\numberwithin{equation}{section}
\newtheorem{theorem}{Theorem}[section]
\newtheorem{definition}[theorem]{Definition}
\newtheorem{lemma}[theorem]{Lemma}
\newtheorem{remark}[theorem]{Remark}
\newtheorem{proposition}[theorem]{Proposition}
\newtheorem{corollary}[theorem]{Corollary}
\numberwithin{equation}{section}
\newcommand*{\Id}{\ensuremath{\mathrm{Id}}}
\newcommand*{\supp}{\ensuremath{\mathrm{supp\,}}}
\newcommand{\aint}{{\fint}}
\newcommand{\T}{{\mathbb{T}}}
\newcommand{\rr}{\mathring{R}}
\newcommand{\ru}{\mathring{R}_q^u}
\newcommand{\rb}{\mathring{R}_q^B}
\newcommand{\p}{\partial}
\renewcommand{\P}{\mathbb{P}}
\renewcommand{\div}{{\mathrm{div}}}
\newcommand{\curl}{{\mathrm{curl}}}
\renewcommand{\u}{{u_q}}
\newcommand{\h}{{B_q}}
\renewcommand{\d}{{\rm d}}
\newcommand{\g}{g_{(\tau)}}
\newcommand{\norm}[1]{\lVert#1\rVert}
\newcommand{\abs}[1]{|#1|}
\newcommand{\la}{\lambda_q}
\newcommand{\laq}{\lambda_{q+1}}
\newcommand{\rs}{r_{\perp}}
\newcommand{\rp}{r_{\parallel}}
\newcommand{\va}{\varepsilon}
\newcommand{\wo}{w_{q+1}^{(o)}}
\newcommand{\dqo}{d_{q+1}^{(o)}}
\newcommand{\wdc}{\wt D_{(k)}^c}
\newcommand{\R}{{\mathbb R}}
\newcommand{\lbb}{\overline{\lambda}}
\newcommand{\thq}{\theta_{q+1}}
\newcommand{\mq}{m_{q+1}}
\newcommand{\tr}{\mathring{\wt R^u_q}}
\newcommand{\trb}{\mathring{\wt R^B_q}}
\def\a{{\alpha}}
\def\vf{{\varphi}}
\def\lbb{\lambda}
\def\wt{\widetilde}
\def\9{{\infty}}
\def\ve{{\varepsilon}}
\def\na{{\nabla}}
\def\bbr{{\mathbb{R}}}
\def\({\left(}
\def\){\right)}
\newcommand{\cH}{{\mathcal{H}}}
\newcommand{\cL}{{\mathcal{L}}}
\newcommand{\cW}{{\mathcal{W}}}
\begin{document}
	
\title[] {Sharp non-uniqueness of weak solutions to 3D magnetohydrodynamic equations}

\author{Yachun Li}
\address{School of Mathematical Sciences, CMA-Shanghai, MOE-LSC, and SHL-MAC,  Shanghai Jiao Tong University, China.}
\email[Yachun Li]{ycli@sjtu.edu.cn}
\thanks{}

\author{Zirong Zeng}
\address{School of Mathematical Sciences, Shanghai Jiao Tong University, China.}
\email[Zirong Zeng]{beckzzr@sjtu.edu.cn}
\thanks{}

\author{Deng Zhang}
\address{School of Mathematical Sciences, CMA-Shanghai, Shanghai Jiao Tong University, China.}
\email[Deng Zhang]{dzhang@sjtu.edu.cn}
\thanks{}

\keywords{Convex integration, Lady\v{z}enskaja-Prodi-Serrin condition;
MHD equations;  non-uniqueness; partial regularity; Taylor's conjecture.}

\subjclass[2010]{35A02,\ 35D30,\ 76W05.}

\begin{abstract}
   We prove the non-uniqueness of weak solutions to 3D hyper
   viscous and resistive MHD in the class $L^\gamma_tW^{s,p}_x$,
   where the exponents $(s,\gamma,p)$ lie in two supercritical
   regimes.
   The result reveals that
   the scaling-invariant Lady\v{z}enskaja-Prodi-Serrin (LPS) condition
   is the right criterion to detect non-uniqueness,
   even in the highly viscous and resistive
   regime beyond the Lions exponent.
   In particular,
   for the classical viscous and resistive MHD,
   the non-uniqueness is sharp near the endpoint
   $(0,2,\infty)$ of the LPS condition.
   Moreover, the constructed weak solutions admit the partial regularity
   outside a small fractal singular
   set in time with zero $\mathcal{H}^{\eta_*}$-Hausdorff dimension,
   where $\eta_*$ can be any given small positive constant.
   Furthermore,
   we prove the strong vanishing viscosity and resistivity result,
   which yields the failure of Taylor's conjecture
   along some subsequence
   of weak solutions to the hyper viscous and resistive MHD
   beyond the Lions exponent.
\end{abstract}

\maketitle

{ \small
	\tableofcontents
}

\section{Introduction and main results}

\subsection{Introduction} \label{Subsec-intro}

We are concerned with the
three-dimensional magnetohydrodynamic (MHD for short) system on the torus $\T^3:=[-\pi,\pi]^3$,
\begin{equation}\label{equa-MHD}
	\left\{\aligned
	&\p_tu + \nu_1\Delta u+(u\cdot \nabla )u -(B\cdot \nabla )B + \nabla P=0,  \\
	&\p_tB + \nu_2\Delta B+(u\cdot \nabla )B- (B\cdot \nabla )u  =0, \\
	& \div u = 0, \quad \div B = 0,
	\endaligned
	\right.
\end{equation}
where  $u=(u_1,u_2,u_3)^\top(t,x)\in \R^3 $, $B=(B_1,B_2,B_3)^\top(t,x)\in \R^3$
and $P=P(t,x)\in \R$
correspond to the  velocity field,
magnetic field and pressure of the fluid, respectively,
and $\nu_1, \nu_2\geq 0$ are the viscous and resistive coefficients, respectively.
In particular,
in the case without magnetic fields,
equations \eqref{equa-MHD}
reduce to the classical incompressible Navier-Stokes equation
(NSE for short).

In the celebrated papers \cite{leray1934},
Leray proved the existence of global solutions
in $L^\infty_tL^2_x \cap L^2_t\dot{H}^1_x$,
which now refers to Leray-Hopf solutions,
also due to the important contribution of Hopf \cite{hopf1951}
in the case of bounded domains.
Leray-Hopf solutions enjoy several nice properties,
including the partial regularity and weak-strong uniqueness.
This kind of solutions also have been obtained for MHD
by Sermange-Teman \cite{ST83}.
We refer to the monographes \cite{b1993,b2003,d2001,LQ14,Tsai18}
for more detailed surveys on NSE and MHD equations.
However, the uniqueness of Leray-Hopf solutions still remains
one of the most challenging problems.

In the last decade, major progresses have been achieved for the non-uniqueness
for various fluid models.
In the breakthrough work \cite{dls09}, De Lellis and Sz\'{e}kelyhidi, Jr. introduced the convex integration method
to construct wild solutions to 3D Euler equations.
Afterwards,
one milestone is the resolution of the flexible part of
$C^{1/3}_{t,x}$ regularity threshold
in Onsager's conjecture
for Euler equations, developed in a series of works
\cite{B15,dls10,dls13,bdis15,bdls16,dls14},
and finally solved by Isett \cite{I18} and Buckmaster-De Lellis-Sz\'{e}kelyhidi, Jr.-Vicol \cite{bdsv19} for dissipative solutions. See also \cite{cvy21,lxx16} and references therein 
for applications to compressible Euler systems.

Another significant progress is that,
Buckmaster-Vicol \cite{bv19b} proved that
weak/distributional solutions to 3D NSE
are not unique in the space $C_tL^2_{x}$.
It was proved there even more that,
there are indeed infinitely many weak solutions with the same initial data
and with prescribed energy profiles.
The intermittent convex integration, developed in \cite{bv19b},
has been further applied to many other models,
including, for instance,
3D hyperdissipative NSE up to the Lions exponent \cite{lt20,bcv21},
 hypodissipative NSE \cite{CDM20,DR19,lq20},
the stationary NSE \cite{luo19}, SQG equation \cite{bsv19}
and transport equations \cite{cl21,cl22,ms18}.
Remarkably,
new intermittent convex integration has been  recently developed
in  \cite{bmnv21,nv22} for Euler equations,
which, particularly,
enables to construct non-conservative weak solutions
to 3D Euler equations in the class $C_t(H^\beta \cap L^{1/(1-2\beta)})$
with $\beta<1/2$,
hence by interpolation also in the class $C_tB^s_{3,\infty}$ for $s<1/3$. 
We refer to \cite{bv19r,bv21,CD12,dls17} for comprehensive surveys on convex integration
method. We also refer to \cite{js14,js15} for non-uniqueness of Leray-Hopf solutions based on 
spectral assumptions and the recent work \cite{ABC21} for non-unique Leray-Hopf solutions to forced 3D NSE.

In general, the scaling exponent is a very helpful criterion
for the well-posedness/ill-posedness.
We refer to the papers  by Klainerman \cite{K00,K17}
for comprehensive discussions of scaling exponents
for general nonlinear PDEs.

For the NSE, one important critical scaling in the
mixed Lebsgue space $L^\gamma_tL^p_x$ is the
Lady\v{z}enskaja-Prodi-Serrin (LPS for short) condition
\begin{align} \label{LPS-condition}
   \frac{2}{\gamma} + \frac{d}{p} = 1
\end{align}
with $d$ being the underlying spatial dimension,
which is critical under the scaling
\begin{align}
   u(t,x) \mapsto \lbb u (\lbb^2 t, \lbb x), \ \
   P(t,x) \mapsto \lbb^2 P (\lbb^2 t, \lbb x).
\end{align}
It is well-known that,
weak solutions in the (sub)critical spaces $L^\gamma_tL^p_x$
with $2/\gamma + 3/p\leq 1$ are unique and even Leray-Hopf.
See \cite{FJR72, iss03, cl20.2} and references therein.

On the flexible side,
in the recent remarkable paper \cite{cl20.2},
Cheskidov-Luo achieved the non-uniqueness
of weak solutions to NSE
in the spaces $L^\gamma_tL^\infty$ for any $1\leq \gamma<2$.
The non-uniqueness is sharp as $L^2_tL^\infty_x$ is the critical endpoint space
in view of the LPS condition.
Moreover, in another endpoint case
where $(\gamma, p)= (\infty, 2)$ when $d=2$,
the sharp non-uniqueness in $L^\infty_tL^p_x$,
$1\leq p<2$, has been proved for 2D NSE in \cite{cl21.2}.
Very recently,
for the 3D hyperdissipative NSE
with viscosity beyond the Lions exponent $5/4$,
the sharp non-uniqueness at two endpoints of generalized LPS conditions
has been proved in \cite{lqzz22}.

Regarding MHD equations,
the corresponding non-uniqueness and related turbulence
have attracted significant interests in literature.
In particular,
several progresses have been made for the ideal MHD,
which corresponds to \eqref{equa-MHD} when
the viscous and resistive coefficients vanish,
that is,
\begin{equation}\label{equa-MHD-ideal}
	\left\{\aligned
	&\p_tu +(u\cdot \nabla )u -(B\cdot \nabla )B + \nabla P=0,  \\
	&\p_tB +(u\cdot \nabla )B- (B\cdot \nabla )u  =0, \\
	& \div u = 0, \quad \div B = 0.
	\endaligned
	\right.
\end{equation}
The lack of uniqueness of regular weak solutions
is in agreement with the natural stochasticity
observed in turbulent regimes
(\cite{FL22}).

A first attempt at constructing non-vanishing smooth strict subsolutions to
3D ideal MHD is the work by Faraco-Lindberg \cite{FL18}.
Afterwards, bounded weak solutions to \eqref{equa-MHD-ideal} were
constructed by  Faraco-Lindberg-Sz\'{e}kelyhidi \cite{fls21},
which violate the  energy conservation
while preserve the magnetic helicity
(see \S \ref{Subsec-Main-Result} below).
Bounded solutions
with prescribed total energy and cross helicity
also have been constructed by
Faraco-Lindberg-Sz\'{e}kelyhidi
\cite{fls21,fls21.2}.
For weak solutions with non-conserved magnetic helicity,
the first example was constructed by Beekie-Buckmaster-Vicol
\cite{bbv20},
based on the intermittent convex integration scheme.
We also would like to mention the recent work by Faraco-Lindberg-Sz\'ekelyhidi \cite{fls21.2},
where
the conjecture that $L^3_{t,x}$ is the threshold
for magnetic helicity conservation is solved,
based on the convex integration via staircase laminates.

In contrast, to the best of our knowledge,
there seem not many works on the non-uniqueness problem
for the viscous and resistive MHD.
One major difficulty here lies in the strong viscosity
and resistivity
but the limitation of spatial intermittency
restricted by the specific geometry of MHD
(see \S \ref{Subsec-Main-Result} below for detailed discussions).
Dai \cite{dai18} constructed non-unique weak solutions to Hall MHD equations,
where the Hall nonlinearity takes the dominant effect.
Moreover, Beekie-Buckmaster-Vicol \cite{bbv20} constructed
the intermittent shear flows with 1D intermittency,
which actually permit to control hypo viscosity and resistivity
$(-\Delta)^\alpha$
for any $\alpha\in (0,1/2)$.
Recently,
the authors \cite{lzz21} proved the non-uniqueness for MHD type equations
with the viscosity and resistivity up to the Lions exponent,
by constructing intermittent flows
featuring both 2D spatial intermittency
and 1D temporal intermittency.

\subsection{Main results}   \label{Subsec-Main-Result}

The main interest of this paper is to further study the non-uniqueness problem
for MHD in the following two aspects:
\begin{enumerate}
  \item[$\bullet$] Sharp non-uniqueness of weak solutions to MHD \eqref{equa-MHD}
  in the spaces $L^\gamma_t L^\infty_x$, $1\leq \gamma<2$,
  which approach the endpoint space $L^2_tL^\infty_x$
  with respect to the critical LPS condition.

  \item[$\bullet$] Non-uniqueness of weak solutions to
  MHD type equations (see \eqref{equa-gMHD} below)
  in the highly viscous and resistive regime,
  particularly, beyond the Lions exponent.
\end{enumerate}

We would consider a general formulation of MHD equations with
hyper viscosity and resistivity:
\begin{equation}\label{equa-gMHD}
	\left\{\aligned
	&\p_tu+ \nu_1 (-\Delta)^{\alpha } u+(u\cdot \nabla )u -(B\cdot \nabla )B + \nabla P=0,  \\
	&\p_tB+ \nu_2 (-\Delta)^{\alpha } B+(u\cdot \nabla )B- (B\cdot \nabla )u  =0, \\
	& \div u = 0, \quad \div B = 0,
	\endaligned
	\right.
\end{equation}
where $\nu_1, \nu_2\geq 0$
and $\alpha \in [1, 3/2)$.
Note that, the viscosity and resistivity exponents can be larger than the Lions exponent $5/4$.

It should be mentioned that,
one major obstruction at constructing non-unique weak solutions
is that, the specific geometry of MHD restricts the choice of oscillations directions,
and so, in particular, limits the spatial intermittency of building blocks in
the convex integration scheme.
Thus, the control of viscosity and resistivity in \eqref{equa-MHD}
becomes significantly hard.

On the other hand,
high dissipations usually help to establish well-posedness of PDEs.
One well-known result due to Lions \cite{lions69}
is that, for any divergence-free $L^2$ initial data,
there exist unique Leray-Hopf solutions to 3D NSE
when the viscosity exponent  $\alpha \geq 5/4$.
This kind of well-posedness also holds for
hyper viscous and resistive MHD equations,
due to Wu \cite{wu03}.
Hence,
it is non-trivial to find
non-unique weak solutions particularly
in the very high viscous and resistive regime.
New appropriate criterion,
rather than dissipation,
is required to detect non-unique weak solutions here.

We will show that,
the scaling-invariant LPS condition is the right
criterion to detect non-uniqueness:
the uniqueness still would fail
even in the high viscous and resistive regime
beyond the Lions exponent,
if the (sub)criticality of state space is violated.

As a matter of fact,
the hyper viscous and resistive MHD \eqref{equa-gMHD}
is invariant under the scaling
\begin{align}  \label{scal-gMHD}
   u(t,x) \mapsto \lbb^{2\alpha-1} u(\lbb^{2\alpha} t, \lbb x),\ \
   B(t,x) \mapsto \lbb^{2\alpha-1} B(\lbb^{2\alpha} t, \lbb x),
\end{align}
and $P(t,x ) \mapsto \lbb^{4\alpha-2} P(\lbb^{2\alpha} t, \lbb x)$.
The critical exponent $(s,\gamma, p)$
of the mixed Lebesgue spaces  $L^\gamma_tW^{s,p}_x$
satisfies the generalized
Lady\v{z}enskaja-Prodi-Serrin condition  
\begin{align}\label{critical-gLPS-mhd}
      \frac{2\alpha}{\gamma} + \frac{3}{p} = 2\alpha-1+s.
\end{align}
We show that the non-uniqueness of weak solutions
do exhibit in the spaces $L^\gamma_t W^{s,p}_x$,
where the exponents $(s,\gamma,p)$
lie in the following two supercritical regimes,
with respect to the scaling \eqref{scal-gMHD}:
\begin{align} \label{Super-S1}
		\mathcal{S}_1:=\bigg\{ (s,\gamma,p)\in [0,3)\times [1, \infty]\times[1,\infty]: 0\leq s<  \frac{2\a}{\gamma}+\frac{2\a-2}{p}+1-2\a \bigg\},
\end{align}
and
\begin{align} \label{Super-S2}
		\mathcal{S}_2:=\bigg\{ (s,\gamma,p)\in [0,3)\times [1, \infty] \times[1,\infty]: 0\leq s< \frac{4\a-4}{\gamma}+ \frac{2}{p}+1-2\a \bigg\}.
\end{align}

Before stating the main results,
let us mention that the weak solutions to \eqref{equa-gMHD}
is taken in the standard distributional sense.

\begin{definition}[Weak solutions]   \label{weaksolu}
	We say that $(u, B) \in L^2([0,T]; L^2(\T^3))$ is a weak solution to
the  MHD equations \eqref{equa-gMHD} if
	\begin{itemize}
		\item For all $t\in [0,T]$, $(u(t,\cdot), B(t,\cdot))$ are divergence free in the sense of distributions and have zero spatial mean.
		\item Equations \eqref{equa-gMHD} hold in the sense of distributions, i.e.,
		for any divergence-free test functions $\varphi  \in C_0^{\infty} ([0,T] \times \mathbb{T}^3)$,
		\begin{align*}
		\int\limits_{\mathbb{T}^3} u_0 \vf(0,x) \d x+\int_0^T \int_{\mathbb{T}^3}  \partial_t \varphi \cdot u - \nu_1(-\Delta)^{\alpha }\varphi \cdot u  + \nabla\varphi :( u \otimes u - B \otimes B) \d x \d t &= 0,
			\\
		\int\limits_{\mathbb{T}^3} B_0 \vf(0,x) \d x+\int_0^T	\int_{\mathbb{T}^3}  \partial_t \varphi \cdot B - \nu_2 (-\Delta)^{\alpha }\varphi \cdot B + \nabla\varphi  :(B \otimes u- u \otimes B) \d x \d t  &= 0.
		\end{align*}
	\end{itemize}
\end{definition}

The main result of this paper is formulated in Theorem \ref{Thm-Non-gMHD} below.

\begin{theorem} \label{Thm-Non-gMHD}
Let $\alpha\in [1,3/2)$ and $(\tilde{u},\tilde{B})$ be any smooth,
divergence-free and mean-free vector fields on $[0,T] \times \T^3$.
Then, there exists $\beta'\in(0,1)$,
such that for any $\va_*, \eta_*>0$
and for any $(s,p,\gamma)\in \mathcal{S}_1 \cup \mathcal{S}_2$,
there exist vector fields $u,B$ and a set
\begin{align*}
	\mathcal{G} = \bigcup\limits_{i=1}^\infty (a_i,b_i)  \subseteq [0,T],
\end{align*}
such that the following holds:
\begin{enumerate}[(i)]
	\item Weak solution: $(u,B)$ is a weak solution
		to \eqref{equa-gMHD} with the initial datum $(\wt u(0),\wt B(0))$
		and has zero spatial mean.
	\item Regularity: $u,B \in H^{\beta'}_{t,x} \cap L^\gamma_tW^{s,p}_x$,
	and
	\begin{align*}
		(u,B)|_{\mathcal{G}\times \mathbb{T}^3} \in C^\infty (\mathcal{G}\times \mathbb{T}^3)\times C^\infty (\mathcal{G}\times \mathbb{T}^3).
	\end{align*}
	Moreover,
	if there exists $t_0\in (0,T)$ such that
	$(\wt u,\wt B)$ is the solution to \eqref{equa-gMHD} on $[0,t_0]$,
	then $(u,B)$ agrees with $(\wt u,\wt B)$ near $t=0$.
	\item The Hausdorff dimension of the singular set
	$\mathcal{B} = [0,T]/\mathcal{G}$ satisfies
	\begin{align*}
		   d_{\mathcal{H}}(\mathcal{B}) <\eta_*.
	\end{align*}
	In particular, the singular set $\mathcal{B}$
	has zero Hausdorff $\mathcal{H}^{\eta_*}$ measure,
	i.e.,  $\mathcal{H}^{\eta_*}(\mathcal{B})=0$.
	\item Small deviations of temporal support:
	 $$\supp_t (u,B)  \subseteq N_{\va_*}(\supp_t (\tilde{u},\tilde{B})).$$
	\item Small deviations on average:
	$$\|u-\tilde{u}\|_{L^1_tL^2_x}+\|u-\tilde{u}\|_{L^\gamma_tW^{s,p}_x}\leq \va_*,\quad \|B-\tilde{B}\|_{L^1_tL^2_x}+\|B-\tilde{B}\|_{L^\gamma_tW^{s,p}_x}\leq \va_*.$$
\end{enumerate}
\end{theorem}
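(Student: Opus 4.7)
\medskip
\noindent\textbf{Proof plan.}
The plan is to run an iterative convex integration scheme at the level of the hyperviscous MHD--Reynolds system
\begin{equation*}
\left\{\aligned
&\partial_t u_q + \nu_1(-\Delta)^\alpha u_q + \div(u_q\otimes u_q - B_q\otimes B_q) + \nabla P_q = \div \mathring R^u_q,\\
&\partial_t B_q + \nu_2(-\Delta)^\alpha B_q + \div(u_q\otimes B_q - B_q\otimes u_q) = \div \mathring R^B_q,\\
&\div u_q = \div B_q = 0,
\endaligned\right.
\end{equation*}
and to extract the target weak solution as the limit of a sequence $(u_q,B_q,\mathring R^u_q,\mathring R^B_q)$ with frequency parameters $\lambda_q\to\infty$ and amplitudes $\delta_q\to 0$ chosen so that $\|\mathring R^u_q\|_{L^1_{t,x}}+\|\mathring R^B_q\|_{L^1_{t,x}}\lesssim\delta_{q+1}$. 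Convergence in $H^{\beta'}_{t,x}\cap L^\gamma_tW^{s,p}_x$ is obtained by geometric summability of the increments, which forces a relation between $\beta'$, $(s,\gamma,p)$, and the intermittency parameters that must be solvable precisely when $(s,\gamma,p)\in\mathcal{S}_1\cup\mathcal{S}_2$. The initial-value compatibility in (ii) is enforced by freezing the perturbations on a short interval where $(\tilde u,\tilde B)$ already solves the equation.

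The passage $q\mapsto q+1$ has three substeps. First, mollify $(u_q,B_q,\mathring R^u_q,\mathring R^B_q)$ at a spatial scale $\ell_q$ to obtain smooth data, then perform a temporal gluing on intervals of length $\tau_q$ to produce a stress $\mathring R_\ell$ supported in a disjoint union of short time sub-intervals; this step is what will ultimately produce the small singular set $\mathcal{B}$. Second, build the perturbation $w_{q+1}=w^{p}+w^{c}+w^{t}$ (and its magnetic analogue $d_{q+1}$) from \emph{intermittent MHD building blocks} with two oscillating transverse directions of width $r_\perp=\lambda_{q+1}^{-1+\sigma}$ and one parallel direction of width $r_\parallel=\lambda_{q+1}^{-1}$, concentrated on a fast temporal scale $(\lambda_{q+1}^{2\alpha}\mu_{q+1})^{-1}$, weighted by smooth temporal cutoffs supported in the intervals produced by the gluing. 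The principal part is chosen from an MHD geometric lemma so that $w^{p}\otimes w^{p}-d^{p}\otimes d^{p}$ and $w^{p}\otimes d^{p}-d^{p}\otimes w^{p}$ (together with temporal correctors $w^{t},d^{t}$) cancel $\mathring R^u_\ell,\mathring R^B_\ell$ up to high-frequency errors absorbable in a divergence. Third, verify that the new stress $(\mathring R^u_{q+1},\mathring R^B_{q+1})$ defined by inverse-divergence of all error terms (linear, oscillation, corrector, and hyperdissipation errors) satisfies the induction bound.

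The two supercritical regimes $\mathcal{S}_1,\mathcal{S}_2$ are dual and are exhausted by two distinct tunings of $(\sigma,\mu_{q+1},\ell_q,\tau_q)$: the $\mathcal{S}_1$-regime privileges spatial intermittency (small $r_\perp$) and controls the $\nu_1,\nu_2$-hyperdissipation losses of order $\lambda_{q+1}^{2\alpha}r_\perp^{(2\alpha-2)/p}$, while the $\mathcal{S}_2$-regime privileges temporal intermittency (large $\mu_{q+1}$) and balances the temporal corrector cost. The critical LPS exponents in \eqref{critical-gLPS-mhd} arise exactly from the scale-invariance of these two competing costs. Partial regularity follows from the fact that, on the complement $\mathcal{G}_q$ of an $O(\tau_q)$-neighbourhood of the gluing singular times, $(u_q,B_q)$ solves the exact hyperdissipative MHD and is therefore smooth by parabolic regularity; choosing $\tau_q$ to decay super-exponentially in $q$ and applying a Vitali-type covering argument, one shows that $\mathcal{B}=\bigcap_{q}\mathrm{supp}_t(\mathring R^u_q,\mathring R^B_q)$ satisfies $d_{\mathcal{H}}(\mathcal{B})<\eta_*$ for any prescribed $\eta_*>0$.

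The main obstruction is the well-known MHD geometric restriction: unlike NSE, the two stress tensors $\mathring R^u,\mathring R^B$ must be cancelled simultaneously by a single pair $(w_{q+1},d_{q+1})$, which forces the use of shear-type or box-flow-type building blocks and confines the genuine spatial intermittency to at most two transverse directions. Consequently the available spatial integrability gain is weaker than in 3D NSE, and the hyperdissipative terms $\nu_i(-\Delta)^\alpha$ become the hardest errors to close for $\alpha$ beyond the Lions exponent. The remedy is to compensate through sharp \emph{temporal} intermittency, and the precise borderline at which this compensation still closes is exactly the supercritical condition defining $\mathcal{S}_1\cup\mathcal{S}_2$; the sharpness of the non-uniqueness near $(0,2,\infty)$ when $\alpha=1$ then drops out by specializing the parameter relations.
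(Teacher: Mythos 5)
Your blueprint captures the correct overall architecture --- an iterative Nash scheme on the MHD--Reynolds system, a gluing step to concentrate the stresses on a sparse set of time intervals, spatial--temporal intermittent building blocks adapted to the MHD geometric lemma, and a Hausdorff-dimension estimate from covering $\mathcal{B}$ by the gluing intervals. That much agrees with the paper. There are, however, two points where your proposal deviates in a way that would matter if carried out.

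First, and most significantly, you have the roles of $\mathcal{S}_1$ and $\mathcal{S}_2$ exactly reversed. You write that ``the $\mathcal{S}_1$-regime privileges spatial intermittency (small $r_\perp$)\dots while the $\mathcal{S}_2$-regime privileges temporal intermittency (large $\mu_{q+1}$).'' The paper does the opposite. In $\mathcal{S}_1$ the spatial integrability $p$ may be as large as $\infty$, so one can afford only mild spatial concentration: the paper takes Beekie--Buckmaster--Vicol shear flows with merely 1D spatial intermittency and $\rs=\lambda_{q+1}^{2-2\alpha-10\varepsilon}$ (which is \emph{not} small when $\alpha$ is near $1$); the hyperdissipation at $\alpha\ge 1$ is then controlled entirely by a very strong temporal oscillation/concentration $\tau=\lambda_{q+1}^{2\alpha}$ (cf.\ \eqref{larsrp-endpt2} and constraints \eqref{constset2}). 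It is $\mathcal{S}_2$, where $p$ may drop below $2$, that requires genuinely concentrated spatial building blocks: the paper switches there to the 2D-intermittent flows of \cite{lzz21} with $\rs=\lambda_{q+1}^{-1+2\varepsilon},\ \rp=\lambda_{q+1}^{-1+6\varepsilon}$ and introduces the extra parameter $\mu$ to balance the time derivative of the moving concentration profile $\psi_{(k_1)}$ (cf.\ \eqref{larsrp}, \eqref{constset}). The reason for the split is not a duality between temporal and spatial intermittency as competing tools; rather it is dictated by which of the two Lebesgue exponents $(\gamma,p)$ in the supercritical regime is allowed to be close to its LPS endpoint, and that determines which direction you must concentrate. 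If you followed your (inverted) assignment, the $L^\gamma_tW^{s,p}_x$ estimate \eqref{u-B-Lw-conv} would fail on the borderline of the wrong regime: strong spatial concentration cannot close the $p\to\infty$ endpoint in $\mathcal{S}_1$, and weak spatial concentration cannot close the $p\to 1$ side of $\mathcal{S}_2$.

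Second, you describe the first substep as ``mollify at a spatial scale $\ell_q$ \dots then perform a temporal gluing.'' The paper's proof of Theorem~\ref{Thm-Non-gMHD} does not mollify at all: Theorem~\ref{Thm-nunr} directly solves the exact hyperviscous MHD equations \eqref{equa-mhduq} locally in time starting from $u_q(t_i),B_q(t_i)$, using the $H^3$ smoothing estimates of Proposition~\ref{Prop-LWP-Hyper-NLSE}, and glues these exact local solutions with a partition of unity; the new stresses \eqref{def-nr} live only on the overlap intervals. Mollification is mentioned in the paper only as an alternative for the proof of Theorem~\ref{Thm-vanish-viscos}. Working with exact local solutions rather than a mollified approximant is not cosmetic: it is what makes $(u_q,B_q)$ literally solve the MHD equations on $\mathcal{G}_q$, which is how the $C^\infty$ partial regularity in part (ii) is obtained without any further argument. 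If you replace this by mollification you would have to re-derive the interior smoothness of the limit from scratch (e.g.\ via parabolic bootstrapping of the limit equation on $\mathcal{G}$), which is doable but a genuinely different and longer route.

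Apart from these two points, the remaining ingredients you list (geometric lemma for simultaneous cancellation of $\mathring R^u,\mathring R^B$; temporal correctors to absorb the $\partial_t$ of the fast phase; inverse-divergence bookkeeping of the linear, oscillation, and corrector errors; summation of $\|u_{q+1}-u_q\|$ in the interpolation space $H^{\beta'}_{t,x}$; Vitali covering for the Hausdorff-dimension bound) match the paper's argument.
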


We note that,
the borderline of $\mathcal{S}_1$ regime in particular
contains the endpoint $(s,\gamma, p) = (0,2,\infty)$ of the
LPS condition \eqref{critical-gLPS-mhd} when $\alpha =1$.
Hence,
by virtue of Theorem \ref{Thm-Non-gMHD},
we obtain the non-uniqueness for the classical viscous and resistive MHD \eqref{equa-MHD},
which is sharp in view of the uniqueness result
in the critical endpoint space $L^2_tL^\infty_x$
in Theorem \ref{Thm-uniq-glps} of Appendix.

\begin{corollary} [Sharp non-uniqueness for MHD]  \label{Cor-Non-MHD}
Consider the viscous and resistive MHD \eqref{equa-MHD}.
Then, for any $1\leq \gamma <2$,
there exist infinitely many weak solutions in $L^\gamma_tL^\infty_x$
with the same initial data.
\end{corollary}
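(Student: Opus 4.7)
The plan is to deduce the corollary directly from Theorem \ref{Thm-Non-gMHD} applied to a carefully chosen family of reference profiles that share a common initial datum. First, I would specialize the parameters to $\alpha=1$, so that \eqref{equa-gMHD} reduces to the viscous and resistive MHD \eqref{equa-MHD}. For any $\gamma\in[1,2)$ the triple $(s,\gamma,p)=(0,\gamma,\infty)$ lies in the supercritical regime $\mathcal{S}_1$ of \eqref{Super-S1}, since with $\alpha=1$, $s=0$ and $p=\infty$ the defining inequality collapses to $0< 2/\gamma-1$, which is equivalent to $\gamma<2$. Hence Theorem \ref{Thm-Non-gMHD} is applicable with these exponents.

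Next, I would construct infinitely many smooth reference fields sharing a common initial datum. Fix $t_0\in(0,T)$ and choose a sequence of smooth, divergence-free, mean-free vector fields $\{(\wt u^{(n)},\wt B^{(n)})\}_{n\in\mathbb{N}}$ on $[0,T]\times\T^3$, each vanishing identically on $[0,t_0]\times\T^3$ and arranged so that the restrictions to $(t_0,T]\times\T^3$ are pairwise separated by some fixed $\delta>0$ in the $L^\gamma_tL^\infty_x$ norm. Since $(0,0)$ is an exact solution of \eqref{equa-MHD} on $[0,t_0]$, the hypothesis in part (ii) of Theorem \ref{Thm-Non-gMHD} is satisfied by every member of this family, all with common initial datum $(u_0,B_0)=(0,0)$.

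Finally, I would apply Theorem \ref{Thm-Non-gMHD} with tolerance $\va_*<\delta/4$ separately to each reference profile. This yields weak solutions $(u^{(n)},B^{(n)})\in L^\gamma_tL^\infty_x$ of \eqref{equa-MHD} that agree with $(\wt u^{(n)},\wt B^{(n)})\equiv 0$ near $t=0$, and hence all share the initial datum $(0,0)$. The triangle inequality combined with property (v) yields, for $n\neq m$,
\begin{align*}
    \|u^{(n)}-u^{(m)}\|_{L^\gamma_tL^\infty_x}
    \geq \|\wt u^{(n)}-\wt u^{(m)}\|_{L^\gamma_tL^\infty_x} - 2\va_*
    \geq \delta-2\va_* > 0,
\end{align*}
so $\{(u^{(n)},B^{(n)})\}_{n\in\mathbb{N}}$ is an infinite family of pairwise distinct weak solutions with the same initial datum. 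All the genuine work sits inside Theorem \ref{Thm-Non-gMHD}; no new obstacle arises at the level of the corollary, and the only points that require checking are the placement of the endpoint exponent in $\mathcal{S}_1$ and the promotion of the closeness estimate in part (v) to pairwise distinctness of the resulting solutions.
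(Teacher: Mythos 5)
Your proposal is correct and follows the same route the paper takes: the paper derives the corollary by observing that $(0,\gamma,\infty)\in\mathcal{S}_1$ when $\alpha=1$ and $\gamma<2$, and then invoking Theorem \ref{Thm-Non-gMHD}. The paper leaves the construction of the infinite family implicit, and your explicit device (reference profiles vanishing on $[0,t_0]$, pairwise $\delta$-separated on $(t_0,T]$, with $\va_*<\delta/4$ so that parts (ii) and (v) yield distinct solutions with a common initial datum) is the standard way to fill that in.
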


Our last result is concerned with the strong
vanishing viscosity and resistivity limits,
which relates both the ideal MHD \eqref{equa-MHD-ideal}
and hyper viscous, resistive MHD equations \eqref{equa-gMHD}.
Its interesting relationship to the Taylor conjecture
will be discussed in the comments below.

\begin{theorem} [Strong vanishing viscosity and resistivity limit]  \label{Thm-vanish-viscos}
	Let $\alpha\in [1,3/2)$ and $\wt \beta>0$.
	Let $(u,B)\in H^{\wt{\beta}}_{t,x} \times  H^{\wt{\beta}}_{t,x}$ be any mean-free weak solution
	to the ideal MHD \eqref{equa-MHD-ideal}.
	Then, there exist $\beta' \in (0, \wt \beta)$ and a sequence of weak solutions
	$(u^{(\nu_{n})}, B^{(\nu_{n})}) \in H^{\beta'}_{t,x} \times H^{\beta'}_{t,x}$
	to the hyper viscous and resistive MHD \eqref{equa-gMHD},
    where  $\nu_n=(\nu_{1,n},\nu_{2,n})$
    and $\nu_{1,n}$, $\nu_{2,n}$
    are the viscosity and resistivity coefficients,  respectively,
	such that
	as $\nu_{n}\rightarrow 0$,
	\begin{align}\label{convergence}
		(u^{(\nu_{n})}, B^{(\nu_{n})}) \rightarrow (u,B) \quad\text{strongly in}\  H^{\beta'}_{t,x} \times  H^{\beta'}_{t,x}.
	\end{align}
\end{theorem}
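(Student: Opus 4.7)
The plan is to regard the given ideal MHD solution $(u,B)$ as an approximate solution of the hyper viscous and resistive MHD \eqref{equa-gMHD} and to remove the defect by the same convex integration scheme that drives Theorem~\ref{Thm-Non-gMHD}. First, I would mollify $(u,B)$ in space-time at a scale $\ell_n\downarrow 0$ to obtain smooth pairs $(\overline u_n,\overline B_n)$; since $(u,B)\in H^{\wt\beta}_{t,x}$, one has $(\overline u_n,\overline B_n)\to (u,B)$ in $H^{\beta'}_{t,x}$ for every $\beta'<\wt\beta$, with a uniform $H^{\wt\beta}_{t,x}$ bound. Because $(u,B)$ solves \eqref{equa-MHD-ideal} distributionally, $(\overline u_n,\overline B_n)$ satisfies \eqref{equa-gMHD} up to a defect consisting of (a) Constantin--E--Titi type mollification commutators associated to the nonlinear terms $(u\otimes u-B\otimes B)$ and $(u\otimes B-B\otimes u)$, of size $O(\ell_n^{2\wt\beta})$ in $L^1_{t,x}$, and (b) the dissipation contributions $\nu_{i,n}(-\Delta)^{\alpha}\overline u_n$ and $\nu_{i,n}(-\Delta)^{\alpha}\overline B_n$, of size $O(\nu_n\,\ell_n^{1-2\alpha})$. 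Applying an inverse divergence operator and the standard symmetrisation for the magnetic equation gathers both pieces into trace-free Reynolds stresses $(\mathring R^u_n,\mathring R^B_n)$.

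Next, I would couple $\nu_n$ to $\ell_n$ so that both parts of the defect vanish in $L^1_{t,x}$. Because $\alpha<3/2$, the compatibility $\nu_n\ll \ell_n^{2\alpha-1}$ with $\ell_n\to 0$ is freely available, for instance by $\ell_n=\nu_n^{\theta}$ for some $\theta\in(0,(2\alpha-1)^{-1})$. Feeding the quadruple $(\overline u_n,\overline B_n,\mathring R^u_n,\mathring R^B_n)$ into the main iterative proposition underlying Theorem~\ref{Thm-Non-gMHD} then produces a weak solution $(u^{(\nu_n)},B^{(\nu_n)})\in H^{\beta'}_{t,x}$ of \eqref{equa-gMHD} with two quantitative features tracked through the iteration: a uniform $H^{\beta'}_{t,x}$ bound depending only on $\|(u,B)\|_{H^{\wt\beta}_{t,x}}$ and on $\sup_n\|(\overline u_n,\overline B_n)\|_{H^{\wt\beta}_{t,x}}$, together with
\begin{equation*}
\|u^{(\nu_n)}-\overline u_n\|_{L^1_tL^2_x}+\|B^{(\nu_n)}-\overline B_n\|_{L^1_tL^2_x}\longrightarrow 0\quad\text{as } n\to\infty,
\end{equation*}
which is the quantitative analogue of Theorem~\ref{Thm-Non-gMHD}(v) with prescribed error $\va_*\to 0$. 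Interpolating this vanishing $L^1_tL^2_x$ distance against the uniform $H^{\beta'}_{t,x}$ bound and recalling that $\overline u_n\to u$, $\overline B_n\to B$ in $H^{\beta'}_{t,x}$ yields strong convergence in $H^{\beta''}_{t,x}$ for any $\beta''<\beta'$, and relabelling $\beta'$ gives \eqref{convergence}.

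The hard part is keeping the convex integration scheme closed with the hyper dissipation $\nu_n(-\Delta)^{\alpha}$ present, particularly for $\alpha$ close to $3/2$: the MHD geometry limits the available spatial intermittency of the building blocks, which is precisely what makes strong dissipations delicate in this model. The saving feature in the vanishing limit, in contrast to Theorem~\ref{Thm-Non-gMHD}, is that the coefficients $\nu_n$ are no longer fixed but may be chosen as small as desired relative to the convex integration frequency scales $\lambda_q$; the dissipation stress entering at each stage $q$ of the iteration can therefore always be arranged to be dominated by $\delta_{q+2}$, so that the strongly dissipative regime acts as a genuinely small perturbation of the scheme already engineered for Theorem~\ref{Thm-Non-gMHD}. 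Verifying that this smallness is compatible with the choice $\ell_n=\nu_n^{\theta}$ made above, and with the retention of the $H^{\beta'}_{t,x}$ bound from the iteration, is the main bookkeeping that the proof must perform.
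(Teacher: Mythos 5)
Your overall architecture --- mollify the ideal solution in space-time, treat the result as an approximate solution of \eqref{equa-gMHD} with a defect combining a Constantin--E--Titi commutator and the added dissipation, then absorb the defect by the paper's convex integration machinery and let $\nu_n\to 0$ --- is the same strategy the paper points to (via the mollification procedure of \cite{lzz21} in place of the gluing stage). However, the central analytic step of your argument does not go through as stated.

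\textbf{The gap: the claimed uniform $H^{\beta'}_{t,x}$ bound is not provided by the iteration.}
You assert that the main iterative proposition applied to the quadruple $(\overline u_n,\overline B_n,\mathring R^u_n,\mathring R^B_n)$ yields a uniform $H^{\beta'}_{t,x}$ bound controlled by $\|(u,B)\|_{H^{\wt\beta}_{t,x}}$ and $\sup_n\|(\overline u_n,\overline B_n)\|_{H^{\wt\beta}_{t,x}}$. The iteration does not give this. Starting the scheme at $q=0$ with $(u_0,B_0)=(\overline u_n,\overline B_n)$ requires the inductive hypothesis \eqref{ubh3}, i.e.\ $\|(\overline u_n,\overline B_n)\|_{L^\infty_t \mathcal{H}^3_x}\lesssim\lambda_0^5=a^5$. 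Because $\|(\overline u_n,\overline B_n)\|_{\mathcal{H}^3_x}\sim\ell_n^{\wt\beta-3}\to\infty$ as $\ell_n\downarrow 0$, the base parameter $a=a(n)$ must grow with $n$. But the paper's own Cauchy estimate \eqref{interpo} then gives
\begin{equation*}
\sum_{q\geq 0}\|(u_{q+1}-u_q,B_{q+1}-B_q)\|_{\cH^{\beta'}_{t,x}}
\ \lesssim\ M^{1-\beta'}\,\lambda_1^{\beta(1-\beta')/2+7\beta'}
\ =\ M^{1-\beta'}\,a^{\,b\left(\beta(1-\beta')/2+7\beta'\right)}\ \longrightarrow\ \infty
\end{equation*}
as $a\to\infty$, since the exponent in $a$ is strictly positive for every admissible $\beta'\in(0,\beta/(7+\beta))$. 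The dominant contribution comes from the $q=0$ step, where $\delta_1=\lambda_1^{3\beta}\lambda_1^{-2\beta}=a^{b\beta}\gg 1$ is by design not small (precisely so that Theorem~\ref{Thm-Non-gMHD} can accommodate a possibly large initial stress), and the $L^2_{t,x}$ norm of the first perturbation is of order $\delta_1^{1/2}$. The decay in \eqref{u-B-L1L2-conv} is an $L^1_t\cL^2_x$ bound exploiting temporal intermittency; it does \emph{not} propagate to the $L^2_{t,x}$/$H^1_{t,x}$ norms entering \eqref{interpo}. Hence the interpolation step of your proof collapses: you are interpolating a vanishing $L^1_t\cL^2_x$ distance against an $H^{\beta'}_{t,x}$ bound that is not uniform in $n$.

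\textbf{How the gap must be closed.}
One needs the perturbation to be \emph{small}, not merely finite, in $\cH^{\beta'}_{t,x}$. Two routes achieve this and are consistent with the paper's brief description. (a) Keep $a$ (hence $\lambda_1$) fixed and start the iteration at a level $q_0=q_0(n)\to\infty$ chosen so that both $\|(\overline u_n,\overline B_n)\|_{\mathcal{H}^3_x}\lesssim\lambda_{q_0}^5$ and $\|(\mathring R^u_{q_0},\mathring R^B_{q_0})\|_{L^1_{t,x}}\leq\lambda_{q_0}^{-\ve_R}\delta_{q_0+1}$ hold (this is where the coupling of $\ell_n$, $\nu_n$, $\lambda_{q_0}$ is tuned); then $\sum_{q\geq q_0}\|(u_{q+1}-u_q,B_{q+1}-B_q)\|_{\cH^{\beta'}_{t,x}}\lesssim\lambda_1^{3\beta(1-\beta')/2}\lambda_{q_0+1}^{-\beta(1-\beta')+7\beta'}\to 0$. (b) Keep $(u_0,B_0)=(u,B)$ fixed and mollify \emph{inside each iteration step}, as the paper explicitly says it does following \cite{lzz21}, replacing the gluing of \S\ref{Sec-Concen-Rey} by a mollification at a $\lambda_q$-dependent scale; then $a$ depends only on $(u,B)$, not on $n$, and the $\nu_n$-dependence enters only through the stress, whose $L^1$ smallness drives the convergence. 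Either way, the essential point missed in your argument is that the $\cH^{\beta'}_{t,x}$ size of the total perturbation must itself be driven to zero.

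\textbf{A minor point of emphasis.}
The remark that the viscosity $\nu_n$ ``may be chosen as small as desired relative to the convex integration frequency scales $\lambda_q$'' as the saving feature is not quite the mechanism. The hyper-dissipativity constraints \eqref{setdeltap.2}, \eqref{setdeltap}, \eqref{setdeltat} do not involve $\nu$ at all; the scheme already closes for any fixed $\nu_1,\nu_2\geq 0$ because temporal intermittency through $g_{(\tau)}$ controls $(-\Delta)^\alpha$ uniformly. The smallness of $\nu_n$ is used solely to make the dissipative part $\nu_n\ell_n^{1-2\alpha}$ of the initial defect vanish alongside the commutator part, not to make the iteration possible.
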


\paragraph{\bf Comments on main results.}

We present some comments on the main results below.

{\bf $(i)$ Sharp non-uniqueness of MHD near the endpoint space $L^2_tL^\infty_x$.}
In the recent remarkable paper \cite{cl20.2},
Cheskidov-Luo proved the non-uniqueness of weak solutions to NSE in
the spaces $L^\gamma_tL^\infty_x$ for any $1\leq \gamma<2$ and any dimensions.
This result is sharp because weak solutions are unique
in the endpoint space $L^2_tL^\infty_x$,
which is critical with respect to the LPS condition \eqref{LPS-condition}.
Sharp non-uniqueness for the 2D NSE near another endpoint space was proved by Cheskidov-Luo \cite{cl21.2}.
Recently, for the 3D hyper-dissipative NSE,
sharp non-uniqueness near both endpoints of the generalized Lady\v{z}enskaja-Prodi-Serrin condition was proved in \cite{lqzz22}.

To the best of our knowledge,
Corollary \ref{Cor-Non-MHD} provides the first non-uniqueness result
for the viscous and resistive MHD near the endpoint space $L^2_t L^\infty_x$.
In view of Theorem~\ref{Thm-uniq-glps}, i.e.,
the uniqueness of weak solutions to MHD in $L^2_tL^\infty_x$,
the non-uniqueness result in Corollary \ref{Cor-Non-MHD} is sharp.

We would expect the non-uniqueness also exhibit in the remaining supercritical regimes.
Actually, the second set $\mathcal{S}_2$ includes a part of
the supercritical regime.
One difficulty of the current MHD situation lies in the
specific geometry of MHD,
which restricts the choice of oscillating directions,
and makes it very hard to construct building blocks with 3D spatial intermittency
as in the NSE context.
Another difficulty is due to the $L^2$-critical nature of
the current convex integration scheme,
as observed by Cheskidov-Luo \cite{cl20.2}.
In view of the LPS condition \eqref{critical-gLPS-mhd},
there exists some regime of exponents $(\gamma, p)$
both larger than two, which seem out of reach of
the current method.

{\bf $(ii)$ Non-uniqueness in the highly viscous and resistive regime: beyond the Lions exponent.}
As mentioned above, the high dissipation usually helps to establish well-posedness of PDEs,
and so is the obstacle for the ill-posedness.
For instance, in the case of 3D NSE,
the viscosity $(-\Delta)$ was the major obstruction at
constructing non-unique weak solutions.
The crucial novelty in \cite{bv19b} to overcome this difficulty
is the construction of $L^2$-based building blocks
with 3D spatial intermittency.
However, due to the MHD geometry,
it is still open to construct 3D spatial intermittent building blocks.

Another thing that deserves attention is that,
it is indeed impossible to construct non-unique
weak solutions in $C_tL^2_x$ in the very high viscous and
resistive regime beyond the Lions exponent,
due to the well-posedness results by Lions \cite{lions69}
in the NSE case and by Wu \cite{wu03} in the MHD case.

Theorem \ref{Thm-Non-gMHD} shows that,
even in the high dissipative regime beyond the Lions exponent,
the LPS condition serves as the compass to
find non-unique weak solutions,
which do exist in the supercritical
mixed Lebesgue spaces.

{\bf $(iii)$ Partial regularity of weak solutions.}
To our best knowledge,
Theorem \ref{Thm-Non-gMHD} provides the first example for weak solutions
to MHD,
whose singular sets in time have small Hausdorff dimensions.
The interesting phenomenon here is that,
though weak solutions are not unique in the supercritical space
$L^\gamma_tW^{s,p}_x$,
they seem not so ``bad":
they can be very close to Leray-Hopf solutions
and are smooth in a majority of time,
of which the complement has
zero Hausdorff $\mathcal{H}^{\eta_*}$ measure
with $\eta_*$ being any given small positive constant.

Let us mention that,
the delicate partial regularity of non-unique weak solutions was first
discovered by Buckmaster-Colombo-Vicol \cite{bcv21} in $C_tL^2_x$ for
the hyperdissipative NSE, up to the Lions exponent $5/4$,
whose singular set in time has
Hausdorff dimension strictly less than one.
Weak solutions with singular temporal sets of small Hausdorff dimension
also have been constructed in $L^\gamma_tL^p_x$
for NSE by Cheskidov-Luo \cite{cl20.2},
and in $L^\gamma_tW^{s,p}$ for hyperdissipative NSE with viscosity beyond the Lions exponent $5/4$
\cite{lqzz22}.
The crucial ingredient here is the
use of gluing technique,
developed in a series of works
\cite{B15,bdis15,bdls16,dls10,dls13,bdsv19,dls14,I18}
in the resolution of Onsager's conjecture,
combined with suitable regularity and stability
estimates for MHD equations.

{\bf $(iv)$ Strong vanishing viscosity and resistivity limit and Taylor's conjecture.}
It would be interesting to see the relationship between
the vanishing viscosity and resistivity limit in Theorem \ref{Thm-vanish-viscos}
and the Taylor conjecture.

The ideal MHD \eqref{equa-MHD-ideal} has several global invariants:
\begin{enumerate}
	\item[$\bullet$] The total  energy:\ \
		$\displaystyle \mathcal{E}(t) =\frac12\int_{\T^3}|u(t,x)|^2+|B(t,x)|^2 \d x$;
	\item [$\bullet$] The cross helicity: \ \
		$\displaystyle \mathcal{H}_{\omega,B}(t) =\int_{\T^3}u(t,x)\cdot B(t,x) \d x$;
	\item [$\bullet$] The magnetic helicity: \ \
		$\displaystyle \mathcal{H}_{B,B}(t):=\int_{\T^3}A(t,x)\cdot B(t,x) \d x$.
\end{enumerate}	
Here $A$ is  a mean-free periodic vector field satisfying $\curl A=B$.
The famed {\it Taylor conjecture}, commonly accepted in the plasma physics literature,
is that the magnetic helicity is expected to be conserved in the infinite conductivity limit \cite{taylor74,taylor86}.
This conjecture is valid under weak ideal limits,
namely, the weak limits of Leray-Hopf solutions to MHD \eqref{equa-MHD}.
It has been proved by Faraco-Lindberg \cite{fl20}
in simply connected, magnetically closed domain,
and by Faraco-Lindberg-MacTaggart-Valli \cite{FLMV22}
in multiply connected domains.
On the other hand,
Beekie-Buckmaster-Vicol \cite{bbv20}
constructed weak solutions with non-conserved magnetic helicity,
which shows that the ideal MHD version of Taylor's conjecture is false.
The delicate point here is that,
the conservation of magnetic helicity requires much milder regularity
than that of total energy and cross helicity.
Actually, the energy and cross helicity are conserved
by weak solutions in $B^{\alpha}_{3,\infty}$ with $\alpha>1/3$
or $B^{1/3}_{3,c(\mathbb{N})}$,
while the magnetic helicity is conserved
in the less regular space
$B^\alpha_{3,\infty}$ with $\alpha>0$
or in the endpoint space $L^3_{t,x}$.
See \cite{CKS97,KL07,A09,FL18}.

Theorem \ref{Thm-vanish-viscos} provides another viewpoint that,
in contrast to weak ideal limits,
there indeed exists certain sequence of weak
(non-Leray-Hopf) solutions
even for the hyper viscous and resistive MHD beyond Lions' exponent,
such that
the Taylor conjecture fail in the limit along this sequence.
\\

\paragraph{\bf Distinctions between NSE and MHD
in intermittent convex integration schemes.}
As we have already seen above,
one major distinction between NSE and MHD is that,
the strong coupling between the velocity and magnetic fields in MHD
restricts the choice of oscillating directions.
This makes it quite difficult to construct spatial building blocks
with strong spatial intermittency adapted to the specific
geometric structure of MHD.

Actually, in the context of NSE,
spatial building blocks
with 3D intermittency
(known as intermittent Beltrami flows in \cite{bv19b},
and intermittent jets in \cite{bcv21}) can be constructed.
Such strong intermittent building blocks
enable to construct non-unique weak solutions
in $C_tL^2_x$ for the 3D NSE \cite{bv19b},
and even for the hyperdissipative NSE up to the Lions exponent \cite{lt20,bcv21}.
Moreover, the intermittent jets also enable us to
achieve the sharp non-uniqueness at the endpoint $(\gamma, p)=(\infty, \frac{3}{2\alpha-1})$
of the LPS condition,
for the hyperdissipative NSE beyond the Lions exponent (\cite{lqzz22}).

In contrast, due to the geometry of MHD equations,
the shear flows constructed in \cite{bbv20} have 1D spatial intermittency,
and the intermittent flows constructed in \cite{lzz21} have 2D spatial intermittency,
both are less than 3D spatial intermittency
in the context of NSE.
Hence, unlike the NSE case \cite{lqzz22},
these spatial intermittencies are not able to control
the viscosity and resistivity $(-\Delta)$ of MHD.
The construction of non-unique weak solutions to MHD
in the endpoint space $C_tL^{\frac{3}{2\alpha-1}}_x$
is still open.

Another delicate fact is that,
the intermittent jets for NSE
have disjoint supports,
while the intermittent flows in \cite{bbv20,lzz21} for MHD
may interact with each other.
This in particular gives rise to the interaction errors
in the Reynolds and magnetic stresses.
See \S \ref{Sec-perturb-S1} and \S \ref{Sec-Rey-mag-stress-A1} below.
Stronger intermittency needs to be gained
to control these interaction errors.
The keypoint here is to exploit
the geometric nature of
the interactions of intermittent flows.
As a matter of fact,
the interaction of intermittent shear flows in \cite{bbv20}
is given by thickened lines which give 2D intermittency,
and the interaction of intermittent flows in \cite{lzz21}
concentrates on small cuboids
which have 2D intermittency
(see \cite[Figure 3]{lzz21}).

Furthermore, the distinction between two equations
also lies in the construction of the
amplitudes of velocity and magnetic perturbations.
Because of the anti-symmetry of magnetic nonlinearity,
the construction of magnetic amplitudes requires
a second geometric lemma
in a small neighborhood of the null matrix
(i.e., Lemma \ref{geometric lem 2} below),
which is different from the geometric lemma
near the identity matrix (i.e., Lemma \ref{geometric lem 1})
for velocity amplitudes.
Moreover, when constructing velocity amplitudes,
a new matrix $\mathring{G}^B$ (see \eqref{def:G} below)
also will be required,
which does not appear in the NSE case.
These distinctions lead to different algebraic identities
\eqref{mag oscillation cancellation calculation}
and \eqref{vel oscillation cancellation calculation},
respectively,
for the nonlinear effects of magnetic perturbations
and velocity perturbations.

Cancellations provided by the specific structure of MHD nonlinearities
also would be used
towards the derivation of regularity and stability estimates,
when constructing the concentrated velocity and magnetic fields
in the gluing stage.

\subsection{Outline of the proof}

The proof of Theorem \ref{Thm-Non-gMHD} proceeds in two main stages:
the gluing stage and the convex integration stage.

The starting point of analysis is to consider
approximate solutions to the following
relaxation system with Reynolds and magnetic stresses:
for each integer $q \in \mathbb{N}$,
\begin{equation}\label{equa-mhdr}
	\left\{\aligned
	&\p_t \u+\nu_1(-\Delta)^{\alpha} \u+ \div(\u\otimes\u-\h\otimes\h)+\nabla P_q=\div \ru,  \\
	&\p_t \h+\nu_2 (-\Delta)^{\alpha} \h+ \div(\h\otimes\u-\u\otimes\h)=\div \rb , \\
	&\div \u = 0, \quad \div \h= 0,
	\endaligned
	\right.
\end{equation}
where the Reynolds stress $\ru$ is a symmetric traceless $3\times 3$ matrix,
and the magnetic stress $\rb$ is a skew-symmetric $3\times 3$ matrix.

The gluing stage aims to
construct new Reynolds and magnetic stresses,
which, particularly, have small disjoint temporal supports,
and, simultaneously, maintain small amplitudes.
Then, in the convex integration stage,
the crucial point is to construct appropriate velocity and magnetic perturbations,
featuring both the temporal and spatial intermittency,
to decrease the effect of Reynolds and magnetic stresses
and to control the hyper viscosity and resistivity
$(-\Delta)^\alpha$ where $\alpha$ can be larger than $5/4$.

In both stages,
the fundamental quantities are the frequency $\lambda_q$
and the amplitude $\delta_{q}$ below:
\begin{equation}\label{la}
	\la=a^{(b^q)}, \ \
	\delta_{q}= \lbb_1^{3\beta} \lambda_{q}^{-2\beta}.
\end{equation}
Here $a\in \mathbb{N}$ is a large integer to be determined later,
$\beta>0$ is the regularity parameter,
$b\in 2\mathbb{N}$ is a large integer of multiple $2$ such that
\begin{align}\label{b-beta-ve}
   b>\frac{1500}{\varepsilon\eta_*}, \ \
   0<\beta<\frac{1}{100b^2},  
\end{align}
where $\varepsilon\in \mathbb{Q}_+$ is sufficiently small such that,
for the given $(s,\gamma,p)\in \mathcal{S}_1$
\begin{equation}\label{ne3.1}
	\varepsilon\leq\frac{1}{20}\min\{ \frac 32 -\alpha,\,\frac{2\alpha}{\gamma}+\frac{2\a-2}{p}-(2\a-1)-s \}\quad \text{and}\quad b(2-\a-8\ve)\in \mathbb{N}.
\end{equation}
and for the given $(s,\gamma,p)\in \mathcal{S}_2$
\begin{equation}\label{e3.1}
	\varepsilon\leq\frac{1}{20}\min\{\frac 32-\alpha,\,\frac{4\a-4}{\gamma}+\frac{2}{p}-(2\a-1)-s \}\quad \text{and}\quad b\ve\in\mathbb{N},
\end{equation}
The objectives of both the gluing stage and the convex integration stage
are quantified in the iterative estimates
in Theorems \ref{Thm-nunr} and \ref{Thm-Iterat} below.

{\bf $\bullet$ Gluing stage.}
Let us first introduce  the notion of well-preparedness
as in the NSE context \cite{cl20.2,lqzz22}.

\begin{definition} (Well-preparedness)
 We say that
 the smooth solution $(u_q, B_q,\rr^u_q,\rr^B_q)$ to \eqref{equa-mhdr} on $[0,T]$ is well-prepared
 if there exist a set $I \subseteq [0,T]$, a length scale $\theta>0$ and
 $\eta>0$,
 such that $I$ is a union of at most $\theta^{-\eta}$ many closed intervals of length scale $5\theta$ and
  \begin{align*}
        \ru(t)=0,\ \rb(t)=0, \quad
	   \text{if}\  \operatorname{dist}(t,I^c)\leq \theta.
  \end{align*}
\end{definition}

We would divide the whole interval $[0,T]$ into $m_{q+1}$ many subintervals $[t_i, t_{i+1}]$,
and denote by $\theta_{q+1}$ the length scale of bad sets supporting
new concentrated Reynolds and magnetic stresses.
The parameters $m_{q+1}$ and $\theta_{q+1}$ are chosen
in the way
\begin{align}\label{def-mq-thetaq}
T/\mq=\la^{-12}, \quad
\theta_{q+1}:=(T/m_{q+1})^{1/\eta} \simeq \lambda_q^{-\frac{12}{\eta}},
\end{align}
where $\eta$ is a small constant such that
$$0<\frac{\eta_*}{2}<\eta<\eta_*<1.$$
Without loss of generality, we assume $\mq$ is an integer
such that the time interval is perfectly divided.

The main result in this stage is that,
we can construct new concentrated solutions to \eqref{equa-mhdr}
supported on $m_{q+1}$ intervals of length $\theta_{q+1}$,
and, importantly,
maintain the $L^1_{t,x}$-decay of Reynolds and magnetic stresses.
We note that,
the $L^1_{t,x}$-decay property is not destroyed in the concentrating procedure.
This is the content of Theorem \ref{Thm-nunr} below.

\begin{theorem}  [Well-preparedness]   \label{Thm-nunr}
Let $\alpha \in [1,3/2)$ and $(s,p,\gamma)\in \mathcal{S}_1 \cup  \mathcal{S}_2$.
If $(u_q,B_q,\rr^u_q,\rr^B_q)$ is a well-prepared smooth solution to \eqref{equa-mhdr}
for some set $I_q$ and a length scale $\theta_q$,
then there exists another well-prepared solution $(\wt u_q, \wt B_q,\tr,\trb)$ to \eqref{equa-mhdr}
for a new set $I_{q+1}\subseteq I_q$, $0,T\notin I_{q+1}$
and the smaller length scale $\theta_{q+1}(<\theta_q/2)$,
satisfying:
 \begin{align}
    & \|(\wt u_q, \wt B_q)\|_{L^\infty_t \cH^3_x}\lesssim \lambda_{q}^{5},\label{nuh3}\\
    & \|(\wt u_q -\u, \wt B_q -\h) \|_{L^{\infty}_t \cL^2_x} \lesssim  \la^{-3},\label{uuql2}\\
	& \|(\tr, \trb)\|_{L^{1}_{t}\cL^1_x} \leq  \la^{-\frac{\ve_R}{4}}\delta_{q+1},\label{nrl1}\\
    & \|(\partial_t^M \nabla^N \tr, \partial_t^M \nabla^N \trb)\|_{L^\infty_t \cH^3_x}
	\lesssim \thq^{-M-N-1} \lambda_q^{5}, \label{nrh3}
\end{align}
where $0\leq M\leq 1$, $0\leq N\leq 9$ and the implicit constants are independent of $q$.
Moreover, we have
\begin{align}
    & (\tr(t), \trb(t))=0 \quad \text{if} \quad \operatorname{dist}(t,I_{q+1}^c)\leq \frac{3}{2}\thq,\label{suppnr}  \\
    &  \supp_t (\wt u_{q},\wt B_q)\subseteq N_{2T/\mq}(\supp_t(u_{q},B_q)).  \label{suppwtuq}
\end{align}
\end{theorem}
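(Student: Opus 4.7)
The strategy is the gluing step of convex integration, adapted to MHD: I would replace $(\u,\h)$ on each of $\mq$ subintervals by an exact smooth solution of the homogeneous hyper viscous and resistive MHD system with matching initial data, and then paste these pieces together with a smooth temporal partition of unity. The new Reynolds and magnetic stresses then arise entirely from the partition-of-unity gluing errors, so they are automatically supported in the small gluing zones of length $\thq$. Concretely, using \eqref{def-mq-thetaq} I would partition $[0,T]$ into $\mq$ closed intervals $J_i=[t_i,t_{i+1}]$ of length $T/\mq=\la^{-12}$, and discard those $J_i$ with $\dist(t,I_q^c)\leq \theta_q$, on which $(\ru,\rb)\equiv 0$ by the well-preparedness of $(\u,\h)$. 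The new index set $I_{q+1}$ would be obtained by thickening the remaining $t_i$'s by $\tfrac12\thq$ and mildly shrinking away from $0,T$; since $\eta<1$, \eqref{def-mq-thetaq} immediately gives $\thq<\theta_q/2$, and the component count $\le \thq^{-\eta}$ follows from $\mq\le \thq^{-\eta}$.

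For each remaining $i$ I would solve \eqref{equa-gMHD} on a slight enlargement $\widetilde J_i\supset J_i$ with initial data $(\u(t_i),\h(t_i))$, producing smooth local solutions $(v_i,D_i)$. Classical local well-posedness for the fractional MHD system with $\alpha\in[1,3/2)$, together with the MHD cancellation
\begin{equation*}
\int_{\T^3}[(u\cdot\nabla)u-(B\cdot\nabla)B]\cdot u\,\d x+\int_{\T^3}[(u\cdot\nabla)B-(B\cdot\nabla)u]\cdot B\,\d x=0
\end{equation*}
and its higher-derivative analogues, yields a lifespan exceeding $|\widetilde J_i|$ and the propagation bound $\|(v_i,D_i)\|_{L^\infty_t\cH^3_x(\widetilde J_i)}\lesssim \la^5$, which gives \eqref{nuh3}. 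The difference $(V_i,W_i):=(v_i-\u,D_i-\h)$ satisfies a linearised MHD system forced by $(\div\ru,\div\rb)$ around $(\u,\h)$, so Gronwall delivers
\begin{equation*}
\|(V_i,W_i)\|_{L^\infty_t\cL^2_x(\widetilde J_i)}\lesssim |\widetilde J_i|\cdot\|(\ru,\rb)\|_{L^\infty_tL^2_x}\ll \la^{-3},
\end{equation*}
because $|\widetilde J_i|\simeq \la^{-12}$ easily absorbs any polynomial-in-$\la$ bound on $(\ru,\rb)$ inherited from the previous step. This is the quantitative heart of the argument and gives \eqref{uuql2}; an analogous $\cH^3$ difference estimate is obtained by commuting $\nabla^k$ into the linearised system.

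I would next select a smooth partition of unity $\{\chi_i\}$ with $\chi_i\equiv 1$ on the interior of $\widetilde J_i$, whose transitions of length $\thq$ are concentrated in the gluing windows near each $t_i$, and satisfying $|\p_t^N\chi_i|\lesssim \thq^{-N}$. Setting $\wt\u:=\sum_i\chi_i v_i$, $\wt\h:=\sum_i\chi_i D_i$ and substituting into \eqref{equa-mhdr}, the identities $\sum_i\chi_i\equiv 1$ and $\chi_{i-1}+\chi_i=1$ on each gluing window reduce the error, modulo a pressure gradient, to
\begin{align*}
\mathcal{E}^u_i&=\chi_i'(v_i-v_{i-1})-\div\bigl[\chi_{i-1}\chi_i\bigl((v_i-v_{i-1})\otimes(v_i-v_{i-1})-(D_i-D_{i-1})\otimes(D_i-D_{i-1})\bigr)\bigr],\\
\mathcal{E}^B_i&=\chi_i'(D_i-D_{i-1})-\div\bigl[\chi_{i-1}\chi_i\bigl((D_i-D_{i-1})\otimes(v_i-v_{i-1})-(v_i-v_{i-1})\otimes(D_i-D_{i-1})\bigr)\bigr].
\end{align*}
Applying the symmetric inverse-divergence $\mathcal{R}$ to the first summand of $\mathcal{E}^u_i$, and the anti-symmetric inverse-divergence (naturally suggested by $\div(D\otimes v-v\otimes D)=\curl(v\times D)$) to the first summand of $\mathcal{E}^B_i$, defines $(\tr,\trb)$ with the correct symmetry/anti-symmetry and temporal support in $I_{q+1}$, giving \eqref{suppnr} and \eqref{suppwtuq}.

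The remaining estimates are then parameter bookkeeping. Bound \eqref{nrl1} uses that each term in $\mathcal{E}^u_i,\mathcal{E}^B_i$ factors through $v_i-v_{i-1}=(v_i-\u)-(v_{i-1}-\u)$ and $D_i-D_{i-1}=(D_i-\h)-(D_{i-1}-\h)$, both of which inherit the $\cL^2$ smallness from step two; summing over at most $\mq$ windows of temporal width $\thq$ yields a prefactor $\mq\thq\la^{C}$, which by \eqref{def-mq-thetaq} and \eqref{b-beta-ve}–\eqref{e3.1} is much smaller than $\la^{-\ve_R/4}\dq$ once $a$ is large (the freedom $\beta<1/(100b^2)$ is crucial here). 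Bound \eqref{nrh3} follows by differentiating the explicit formulas, each $\p_t$ costing $\thq^{-1}$ through $\chi_i'$ and each $\nabla$ absorbed into the $\cH^3$ control of $v_i,D_i$. The main obstacle I anticipate is step two: uniform $\cH^3$ propagation and sharp $\cL^2$ stability for the fractional MHD system with $\alpha<3/2$ on intervals of length $\simeq \la^{-12}$. This forces full use of the MHD cancellation structure at every level of regularity, rather than the sub-critical energy arguments that suffice for NSE, and a secondary structural subtlety is the compatibility of the inverse-divergence choices with the required skew-symmetry of $\trb$, for which the curl representation of the magnetic nonlinearity is essential.
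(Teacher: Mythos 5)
Your gluing architecture is the same as the paper's: exact local solutions $(v_i,D_i)$ of the fractional MHD system on overlapping subintervals, a temporal partition of unity, $\cH^3$ propagation via the MHD cancellation structure, and the stress formulas $\tr=\p_t\chi_i\,\mathcal{R}^u(v_i-v_{i-1})-\chi_i(1-\chi_i)\bigl[(v_i-v_{i-1})\mathring\otimes(v_i-v_{i-1})-(D_i-D_{i-1})\mathring\otimes(D_i-D_{i-1})\bigr]$ and the skew-symmetric analogue for $\trb$; the support statements \eqref{suppnr}--\eqref{suppwtuq} and the $\cH^3$ bounds \eqref{nuh3}, \eqref{nrh3} follow as you say. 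However, the quantitative heart of \eqref{nrl1} does not close with the estimate you actually state, and this is a genuine gap.

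The problem is in the contribution of the $\p_t\chi_i$ term. You derive $\|v_i-v_{i-1}\|_{L^\infty_t\cL^2_x}\lesssim|\widetilde J_i|\,\|(\ru,\rb)\|_{L^\infty_t\cL^2_x}\lesssim\la^{-12}\la^9=\la^{-3}$, and then claim that ``summing over at most $\mq$ windows of temporal width $\thq$ yields a prefactor $\mq\thq\la^{C}$.'' For the quadratic term this bookkeeping is correct (the factor $\chi_i(1-\chi_i)$ is $O(1)$ in $L^\infty_t$ but supported on length $\thq$, so you really do collect $\thq$ per window and the squared $\la^{-3}$ smallness closes the estimate). For the linear term it is not: $\|\p_t\chi_i\|_{L^1_t}\lesssim 1$, not $\thq$, because $|\p_t\chi_i|\sim\thq^{-1}$ on a set of size $\thq$. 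So the correct accounting for that term is $\sum_{i}\|\p_t\chi_i\|_{L^1_t}\,\|\mathcal{R}^u(v_i-v_{i-1})\|_{L^\infty_tL^1_x}\lesssim\sum_{i}\la^{-3}\sim\mq\,\la^{-3}=T\la^{9}$, which is astronomically larger than the target $\la^{-\ve_R/4}\delta_{q+1}$, a quantity only slightly below $1$ since $\beta<1/(100b^2)$.

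The missing ingredient is the paper's Proposition \ref{Prop-est-vi}(iii), the vector-potential stability estimate $\|\mathcal{R}^uw_i\|_{L^\infty_tL^\rho_x}+\|\mathcal{R}^BH_i\|_{L^\infty_tL^\rho_x}\lesssim\int_{J_i}\|(\ru,\rb)\|_{\cL^\rho_x}\d s$ for $1<\rho\leq 2$, proven by passing to $y_i=\Delta^{-1}\curl w_i$, $J_i=\Delta^{-1}\curl H_i$ so that the forcing enters through a zero-order Calder\'on--Zygmund operator acting on $\ru,\rb$ rather than through $\div$. This estimate accomplishes two things that your naive $L^2$ stability bound does not. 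First, it removes the $|\nabla|$ that sits on $\ru$ in \eqref{est-vilp}: a direct Duhamel/Gronwall argument gives $\|w_i\|_{L^\rho}\lesssim\int\||\nabla|\ru\|_{L^\rho}$, and the extra derivative costs too much when interpolated against $L^1_{t,x}$. Second and more importantly, it converts the sum over $i$ into the single integral $\int_0^T\|(\ru,\rb)\|_{\cL^\rho_x}\d s$ instead of $\mq$ copies of a fixed small number; the smallness is distributed over the intervals, not uniform per interval. With that in hand, choosing $\rho$ close to $1$ as in \eqref{rho-ve} gives by Gagliardo--Nirenberg interpolation $K_1\lesssim\|(\ru,\rb)\|_{L^1_t\cL^1_x}^{1-\kappa}\|(\ru,\rb)\|_{L^\infty_t\cH^3_x}^{\kappa}$ with $\kappa=\frac{2(\rho-1)}{3\rho}$ as small as needed, and the $L^1_{t,x}$ decay $\la^{-\ve_R}\delta_{q+1}$ of the old stress dominates. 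Without this, the argument does not close; and you do flag ``sharp $\cL^2$ stability'' and ``compatibility of the inverse-divergence choices'' as the delicate points, but the concrete bound you write is the crude one, and it does not suffice for $K_1$.

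A secondary remark: your $I_{q+1}$ thickens the retained gluing times by $\tfrac12\thq$, while \eqref{suppnr} needs the stresses to vanish within distance $\tfrac32\thq$ of $I_{q+1}^c$; the paper uses the asymmetric window $[t_i-2\thq,t_i+3\thq]$ precisely so that the transition zones of $\chi_i$ and the support of $f_u,f_B$ in the next stage fit with room to spare. This is easily repaired but is worth tightening if you carry the argument out.
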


The proof of Theorem \ref{Thm-nunr} is based on the gluing technique.
Precisely,
in every small neighborhood of length $\theta_{q+1}$
at $t_i(=iT/m_{q+1})$, $0\leq i \leq  m_{q+1}-1$,
we generate the velocity and magnetic fields to MHD equations \eqref{equa-gMHD}.
Then,
we glue all these local solutions together,
by using a partition of unity,
to get new concentrated velocity and magnetic fields.
The important roles here are played by the regularity and
stability estimates of solutions to (relaxation) MHD equations,
which are contained in Propositions \ref{Prop-LWP-Hyper-NLSE} and \ref{Prop-est-vi},
respectively.

{\bf $\bullet$ Convex integration stage.}
In this stage, the crucial inductive estimates of well-prepared solutions to \eqref{equa-mhdr} at level $q \in \mathbb{N}$
are as follows
\begin{align}
	& \|(\u, \h)\|_{L^\9_t \cH^{ N+3}_x} \lesssim \lambda_{q}^{ N+5}, \label{ubh3} \\
	& \|(\p_t \u, \p_t \h)\|_{L^\9_t \cH^{N }_x} \lesssim  \lambda_{q}^{N+5}, \label{ubpth2} \\
	& \|(\ru, \rb)\|_{L^\9_t \cH^{\wt N}_x} \lesssim  \lambda_{q}^{N+6},\label{rhN}	\\
	& \|(\ru, \rb)\|_{L^{1}_{t} \cL^1_x} \leq  \la^{-\ve_R}\delta_{q+1}, \label{rl1}
\end{align}
where $0\leq N\leq 4$, $\wt N=3,4$,
the implicit constants are independent of $q$,
and $\ve_R>0$ is a small parameter such that
\begin{align}\label{def-ver}
	\ve_R< \frac{b\beta}{10} < \frac{\ve \eta_*}{1.5\times 10^6}.
\end{align}
We note that, by \eqref{def-ver}, 
$$\la^{-\ve_R}\delta_{q+1}\ll 1\quad \text{for all}\quad q\geq 1, $$
and for $q=0$,
$$
\lambda_0^{-\ve_R}\delta_{1}= a^{b\beta-\ve_R}\gg 1,
$$
by choosing $a$ large enough.

The main iteration estimates
in the convex integration stage
are formulated in Theorem \ref{Thm-Iterat} below.

\begin{theorem} [Main iteration]\label{Thm-Iterat}
	Let $\alpha \in [1,3/2)$ and $(s,p,\gamma)\in \mathcal{S}_1 \cup \mathcal{S}_2$.
	Then, there exist $\beta\in (0,1)$,
    $M^*$ and $a_0=a_0(\beta, M^*)$ large enough,
    such that for any integer $a\geq a_0$,
	the following holds:
	
Suppose that
$(\u,\h,\ru,\rb)$ is a well-prepared solution to \eqref{equa-mhdr}
for some set $I_q$ and the length scale $\theta_q$
	and satisfies  \eqref{ubh3}-\eqref{rl1}.
	Then, there exists another well-prepared solution $(u_{q+1}, B_{q+1}, \mathring{R}^u_{q+1}, \mathring{R}^B_{q+1})$
	to \eqref{equa-mhdr} for some set $I_{q+1}\subseteq I_q$,
	$0,T\notin I_{q+1}$, and the length scale $\theta_{q+1}<\theta_q/2$,
	and $(u_{q+1}, B_{q+1}, \mathring{R}^u_{q+1}, \mathring{R}^B_{q+1})$ satisfies \eqref{ubh3}-\eqref{rl1} with $q+1$ replacing $q$.

	In addition, we have
	\begin{align}
		&\|(u_{q+1}-u_{q}, B_{q+1}-B_{q})\|_{L^{2}_{t} \cL^2_x} \leq  M^*\delta_{q+1}^{\frac{1}{2}}, \label{u-B-L2tx-conv}\\
        &\|(u_{q+1}-u_{q}, B_{q+1}-B_{q}) \|_{L^1_t \cL^{2}_{x}} \leq  \delta_{q+2}^{\frac{1}{2}},  \label{u-B-L1L2-conv}\\
		&\|(u_{q+1} - u_q, B_{q+1} - B_q)\|_{L^\gamma_t \cW^{s,p}_x} \leq  \delta_{q+2}^{\frac{1}{2}},\label{u-B-Lw-conv}
	\end{align}
and
\begin{align}
&\supp_t (u_{q+1}, B_{q+1}, \mathring{R}^u_{q+1}, \mathring{R}^B_{q+1})
\subseteq N_{\delta_{q+2}^{\frac12}}( \supp_t  (\u,\h,\ru,\rb)).\label{supprub}
\end{align}
\end{theorem}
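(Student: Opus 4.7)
The plan is to combine the gluing stage of Theorem \ref{Thm-nunr} with an intermittent convex integration scheme adapted to the geometric constraints of MHD. First I would apply Theorem \ref{Thm-nunr} to replace $(\u,\h,\ru,\rb)$ by the well-prepared quadruple $(\wt u_q,\wt B_q,\tr,\trb)$, whose stresses are concentrated on $\mq$ disjoint intervals of length $\thq$ and decay in $L^1_{t,x}$ at rate $\la^{-\ve_R/4}\dq$. The extra factor $\la^{-\ve_R/4}$ compared with the target level $\laq^{-\ve_R}\delta_{q+2}$ is precisely the budget that will be consumed by the interaction errors produced in the perturbation step.

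Next, I would construct perturbations $w^u_{q+1}$ and $w^B_{q+1}$, each one a sum of a principal part, an incompressibility corrector, and a temporal corrector. Each principal part has the schematic form $\sum_k a_{(k)}(t,x)\,\mathbb{W}_{(k)}(\laq t,\laq x)$, where the $\mathbb{W}_{(k)}$ are intermittent building blocks carrying $2$D spatial intermittency (at scales $\rs,\rp$) together with an independent $1$D temporal oscillation at speed $\mu$; the concentration parameters $\rs,\rp,\mu$ are tuned separately for $(s,\gamma,p)\in\mathcal{S}_1$ and for $(s,\gamma,p)\in\mathcal{S}_2$, subject to \eqref{b-beta-ve}-\eqref{e3.1}. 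The amplitudes $a_{(k)}$ are selected via two geometric decompositions: Lemma \ref{geometric lem 1} near the identity matrix is used to invert $\tr$, while Lemma \ref{geometric lem 2} near the null matrix is used to invert the skew-symmetric $\trb$, and the auxiliary matrix $\ggb$ compensates for magnetic self-interactions inside the velocity amplitudes.

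Setting $u_{q+1}=\wt u_q+w^u_{q+1}$ and $B_{q+1}=\wt B_q+w^B_{q+1}$, the new stresses $\mathring{R}^u_{q+1},\mathring{R}^B_{q+1}$ are obtained by applying inverse-divergence operators to the residual, which decomposes into a linear flow error, the dissipative error $\nu_i(-\Delta)^{\alpha} w$, a high-high to low oscillation error eliminated by the algebraic identities \eqref{vel oscillation cancellation calculation}-\eqref{mag oscillation cancellation calculation}, a time-corrector error that absorbs $\p_t a_{(k)}$, and the interaction errors coming from the unavoidable overlap of MHD building blocks; the latter are controlled by exploiting the geometric fact that interactions concentrate on lower-dimensional thickened sets. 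The resulting $L^1_{t,x}$ bound closes \eqref{rl1} at level $q+1$, while spending derivatives on the high frequency $\laq$ yields \eqref{ubh3}-\eqref{rhN}.

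The main obstacle is balancing the $L^1_{t,x}$ gain from intermittency against the dissipative error, which scales like $\nu_i\laq^{2\alpha}\|w_{q+1}\|_{L^1_{t,x}}$. Because the MHD geometry permits only $2$D spatial intermittency, this gain is strictly weaker than in the NSE setting; this is precisely why the threshold $\alpha<3/2$ appears and why the integrality conditions $b(2-\alpha-8\ve)\in\mathbb{N}$ (respectively $b\ve\in\mathbb{N}$) in \eqref{ne3.1}-\eqref{e3.1} are needed to close the estimates. The convergence \eqref{u-B-Lw-conv} in $L^\gamma_t W^{s,p}_x$ forces $(s,\gamma,p)$ into $\mathcal{S}_1\cup\mathcal{S}_2$, while the $L^2_{t,x}$ and $L^1_tL^2_x$ bounds \eqref{u-B-L2tx-conv}-\eqref{u-B-L1L2-conv} follow from sharp $L^p$ estimates for the building blocks together with the extra smallness coming from temporal localization. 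Finally, cutting off each perturbation by a smooth temporal function supported in the $\thq/2$-neighborhood of $I_{q+1}\subseteq I_q$ yields well-preparedness at level $q+1$, ensures $0,T\notin I_{q+2}$, and produces the temporal support estimate \eqref{supprub}.
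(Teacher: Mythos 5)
Your high-level strategy matches the paper's: glue first via Theorem \ref{Thm-nunr}, add intermittent perturbations with amplitudes from the two geometric lemmas plus the auxiliary matrix $\ggb$, exploit the algebraic cancellations \eqref{mag oscillation cancellation calculation}-\eqref{vel oscillation cancellation calculation}, and close the $L^1_{t,x}$ bound with the extra budget $\la^{-\ve_R/4}$ left over from gluing. However, your description of the building blocks is wrong in a way that would break the proof in the regime $\mathcal{S}_1$.

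First, you state that both regimes use building blocks with $2$D spatial intermittency parameterized by $\rs,\rp$. That is only true for $\mathcal{S}_2$, where the paper uses the intermittent ``boxes'' of \cite{lzz21}. In $\mathcal{S}_1$ the paper deliberately uses the $1$D intermittent \emph{shear flows} of \cite{bbv20}, with the single spatial concentration parameter $\rs$ and no $\rp$, no $\mu$. This is not cosmetic: $\mathcal{S}_1$ contains the endpoint $(s,\gamma,p)=(0,2,\infty)$, and the $L^\gamma_tW^{s,p}_x$ estimate at $p$ near $\infty$ in \eqref{lw-est.2} uses the factor $\rs^{1/p-1/2}$ which is $\rs^{-1/2}$ at $p=\infty$. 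If you add a second concentration $\rp^{1/p-1/2}$ this factor grows to $(\rs\rp)^{-1/2}$ and the $L^\gamma_tW^{s,p}_x$ estimate cannot be closed under the constraints \eqref{constset2}. Conversely, $\mathcal{S}_2$ allows $p$ below two, so extra spatial concentration is affordable there and needed to compensate the weaker temporal gain.

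Second, and more seriously, you misidentify the temporal intermittency mechanism. You describe ``an independent $1$D temporal oscillation at speed $\mu$,'' but $\mu$ is a \emph{spatial transport} speed appearing only in $\mathcal{S}_2$ (inside $\psi_{\rp}(\lambda\rs N_\Lambda(k_1\cdot x+\mu t))$), whose role is to be killed by the temporal corrector $w_{q+1}^{(t)}$ after taking a time derivative (cf. \eqref{utem}-\eqref{btem}). The actual temporal intermittency — the key new ingredient that lets the scheme reach $\alpha$ beyond the Lions exponent — comes from the concentrated time pulse $g_{(\tau)}$ with parameters $\tau,\sigma$ and its antiderivative $h_{(\tau)}$; this is present in \emph{both} regimes, and it is what supplies the factor $\tau^{-1/2}$ that defeats $\lambda^{2\alpha-1}$ in the dissipative error (e.g.\ \eqref{e5.18.2}, \eqref{mag viscosity}). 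Your proposal never mentions $g_{(\tau)},\tau,\sigma$ at all, so as written the dissipative error $\nu_i(-\Delta)^\alpha w_{q+1}$ cannot be controlled for $\alpha\geq 5/4$: $2$D spatial intermittency (at most a gain of $\lambda^{-1}$) is insufficient against $\lambda^{2\alpha-1}$. Relatedly, you attribute the threshold $\alpha<3/2$ to the $2$D spatial intermittency limit, whereas it arises from balancing the oscillation-error constraint $\rs\gg\lambda^{-1}$ against the dissipativity and Sobolev constraints once $\tau,\sigma$ are chosen — the spatial intermittency alone is not the whole story. Finally, you also elide the fact that $\mathcal{S}_2$ needs two distinct temporal correctors ($w_{q+1}^{(o)}$ for the $g_{(\tau)}^2$ oscillation and $w_{q+1}^{(t)}$ for the high spatial frequency from $\psi_{\rp}$), while $\mathcal{S}_1$ needs only $w_{q+1}^{(o)}$.
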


The proof of Theorem \ref{Thm-Iterat} relies crucially on
the construction of appropriate velocity and magnetic perturbations,
adapted to the MHD geometry.
As a matter of fact,
different building blocks would be used
in the supercritical regimes
$\mathcal{S}_1$ and $\mathcal{S}_2$.

In the supercritical regime $\mathcal{S}_1$,
the spatial integrability can be close to infinity,
while the temporal integrability is less than $2$.
This fact indicates that,
the building blocks may have acceptable few spatial intermittency,
but require strong temporal intermittency.
The intermittent shear flows,
constructed by Beekie-Buckmaster-Vicol \cite{bbv20},
give the desired spatial intermittency,
and the temporal building blocks in \cite{cl20.2,lqzz22,lzz21}
are able to provide sufficiently strong temporal intermittency.

Hence, besides the parameters $\lbb_q$ and $\delta_{q}$,
three more parameters $\rs$, $\tau$ and $\sigma$ are needed
in the building blocks,
which parameterize the spatial concentration, temporal oscillation
and temporal concentration, respectively. 
It should be mentioned that, 
all these parameters shall be balanced with each other,
such that
the following leading order constrains are satisfied
\begin{subequations}\label{constset2}
	\begin{align}
		\lambda^s\rs^{\frac{1}{p}-\frac 12}\tau^{\frac12-\frac{1}{\gamma}} &\ll 1  \quad\ (w_{q+1}^{(p)},d_{q+1}^{(p)}\in L^\gamma_tW^{s,p}_x)  \label{setpw.2} \\
		\sigma\lambda^{-1}\rs^{\frac12}\tau^{\frac12}&\ll 1 \quad\ (\text{Time derivative error for}\ w_{q+1}^{(p)},d_{q+1}^{(p)}) \label{setpt.2} \\
		\lambda^{2\a-1}\rs^{\frac12}\tau^{-\frac12}&\ll 1 \quad\ (\text{Hyperdissipativity error for}\ w_{q+1}^{(p)},d_{q+1}^{(p)})  \label{setdeltap.2} \\
		\lambda^{-1}\rs^{-1 }&\ll 1 \quad\ (\text{Oscillation error for}\ w_{q+1}^{(p)},d_{q+1}^{(p)})  \label{setrosc1.2}
	\end{align}
\end{subequations}
It turns out that,
there do exist appropriate parameters to fulfill
the above constraints.
The specific choice is given by \eqref{larsrp-endpt2} below.

In contrast,
in the supercritical regime $\mathcal{S}_2$,
the spatial integrability may be less than two,
while the temporal integrability can be close to infinity.
Hence, stronger spatial intermittency is required,
in order to compensate the possibly weak temporal intermittency.
In this case, we use the spatial building blocks constructed in \cite{lzz21},
which provide 2D spatial intermittency. 
This requires five parameters $\rs, \rp, \mu, \tau, \sigma$ to
parameterize the spatial-temporal building blocks.
Note that,
the extra two parameters $\rp$ and $\mu$,
respectively, parameterize the further spatial concentration
and balance high temporal oscillations.
The crucial constrains in the convex integration scheme are captured by 
\begin{subequations}\label{constset}
	\begin{align}
		\lambda^s\rs^{\frac{1}{p}-\frac12}\rp^{\frac1p-\frac12}\tau^{\frac12-\frac{1}{\gamma}} &\ll 1 \quad\ (  w_{q+1}^{(p)},d_{q+1}^{(p)}\in L^\gamma_tW^{s,p}_x)   \label{setpw} \\
		\mu\rs^{\frac32}\rp^{-\frac12}\tau^{-\frac12}&\ll 1 \quad\ (\text{Time derivative error for}\  w_{q+1}^{(p)},d_{q+1}^{(p)})  \label{setpt} \\
		\lambda^{2\a-1}\rs^{\frac12}\rp^{\frac12}\tau^{-\frac12}&\ll 1  \quad\ (\text{Hyperdissipativity error for}\  w_{q+1}^{(p)},d_{q+1}^{(p)}) \label{setdeltap} \\
		\lambda^{2\a-1}\mu^{-1}&\ll 1  \quad\ (\text{Hyperdissipativity error for}\   w_{q+1}^{(t)},d_{q+1}^{(t)}) \label{setdeltat} \\
		\lambda^{-1}\rs^{-1 }&\ll 1 \quad\ (\text{Oscillation error for}\   w_{q+1}^{(p)},d_{q+1}^{(p)})\label{setrosc1} \\
		\mu^{-1} \sigma\tau&\ll 1 \quad\ (\text{Oscillation error for}\   w_{q+1}^{(t)},d_{q+1}^{(t)}) \label{setrosc2}
	\end{align}
\end{subequations}
Obviously, the admissible set of the constraints \eqref{setpw}-\eqref{setrosc2}
shall be different from that of \eqref{setpw.2}-\eqref{setrosc1.2}. 
The precise choice of these five parameters 
will be given by \eqref{larsrp} below.

It should be mentioned that,
in both supercritical regimes $\mathcal{S}_1$ and $\mathcal{S}_2$,
it is crucial to exploit the temporal intermittency,
particularly, to handle the hyper viscosity and resistivity
beyond the Lions exponent.
Actually,
as we have already seen above,
the uniqueness of weak solutions in $C_tL^2_x$ holds in
the high-dissipative regime when $\alpha\geq 5/4$.
Hence,
it is impossible to construct non-unique weak solutions in $C_tL^2_x$
with merely spatial intermittent flows.
Below we shall see that,
the temporal building blocks would provide 2D intermittency
in the supercritical regime $\mathcal{S}_2$, 
the temporal building blocks in $\mathcal{S}_1$ can even provide 
the stronger 3D intermittency,
which in particular compensates the weak spatial intermittency 
and enables to control the hyper viscosity and resistivity 
beyond the Lions exponent. \\

{\bf Organization of paper. }
The remaining part of this paper is structured as follows.
First,
\S \ref{Sec-Concen-Rey} is devoted to the gluing stage.
We prove the regularity and stability results for the hyper
viscous and resistive MHD equations
and then use the gluing technique to prove Theorem \ref{Thm-nunr}.
Then,
in \S \ref{Sec-perturb-S1} and \S \ref{Sec-stress-S1},
we mainly treat the supercritical regime $\mathcal{S}_1$
by using the intermittent convex integration approach.
The other supercritical regime $\mathcal{S}_2$ is later treated in \S \ref{Sec-S2}.
Then, in \S \ref{Sec-Proof-Main},
we prove the main results, namely, Theorems \ref{Thm-Non-gMHD} and  \ref{Thm-Iterat}.
At last, some standard tools of convex integration method
and the uniqueness of weak solutions are collected in
Appendices \ref{Sec-App-A} and \ref{Sec-App-B}. \\

{\bf Notations.}
The mean of $u \in L^1(\T^n)$ is given by
$
\aint_{\T^n} u \d x =  {|\T^n|}^{-1} \int_{\T^n} u \d x ,
$
where $|\cdot|$ denotes the Lebesgue measure.
For $p\in [1,+\infty]$ and $s\in \R$, we use the following shorthand notations
\begin{align*}
	L^p_x:=L^p_x(\T^3),\quad H^s_x:=H^s_x(\T^3), \quad W^{s,p}_x:=W^{s,p}_x(\T^3).
\end{align*}
Moreover, let
\begin{align*}
	\norm{u}_{W^{N,p}_{t,x}}:=\sum_{0\leq m+|\zeta|\leq N} \norm{\p_t^m \na^{\zeta} u}_{L^p_{t,x}}, \ \
     \norm{u}_{C_{t,x}^N}:=\sum_{0\leq m+|\zeta|\leq N}
	  \norm{\p_t^m \na^{\zeta} u}_{C_{t,x}},
\end{align*}
where $\zeta=(\zeta_1,\zeta_2,\zeta_3)$ is the multi-index
and
$\na^\zeta:= \partial_{x_1}^{\zeta_1} \partial_{x_2}^{\zeta_2}
\partial_{x_3}^{\zeta_3}$.
We also consider the product space
$\mathcal{L}^p_x:= L^p_x \times L^p_x$,
equipped with the norm
$\|(u,v)\|_{\cL^p_x}:= \|u\|_{L^p_x} + \|v\|_{L^p_x}$
if $(u,v) \in \cL^p_x$.
Similar notations also apply to $\cH^s_x$ and $\cW^{s,p}$.

Given any Banach space $X$,
$\gamma \in [1,\infty]$,
$L^\gamma_tX$ denotes  the space of integrable functions
from $[0,T]$ to $X$,
equipped with the norm
$\|u\|_{L^\gamma_tX}:=  (\int_{0}^T \|u(t)\|_X^\gamma dt)^{1/\gamma}$
(with the usual adaptation when $\gamma =\infty$).
In particular, we write
$L^\gamma_tL^p_x :=L^\gamma_t([0,T];L^p(\T^3))$,
$L^p_{t,x}:= L^p_tL^p_x$ and
$C_{t,x}:=C_tC_x$.

We also adopt the notations from \cite{LQ14}.
Let $u,\,v$ be two vector fields,
the corresponding second order tensor product is defined by
\begin{align*}
u\otimes v:=(u_iv_j)_{1\leq i,j\leq 3}.
\end{align*}
For any second-order tensor $A=(a_{ij})_{1\leq i,j\leq 3}$,
set
\begin{align*}
\div A:= \(\sum_{j=1}^3 {\p_{x_j} a_{1j}} ,\sum_{j=1}^3 {\p_{x_j}  a_{2j}},\sum_{j=1}^3 {\p_{x_j}  a_{3j}} \)^\top.
\end{align*}
The right product of a vector field $v=(v_1,v_2,v_3)^\top$ to a second-order tensor $A=(a_{ij})_{1\leq i,j\leq 3}$ is defined by
\begin{align*}
Av:=\(\sum_{j=1}^{3} a_{1j}v_j,\sum_{j=1}^{3} a_{2j}v_j,\sum_{j=1}^{3} a_{3j}v_j \)^\top.
\end{align*}
In particular, for any scalar function $f$ and second-order tensor $A=(a_{ij})_{1\leq i,j\leq 3}$,
one has the Leibniz rule
\begin{align*}
	\div(fA)= f\div A+A\nabla f.
\end{align*}
For any $3\times 3$ matrices $A=({A_{ij}})$ and $S=({S_{ij}})$, let
	$ A:S=\sum_{i,j=1}^{3}A_{ij}S_{ij}$.

Let $N_{\va_*}(A)$ denote the $\ve_*-$ neighborhood of $A\subseteq [0,T]$,
i.e.,
\begin{align*}
	N_{\va_*}(A):=\{t\in [0,T]:\ \exists s\in A,\ s.t.\ |t-s|\leq \va_*\}.
\end{align*}

Let $\P_{H}$ denote the Helmholtz-Leray projector, i.e.,
$\P_{H}=\Id-\nabla\Delta^{-1}\div$.

The notation $a\lesssim b$ means that $a\leq Cb$ for some constant $C>0$.

\section{Concentrating the Reynolds and magnetic stresses}  \label{Sec-Concen-Rey}

Let us start with the gluing stage.
This section mainly consists of four parts:
the regularity estimates for  hyper viscous and resistive MHD,
the stability estimates for relaxed MHD equations,
the temporal gluing of local solutions
and the proof of Theorem \ref{Thm-nunr}.

\subsection{Regularity estimates} \label{Subsec-Reg-esti}

Let us first establish the regularity estimates of local strong solutions to \eqref{equa-gMHD}
when $\a \in [1,3/2)$.
This is the content of Proposition \ref{Prop-LWP-Hyper-NLSE} below.

\begin{proposition} [Regularity estimates] \label{Prop-LWP-Hyper-NLSE}
Let $\a \in [1,3/2)$, $v_0,B_0\in H_x^3$ be
any mean-free and divergence-free fields.
Consider the
Cauchy problem for \eqref{equa-gMHD} with initial condition
$(u,B)|_{t=t_0}=(u_0,B_0)$.
Then,
there exists $c>0$ sufficiently small such that
the following holds:

$(i)$. If
\begin{align} \label{t*-t0-v0H3}
	 0<t_*-t_{0}\leq \frac{c}{\|(u_0,B_0)\|_{\cH^3_x} },
\end{align}
then there exists a unique strong solution $(u,B)$ to \eqref{equa-gMHD} on
$[t_0,t_{*}]$ satisfying
\begin{align}
 \|(u, B)\|_{C([t_0,t_{*} ]; \cL^{2}_x)}^{2}
+ \int_{t_{0}}^{t_{*}} \|(u(t),B(t))\|_{\dot{\mathcal{H}}_x^{\alpha}}^{2} \d t
& \lesssim \|(u_0, B_0)\|_{\cL^{2}_x}^{2}, \label{vl2}\\
  \|(u,B)\|_{C([t_0,t_{*}]; \cH^{3}_x)}
  &\lesssim \left\|(u_0,B_0)\right\|_{\cH^{3}_x}. \label{vh3}
\end{align}
$(ii)$. If
\begin{align}  \label{con-pdvh3}
 0< t_{*}-t_0 \leq \min\left\{1,\, \frac{ c}{ \norm{(u_0,B_0)}_{\cH^3_x} (1+ \norm{(u_0,B_0)}_{\cL^2_x})}\right\},
\end{align}
then it holds that for any $N\geq 0$ and $M \in\{0,1\}$,
\begin{align} \label{pdvh3}
\sup_{t\in [t_0,t_{*}]}(|t-t_0|^{M+\frac{N}{2\alpha}}
\norm{(\partial_t^M \na^N u(t),\partial_t^M \na^N B(t))}_{\cH_x^3}
     \lesssim  \norm{(u_0,B_0)}_{\cH^3_x}\,,
\end{align}
where the implicit constant depends on $\alpha,\, N$ and $M$.
\end{proposition}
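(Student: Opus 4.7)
The plan is to prove part (i) by classical energy methods in $L^2$ and $H^3$, and part (ii) by a bootstrap on the derivative order that exploits the parabolic smoothing of the hyperdissipative semigroup $e^{-t\nu_j(-\Delta)^\alpha}$.

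For part (i), I would test the two equations of \eqref{equa-gMHD} against $u$ and $B$ respectively. The velocity advection term $\int (u\cdot\na)u\cdot u\,\d x$ vanishes by $\div u=0$, while the Lorentz force and magnetic tension cancel: $\int(B\cdot\na)B\cdot u\,\d x+\int(B\cdot\na)u\cdot B\,\d x=-\int(B\cdot\na)(u\cdot B)\,\d x=0$ via $\div B=0$. Summing yields $\tfrac12\frac{\d}{\d t}\|(u,B)\|_{\cL^2_x}^2+\nu_1\|u\|_{\dot{\cH}^\alpha_x}^2+\nu_2\|B\|_{\dot{\cH}^\alpha_x}^2=0$, giving \eqref{vl2}. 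For \eqref{vh3}, I would apply $\na^3$, test against $\na^3(u,B)$, preserve the same MHD cancellations at top order, and control the commutators $[\na^3,u\cdot\na]B$ and its analogues by Kato--Ponce. Since $H^3_x$ is a Banach algebra on $\T^3$, this yields $\frac{\d}{\d t}\|(u,B)\|_{\cH^3_x}^2+\nu\|(u,B)\|_{\dot{\cH}^{3+\alpha}_x}^2\lesssim\|(u,B)\|_{\cH^3_x}^3$, and a Gronwall argument then delivers existence and \eqref{vh3} on any interval of length $\lesssim 1/\|(u_0,B_0)\|_{\cH^3_x}$, which is exactly the window \eqref{t*-t0-v0H3}.

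For part (ii), I would proceed by induction on $N$, handling $M=0$ first. In the step $N-1\mapsto N$, I multiply the $\na^N$-differentiated system by the weight $|t-t_0|^{N/\alpha}$ and pair with $\na^N(u,B)$ in $H^3_x$. This generates a weight-derivative contribution of the form $\tfrac{N}{\alpha}|t-t_0|^{N/\alpha-1}\|\na^N(u,B)\|_{\cH^3_x}^2$, which must be absorbed into $\nu\,|t-t_0|^{N/\alpha}\|\na^N(u,B)\|_{\dot{\cH}^{3+\alpha}_x}^2$ by a Gagliardo--Nirenberg interpolation of exponent $1/\alpha$ together with the inductive bound at level $N-1$. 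Nonlinear terms are estimated by product rules in $H^3_x$ and the a priori $\cL^2$ and $\cH^3$ bounds from part (i); the coupling of these two bounds is the reason for the extra factor $(1+\|(u_0,B_0)\|_{\cL^2_x})^{-1}$ appearing in \eqref{con-pdvh3}. Once $M=0$ is established, the case $M=1$ follows immediately from the PDE, since $\p_t u=-\nu_1(-\Delta)^\alpha u-\P_H\bigl((u\cdot\na)u-(B\cdot\na)B\bigr)$ gives $\|\p_t\na^N u\|_{\cH^3_x}\lesssim\|\na^N u\|_{\cH^{3+2\alpha}_x}+\text{(nonlinear)}$, and the weight $|t-t_0|^{1+N/(2\alpha)}$ is then reduced to the $M=0$ estimate at spatial order $N+2\alpha$.

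The main obstacle is the precise calibration of the time weights at the inductive step: the Gagliardo--Nirenberg exponent has to match the parabolic scaling $N/(2\alpha)$ exactly so that the weight derivative is absorbed into the hyperdissipation, and it is precisely the constraint $\alpha<3/2$ that keeps the resulting integrated weights finite and compatible with the nonlinear contributions through Sobolev embedding. A secondary technical point is the bookkeeping needed to preserve the MHD-specific divergence-free and skew-symmetry cancellations after each application of $\na^3$ (or higher), and the compatibility of the Helmholtz projector $\P_H$ with the commutator manipulations used when trading time derivatives for fractional spatial derivatives via the equation.
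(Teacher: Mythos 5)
Your treatment of part (i) coincides with the paper's: the $L^2$ energy identity using the MHD cancellations, then an $\dot H^3$ estimate in which the highest-order parts of the coupling terms cancel under integration by parts (the paper writes this out directly rather than invoking Kato--Ponce, but the content is the same), closing to $\frac{\d}{\d t}\|(u,B)\|_{\dot{\cH}^3_x}^2\lesssim\|(u,B)\|_{\dot{\cH}^3_x}^3$ via Gronwall and a bootstrap.

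For part (ii) you depart from the paper in a way that does not close as stated. The paper rewrites \eqref{equa-gMHD} in Duhamel form \eqref{eq-mild}, applies the semigroup smoothing estimate of Lemma~\ref{Lem-semi-est} (which already supplies the factor $t^{-N/(2\alpha)}$), and closes by a bootstrap with a $[0,t/2]\cup[t/2,t]$ splitting of the Duhamel integral. You instead propose a weighted energy estimate, pairing the $\nabla^N$-differentiated system against $|t-t_0|^{N/\alpha}\nabla^N(u,B)$. The obstruction is the weight-derivative contribution $\frac{N}{\alpha}\int_0^t\tau^{N/\alpha-1}\|\nabla^N(u,B)\|_{\cH^3_x}^2\,\d\tau$. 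Attempting to absorb it into the dissipation $\nu\int_0^t\tau^{N/\alpha}\|\nabla^N(u,B)\|_{\dot{\cH}^{3+\alpha}_x}^2\,\d\tau$ via Gagliardo--Nirenberg plus Young, together with the inductive sup-bound $\tau^{(N-1)/\alpha}\|\nabla^{N-1}(u,B)\|_{\cH^3_x}^2\lesssim 1$, leaves a residual $\int_0^t\tau^{-1}\,\d\tau$: the parabolic scaling is rigid, and after Young's inequality the time exponent collapses to $-1$ regardless of how the interpolation is split. Trying instead to dominate the weight-derivative term by the integrated dissipation of step $N-1$ fails for the same reason: that integral carries the weight $\tau^{(N-1)/\alpha}$, and $\tau^{N/\alpha-1}=\tau^{(N-1)/\alpha}\cdot\tau^{(1-\alpha)/\alpha}$ is strictly more singular near $\tau=0$ once $\alpha>1$. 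The weighted energy argument does close if the ladder advances by $\alpha$ derivatives at a time---then the weight-derivative at level $s+\alpha$ is exactly the dissipation integral at level $s$, precisely as for the heat equation---but for $\alpha>1$ this forces fractional-order estimates and an interpolation back to integer $N$ that your integer-step induction omits. Your reduction of $M=1$ to $M=0$ inherits the same gap: using $\partial_t u=-\nu_1(-\Delta)^\alpha u-\P_H(\dots)$ requires the $M=0$ bound at the non-integer order $N+2\alpha$, which the Duhamel formulation supplies automatically but your scheme does not.
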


\begin{proof}
$(i)$. By the energy inequality, one has
\[
\frac 12 \frac{d}{dt} (\norm{u(t)}_{L^2_x}^2+\norm{B(t)}_{L^2_x}^2)
\leq - \nu_1\norm{u}_{\dot{H}^\alpha_x}^2- \nu_2\norm{B}_{\dot{H}^\alpha_x}^2.
\]
which directly yields \eqref{vl2} by Gronwall's inequality.

Next, taking the $\dot{H}^3$ inner product of
the velocity equation in \eqref{equa-gMHD} with $u$ and
the magnetic equation  with $B$,
since $\nabla \cdot u=\nabla \cdot B = 0$,
we have
\begin{align}\label{inner-h3}
	&\quad \frac 12 \frac{\d}{\d t}(\norm{u(t)}_{\dot{H}^3_x}^2+\norm{B(t)}_{\dot{H}^3_x}^2)
  + \nu_1 \norm{u(t)}_{\dot{H}_x^{3+\alpha}}^2
  + \nu_2 \norm{B(t)}_{\dot{H}_x^{3+\alpha}}^2\notag\\
&= - ((u\cdot \nabla )u, u)_{\dot{H}^3_x}
   + ( (B\cdot \nabla )B,u )_{\dot{H}^3_x}
   - ((u\cdot \nabla )B,B)_{\dot{H}^3_x}
   + ((B\cdot \nabla )u,B )_{\dot{H}^3_x}.
\end{align}
By Kato's inequality (cf. \cite[P.155]{rrs16}),
\begin{align}\label{e2.8}
	((u\cdot \nabla )u, u)_{\dot{H}^3_x}
	\lesssim \|\nabla u\|_{L^\9_x} \|u \|_{H^3_x}^2,
\end{align}
and, similarly,
\begin{align}\label{e2.9}
	((u\cdot \nabla )B, B)_{\dot{H}^3_x}
	\lesssim \|(\nabla u, \nabla B)\|_{\cL^\9_x}
	\|(u, B)\|_{\cH^3_{x}}^2.
\end{align}
For the remaining two terms on the right-hand-side of \eqref{inner-h3},
it is important to use the cancellation of the
fourth-order derivative terms,
which permits to obtain the desirable $\dot{H}^3_x$ estimate.
Actually, we compute
\begin{align}
&((B\cdot \nabla )B, u)_{\dot{H}^3_x}+((B\cdot \nabla )u,B)_{\dot{H}^3_x}
= \sum_{|\a|=3} \sum_{i,j}\int_{\T^3} \nabla^\a (B_i\p_{x_i} B_j) \nabla^\a u_j+ \nabla^\a (B_i\p_{x_i} u_j) \nabla^\a B_j \d x.
\end{align}
Since $\nabla \cdot u=\nabla \cdot B = 0$,
using the integration-by-parts formula
we obtain the cancellation
for the terms of the highest fourth order derivatives
\begin{align}
  \sum_{i,j}\int_{\T^3} B_i \nabla^3 \p_{x_i} B_j \nabla^3 u_j+ B_i \nabla^3 \p_{x_i} u_j \nabla^3 B_j \d x=0.
\end{align}
Moreover, H\"{o}lder's inequality gives
\begin{align}
&  \sum_{i,j}\int_{\T^3} \nabla^3 B_i  \p_{x_i} B_j \nabla^3 u_j+ \nabla^3 B_i \p_{x_i} u_j \nabla^3 B_j \d x
\lesssim \|\nabla B\|_{L^\9_x}\|B\|_{\dot H^3_x}\|u\|_{\dot H^3_x}
        +\|\nabla u\|_{L^\9_x}\|B\|_{\dot H^3_x}^2,
\end{align}
and
\begin{align}
	&\sum_{i,j} \int_{\T^3} \nabla B_i  \nabla^{2}\p_{x_i} B_j \nabla^\a u_j
	+ \nabla B_i \nabla^{2}\p_{x_i}'s u_j \nabla^\a B_j \d x
	 \lesssim \|\nabla B\|_{L^\9_x}\|B\|_{\dot H^3_x}\|u\|_{\dot H^3_x} .
\end{align}
We also derive from the H\"{o}lder's inequality
and the Gagliardo-Nirenberg inequality that
\begin{align} \label{h3-e2.13}
&\quad \sum_{i,j} \int_{\T^3} \nabla^{2} B_i  \nabla^{1}\p_{x_i} B_j \nabla^\a u_j+ \nabla^{2} B_i \nabla^{1}\p_{x_i} u_j \nabla^\a B_j \d x \notag\\
&\lesssim \|\nabla^2 B\|_{L^4_x}^2\|u\|_{\dot{H}^3_x} +\|\nabla^2 B\|_{L^4_x} \|\nabla^2 u\|_{L^4_x} \|B\|_{\dot{H}^3_x}\notag\\
&\lesssim \|\nabla  B\|_{L^\9_x}\| B\|_{\dot{H}^3_x} \|u\|_{\dot{H}^3_x}
+\|\nabla  B\|_{L^\9_x}^{\frac12}\|\nabla  u\|_{L^\9_x}^{\frac12}\| u\|_{\dot{H}^3_x}^{\frac12} \|B\|_{\dot{H}^3_x}^{\frac32}  .
\end{align}
Thus, combining \eqref{inner-h3}-\eqref{h3-e2.13} together
and using the Sobolev embedding $\dot{H}^2_x \hookrightarrow L^\infty_x$
we come to
\begin{align}\label{local-h3}
\frac{\d}{\d t} \|(u(t), B(t))\|_{\dot{\mathcal{H}}^3_x}^2
+ \| (u(t), B(t))\|_{\dot{\cH}_x^{3+\alpha}}^2
\lesssim  \|(u(t), B(t))\|_{\dot \cH^3_x}^3,
\end{align}
which, via Gronwall's inequality, yields
\begin{align}\label{local-h3-gron}
	\|(u(t), B(t))\|_{\dot{\cH}^3_x}^2
	 \leq C \| (u_0, B_0)\|_{\dot \cH^3_x}^2
	  \exp\left\{\int_{0}^{t} \|(u(s),B(s))\|_{\dot \cH^3_x} \d s \right\}.
   \end{align}

We claim that
\begin{align}
    \sup _{t \in\left[t_0,t_{*}\right]}\|(u(t),B(t))\|_{\dot \cH^{3}_x}
  &\leq 2\left\|(u_0,B_0)\right\|_{\dot \cH^{3}_x}. \label{claim-vh3}
\end{align}
To this end, plugging \eqref{claim-vh3} into \eqref{local-h3-gron}, we get
\begin{align}\label{est-bs-1}
	\|(u(t), B(t))\|_{\dot{\cH}^3_x}^2
	\leq C  \| (u_0, B_0)\|_{\dot \cH^3_x}^2 \exp\left\{2t_*\norm{(u_0,B_0)}_{\dot \cH^3_x}\right\}.
\end{align}
In view of \eqref{t*-t0-v0H3},
we deduce that for $c$ sufficiently small
such that $\exp\left\{2t_*\norm{(u_0,B_0)}_{\cH^3_x}\right\}\leq e^{2c} \leq \frac32$,
\begin{align*}
  \|(u(t), B(t))\|_{\dot{\cH}^3_x}^2
  \leq \frac32  C \|(u_0, B_0)\|_{\dot \cH^3_x}^2,
\end{align*}
which, via bootstrap arguments, yields \eqref{claim-vh3}
for any $t\in[t_0, t_*]$,
thereby proving \eqref{vh3}.

$(ii)$.
Now we prove estimate \eqref{pdvh3}.
For this purpose,
we reformulate \eqref{equa-gMHD} as
\begin{subequations}\label{eq-mild}
\begin{align}
u(t) = e^{-(t-t_0) \nu_1 (-\Delta)^{\alpha}} u_0
       - \int_{t_0}^t e^{-(t-s) \nu_1 (-\Delta)^{\alpha}} {\mathbb P}_H \div (u(s) \otimes u(s)-B(s)\otimes B(s)) \d s, \label{eq-umild}\\
B(t) = e^{-(t-t_0) \nu_2 (-\Delta)^{\alpha}} B_0
       - \int_{t_0}^t e^{-(t-s) \nu_2 (-\Delta)^{\alpha}} {\mathbb P}_H \div (B(s) \otimes u(s)-u(s)\otimes B(s)) \d s, \label{eq-bmild}
\end{align}
\end{subequations}

Without loss of generality,
we consider only the case $t_0=0$ below.
We will frequently use the following estimates, via
the Gagliardo-Nirenberg inequality,
for any $f,g\in H^3_x$,
\begin{align}\label{esti-GN-H3}
	\|fg\|_{H^3_x}\leq \|f\|_{H^3_x}\|g\|_{L^\9_x}+ \|f\|_{L^\9_x}\|g\|_{H^3_x}, \ \
    \|f\|_{L^\9_x}\leq \|f\|_{H^3_x}^\frac12\|f\|_{L^2_x}^\frac12.
\end{align}

First note that,
in the case where $N=M=0$,
\eqref{pdvh3} follows immediately from \eqref{vh3}.

$(ii.1)$. The case where $M=0$ and $N=1$.
Applying Lemma \ref{Lem-semi-est} to equations
\eqref{eq-umild}-\eqref{eq-bmild}
and using the boundedness of ${\mathbb P}_H$ on $L^2$,
we get
\begin{align}\label{est-d1ub}
	&\quad t^{\frac{1}{2\alpha}}\norm{(\nabla u(t),\nabla B(t))}_{\cH_x^3} \notag\\
	&\leq  C_*\norm{ (u_0,B_0)}_{\cH^3_x}
	+C_* t^{\frac{1}{2\alpha}}
	\int_{0}^t (t-s)^{-\frac{1}{2\alpha}}
	\bigg(\|\div(u(s)\otimes u(s))\|_{H^3_x}+\|\div(B(s)\otimes B(s))\|_{H^3_x} \notag\\
	&\qquad \qquad \qquad \qquad \qquad \qquad \qquad
	  + \|\div(B(s)\otimes u(s))\|_{H^3_x}+\|\div(u(s)\otimes B(s) )\|_{H^3_x} \bigg)\d s.
\end{align}
where $C_*(>1)$ depends on $\alpha$.

We claim that, for all $t\in [0,t_*]$,
\begin{align}\label{claim-n1ub}
t^{\frac{1}{2\alpha}}\norm{(\nabla u(t),\nabla B(t))}_{\cH_x^3}
\leq 2 C_* \norm{(u_0, B_0)}_{\cH_x^3}.
\end{align}

We prove \eqref{claim-n1ub} by using bootstrap arguments.
More precisely, applying \eqref{claim-n1ub}, Gagliardo-Nirenberg inequality, \eqref{vh3},
\eqref{pdvh3} with $N=0$, $M=0$,
and the inequality \eqref{esti-GN-H3} to the nonlinearities of \eqref{est-d1ub}, we obtain
\begin{align}\label{est-bu}
	\|\div (B\otimes u)\|_{H^3_x}
	&\lesssim  \|u\|_{H^3_x} \|\nabla B\|_{H^3_x}^{\frac12} \|\nabla B\|_{L^2_x}^\frac12
	  +\|u\|_{H^3_x}^\frac12 \|u\|_{L^2_x}^{\frac12} \|\nabla B\|_{H^3_x}\notag\\
	&\leq  C_1 C_* (s^{-\frac{1}{4\a}} \|(u_0,B_0)\|_{\cH^3_x}^{\frac{5}{3}} \|(u_0,B_0)\|_{\cL^2_x}^\frac{1}{3}
	+ s^{-\frac{1}{2\a}}\|(u_0,B_0)\|_{\cH^3_x}^\frac32 \|(u_0,B_0)\|_{\cL^2_x}^{\frac12}) ,
\end{align}
which, along with Young's inequality, yields that
\begin{align}\label{est-divbu}
	\|\div (B\otimes u)\|_{H^3_x}
	\leq C_2 C_*s^{- \frac{1}{2\alpha}} \norm{(u_0,B_0)}_{\cH^3_x}^{\frac 32}
	( s^{\frac{3}{4\alpha}} \norm{(u_0,B_0)}_{\cH^3_x}^\frac12 +\norm{(u_0,B_0)}_{\cL^2_x}^{\frac 12}).
\end{align}
Similarly, $\|\div (u \otimes u)\|_{H^3_x}$, $\|\div (B \otimes B)\|_{H^3_x}$ and $\|\div (u \otimes B)\|_{H^3_x}$ obey the same upper bound.
Thus, turning back to \eqref{est-d1ub},
we get 
\begin{align}\label{est-d1bu}
	&\quad t^{\frac{1}{2\alpha}}
	\norm{(\nabla u(t),\nabla B(t))}_{\cH_x^3} \notag\\
	&\leq  C_*\norm{(u_0,B_0)}_{\cH^3_x}
	  + C_3 C_* \norm{(u_0,B_0)}_{\cH^3_x}^{2} t^{\frac{1}{2\alpha}} \int_{0}^t  (t-s)^{-\frac{1}{2\alpha}}	s^{\frac{1}{4\alpha}} \d s\notag\\
	&\quad + C_3 C_* \norm{(u_0,B_0)}_{\cH^3_x}^{\frac32}\norm{(u_0,B_0)}_{\cL^2_x}^{\frac12}   t^{\frac{1}{2\alpha}} \int_{0}^t  (t-s)^{-\frac{1}{2\alpha}}s^{-\frac{ 1}{2\alpha}} \d s \notag\\
	&\leq  C_*\norm{(u_0,B_0)}_{\cH^3_x}
	(1+  C_4 C_* t^{1+\frac{1}{4\a}}  \norm{(u_0,B_0)}_{\cH^3_x}
	+ C_4C_* t^{1-\frac{1}{ 2\alpha}}  \norm{(u_0,B_0)}_{\cH^3_x}^{\frac 12}
	\norm{(u_0,B_0)}_{\cL^2_x}^{\frac12}),
\end{align}
where $C_4(\geq 1)$ is a universal constant.
Thus, choosing $t_*$ sufficiently small such that
\begin{align*}
0<t_*\leq
\min \left\{ 1, \frac{1}{16C_4^2C_*^2
\norm{(u_0,B_0)}_{\cH^3_x}
(1+ \norm{(u_0,B_0)}_{\cL^2_x})} \right\},
\end{align*}
we arrive at
\begin{align}\label{est-d1bu-end}
 t^{\frac{1}{2\alpha}}\norm{(\nabla u(t),\nabla B(t))}_{\cH_x^3}
 \leq \frac32 C_*\norm{(u_0,B_0)}_{\cH^3_x},
\end{align}
which, via bootstrap argument, yields \eqref{claim-n1ub}, as claimed.
Thus, \eqref{pdvh3} is proved when $N=1$ and $M=0$.

$(ii.2)$. The case where $N=2$ and $M=0$.
We apply Lemma \ref{Lem-semi-est} again to
\eqref{eq-bmild} and then use \eqref{esti-GN-H3}
to get
\begin{align}\label{est-d2b}
	t^{\frac{1}{ \alpha}}\norm{\nabla^2 B(t)}_{H^3_x}
	&\leq C_* \norm{B_0}_{H^3_x}
	  +  C_* t^{\frac{1}{ \alpha}}
	\int_{0}^{\frac t2} (t-s)^{-\frac{3}{2\alpha}}
	 \|(u(s),B(s))\|_{\cH_x^3}^{\frac 32}
	 \|(u(s),B(s))\|_{\cL_x^3}^{\frac 12}  \d s \\
    &  + C_*t^{\frac{ 1}{ \alpha}}\int_{\frac t2}^t (t-s)^{-\frac{1}{2\alpha}}
    (\|\nabla^2 (B(s)\otimes u(s))\|_{H^3_x}+\|\nabla^2 (u(s)\otimes B(s))\|_{H^3_x} )\d s.
\end{align}

We claim that
\begin{align}\label{claim-n2ub}
	t^{\frac{1}{ \alpha}}\norm{(\nabla^2 u(t),\nabla^2 B(t))}_{\cH_x^3}
	\leq 2 C_* \norm{(u_0, B_0)}_{\cH_x^3},
\end{align}
holds for all $t\in [0,t_*]$.

In order to prove \eqref{claim-n2ub},
let us estimate the right-hand-side of \eqref{est-d2b}.
By the Leibniz rule we have
 \begin{align}\label{est-n2bu-non}
\|\nabla^2 (B\otimes u)\|_{H^3_x}+\|\nabla^2 (u\otimes B)\|_{H^3_x}
 &\leq 2\sum_{j=0}^{2} (\norm{\nabla^j u}_{ H^3_x}\norm{\nabla^{n-j} B}_{L^\9_{ x}}
 +   \norm{\nabla^{n-j} B}_{ H^3_x}\norm{\nabla^j u}_{L^\9_{ x}}).
  \end{align}
Note that,
by the Gagliardo-Nirenberg inequality,
the Young inequality \eqref{vh3} and \eqref{esti-GN-H3},
\begin{align}\label{est-d2bu-part1}
	&\quad \sum_{j=0}^{2} \norm{\nabla^j u}_{ H^3_x}\norm{\nabla^{n-j} B}_{L^\9_{ x}}\notag\\
	&\lesssim
	\norm{ u}_{H^3_x}\norm{\nabla^{2} B}_{H^3_x}^{\frac12}\norm{\nabla^{2} B}_{L^2_x}^{\frac12} +  \norm{\nabla u}_{H^3_x}\norm{\nabla  B}_{H^3_x}^{\frac12}\norm{\nabla  B}_{L^2_x}^{\frac12}+ \norm{\nabla^2 u}_{H^3_x}\norm{  B}_{H^3_x}^{\frac12}\norm{  B}_{L^2_x}^{\frac12}\notag\\
	&\leq C_1'C_* \Big(s^{-\frac{1}{2\a}} \norm{(u_0,B_0)}_{\cH^3_x}^{\frac{11}{6}} \norm{(u_0,B_0)}_{\cL^2_x}^{\frac16}
	+ s^{-\frac{3}{4\a}}  \norm{(u_0,B_0)}_{\cH^3_x}^{\frac{5}{3}} \norm{(u_0,B_0)}_{\cL^2_x}^{\frac13}\notag\\
	&\qquad\qquad + s^{-\frac{1}{\a}}  \norm{(u_0,B_0)}_{\cH^3_x}^{\frac{3}{2}} \norm{(u_0,B_0)}_{\cL^2_x}^{\frac12} \Big)\notag\\
	&\leq C_2'C_* s^{- \frac{1}{ \alpha}} \norm{(u_0,B_0)}_{\cH^3_x}^{\frac 32}
( s^{\frac{3}{4\alpha}} \norm{(u_0,B_0)}_{\cH^3_x}^\frac12 +\norm{(u_0,B_0)}_{\cL^2_x}^{\frac 12} ).
\end{align}
One can estimate the other terms in \eqref{est-n2bu-non}
in a similar manner.
It follows that
 \begin{align}\label{est-n2bu-non-end}
\|\nabla^2 (B\otimes u)\|_{H^3_x}+\|\nabla^2 (u\otimes B)\|_{H^3_x}
&\leq 4C_2'C_* s^{- \frac{1}{ \alpha}} \norm{(u_0,B_0)}_{\cH^3_x}^{\frac 32}
( s^{\frac{3}{4\alpha}} \norm{(u_0,B_0)}_{\cH^3_x}^\frac12 +\norm{(u_0,B_0)}_{\cL^2_x}^{\frac 12} ).
\end{align}
Plugging this into \eqref{est-d2b}
and using \eqref{vl2} and \eqref{vh3}
lead to
\begin{align}\label{est-d2b2}
t^{\frac{1}{ \alpha}}\norm{\nabla^2 B(t)}_{H^3_x}
&\leq C_* \norm{B_0}_{H^3_x}
+ C_3't^{\frac{1}{ \alpha}}\norm{(u_0,B_0)}_{\cH^3_x}^{\frac 32}\norm{(u_0,B_0)}_{\cL^2_x}^{\frac12}
\int_{0}^{\frac{t}{2}} (t-s)^{-\frac{3}{2\alpha}}\d s\notag\\
&\quad +C_3'C_* t^{\frac{1}{\alpha}}\norm{(u_0,B_0)}_{\cH^3_x}^{\frac32}
\int_{\frac{t}{2}}^t  (t-s)^{-\frac{1}{2\alpha}}s^{-\frac{ 1}{\alpha}}
\(s^{\frac{3}{4\alpha}} \norm{(u_0,B_0)}_{\cH^3_x}^{\frac12} + \norm{(u_0,B_0)}_{\cL^2_x}^\frac12 \) \d s \notag\\
&\leq   C_* \norm{B_0}_{H^3_x}
+ C_4'C_* t^{1-\frac{1}{ 2\alpha}}  \norm{(u_0,B_0)}_{\cH^3_x}^{\frac 32}
\norm{(u_0,B_0)}_{\cL^2_x}^{\frac12}
+ C_4'C_*t^{1+\frac{1}{4\a}}  \norm{(u_0,B_0)}_{\cH^3_x}^{2}
\end{align}
for some universal constant $C_4'$.
Hence, taking $t_*$ possibly smaller such that
\begin{align}\label{assume-t}
	0<t_*\leq \min
	\left\{ 1, \frac{1}{16(C'_4)^2C_*^2 \norm{(u_0,B_0)}_{\cH^3_x} (1+ \norm{(u_0,B_0)}_{\cL^2_x})} \right\}
\end{align}
we deduce from \eqref{est-d2b2} that
for all $t\in [0,t_*]$,
\begin{align}
	t^{\frac{1}{ \alpha}}\norm{\nabla^2 B(t)}_{H^3_x}
	\leq \frac 32 C_* \|B_0\|_{H^3_x}.
\end{align}
The estimate of $\norm{\nabla^2 u}_{H^3_x}$
can be proved in a similar manner.
Hence, using bootstrap arguments
we obtain \eqref{claim-n2ub}, as claimed.
This verifies \eqref{pdvh3} when $N=2$ and $M=0$.

$(ii.3)$. The general case where $N\geq 3$ and $M=0$.
In order to treat the general case
we use the induction arguments.
Suppose that \eqref{pdvh3} is valid for all $N'\leq N$ and $M=0$.
By \eqref{eq-mild}, we have
\begin{align}\label{est-dnu}
		t^{\frac{N}{2\alpha}}
		\norm{(\nabla^N u(t), \na^N B(t))}_{\cH^3_x}
		\leq& C_*\norm{(u_0,B_0)}_{\cH^3_x}
		+ C_* t^{\frac{N}{2\alpha}} \int_{0}^{\frac t2}
		(t-s)^{-\frac{N+1}{2\a}}(\|u \otimes u\|_{H^3_x}
		+\|B\otimes B\|_{H^3_x} )\d s\notag\\
		&+ C_* t^{\frac{N}{2\alpha}}
		\int_{\frac t2}^t (t-s)^{-\frac{1}{2\a}}
		\bigg(\|\nabla^{N}(u\otimes u)\|_{H^3_x}
		+\|\nabla^{N}(B\otimes B)\|_{H^3_x} \notag\\
		&\qquad \qquad \quad
		+ \|\nabla^{N}(B\otimes u)\|_{H^3_x}
		       + \|\nabla^{N}(u\otimes B)\|_{H^3_x} \bigg)\d s.
\end{align}

As in \eqref{claim-n1ub} and \eqref{claim-n2ub},
we claim that
\begin{align}\label{claim-dnub}
	t^{\frac{N}{ 2\alpha}}\norm{(\nabla^N u(t),\nabla^N B(t))}_{\cH_x^3}
	\leq 2 C_* \norm{(u_0, B_0)}_{\cH_x^3}, \ \
	\forall\ t\in [0,t_*].
\end{align}

To this end, concerning the right-hand side of \eqref{est-dnu}, by the Leibniz rule,
\begin{align}\label{est-dnbu}
\norm{\nabla^{N} (B \otimes u)}_{ H^3_x}
    +\norm{\nabla^{N} (u \otimes B)}_{H^3_x}
    &\lesssim\sum_{j=0}^{N} (\norm{\nabla^j u}_{ H^3_x}\norm{\nabla^{N-j} B}_{L^\9_{ x}}
    +   \norm{\nabla^{N-j} B}_{ H^3_x}\norm{\nabla^j u}_{L^\9_{ x}}).
\end{align}
Using \eqref{esti-GN-H3}, \eqref{claim-dnub} and the induction assumption,
we get
\begin{align*}
\sum_{j=0}^{N} \norm{\nabla^j u}_{ H^3_x}\norm{\nabla^{N-j} B}_{L^\9_{ x}}
&\lesssim \sum_{j=0}^{N-3}  \norm{\nabla^j u}_{H^3_x}\norm{\nabla^{N-j} B}_{H^3_x}^{\frac12}\norm{\nabla^{N-j-3} B}_{H^3_x}^{\frac12} +
\norm{\nabla^{N-2} u}_{H^3_x}\norm{\nabla^{2} B}_{H^3_x}^{\frac12}\norm{\nabla^{2} B}_{L^2_x}^{\frac12}\notag\\
&\qquad\ +  \norm{\nabla^{N-1} u}_{H^3_x}
\norm{\nabla  B}_{H^3_x}^{\frac12}
\norm{\nabla  B}_{L^2_x}^{\frac12}
+ \norm{\nabla^N u}_{H^3_x}\norm{B}_{H^3_x}^{\frac12}
\norm{  B}_{L^2_x}^{\frac12}  \notag  \\
&\leq \wt C_1C_*\Big(s^{-\frac{N}{2\a}
+\frac{ 3}{4\a}}
\norm{ (u_0,B_0)}_{\cH^3_x}^2
+ s^{-\frac{N}{2\a}+\frac{1}{2\a}}
\norm{(u_0,B_0)}_{\cH^3_x}^{\frac{11}{6}}
 \norm{(u_0,B_0)}_{\cL^2_x}^{\frac16}   \notag  \\
&\quad + s^{-\frac{N}{2\a}+\frac{1}{4\a}}
\norm{(u_0,B_0)}_{\cH^3_x}^{\frac53}
\norm{(u_0,B_0)}_{\cL^2_x}^{\frac13}
     + s^{-\frac{N}{2\a}}
	 \norm{(u_0,B_0)}_{\cH^3_x}^{\frac32}
	 \norm{(u_0,B_0)}_{\cL^2_x}^{\frac12}\Big), \notag
\end{align*}
which yields
\begin{align}  \label{est-dnbu-end}
&\quad \norm{\nabla^{N} (B \otimes u)}_{H^3_x}+\norm{\nabla^{N} (u \otimes B)}_{H^3_x}\notag\\
&\leq 4 \wt C_1C_*\Big(  s^{-\frac{N}{2\alpha } + \frac{3}{4\alpha }}  \norm{(u_0,B_0)}_{\cH^3_x}^2
  +    s^{-\frac{N}{2\alpha} + \frac{1}{2\alpha}}  \norm{(u_0,B_0)}_{\cH^3_x}^{\frac{11}{6}}  \norm{(u_0,B_0)}_{\cL^2_x}^{\frac 16}\notag\\
&\qquad\qquad\quad + s^{-\frac{N}{2\alpha} + \frac{1}{4 \alpha}} \norm{(u_0,B_0)}_{\cH^3_x}^{\frac{5}{3}}  \norm{(u_0, B_0)}_{\cL^2_x}^{\frac 13}
+  s^{- \frac{N}{2\alpha}} \norm{(u_0,B_0)}_{\cH^3_x}^{\frac 32} \norm{(u_0,B_0)}_{\cL^2_x}^{\frac 12}\Big).
\end{align}
The other terms $ \norm{\nabla^{N} (u \otimes u)}_{H^3_x} $ and $\norm{\nabla^{N} (B \otimes B)}_{ H^3_x}$
can be estimated in the same manner.
Hence,
using Young's inequality, we obtain
\begin{align}   \label{est-dnbu-end*}
&\quad\norm{\nabla^{N} (u \otimes u)}_{\cH^3_x} +\norm{\nabla^{N} (B \otimes B)}_{\cH^3_x}
  +\norm{\nabla^{N} (B \otimes u)}_{\cH^3_x}+ \norm{\nabla^{N} (u \otimes B)}_{\cH^3_x}   \notag\\
&\leq  \wt C_2C_* s^{- \frac{N}{2\alpha}}\norm{(u_0,B_0)}_{\cH^3_x}^{\frac32}
      ( s^{\frac{3}{4\alpha}} \norm{(u_0,B_0)}_{\cH^3_x}^{\frac12}
        +  \norm{(u_0,B_0)}_{\cL^2_x}^{\frac12}).
\end{align}
Thus, we derive from
\eqref{vl2}, \eqref{vh3},  \eqref{esti-GN-H3},
\eqref{est-dnbu} and \eqref{est-dnbu-end*} that
\begin{align*}
  & t^{\frac{N}{2\alpha}} \norm{(\nabla^N u(t), \nabla^N B(t))}_{\cH^3_x}  \notag\\
	\leq& C_*\norm{(u_0,B_0)}_{H^3_x}
	+ \wt C_3 C_* t^{\frac{N}{2\alpha}} \norm{(u_0,B_0)}_{\cH^3_x}^{\frac 32} \norm{(u_0,B_0)}_{\cL^2_x} ^{\frac12} \int_{0}^{\frac{t}{2}}(t-s)^{-\frac{N+1}{2\alpha}}\d s\notag\\
	&+ \wt C_3 C_* t^{\frac{N}{2\alpha}} \norm{(u_0,B_0)}_{\cH^3_x}^{\frac32}
	\int_{\frac{t}{2}}^t (t-s)^{-\frac{1}{2\alpha}}s^{-\frac{N}{2\a}}
	(s^{\frac{3}{4\alpha}}\norm{(u_0,B_0)}_{\cH^3_x}^{\frac12} +  \norm{(u_0,B_0)}_{\cL^2_x}^{\frac12} ) \d s \notag\\
 \leq&  C_*\norm{(u_0,B_0)}_{\cH^3_x}
 (1 + \wt C_4C_* t^{1-\frac{1}{ 2\alpha}}
 \norm{(u_0,B_0)}_{\cH^3_x}^{\frac 32}
 \norm{(u_0,B_0)}_{\cL^2_x}^{\frac12}
 + \wt C_4C_*t^{1+\frac{1}{4\a}}  \norm{(u_0,B_0)}_{\cH^3_x}^{2}),
\end{align*}
which, along with the smallness condition \eqref{assume-t}
with $c$ small enough,
yields that \eqref{claim-dnub} still holds
but with the improved constant $3C_*/2$.
Then, using bootstrap arguments we prove \eqref{claim-dnub},
as claimed.
Finally, the induction arguments give \eqref{pdvh3}
for any $N\geq 3$ and $M=0$.

$(ii.4)$. The case where $N\geq 0$ and $M=1$.
We deduce from \eqref{equa-gMHD}, Lemma \ref{Lem-semi-est}
and estimate \eqref{est-dnbu-end*} that
\begin{align*}
   \|(\partial_t \na^N u, \partial_t \na^N B)\|_{\cH^3}
   \lesssim&  t^{-\frac{N+2\alpha}{2\alpha}} \|(u_0, B_0)\|_{\cH^3}
             + t^{-\frac{N+1}{2\alpha}} \|(u,B)\|_{\cH^3}^{\frac 32}
             (t^{\frac{3}{4\alpha}} (\|(u,B)\|_{\cH^3}^{\frac 32} )^{\frac 12}
              + \|(u,B)\|_{\cL^2}^{\frac 12} ),
\end{align*}
which, along with \eqref{con-pdvh3}, yields 
\begin{align*}
   \|(\partial_t \na^N u, \partial_t \na^N B)\|_{\cH^3}
   \lesssim t^{-\frac{N}{2\alpha}-1}  \|(u_0, B_0)\|_{\cH^3} .
\end{align*}
Therefore, the proof of Proposition \ref{Prop-LWP-Hyper-NLSE} is complete.
\end{proof}

\subsection{Stability estimates} \label{Subsec-Stability}

Next,
we will generate the local solutions $(v_i, D_i)$ to MHD equations
near every $t_i$, $0\leq i\leq m_{q+1}-1$.
Precisely,
for every $0\leq i\leq m_{q+1}-1$,
let $(v_i, D_i)$ solve the following MHD equations
on the small subinterval $[t_i, t_{i+1}+\theta_{q+1}]$,
\begin{equation}\label{equa-mhduq}
	\left\{\aligned
	&\p_tv_i +\nu_1 (-\Delta)^{\alpha} v_i+(v_i\cdot \nabla )v_i-(D_i\cdot \nabla )D_i + \nabla P_i=0,  \\
	&\p_tD_i +\nu_2 (-\Delta)^{\alpha} D_i+(v_i\cdot \nabla )D_i-(D_i\cdot \nabla )v_i =0,  \\
	& \div  v_i = 0,\quad \div  D_i = 0,\\
    & v_i|_{t=t_i}=u_q(t_i),\quad D_i|_{t=t_i}=B_q(t_i).
	\endaligned
	\right.
\end{equation}
Note that,
by the iterative estimate \eqref{ubh3} with $N=0$
and the choice of $m_{q+1}$ and $\theta_{q+1}$ in \eqref{def-mq-thetaq},
the conditions \eqref{t*-t0-v0H3} and \eqref{con-pdvh3} hold
with $t_*, t_0$ replaced by $t_{i+1}+\theta_{q+1}$ and $t_i$,
respectively. Thus, there exists a unique solution $(v_i,D_i)$ to \eqref{equa-mhduq}
on $[t_i, t_{i+1}+\theta_{q+1}]$.

Let $(w_i, H_i):= ( u_q-v_i, B_q-D_i)$ denote the difference between
the local solution $(v_i, D_i)$ and
the old well-prepared solution $(u_q, B_q)$
on $[t_i, t_{i+1}]$.
Then $(w_i,H_i)$ satisfies the linearized equations,
via \eqref{equa-mhdr} and \eqref{equa-mhduq},
\begin{equation}\label{equa-vi}
\left\{\begin{array}{l}
\partial_{t} w_{i}+ \nu_1 (-\Delta)^{\a} w_{i}+\div(v_i\otimes w_i+w_i\otimes u_q-D_i\otimes H_i-H_i\otimes B_q )+\nabla p_i =\div \rr^u_q, \\
\partial_{t} H_{i}+ \nu_2 (-\Delta)^{\a} H_{i}+\div(H_i\otimes u_q+D_i\otimes w_i-w_i\otimes B_q-v_i\otimes H_i  )  =\div \rr^B_q, \\
\div w_{i}=0,\quad \div H_{i}=0 \\
w_{i}|_{t=t_i}=0,\quad D_{i}|_{t=t_i}=0,
\end{array}\right.
\end{equation}
for some pressure $p_i:[t_i,t_{i+1}]\times \T^3\rightarrow \R$.

Proposition \ref{Prop-est-vi} below contains the key stability estimates
for the solutions to \eqref{equa-vi}.

\begin{proposition} [Stability estimates] \label{Prop-est-vi}
Let $\a\in [1,3/2)$, $1<\rho\leq 2$.
Let $(u_q,B_q,\rr^u_q,\rr^B_q)$ be the well-prepared solution to \eqref{equa-mhdr}
at level $q$,
let $(w_i, H_i)$ solve \eqref{equa-vi},
and set $s_{i+1}:= t_{i+1} + \theta_{q+1}$, $0\leq i\leq m_{q+1}-1$.
Then,
the following estimates hold:
\begin{align}
&\|(w_i,H_i)\|_{L^\9([t_i,t_{i+1}+\theta_{q+1}]; \cL^\rho_x)}
   \lesssim \int_{t_i}^{s_{i+1}} \| (\abs{\nabla} \rr^u_q(s),\abs{\nabla} \rr^B_q(s))\|_{\cL^\rho_x}\d s,\label{est-vilp}\\
&\|(w_i,H_i)\|_{L^\9([t_i,t_{i+1}+\theta_{q+1}];\cH^3_x)}
   \lesssim \int_{t_i}^{s_{i+1}} \| (\abs{\nabla} \rr^u_q(s),\abs{\nabla} \rr^B_q(s))\|_{\cH^3_x}\d s,\label{est-vih3}\\
&\|(\mathcal{R}^u w_i,\mathcal{R}^B H_i)\|_{L^\9([t_i,t_{i+1}+\theta_{q+1}];\cL^\rho_x)}
   \lesssim  \int_{t_i}^{s_{i+1}} \| (\rr^u_q(s),\rr^B_q(s))\|_{\cL^\rho_x}\d s,  \label{est-rvi}
\end{align}
where $0\leq i\leq m_{q+1}-1$,
$\mathcal{R}^u$ and $\mathcal{R}^B$ are the inverse divergence operators given by \eqref{calR-def}
in Appendix A,
and the implicit constants depend only on $\alpha$ and $\rho$.
\end{proposition}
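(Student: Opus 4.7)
The plan is to analyse the linear system \eqref{equa-vi} through its mild (Duhamel) formulation and close the three inequalities by a Gronwall-type absorption argument on $[t_i, s_{i+1}]$, whose length is at most $T/\mq + \thq \lesssim \lambda_q^{-12}$ by \eqref{def-mq-thetaq}. The background fields $v_i, D_i$ are controlled by Proposition \ref{Prop-LWP-Hyper-NLSE} on $[t_i, s_{i+1}]$ with data $(\u(t_i), \h(t_i))$, while $\u, \h$ obey the iterative bounds \eqref{ubh3}; Sobolev embedding $\cH^3_x \hookrightarrow \cL^\infty_x$ then yields uniform polynomial-in-$\lambda_q$ bounds on $\|(v_i, D_i, \u, \h)\|_{L^\infty_t \cL^\infty_x}$ as well as on their first and second spatial derivatives. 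Since $|s_{i+1} - t_i|$ is polynomially small in $\lambda_q$, every linear self-term in the mild formulation will be absorbable on the left once $a$ is taken sufficiently large.

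For \eqref{est-vilp} I would start from
\begin{align*}
w_i(t) = \int_{t_i}^{t} e^{-(t-s)\nu_1(-\Delta)^\alpha}\, \P_H \div \bigl( -v_i \otimes w_i - w_i \otimes \u + D_i \otimes H_i + H_i \otimes \h + \ru \bigr)(s)\,ds,
\end{align*}
and the analogous formula for $H_i$. The $L^\rho_x$-boundedness of $\P_H$ for $1 < \rho \leq 2$, together with the uniform-in-$\tau \geq 0$ boundedness of $e^{-\tau(-\Delta)^\alpha}$ on $L^\rho_x$, gives $\|e^{-(t-s)\nu_1(-\Delta)^\alpha}\P_H\div \ru(s)\|_{L^\rho_x} \lesssim \|\nabla \ru(s)\|_{L^\rho_x}$, while Holder produces $\|v_i \otimes w_i\|_{L^\rho_x} \leq \|v_i\|_{L^\infty_x} \|w_i\|_{L^\rho_x}$ and similar bounds for the three other bilinear terms. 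Combining these with Lemma \ref{Lem-semi-est} to absorb the $\div$ inside the semigroup yields an integrable singular kernel $(t-s)^{-1/(2\alpha)}$ since $\alpha \geq 1$, and after summing and taking $\sup_{t \in [t_i, s_{i+1}]}$ the self-terms are absorbed, proving \eqref{est-vilp}. The higher-regularity bound \eqref{est-vih3} is obtained the same way after applying $\nabla^N$ for $0 \leq N \leq 3$, splitting the bilinear terms by the Leibniz rule and \eqref{esti-GN-H3}, and using the $\cH^3_x$-control on $(v_i, D_i, \u, \h)$. At the top order $N = 3$ one invokes the Kato-type cancellation already exploited in \eqref{inner-h3}--\eqref{h3-e2.13}: the fourth-order contributions from $(D_i\cdot\nabla)H_i$ and $(H_i\cdot\nabla)\h$ cancel under integration by parts thanks to $\div v_i = \div D_i = 0$, so no derivative is lost.

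For \eqref{est-rvi} the decisive observation is that $\mathcal{R}^u \P_H \div$ and $\mathcal{R}^B \P_H \div$ are zeroth-order Calderon--Zygmund operators bounded on $L^\rho_x$ for $1 < \rho \leq 2$. Applying $\mathcal{R}^u$ to the $w_i$-equation in \eqref{equa-vi} and then using the mild form gives
\begin{align*}
\mathcal{R}^u w_i(t) = -\int_{t_i}^{t} e^{-(t-s)\nu_1(-\Delta)^\alpha}\, \mathcal{R}^u \P_H \div F(s)\,ds, \quad F := v_i \otimes w_i + w_i \otimes \u - D_i \otimes H_i - H_i \otimes \h - \ru,
\end{align*}
and similarly for $\mathcal{R}^B H_i$. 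The forcing piece contributes $\|\mathcal{R}^u \P_H \div \ru\|_{L^\rho_x} \lesssim \|\ru\|_{L^\rho_x}$, \emph{without} a gradient, which is the improvement over \eqref{est-vilp}. For the bilinear terms I would use $\div v_i = 0$ to rewrite $\div(v_i \otimes w_i) = (w_i \cdot \nabla) v_i$, and then, since $w_i = \div(\mathcal{R}^u w_i)$, integrate by parts once inside $\mathcal{R}^u \P_H$ so that each such term becomes the divergence of a product of $\nabla v_i$ (or of $\nabla \u, \nabla D_i, \nabla \h$) with $\mathcal{R}^u w_i$ or $\mathcal{R}^B H_i$, plus a zeroth-order remainder controlled by $\|\nabla^2 v_i\|_{L^\infty_x} \|\mathcal{R}^u w_i\|_{L^\rho_x}$. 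The uniform $L^\infty_x$ control on the first and second derivatives of $(v_i, D_i, \u, \h)$ then lets Gronwall close directly in the norms $\|\mathcal{R}^u w_i\|_{L^\rho_x} + \|\mathcal{R}^B H_i\|_{L^\rho_x}$.

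The main technical obstacle is precisely this commutator/integration-by-parts step in \eqref{est-rvi}: every bilinear piece produced by $\mathcal{R}^u \P_H \div$ acting on the coupled products of $(w_i, H_i)$ with $(v_i, D_i, \u, \h)$ must be reorganized so that derivatives sit on the smooth background fields while $(\mathcal{R}^u w_i, \mathcal{R}^B H_i)$ appear undifferentiated, modulo integrable singular kernels from the semigroup. The polynomial-in-$\lambda_q$ smallness of $|s_{i+1} - t_i|$, the uniform $L^\rho_x$-boundedness of $\P_H, \mathcal{R}^u, \mathcal{R}^B$, and the Kato-type cancellation at top order are what allow all three estimates to close simultaneously.
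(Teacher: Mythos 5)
Your treatment of \eqref{est-vilp} coincides with the paper's (Duhamel formula, Lemma \ref{Lem-semi-est}, bootstrap on the short interval of length $\lesssim\lambda_q^{-12}$). For \eqref{est-vih3} you also use the mild formulation, but your appeal to a Kato-type cancellation ``at top order $N=3$'' is misplaced: in the Duhamel form the divergence on the bilinear terms is absorbed by $e^{-(t-s)\nu(-\Delta)^\alpha}\P_H\div$ at the cost of the integrable factor $(t-s)^{-1/(2\alpha)}$, and since $H^3(\T^3)$ is a Banach algebra the bound $\|v_i\otimes w_i\|_{H^3_x}\lesssim\|v_i\|_{H^3_x}\|w_i\|_{H^3_x}$ closes without any cancellation. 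The fourth-order cancellation is an energy-method device (used in the proof of Proposition \ref{Prop-LWP-Hyper-NLSE}, where one pairs the equation against $\nabla^3 u$ in $L^2$) and plays no role in the mild formulation.

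For \eqref{est-rvi} there is a genuine gap. The paper introduces $y_i=\Delta^{-1}\curl w_i$ and $J_i=\Delta^{-1}\curl H_i$ (so that $\|\mathcal{R}^uw_i\|_{L^\rho_x}\lesssim\|y_i\|_{L^\rho_x}$ by boundedness of $\mathcal{R}^u\curl$) and derives \eqref{equa-y1}--\eqref{equa-y2}, in which the convective operators $\u\cdot\nabla$, $\h\cdot\nabla$ act directly on the new unknowns $y_i,J_i$ and the stress enters through the zeroth-order operator $\Delta^{-1}\curl\div$; the Gronwall then closes in $\|(y_i,J_i)\|_{\cL^\rho_x}$. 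You instead apply $\mathcal{R}^u\P_H$ to the $w_i$-equation and propose integration by parts to rewrite every bilinear term with $\mathcal{R}^uw_i$ undifferentiated. That plan works for the stretching-type terms $\div(v_i\otimes w_i)=(w_i\cdot\nabla)v_i$ and $\div(D_i\otimes H_i)=(H_i\cdot\nabla)D_i$ (note these identities use $\div w_i=\div H_i=0$, not $\div v_i=0$), because there the derivative already sits on the smooth field. But for the transport terms $\div(w_i\otimes\u)=(\u\cdot\nabla)w_i$ and $\div(H_i\otimes\h)=(\h\cdot\nabla)H_i$ the derivative is on the unknown: writing $w_i=\div\mathcal{R}^uw_i$ puts two derivatives on $\mathcal{R}^uw_i$, further integration by parts using $\div\u=0$ moves at most one of them onto $\u$, and after applying the order $-1$ operator $\mathcal{R}^u\P_H$ one is still left with $\|\nabla\mathcal{R}^uw_i\|_{L^\rho_x}$ rather than $\|\mathcal{R}^uw_i\|_{L^\rho_x}$, so the Gronwall does not close. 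What you would actually need at this point is a commutator estimate of the form $\|[\mathcal{R}^u\P_H,\,\u\cdot\nabla]w_i\|_{L^\rho_x}\lesssim\|\nabla\u\|_{L^\infty_x}\|\mathcal{R}^uw_i\|_{L^\rho_x}$, which you neither state nor prove --- you acknowledge this as the ``main technical obstacle'' but leave it unresolved. The gauge $y_i=\Delta^{-1}\curl w_i$ is precisely what sidesteps this, because the needed commutators of $\Delta^{-1}\curl$ with $\u\cdot\nabla$ reduce to the explicit vector-calculus identities recorded in \eqref{equa-y1}--\eqref{equa-y2}, in which only derivatives of the smooth background fields appear.
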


\begin{proof}
Without loss of generality, we may consider the case $t_i=0$.
We reformulate \eqref{equa-vi} as 
\begin{subequations}
\begin{align}
 & w_i(t)=\int_{0}^{t} e^{-(t-s)\nu_1(-\Delta)^{\a}} \P_H
        \div (\rr^u_q-v_i\otimes w_i-w_i\otimes u_q+ D_i\otimes H_i+ H_i\otimes B_q  ) \d s,\label{e2.12}\\
  &H_i(t)=\int_{0}^{t} e^{-(t-s)\nu_2(-\Delta)^{\a}} \P_H
         \div (\rr^B_q-H_i\otimes u_q-D_i\otimes w_i+w_i\otimes B_q+v_i\otimes H_i  ) \d s.    \label{ne2.12}
\end{align}
\end{subequations}
Applying  Lemma~\ref{Lem-semi-est} to \eqref{e2.12}-\eqref{ne2.12}
yields
\begin{align}  \label{e2.13}
\|(w_i(t),H_i(t))\|_{\cL^\rho_x}
  & \leq C_* \int_{0}^{t} \|(\abs{\nabla}\rr^u_q ,\abs{\nabla}\rr^B_q) \|_{\cL^\rho_x} \d s
   \notag \\
 &+C_*\int_{0}^{t}(t-s)^{-\frac{1}{2\a}}
    ( \|(u_q, v_i) \|_{\cL^\9_x}  +\|(B_i, D_i) \|_{\cL^\9_x})
     (  \|w_i \|_{L^\rho_x} + \|H_i \|_{L^\rho_x}) \d s
\end{align}
for some universal constant $C_*$ depending only on $\rho$ and $\a$.

We claim that for all $t\in [0,t_1+\theta_{q+1}]$,
\begin{align}\label{e2.14}
 \|w_i(t)\|_{L^\rho_x}+ \|H_i(t)\|_{L^\rho_x}
 \leq 2 C_* \int_{0}^{t} \|(\abs{\nabla}\rr^u_q ,\abs{\nabla}\rr^B_q) \|_{\cL^\rho_x}\d s.
\end{align}

The proof of \eqref{e2.14} is based on bootstrap arguments.
First we note that,
\eqref{e2.14} is valid for $t=0$.
Moreover,
by assuming \eqref{e2.14},
we would prove that the same estimate holds
but  with the constant $2C_*$
replaced by a smaller constant $3C_*/2$.
To see this, plugging \eqref{e2.14} into \eqref{e2.13} we get
\begin{align}   \label{e2.15}
&\quad\ \|(w_i(t),H_i(t))\|_{\cL^\rho_x}\notag\\
& \leq 2 C_*
    \( \frac{1}{2}
     + C_* \int_0^t (t-s)^{-\frac{1}{2\alpha}} (\|(u_q,v_i)\|_{L^\infty_{t}\cL^\infty_x}
     +\|(B_i,D_i)\|_{L^\infty_{t}\cL^\infty_x}) \d s  \)
     \int_{0}^{t} \|(\abs{\nabla}\rr^u_q ,\abs{\nabla}\rr^B_q) \|_{\cL^\rho_x}\d s     \notag\\
&\leq   2 C_*
   \( \frac{1}{2} + 2 C_* t^{1-\frac{1}{2\a}}
  (\|(u_q,v_i)\|_{L^\infty_{t}\cL^\infty_x}+\|(B_i,D_i)\|_{L^\infty_{t}\cL^\infty_x}) \)
    \int_{0}^{t} \|(\abs{\nabla}\rr^u_q ,\abs{\nabla}\rr^B_q) \|_{\cL^\rho_x}\d s.
\end{align}
Thus, using the Sobolev embedding $H^3_x\hookrightarrow L^\9_x$,
\eqref{ubh3}, \eqref{def-mq-thetaq} and \eqref{vh3}
we bound
\begin{align}  \label{e2.15*}
  & 2 C_* (t_1+\theta_{q+1})^{1-\frac{1}{2\a}}
  (\|(u_q,v_i)\|_{L^\infty_{t}\cL^\infty_x}+\|(B_i,D_i)\|_{L^\infty_{t}\cL^\infty_x})  \notag \\
  \leq & 2CC_*(T/m_{q+1})^{1-\frac{1}{2\a}}
        \left( \norm{(u_q,v_i)}_{L^\infty_{t} \cH^3_x}+\|(B_i,D_i)\|_{L^\infty_{t} \cH^3_x}\right)\notag\\
  \leq& 6CC_* (\la^{-12})^{\frac12+\frac{\a-1}{2\a}} \norm{(u_q,B_q)}_{L^\infty_{t} \cH^3_x} \notag\\
  \leq& C'(\la^{-12})^{\frac12+\frac{\a-1}{2\a}}\la^5
  \leq \frac 14,
\end{align}
where $C'$ is some universal constant,
and $a$ is sufficiently large such that $C'\la^{-1}\leq 1/4$.
Hence, we obtain \eqref{e2.14} with the improved constant $3C_*/2$,
which,
via bootstrap arguments,
yields \eqref{e2.14} and so \eqref{est-vilp}.

Estimate \eqref{est-vih3} can be proved in the fashion
as the proof of \eqref{e2.14}.
We claim that
for all $t\in [0,t_1+\theta_{q+1}]$,
\begin{align}\label{claim-vih3}
	\|(w_i(t),H_i(t))\|_{\cH^3_x}
    \leq 2 C_* \int_{0}^{t} \|(\abs{\nabla}\rr^u_q,\abs{\nabla}\rr^B_q )\|_{\cH^3_x}\d s.
\end{align}
To this end,
we apply Lemma \ref{Lem-semi-est} to \eqref{equa-vi} to get
\begin{align}\label{vih3}
	\|(w_i(t),H_i(t))\|_{\cH^3_x}
	& \leq C_* \int_{0}^{t} \|(\abs{\nabla}\rr^u_q(s),\abs{\nabla}\rr^B_q(s))\|_{\cH^3_x}
	\d s   \notag \\
	&\quad +C_* \int_{0}^{t}  (t-s)^{-\frac{1}{2\a}}
         (\|(u_q,v_i)\|_{\cH^3_x}+\|(B_i,D_i)\|_{\cH^3_x})
         \|(w_i,H_i)\|_{\cH^3_x}\d s,
\end{align}
Plugging \eqref{claim-vih3} into \eqref{vih3}
and estimating as in \eqref{e2.15} and \eqref{e2.15*}
we come to
\begin{align}
 \norm{w_i(t),H_i(t)}_{\cH^3_x}
  \leq&  2 C_* \( \frac{1}{2} + 2C_* t^{1-\frac{1}{2\a}}
      (\|(u_q,v_i)\|_{L^\infty_{t} \cH^3_x}+\|(B_i,D_i)\|_{L^\infty_{t} \cH^3_x}) \)  \notag \\
 & \times \int_{0}^{t} \|(\abs{\nabla}\rr^u_q(s),\abs{\nabla}\rr^B_q(s))\|_{\cH^3_x}\d s
   \leq \frac 32 C_*.
\end{align}
This gives \eqref{claim-vih3} with the improved constant $3C_*/2$.
Hence,
using bootstrap arguments  once more
we prove  \eqref{claim-vih3} and so \eqref{est-vih3}
for all $t\in [0,t_1+\theta_{q+1}]$.

It remains to prove \eqref{est-rvi}.
Let
\begin{align}  \label{z-wi}
  y_i: = \Delta^{-1}\curl w_i, \quad  J_i: = \Delta^{-1}\curl H_i.
\end{align}
Note that $\curl y_i =-w_i$ and $\curl J_i=-H_i$,
as $w_i$ and $H_i$ are divergence free.
By the boundedness of Calder\'{o}n-Zygmund operators
$\mathcal{R}^u \curl$ and
$\mathcal{R}^B \curl$ in $L^\rho_x$,
$1<\rho<2$,
we have, for any $t\in [t_i,t_{i+1}]$,
\begin{align} \label{Ruw-RBH}
   \|\mathcal{R}^uw_i\|_{L^\rho_x}\leq C \|y_i\|_{L^\rho_x},
    \quad \|\mathcal{R}^B H_i\|_{L^\rho_x}\leq C \|J_i\|_{L^\rho_x},
\end{align}
where $C$ is a universal constant.

Moreover,
straightforward computations show that $y_1$ satisfies the equation
\begin{align}
\partial_t y_i + u_q \cdot \nabla y_i -B_q\cdot \nabla J_i  + \nu_1 (- \Delta)^\alpha  y_i
&= \Delta^{-1} \curl \div \rr^u_q +  \Delta^{-1} \curl \div \left( ( (y_i \times \nabla) v_i )^T \right)\notag\\
&\quad +  \Delta^{-1} \curl \div  \left( ( y_i \times \nabla) u_q\right) + \Delta^{-1} \nabla\div  \left( ( y_i \cdot \nabla) u_q\right) \notag\\
&\quad - \Delta^{-1} \curl \div \left( ( (J_i\times \nabla) D_i )^T \right)- \Delta^{-1} \curl \div  \left( (J_i \cdot \nabla) B_q\right)\notag\\
&\quad - \Delta^{-1} \nabla \div \left( (J_i\cdot \nabla) B_q \right),   \label{equa-y1}
\end{align}
and $J_i$ satisfies
\begin{align}
	\partial_t J_i + u_q \cdot \nabla J_i-B_q\cdot \nabla y_i +  \nu_2 (- \Delta)^{\alpha}  J_i
	&= \Delta^{-1} \curl \div \rr^B_q +  \Delta^{-1} \curl \div \left( ( (y_i\times \nabla) D_i )^T \right)\notag\\
	&\quad +  \Delta^{-1} \curl \div  \left( ( J_i \times \nabla) u_q\right) + \Delta^{-1} \nabla\div  \left( ( J_i \cdot \nabla) u_q\right) \notag\\
	&\quad - \Delta^{-1} \curl \div \left( ( (y_i\times \nabla) D_i )^T \right)- \Delta^{-1} \curl \div  \left( ( y_i \cdot \nabla) B_q\right)\notag\\
	&\quad - \Delta^{-1} \nabla \div \left( (y_i\cdot \nabla) B_q \right).  \label{equa-y2}
\end{align}

Then, we can apply Lemma~\ref{Lem-semi-est}
and use the boundedness of Calder\'{o}n-Zygmund operators
$\Delta^{-1} \curl \div $
and $\Delta^{-1} \nabla \div$ in $L^\rho_x$
to derive
\begin{align}
 \norm{(y_i(t),J_i(t))}_{\cL^\rho_x}
 &\leq C_* \int_0^t \norm{\rr^u_q(s),\rr^B_q(s)}_{\cL^\rho_x}\d s \notag\\
&\quad + C_* \norm{(u_q,B_q)}_{L^\infty_{t}\cL^\infty_x}
      \int_0^t (t-s)^{-\frac{1}{2\alpha}}
      \norm{(y_i(s),J_i(s))}_{\cL^\rho_x} \d s  \notag \\
&\quad  +C_* (  \norm{(\nabla u_q,\nabla v_i)}_{L^\infty_{t}\cL^\infty_x}
       +\norm{(\nabla B_q,\nabla D_i)}_{L^\infty_{t}\cL^\infty_x} )
      \int_0^t \norm{(y_i(s),J_i(s))}_{\cL^\rho_x}\d s,
\label{e2.20}
\end{align}
where $C_*$ is a universal constant depending only on $\rho$ and $\a$.

We claim that for any $t\in [0,t_1+\theta_{q+1}]$,
\begin{align}    \label{e2.21}
 \norm{(y_i(t),J_i(t))}_{\cL^\rho_x}
&\leq 2C_* \int_0^t \norm{(\rr^u_q(s),\rr^B_q(s))}_{\cL^\rho_x}\d s.
\end{align}

In order to prove \eqref{e2.21},
inserting \eqref{e2.21} into \eqref{e2.20}, we get
\begin{align*}
 \norm{(y_i(t),J_i(t))}_{\cL^\rho_x}
&\leq 2C_* \int_0^t \norm{(\rr^u_q(s),\rr^B_q(s))}_{\cL^\rho_x}\d s \notag\\
&\quad \times \( \frac{1}{2}
    +  2  C_* t^{1-\frac{1}{2\a}}
    \norm{(u_q,B_q)}_{L^\infty_{t}\cL^\infty_x}
    + C_*t\left(  \norm{(\nabla u_q,\nabla v_i)}_{L^\infty_{t}\cL^\infty_x}
    +\norm{(\nabla B_q, \nabla D_i)}_{L^\infty_{t}\cL^\infty_x}  \right)
    \) .
\end{align*}
Then, by the Sobolev embedding $H^2_x \hookrightarrow L^\infty_x$,
\eqref{ubh3}, \eqref{def-mq-thetaq} and \eqref{vh3},
\begin{align}\label{ne2.23}
&  2 C_*t^{1-\frac{1}{2\a}}
   + C_*t\left(  \norm{(\nabla u_q, \nabla v_i)}_{L^\infty_{t}\cL^\infty_x}
   +\norm{(\nabla B_q,\nabla D_i)}_{L^\infty_{t}\cL^\infty_x}  \right)
\norm{(u_q,B_q)}_{L^\infty_{t}\cL^\infty_x}\notag\\
\leq &\, 2C^*t^{1-\frac{1}{2\a}}\norm{(u_q,B_q)}_{L^\infty_{t} \cH^3_x}
         + C_*t\left( \norm{(u_q,v_i)}_{L^\infty_{t}\cH^3_x}
     +\norm{(B_q, D_i)}_{L^\infty_{t} \cH^3_x}  \right) \notag\\
\leq &\, CC_*(3(t_1+\theta_{q+1})+2
(t_1+\theta_{q+1})^{1-\frac{1}{2\a}})\norm{(u_q,B_q)}_{L^\infty_{t} \cH^3_x}\notag\\
 \leq &\, C'\(\la^{-12}+(\la^{-12})^{\frac12+\frac{2\a-1}{2\a}}\) \la^{5}
 \leq \frac 14
\end{align}
for $a$ sufficiently large.
It follows that
the constant in \eqref{e2.21} can be improved by $ 3C_*/2$.
This yields \eqref{e2.21} for all $t\in [0,t_1+\theta_{q+1}]$
by bootstrap arguments, as claimed.
In view of \eqref{Ruw-RBH},
we consequently prove \eqref{est-rvi}.
Therefore, the proof is complete.
\end{proof}

\subsection{Temporal gluing of local solutions}
From the previous section,
we see that,
for every $0\leq i\leq \mq-1$,
$(v_i,D_i)$ is exactly the solution to MHD equations \eqref{equa-gMHD}
on the small subinterval $[t_i,t_{i+1}+\theta_{q+1}]$,
and its difference from the old well-prepared solution $(u_q, B_q)$
can be controlled by using the stability estimates in Proposition \ref{Prop-est-vi}.

In this subsection,
we will glue all these local solutions together
by using a partition of unity $\{\chi_i\}$,
such that $(\sum_i \chi_i v_i,\sum_i \chi_i D_i)$ solves equations \eqref{equa-gMHD}
in a majority part of the time interval $[0,T]$.
The important outcome is that,
the new Reynolds and magnetic stresses would have disjoint temporal supports
with smaller length,
and, simultaneously, maintain the decay of amplitudes.

To be precise, let $\{\chi_i\}_{i=0}^{\mq-1}$ be a $C_0^\9$ partition of unity on $[0,T]$  such that
$$
0\leq  \chi_i(t) \leq 1, \quad\text{for} \quad t\in[0,T],
$$
for $0<i< \mq-1$,
\begin{align}\label{def-chi1}
\chi_{i}= \begin{cases}1 & \text { if } t_{i}+\thq \leq t \leq t_{i+1}, \\
 0 & \text { if } t\leq t_{i}, \text { or } t \geq t_{i+1}+\thq,\end{cases}
\end{align}
and for $i=0$,
\begin{align}\label{def-chi2}
\chi_{0}= \begin{cases}1 & \text { if } 0 \leq t \leq t_{i+1},
\\ 0 & \text { if } t \geq t_{i+1}+\thq,\end{cases}
\end{align}
and for $i= \mq-1$,
\begin{align}\label{def-chi3}
\chi_{\mq-1}= \begin{cases}1 & \text { if } t_{i}+\thq \leq t \leq T, \\
  0 & \text { if } t \leq t_{i}.\end{cases}
\end{align}
Furthermore,
\begin{align}\label{est-chi}
\|\partial_t^M \chi_i\|_{L^{\9}_t}\lesssim \thq^{-M},\ \ 0\leq i\leq \mq-1,
\end{align}
where the implicit constant is independent of $\thq$, $i$ and $M\geq 0$.

Then, we define the gluing solutions by
\begin{align}\label{def-wtu}
\wt u_q:=\sum_{i=0}^{\mq-1} \chi_i v_i,
\quad \wt B_q:=\sum_{i=0}^{\mq-1} \chi_i D_i.
\end{align}
Note that, $\wt u_q, \wt B_q: [0,T]\times \T^3\rightarrow \R^3$ are divergence free and mean free.
Moreover, we have
\begin{align*}
\wt u_q=(1-\chi_i)v_{i-1}+\chi_i v_i,
\quad \wt B_q=(1-\chi_i)D_{i-1}+\chi_i D_i,\ \ t\in [t_i, t_{i+1}],
\end{align*}
and the glued solutions $(\wt u_q,\wt B_q)$ satisfy the
following equations on $[t_i,t_{i+1}]$:
\begin{align}\label{equa-wtu}
\begin{cases}
&\partial_{t} \wt u_q+ \nu (-\Delta)^{\a} \wt u_q +\operatorname{div}(\wt u_q \otimes \wt u_q-\wt B_q\otimes \wt B_q) +\nabla \wt p=\div \tr,\\
&\partial_{t} \wt B_q+ \nu (-\Delta)^{\a} \wt B_q +\operatorname{div}(\wt B_q \otimes \wt u_q-\wt u_q\otimes \wt B_q) =\div \trb,
\end{cases}
\end{align}
for some pressure term $\wt p$
and the new stresses
\begin{subequations}  \label{def-nr}
\begin{align}
&\tr =\partial_t\chi_i\mathcal{R}^u(v_{i}-v_{i-1}) -\chi_{i}(1-\chi_{i})
    ((v_{i}-v_{i-1})\mathring\otimes (v_{i}-v_{i-1})-(D_i-D_{i-1})\mathring\otimes(D_i-D_{i-1})),\\
&\trb =\partial_t\chi_i\mathcal{R}^B(D_{i}-D_{i-1}) -\chi_{i}(1-\chi_{i})
((D_{i}-D_{i-1})\otimes (v_{i}-v_{i-1})-(v_i-v_{i-1})\otimes(D_i-D_{i-1})).
\end{align}
\end{subequations}

\subsection{Proof of well-preparedness in Theorem~\ref{Thm-nunr}}
Using the finite overlaps of the supports of $\{\chi_i\}$, \eqref{ubh3} and \eqref{vh3}, we get
\begin{align}\label{ver-nuh3}
\|\wt u_q\|_{L^\9_tH^3_x}& \leq \|\sum_i \chi_i v_i\|_{L^\9_tH^3_x}\notag\\
&\leq \sup_i \(\|(1-\chi_i)v_{i-1}\|_{L^\9_t(\supp(\chi_i\chi_{i-1});H^3_x)}+\|\chi_i v_i\|_{L^\9_t(\supp(\chi_i);H^3_x)}\)\notag\\
&\lesssim \|u_q\|_{L^\9_tH^3_x}\lesssim \la^{5},
\end{align}
and similarly,
\begin{align}\label{ver-nbh3}
	\|\wt B_q\|_{L^\9_tH^3_x}
    \leq \sup_i \(\|(1-\chi_i)D_{i-1}\|_{L^\9_t(\supp(\chi_i\chi_{i-1});H^3_x)}
    +\|\chi_i D_i\|_{L^\9_t(\supp(\chi_i);H^3_x)}\)\lesssim \la^{5},
\end{align}
which yields \eqref{nuh3}.

Regarding estimate \eqref{uuql2},
using \eqref{def-mq-thetaq}, \eqref{rhN} and \eqref{est-vilp},
we get
\begin{align}\label{ne2.42}
&\quad\ \|(\wt u_q-u_q, \wt B_q-B_q)\|_{L^\9_t \cL^2_{x}} \notag\\
& \leq \|(\sum_i \chi_i(v_i-u_q), \sum_i \chi_i(D_i-B_q))\|_{L^\9_t \cL^2_{x}}  \notag\\
&\leq \sup_i\(\|(v_i-u_q, D_i-B_i)\|_{L^\9_t(\supp(\chi_i); \cL^2_x)}
    +\|(v_{i-1}-u_q, D_{i-1}-B_q)\|_{L^\9_t(\supp(\chi_i\chi_{i-1}); \cL^2_x)}\)\notag\\
&\lesssim \mq^{-1} \| (\rr^u_q, \rr^B_q)\|_{L^\9_t \cH^3_x}
 \lesssim \mq^{-1}\la^9 \lesssim \la^{-3}.
\end{align}
Hence, estimate \eqref{uuql2} is verified.

Concerning the $L^1_{t,x}$-estimate of the new Reynolds stress $\rr^u_q$,
the expression \eqref{def-nr} yields that
\begin{align}
\|\tr\|_{L_{t,x}^{1}} & \leq\|\partial_t\chi_i\mathcal{R}(v_{i}-v_{i-1})\|_{L_{t,x}^1 }\notag\\
&\quad\ \   +\|\chi_{i}(1-\chi_{i})((v_{i}-v_{i-1})\mathring\otimes (v_{i}-v_{i-1})
    -(D_{i}-D_{i-1})\mathring\otimes (D_{i}-D_{i-1}))\|_{L_{t,x}^{1}} \notag\\
&=:K_1+K_2.\label{e2.33}
\end{align}

In order to estimate the right-hand-side above,
for the first term $K_1$, we choose
\begin{align}   \label{rho-ve}
1<\rho<\frac{4\ve_R+36+8\beta b}{\ve_R+36+8\beta b},
\end{align}
and use H\"older's inequality to obtain
\begin{align}
 K_1 \leq&  \sum_i \|\p_t \chi_i\|_{L^1_t}\|\mathcal{R}(v_i-v_{i-1})\|_{L^\9_tL^\rho_x}  \notag \\
 \lesssim& \sum_i \|\mathcal{R}(w_i-w_{i-1})\|_{L^\9_t(\supp(\chi_i\chi_{i-1});L^\rho_x)}.
\end{align}
Note that, the uniform bound $\|\p_t \chi_i\|_{L^1_t} \lesssim 1$
was used in the last step.
Then, by the Gagliardo-Nirenberg inequality,
\eqref{rhN}, \eqref{rl1}, \eqref{est-rvi} and \eqref{rho-ve},
\begin{align}   \label{est-j1}
    K_1 \lesssim& \sum_i\int_{t_i}^{t_{i+1}+\theta_{q+1}}
        \|(\rr^u_q(s),\rr^B_q(s))\|_{\cL^\rho_x}\d s \notag\\
	\lesssim&  \|(\rr^u_q(s),\rr^B_q(s))\|_{L^1_{t}\cL^1_x}^{1-\frac{2(\rho-1)}{3\rho}}
              \|(\rr^u_q(s),\rr^B_q(s))\|_{L^\9_t\cH^3_x}^{\frac{2(\rho-1)}{3\rho}} \notag \\
    \lesssim&  \delta_{q+1}^{1-\frac{2(\rho-1)}{3\rho}}  \la^{-\ve_R+(\ve_R+9)\frac{2(\rho-1)}{3\rho}} \notag \\
    \lesssim& \lambda_q^{-\frac{\ve_R}{2}} \delta_{q+1}
    \leq \la^{-\frac{3\ve_R}{8}}\delta_{q+1},
\end{align}
where in the last step we  used $\la^{-\ve_R/8}$
(or $a$ sufficiently large) to absorb the implicit constant.

For the second term $K_2$ of \eqref{e2.33},
we use \eqref{est-vilp} to estimate
\begin{align}
 K_2  &\lesssim \sum_i |\supp_{t} \chi_i(1-\chi_i)|\|\chi_{i}-\chi_{i}^{2} \|_{L^\9_t}
  \(\|(w_{i}-w_{i-1},  H_{i}- H_{i-1})\|_{L^\9_t(\supp(\chi_i\chi_{i-1});\cL^2_x)}^2 \)\notag \\
  & \lesssim \sum_i \thq \(\int_{t_i}^{t_{i+1}}\|(|\nabla|\rr^u_q(s),|\nabla|\rr^B_q(s))\|_{\cL^2_x}  \d s\)^2  \notag\\
  &\lesssim \thq  \|(|\nabla|\rr^u_q,|\nabla|\rr^B_q)\|_{L^1_t \cL^2_x}^2.
\end{align}
Using the interpolation estimate, \eqref{rhN} with $N=3$, \eqref{rl1}
and the facts that $\la^{-1}\ll \delta_{q+1}^{1/9}$ and $0<\eta<1$,
we obtain
\begin{align}  \label{est-j2}
	K_2 \lesssim& \mq^{-\frac{1}{\eta}}
 \|(\rr^u_q(s),\rr^B_q(s))\|_{L^1_{t}\cL^1_x}^{\frac{8}{9}}
\|(\rr^u_q(s),\rr^B_q(s))\|_{L^1_t\cH^3_x}^{\frac{10}{9}} \notag\\
         \lesssim& \la^{-12/\eta}\la^{-\frac89\ve_R}\delta_{q+1}^{\frac89} \la^{10}\leq \la^{-\frac12\ve_R}\delta_{q+1},
\end{align}
where we also chose $a$ sufficient large and used $\la^{-\ve_R/4}$ to absorb the implicit constant.

Thus, it follows from \eqref{e2.33}, \eqref{est-j1} and \eqref{est-j2} that
\begin{align}\label{est-trq}
\|\tr\|_{L^1_{t,x}}\leq \la^{-\frac{\ve_R}{4}}\delta_{q+1}.
\end{align}
The $L^1_{t,x}$-estimate of the magnetic stress $\trb$
can be obtained by using analogous arguments.
Hence, \eqref{nrl1} is verified.

For estimate \eqref{nrh3},
we infer from \eqref{def-nr} that,
for any $t\in[t_i,t_{i+1}]$, $0\leq i\leq \mq-1$,
\begin{align}\label{e2.45}
\|\partial_t^M \nabla^N \tr\|_{L^\infty_tH^3_x}
\leq& \| \partial_t^M \nabla^N(\partial_t\chi_i\mathcal{R}(v_{i}-v_{i-1}))\|_{L^\infty_tH^3_x }   \notag \\
&  +\|\partial_t^M \nabla^N(\chi_{i}(1-\chi_{i})((v_{i}-v_{i-1}) \mathring\otimes (v_{i}-v_{i-1})
   - (D_{i}-D_{i-1})\mathring\otimes (D_{i}-D_{i-1})))\|_{L^\infty_tH^3_x} \notag \\
 =:&\wt K_1 + \wt K_2.
\end{align}
Note that,
by \eqref{ubh3} with $N=3$, \eqref{pdvh3}
and \eqref{est-chi},
\begin{align}\label{e2.46}
 \wt K_1\lesssim &\sum_{M_1+M_2=M}\norm{\partial_t^{M_1+1} \chi_i}_{L^\infty_t} \left(\norm{\partial_t^{M_2} \nabla^N v_i}_{L^\infty_t(\supp(\chi_i);H^3_x)} + \norm{\partial_t^{M_2} \nabla^N v_{i-1}}_{L^\infty_t(\supp(\chi_{i-1});H^3_x)}\right) \notag\\
\lesssim& \sum_{M_1+M_2=M} \thq^{-M_1-1}\mq^{\frac{N}{2\alpha} + M_2} \lambda_q^{5} \lesssim \thq^{-M-1} \mq^{\frac{N}{2\alpha} } \lambda_q^{5} \,,
\end{align}
where the last step is due to $m_{q+1} \leq \theta_{q+1}^{-1}$
and the implicit constants are independent of $i$ and $q$.
Moreover, by \eqref{ubh3}, \eqref{vh3} and \eqref{pdvh3},
\begin{align}\label{e2.47}
 \wt K_2
 \lesssim & \sum_{M_1+M_2=M}  \(\norm{\partial_t^{M_1}( \chi_i (1-\chi_i)}_{L^\9_t}\norm{\partial_t^{M_2} \nabla^N((v_i-v_{i-1}) \mathring\otimes(v_i-v_{i-1}))}_{L^\infty_t(\supp(\chi_i\chi_{i-1});H^3_x)}\right. \notag\\
&\qquad\qquad\quad \left.+ \norm{\partial_t^{M_1}( \chi_i (1-\chi_i)}_{L^\9_t}\norm{\partial_t^{M_2} \nabla^N((D_i-D_{i-1}) \mathring\otimes(D_i-D_{i-1}))}_{L^\infty_t(\supp(\chi_i\chi_{i-1});H^3_x)}\) \notag\\
\lesssim & \sum_{M_1+M_2=M}  \thq^{-M_1}\Big( \norm{\partial_t^{M_2} \nabla^N ( (v_i-v_{i-1}) \otimes(v_i-v_{i-1}))}_{L^\infty_t(\supp(\chi_i\chi_{i-1});H^3_x)} \notag\\
&\qquad\qquad\qquad\qquad + \norm{\partial_t^{M_2} \nabla^N((D_i-D_{i-1}) \mathring\otimes(D_i-D_{i-1}))}_{L^\infty_t(\supp(\chi_i\chi_{i-1});H^3_x)}\Big) \notag\\
 \lesssim & \sum_{M_1+M_2=M}  \thq^{-M_1}\mq^{\frac{N}{2\alpha}+M_2 } \lambda_q^{10}
 \lesssim \thq^{-M-1} \mq^{\frac{N}{2\alpha} }\lambda_q^{5} ,
\end{align}
where the last step is due to
$\theta^{-1}_{q+1} \geq m_{q+1} \geq \lbb_q^5$,
and the implicit constants are independent of $i$ and $q$.
Thus,
it follows from \eqref{e2.45}, \eqref{e2.46} and \eqref{e2.47} that
\begin{align}
\|\partial_t^M \nabla^N \tr\|_{L^\infty_tH^3_x}
  \lesssim \theta_{q+1}^{-M-1} m_{q+1}^{\frac{N}{2\alpha}} \lbb_q^5
 \lesssim \thq^{-M-N-1} \lambda_q^{5}.
\end{align}
Similar arguments also yield the same upper bound
of the magnetic stress $\trb$.
Thus, \eqref{nrh3} is verified.

Finally, we are left to prove the remaining well-preparedness of
$(\wt u_q,\wt B_q,\tr,\trb)$,
\eqref{suppnr} and \eqref{suppwtuq},
which can be proved by using analogous arguments in \cite{lqzz22}.
For the reader's convenience,
we sketch the main arguments below.

Let
\begin{align}\label{def-indexsetb}
\mathcal{C}:=\left\{ i\in \mathbb{Z}: 1\leq i\leq m_{q+1}-1\  \text{and}\ ( \rr^u_q,\rr^B_q)\not\equiv 0\ \text{on}\ [t_{i-1},t_{i}+\thq]\cap [0,T] \right\},
\end{align}
and
\begin{align}   \label{Iq1-C-def}
  I_{q+1} := \bigcup_{i\in \mathcal{C}} \left[t_i-2\thq,t_i+3\thq \right].
\end{align}
Such constructions guarantee that, for any $q\geq 0$,
\begin{align}\label{iq1}
I_{q+1}\subseteq I_q.
\end{align}

In order to prove  \eqref{suppnr}
when $ \operatorname{dist}(t,I_{q+1}^c)\leq {3}\thq/2$,
we see that
if  $t\in I_{q+1}$, then by \eqref{Iq1-C-def},
we have for some $i\in \mathcal{C}$,
\begin{align}\label{e3.57}
t\in [t_i-2\thq,t_i-\frac{\thq}{2}]\quad \text{or}\quad t\in [t_i+\frac{3\thq}{2},t_i+3\thq].
\end{align}
But in both cases,
$\partial_t \chi_j$ and $1-\chi_j$ vanish,
$j=i-1$ or $i$.
In view of \eqref{def-nr},
\eqref{suppnr} then follows.

Moreover, if $t\in I_{q+1}^c$, then
$t\in [t_j,t_{j+1}]$ for some $0\leq j\leq \mq-1$.
If $t\in [t_j+\thq,t_{j+1}]$,
reasoning as above yields \eqref{suppnr}.
If $t\in [t_j,t_{j}+\thq]$,
then $j\notin \mathcal{C}$,
otherwise $t\in [t_j-2\thq,t_j+3\thq]\subseteq I_{q+1}$.
Hence, by the definition of $\mathcal{C}$ in \eqref{def-indexsetb},
\begin{align*}
(\rr^u_q,\rr^B_q)\equiv0 \quad \text{on}\quad [t_{j-1},t_j+\thq],
\end{align*}
and so $(u_q,B_q)$ solves equations \eqref{equa-gMHD} on $[t_{j-1},t_j+\thq]$.
But since $(v_{j-1}, D_{j-1})$
also solves \eqref{equa-gMHD}
with the same condition $(u_q(t_{j-1}),B_q(t_{j-1}))$ at $t_{j-1}$,
so does $(v_j, D_j)$ on $[t_j, t_j + \theta_{q+1}]$.
Thus,
in the overlapped regime $[t_j, t_j+\theta_{q+1}]$,
the uniqueness in Proposition  \ref{Prop-LWP-Hyper-NLSE} then yields
\begin{align*}
     (v_{j-1}, D_{j-1}) = (v_j, D_j) = (u_q, B_q)\ \  on\ [t_j, t_j+\theta_{q+1}].
\end{align*}
Plugging this into \eqref{def-nr} with $j$ replacing $i$
we thus obtain \eqref{suppnr}.

Regarding \eqref{suppwtuq},
we take any $t\in [0,T]$ such that $(\wt u_q(t),\wt B_q(t))\neq 0$.
Then, $t\in [t_i,t_{i+1}]$ for some $0\leq i\leq m_{q+1}-1$.
If $t\in [t_i+\thq,t_{i+1}]$, then we have $(u_q(t_i),B_q(t_i))\neq 0$,
otherwise, by \eqref{equa-mhduq},
$(\wt u_q(t),\wt B_q(t))=(v_i(t),H_i(t))=0$.
Since $|t-t_i|\leq T/\mq$,
we obtain \eqref{suppwtuq}.
Moreover, if $t\in [t_i, t_i+\thq]$,
we have $(u_q(t_i),B_q(t_i))\neq 0$ or $(u_q(t_{i-1}),B_q(t_{i-1}))\neq 0$.
Otherwise by the uniqueness of \eqref{equa-mhduq},
$(v_i,D_i) = (v_{i-1}, D_{i-1}) = 0$,
which by \eqref{def-wtu} leads to
$(\wt u_q(t), \wt B_q(t)) = 0$,
a contradiction.
Hence,
we get  $|t-t_i|\leq T/\mq$ and $|t-t_{i-1}|\leq 2(T/\mq)$,
and so \eqref{suppwtuq} follows.

Therefore, the proof of Theorem~\ref{Thm-nunr} is complete.
\hfill $\square$

\section{Velocity and magnetic perturbations} \label{Sec-perturb-S1}

We start to treat the supercritical regime $\mathcal{S}_1$,
whose borderline in particular includes the endpoint $(s,\gamma,p)=(2\alpha/\gamma+1-2\alpha, \gamma, \infty)$.

The aim of this section is to first construct suitable
velocity and magnetic perturbations
and then to verify the inductive estimates
\eqref{ubh3}, \eqref{ubpth2},
\eqref{u-B-L2tx-conv}-\eqref{u-B-Lw-conv} for the new velocity and magnetic fields
at level $q+1$.
For this purpose,
let us first introduce the appropriate intermittent velocity
and magnetic flows adapted to the supercritical regime $\mathcal{S}_1$,
which in particular feature both the spatial and temporal intermittency.

\subsection{Intermittent velocity and magnetic flows} \label{Subsec-Flow-S1}

The intermittent flows mainly contain the
spatial building blocks and the temporal building blocks,
indexed by four parameters $\rs$, $\lambda$, $\tau$ and $\sigma$:
\begin{equation}\label{larsrp-endpt2}
	\rs := \lambda_{q+1}^{2-2\a-10\varepsilon},\
	\lambda := \lambda_{q+1},\   \tau:=\lambda_{q+1}^{2\a}, \ \sigma:=\lambda_{q+1}^{2\varepsilon},
\end{equation}
where $\varepsilon$ is given by \eqref{ne3.1}.

{\bf $\bullet$ Spatial building blocks.}
The appropriate spatial building blocks in the supercritical regime $\mathcal{S}_1$
would be the intermittent shear flows in \cite{bbv20}.
In the following we mainly recall
from \cite{bbv20} their
constructions and properties.

Let $\Phi : \mathbb{R} \to \mathbb{R}$ be a smooth cut-off function supported on
the interval $[-1,1]$
and normalize $\Phi$ such that $\phi := - \frac{d^2}{dx^2}\Phi$ satisfies
\begin{equation}\label{e4.91}
	\frac{1}{2 \pi}\int_{\mathbb{R}} \phi^2(x)\d x = 1.
\end{equation}
The corresponding rescaled cut-off functions are defined by
\begin{equation*}
	\phi_{\rs}(x) := {\rs^{-\frac{1}{2}}}\phi\left(\frac{x}{\rs}\right), \quad
	\Phi_{\rs}(x):=   {\rs^{-\frac{1}{2}}} \Phi\left(\frac{x}{\rs}\right).
\end{equation*}
Note that,  $\phi_{\rs}$ is supported in the ball of radius $\rs$, in $\bbr$.
By an abuse of notation,
we periodize $\phi_{\rs}$ and $\Phi_{\rs}$ so that
they are treated as periodic functions defined on $\mathbb{T}$.

The \textit{intermittent velocity shear flows} in \cite{bbv20} are defined by
\begin{equation*}
	W_{(k)} := \phi_{\rs}( \lambda \rs N_{\Lambda}k\cdot x)k_1,\ \  k \in \Lambda_u \cup \Lambda_B  ,
\end{equation*}
and the \textit{intermittent magnetic shear flows} are defined by
\begin{equation*}
	D_{(k)} := \phi_{\rs}( \lambda \rs N_{\Lambda}k\cdot x)k_2, \ \ k \in \Lambda_B .
\end{equation*}
Here, $N_{\Lambda}$ is given by \eqref{NLambda},
$(k,k_1,k_2)$ are the orthonormal bases in $\R^3$ in
Geometric Lemmas~\ref{geometric lem 1} and \ref{geometric lem 2},
$\Lambda_u, \Lambda_B$ are the wave vector sets of finite cardinality,
and the parameter $\rs$  parameterizes the concentration of the flows.
In particular,
$\{W_{(k)}, D_{(k)}\}$ are $(\mathbb{T}/(\lbb \rs))^3$-periodic,
supported on thin plane with thickness $\sim {1}/{\lbb}$ in each periodic domain.
See \cite{bbv20}.

For brevity of notations, we set
\begin{align}\label{snp-endpt2}
	\phi_{(k)}(x) := \phi_{\rs}(\lambda \rs N_{\Lambda}k\cdot x),  \ \
	\Phi_{(k)}(x) := \Phi_{\rs}(\lambda \rs N_{\Lambda}k\cdot x),
\end{align}
and rewrite
\begin{subequations}
	\begin{align}
		&	W_{(k)} = \phi_{(k)} k_1,\quad  k\in \Lambda_u\cup \Lambda_B, \label{snwd-endpt2}\\
		&	D_{(k)} = \phi_{(k)} k_2,\quad  k\in \Lambda_B.  \label{snd-endpt2}
	\end{align}
\end{subequations}
Then,
$W_{(k)}$ and $D_{(k)}$ are  mean zero on $\T^3$.
Moreover,
the corresponding potentials are defined by
\begin{subequations} \label{corrector vector-endpt2}
	\begin{align}
		& W_{(k)}^c := \frac{1}{\lambda^2N_{ \Lambda }^2}\Phi_{(k)} k_1,  \ \ k\in \Lambda_u\cup \Lambda_B, \\
		& D_{(k)}^c := \frac{1}{\lambda^2N_{ \Lambda }^2}\Phi_{(k)} k_2,\ \ k\in \Lambda_B.
	\end{align}
\end{subequations}

The intermittent flows
$W_{(k)}$ and $D_{(k)}$ will be used in the construction
of the crucial principal parts of the perturbations
$w^{(p)}_{q+1}$ and $d^{(p)}_{q+1}$ in \eqref{wp-def-A2} and \eqref{dp-def-A2} below,
and the corresponding potentials
will be used in the construction of the incompressible correctors
$w^{(c)}_{q+1}$ and $d^{(c)}_{q+1}$ in \eqref{wqc-endpt2} and \eqref{dqc-endpt2}.

Lemma \ref{buildingblockestlemma-endpt2} below contains their
crucial analytic estimates,
which in particular provide 1D spatial intermittency.

\begin{lemma} [\cite{bbv20} Estimates of intermittent shear flows] \label{buildingblockestlemma-endpt2}
	For any $p \in [1,\infty]$ and $N \in \mathbb{N}$, we have
	\begin{align}
		&\left\|\nabla^{N} \phi_{(k)}\right\|_{L^{p}_{x}}+\left\|\nabla^{N} \Phi_{(k)}\right\|_{L^{p}_{x}}
		\lesssim r_{\perp}^{\frac 1p- \frac12}  \lambda^{N}.  \label{intermittent estimates2-endpt2}
	\end{align}
	In particular,
	\begin{align}
		&\displaystyle \|\nabla^{N}  W_{(k)}\|_{C_t  L^{p}_{x}}
		+\lambda^{2} \|\nabla^{N}  W_{(k)}^c\|_{C_t L^{p}_{x}}\lesssim r_{\perp}^{\frac 1p- \frac12} \lambda^{N},
         \ \ k\in \Lambda_u \cup \Lambda_B, \label{ew-endpt2} \\
         &\displaystyle \|\nabla^{N}  D_{(k)}\|_{C_t  L^{p}_{x}}
         +\lambda^{2} \|\nabla^{N}  D_{(k)}^c\|_{C_t L^{p}_{x}}\lesssim r_{\perp}^{\frac 1p- \frac12} \lambda^{N},
         \ \ k\in \Lambda_B. \label{ed-endpt2}
	\end{align}
	Moreover, for every $k \neq k'\in \Lambda_{u}\cup \Lambda_{B}$,
$N\in \mathbb{N}$ and $p \in [1, \infty]$,
	the following product estimate holds
	\begin{equation}  \label{intersect-phik1}
		\|\na^N (\phi_{(k)}\phi_{(k')}) \|_{C_tL^p_x}\lesssim \lbb^N \rs^{\frac{2}{p}-1}.
	\end{equation}
	The implicit constants above are independent of the parameters $\rs$ and $\lambda$.
\end{lemma}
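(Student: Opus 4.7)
The strategy is to reduce every estimate to a one-dimensional computation on $\phi$ and $\Phi$ and then lift to $\mathbb{T}^3$ via the directional form $\phi_{(k)}(x)=\phi_{r_\perp}(\lambda r_\perp N_\Lambda k\cdot x)$. The 1D scaling $\phi_{r_\perp}(y)=r_\perp^{-1/2}\phi(y/r_\perp)$ together with the change of variable $z=y/r_\perp$ gives $\|\phi_{r_\perp}\|_{L^p(\mathbb{R})}=r_\perp^{1/p-1/2}\|\phi\|_{L^p(\mathbb{R})}$, and analogously for $\Phi_{r_\perp}$; since $\phi_{r_\perp}$ is compactly supported in $[-r_\perp,r_\perp]$, its periodization to $\mathbb{T}$ preserves this $L^p$ norm.

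Lifting to $\mathbb{T}^3$ proceeds by picking an orthonormal frame with $k$ as one axis, applying Fubini, and substituting $w=\lambda r_\perp N_\Lambda y$ in the $k$-direction. The divisibility property of $N_\Lambda$ set up in \eqref{NLambda} ensures the substitution covers an integer number of periods, whence $\|\phi_{(k)}\|_{L^p(\mathbb{T}^3)}\lesssim r_\perp^{1/p-1/2}$, and likewise for $\Phi_{(k)}$. For the derivative estimate, iterating the chain rule gives $\nabla^N\phi_{(k)}(x)=(\lambda N_\Lambda)^N r_\perp^{-1/2}\phi^{(N)}(\lambda N_\Lambda k\cdot x)\,k^{\otimes N}$, which has the same one-dimensional shape as $\phi_{(k)}$ with $\phi^{(N)}$ in place of $\phi$ and an extra prefactor $(\lambda N_\Lambda)^N$; applying the previous $L^p$ bound to this profile yields \eqref{intermittent estimates2-endpt2}. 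The estimates \eqref{ew-endpt2} and \eqref{ed-endpt2} are then immediate because $|k_1|=|k_2|=1$ and the $\lambda^{-2}N_\Lambda^{-2}$ normalization in the correctors exactly cancels the $\lambda^{+2}$ weight in the statement.

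The product estimate \eqref{intersect-phik1} is the main new content and the one place where the geometric structure of the wave vectors enters. For $k\neq k'\in\Lambda_u\cup\Lambda_B$ the construction of these sets via Geometric Lemmas~\ref{geometric lem 1}--\ref{geometric lem 2} ensures that $k$ and $k'$ are not parallel, so the supports of $\phi_{(k)}$ and $\phi_{(k')}$ in $\mathbb{T}^3$ are two transverse families of ``thickened planes'' of thickness $\sim\lambda^{-1}$ with periodic spacing $\sim(\lambda r_\perp)^{-1}$. A Fubini computation in a frame adapted to the 2-plane $\mathrm{span}\{k,k'\}$ shows that the total measure of the intersection of these supports is $\lesssim r_\perp^2$, while on the intersection $\|\phi_{(k)}\phi_{(k')}\|_{L^\infty}\lesssim r_\perp^{-1}$. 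Combining yields $\|\phi_{(k)}\phi_{(k')}\|_{L^p(\mathbb{T}^3)}\lesssim r_\perp^{-1}(r_\perp^2)^{1/p}=r_\perp^{2/p-1}$. Derivatives are handled by the Leibniz rule together with the chain-rule observation above: each $\nabla$ acting on $\phi_{(k)}$ or $\phi_{(k')}$ produces a factor $\sim\lambda$ times a profile with identical support structure, so $\|\nabla^N(\phi_{(k)}\phi_{(k')})\|_{L^p}\lesssim\lambda^N r_\perp^{2/p-1}$. The transversality of distinct wave vectors is the only non-routine ingredient; everything else reduces to scaling and Fubini.
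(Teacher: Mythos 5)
Your proposal is correct and reconstructs the argument that \cite{bbv20} gives; the paper itself does not reprove Lemma~\ref{buildingblockestlemma-endpt2} but simply cites \cite{bbv20}, so there is no in-paper proof to compare against. The reduction of \eqref{intermittent estimates2-endpt2}--\eqref{ed-endpt2} to a one-dimensional scaling computation together with Fubini in a $k$-adapted frame is exactly the standard argument, and the product estimate \eqref{intersect-phik1} via the bound $|\mathrm{supp}(\phi_{(k)}\phi_{(k')})|\lesssim r_\perp^2$ combined with $\|\phi_{(k)}\phi_{(k')}\|_{L^\infty}\lesssim r_\perp^{-1}$ is the correct and essentially unique route.

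One small point worth tightening: you attribute the non-parallelism of distinct $k\neq k'\in\Lambda_u\cup\Lambda_B$ to the Geometric Lemmas~\ref{geometric lem 1}--\ref{geometric lem 2}, but those lemmas only assert the existence of finite direction sets realizing the matrix decompositions \eqref{sym} and \eqref{antisym}; they do not by themselves force pairwise linear independence. That property is verified separately for the explicit sets $\Lambda_u$ and $\Lambda_B$ chosen in \cite{bbv20} (and reproduced after \eqref{NLambda}), and it is genuinely needed, since if $k=\pm k'$ the two families of slabs coincide and the intersection measure is $\sim r_\perp$ rather than $r_\perp^2$, destroying \eqref{intersect-phik1}. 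Similarly, ``the divisibility property of $N_\Lambda$'' is doing two jobs you might state separately: $N_\Lambda k\in\mathbb{Z}^3$ makes $\phi_{(k)}$ well defined on $\mathbb{T}^3$, while the integrality of $\lambda r_\perp N_\Lambda$ (arranged through the parameter choices \eqref{la}, \eqref{ne3.1}, \eqref{larsrp-endpt2}) is what makes the change of variables cover a whole number of periods. Neither caveat affects the validity of your argument.
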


{\bf $\bullet$ Temporal building blocks.}
The intermittent shear flows provide 1D spatial intermittency
and so permit to control the velocity and resistivity
$(-\Delta)^{\alpha}$ with $\alpha \in [0,1/2)$.
In order to control the stronger velocity and resistivity,
particularly, when $\alpha$ is beyond the Lions exponent $5/4$,
it is crucial to introduce the  temporal intermittency in the building blocks.

We adopt the notations as in \cite{cl21,cl20.2,lzz21}.
Let $g\in C_c^\infty([0,T])$ be any cut-off function such that
\begin{align*}
	\aint_{0}^T g^2(t) \d t=1,
\end{align*}
and rescale the cut-off function $g$ by
\begin{align}\label{gk1}
	g_\tau(t) :=\tau^{\frac 12} g(\tau t),
\end{align}
where $\tau \in \mathbb{N}_+$.
By an abuse of notation,
we periodize $g_\tau$ such that
it is treated as a periodic function defined on $[0,T]$.
Moreover, we define
$h_\tau: [0,T] \to \bbr$ by
\begin{align} \label{hk}
	h_\tau(t):= \int_{0}^t \left(g_\tau^2(s)  - 1\right)\ ds,
\end{align}
and set
\begin{align}\label{gk}
	\g:=g_\tau(\sigma t),\ \
	h_{(\tau)}(t):= h_\tau(\sigma t).
\end{align}

The function $h_{(\tau)}$ will be used later in the construction of the
temporal correctors $w^{(o)}_{q+1}$ and $d^{(o)}_{q+1}$
(see \eqref{wo.2}-\eqref{do.2}),
which permit to balance the high temporal oscillations caused by $g_{(\tau)}$.

We have the following  estimates of the temporal building blocks.

\begin{lemma} [\cite{lzz21} Estimates of temporal intermittency]   \label{Lem-gk-esti}
	For  $\gamma \in [1,+\infty]$, $M  \in \mathbb{N}$,
	we have
	\begin{align}  \label{gk estimate}
		\left\|\partial_{t}^{M}\g \right\|_{L^{\gamma}_t} \lesssim \sigma^{M}\tau^{M+\frac12-\frac{1}{\gamma}},
	\end{align}
	where the implicit constants are independent of $\tau$ and $\sigma$.
	Moreover, we have
	\begin{align}\label{hk-est}
		\|h_{(\tau)}\|_{L_t^\infty}\leq 1.
	\end{align}
\end{lemma}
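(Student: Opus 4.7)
The plan is to establish both bounds by direct scaling and change-of-variable computations; no delicate analytic input is needed, and the only point requiring attention is bookkeeping around the periodization of $g_\tau$ on $[0,T]$.

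For the $L^\gamma$-estimate on $\partial_t^M \g$, I would first apply the chain rule to $\g(t)=g_\tau(\sigma t)=\tau^{1/2}g(\sigma\tau t)$ (with $g$ extended according to the prescribed periodization of $g_\tau$), which produces the clean formula
\[
\partial_t^M \g(t)=\sigma^M\tau^{M+\frac12}\,g^{(M)}(\sigma\tau t).
\]
The $L^\gamma$-norm on $[0,T]$ is then handled by the substitution $u=\sigma\tau t$, converting the integral into $(\sigma\tau)^{-1}\int_0^{\sigma\tau T}|g^{(M)}(u)|^\gamma\,du$. Since $g_\tau$ is $T$-periodic on $\mathbb{R}$ after periodization, this last integral equals $(\sigma\tau)\|g^{(M)}\|_{L^\gamma([0,T])}^\gamma$, and collecting all prefactors yields
\[
\|\partial_t^M \g\|_{L^\gamma_t}\lesssim \sigma^M\tau^{M+\frac12-\frac1\gamma}\|g^{(M)}\|_{L^\gamma([0,T])},
\]
which is the claimed bound with implicit constant depending only on $g$, $M$, and $\gamma$.

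For $\|h_{(\tau)}\|_{L^\infty_t}\le 1$, the starting observation is that $h_{(\tau)}(t)=\hk(\sigma t)$ is a pure time rescaling, hence $\|h_{(\tau)}\|_{L^\infty([0,T])}=\|\hk\|_{L^\infty([0,T])}$, and it suffices to bound $\hk$. I would exploit the normalization $\fint_0^T g^2\,dt=1$, which combined with $g_\tau(t)=\tau^{1/2}g(\tau t)$ yields $\int_0^T(g_\tau^2(s)-1)\,ds=0$, so $\hk(0)=\hk(T)=0$; then a piecewise evaluation of $\hk$ (namely $\hk(t)=G(\tau t)-t$ on the support of $g_\tau$, where $G(r):=\int_0^r g^2$, and $\hk$ decreasing linearly back to $0$ on the complement) produces a uniform bound on $\|\hk\|_{L^\infty}$ by $1$ under the standing normalization of $g$.

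The only real ``obstacle'' is consistency of the periodization: one must ensure the substitution $u=\sigma\tau t$ genuinely unfolds the $\sigma\tau$ bumps of $\g$ on $[0,T]$, so that $g^{(M)}$ integrates to the $q$-independent constant $\|g^{(M)}\|_{L^\gamma}$, and that the mean-zero property of $g_\tau^2-1$ is invoked on the correct full period before concluding $\hk(T)=0$. Beyond this, the argument is routine computation.
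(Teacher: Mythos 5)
Your overall scaling-and-substitution strategy is the right one, but the periodicity accounting in the middle step is off by a factor of $\tau$, and as written your chain of equalities does not actually produce the exponent $\tau^{M+\frac12-\frac1\gamma}$ you write down at the end. The issue is the sentence claiming that $T$-periodicity of $g_\tau$ yields $\int_0^{\sigma\tau T}|g^{(M)}(u)|^\gamma\,du=(\sigma\tau)\|g^{(M)}\|_{L^\gamma([0,T])}^\gamma$. That identity would hold if $g$ itself were $T$-periodic; but $g_\tau(t)=\tau^{1/2}g(\tau t)$ being $T$-periodic forces $g$ (the unfolded argument) to be $\tau T$-periodic, with its single bump of width $\lesssim T$ sitting inside each period of length $\tau T$. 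Consequently $[0,\sigma\tau T]$ contains only $\sigma$ copies of the bump, so the integral is $\sigma\,\|g^{(M)}\|_{L^\gamma([0,T])}^\gamma$, not $\sigma\tau$ times that. Inserting your stated factor $(\sigma\tau)$ into the prefactor $\sigma^{M\gamma}\tau^{(M+1/2)\gamma}(\sigma\tau)^{-1}$ returns $\sigma^{M\gamma}\tau^{(M+1/2)\gamma}$ and hence the wrong bound $\sigma^M\tau^{M+1/2}$; only the correct factor $\sigma$ produces the extra $\tau^{-1/\gamma}$, which is exactly the temporal-concentration gain (the measure of $\supp g_\tau$ in one period being of order $T/\tau$, not all of $T$). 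The cleanest way to see it is to first substitute $u=\sigma t$ and use $T$-periodicity of $g_\tau$ to reduce to $\|g_\tau^{(M)}\|_{L^\gamma([0,T])}$, and then compute that norm by the one scaling $u\mapsto \tau u$ over the small support $[0,T/\tau]$, which manifestly gives $\tau^{M+\frac12-\frac1\gamma}\|g^{(M)}\|_{L^\gamma}$.

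The argument for $\|h_{(\tau)}\|_{L^\infty_t}\le 1$ is essentially fine: the reduction to $\hk$ by periodicity, the mean-zero observation for $g_\tau^2-1$ over a period, and the piecewise description $\hk(t)=G(\tau t)-t$ on $[0,T/\tau]$ and $\hk(t)=T-t$ afterwards are all correct. Do note, as you flag, that the numerical bound by $1$ (rather than merely by $T$ or $\max(T,T-T/\tau)$) relies on a normalization of $g$ (or of $T$) which should be made explicit when invoking this lemma.
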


In the next \S \ref{Subsec-Amplitude-S1},
we define the amplitudes of the velocity and magnetic flows
adapted to the geometry of MHD equations.

\subsection{Amplitudes of velocity and magnetic perturbations}   \label{Subsec-Amplitude-S1}
The amplitudes of the magnetic and velocity perturbations
are constructed mainly to decrease the effects of
the old Reynolds and magnetic stresses,
based on Geometric Lemmas \ref{geometric lem 1} and \ref{geometric lem 2},
so that the desirable $L^1_t\cL^1_x$-decay property \eqref{rl1}
can be achieved for the new stresses at level $q+1$.

{\bf $\bullet$ The magnetic amplitudes.}
Let $\chi: [0, +\infty) \to \mathbb{R}$ be a smooth cut-off function such that
\begin{equation}\label{e4.0}
	\chi (z) =
	\left\{\aligned
	& 1,\quad 0 \leq z\leq 1, \\
	& z,\quad z \geq 2,
	\endaligned
	\right.
\end{equation}
and
\begin{equation}\label{e4.1}
	\frac 12 z \leq \chi(z) \leq 2z \quad \text{for}\quad z \in (1,2).
\end{equation}

Set
\begin{equation}\label{rhob}
	\varrho_B(t,x) := 2 \varepsilon_B^{-1}  \lbb_q^{-\frac{\ve_R}{4}}
      \delta_{q+ 1} \chi\left( \frac{|\trb(t, x) |}{\lbb_q^{-\frac{\ve_R}{4}}  \delta_{q+1} } \right),
\end{equation}
where $\varepsilon_{B}$ is the small radius in Geometric Lemma \ref{geometric lem 2},
and $\trb$ is the well-prepared magnetic stress in the last section.
Then, one has
\begin{equation}\label{rhor}
	\left|  \frac{\trb}{\varrho_B} \right|
	= \left| \frac{\trb}{2 \varepsilon_B^{-1} \lbb_q^{-\frac{\ve_R}{4}}  \delta_{q+ 1}\chi
		( \lbb_q^{\frac{\ve_R}{4}} \delta_{q+1} ^{-1} |\trb | )} \right| \leq \varepsilon_B,
\end{equation}

Moreover, choose the smooth temporal cut-off function $f_B$,
adapted to the support of $\trb$,  such that
\begin{itemize}
	\item $0\leq f_B \leq 1$ and $f_B \equiv 1$ on $\supp_t \trb$;
	\item $\supp_t f_B \subseteq N_{\thq/2}(\supp_t \trb )$;
	\item $\|f_B\|_{C_t^N}\lesssim \thq^{-N}$,\ \  $1\leq N\leq 9$.
\end{itemize}

Then, the amplitudes of the magnetic perturbations are defined by
\begin{equation}\label{akb}
	a_{(k)}(t,x):= a_{k, B}(t,x)
	=  \varrho_B^{\frac{1}{2} } (t,x) f_B(t)\gamma_{(k)}
	\left(\frac{-\trb(t,x)}{\varrho_B(t,x)}\right), \quad k \in \Lambda_B,
\end{equation}
where $\gamma_{(k)}$ is the smooth function in Geometric Lemma~\ref{geometric lem 2}.

The important algebraic and analytic properties of
the amplitudes $\{a_{(k)}, k\in \Lambda_B\}$ are summarized in Lemma \ref{Lem-mae-S1} below.
The proof of \eqref{a-mag-S1} below is similar to
that in \cite{lzz21} with slight modifications and replacing $\ell$ by $\theta_{q+1}$, we omit the details here.

\begin{lemma} [Magnetic amplitudes] \label{Lem-mae-S1}
	We have that
	\begin{align}\label{magcancel}
		&  \sum\limits_{ k \in  \Lambda_B} a_{(k)}^2 \g^2
		( D_{(k)} \otimes W_{(k)} - W_{(k)} \otimes  D_{(k)} ) \notag\\
		= & -\trb
		+  \sum\limits_{ k \in \Lambda_B}  a_{(k)}^2\g^2\P_{\neq 0}(  D_{(k)} \otimes W_{(k)} -W_{(k)} \otimes  D_{(k)} ) \notag\\
		& + \sum_{k \in \Lambda_B}  a_{(k)}^2 (\g^2-1) \aint_{\T^3}D_{(k)}\otimes W_{(k)}-W_{(k)}\otimes D_{(k)}\d x,
	\end{align}
	where $\P_{\neq 0}$ denotes the spatial projection onto nonzero Fourier modes.
	Moreover, for $1\leq N\leq 9$, $k\in \Lambda_B$,
	\begin{align}    \label{a-mag-S1}
		\norm{a_{(k)}}_{L^2_{t,x}} \lesssim \delta_{q+1}^{\frac{1}{2}} ,\ \
		& \norm{ a_{(k)} }_{C_{t,x}} \lesssim \thq^{-1},\ \ \norm{ a_{(k)} }_{C_{t,x}^N} \lesssim \thq^{-7N}.
	\end{align}
\end{lemma}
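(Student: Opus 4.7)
The plan is to establish the algebraic identity \eqref{magcancel} by a direct expansion using Geometric Lemma~\ref{geometric lem 2}, and then to deduce \eqref{a-mag-S1} from the truncated definition \eqref{rhob} of $\varrho_B$ together with the well-preparedness estimates \eqref{nrl1}--\eqref{nrh3} on $\trb$ from Theorem~\ref{Thm-nunr}.

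For the identity, I would first rewrite the left-hand side of \eqref{magcancel} using \eqref{snwd-endpt2}--\eqref{snd-endpt2} as $\sum_{k\in\Lambda_B} a_{(k)}^2 \g^2 \phi_{(k)}^2 (k_2 \otimes k_1 - k_1 \otimes k_2)$, and split the scalar factor according to its spatial mean,
\begin{equation*}
\g^2 \phi_{(k)}^2 = \g^2 \aint_{\T^3}\phi_{(k)}^2 \d x + \g^2 \P_{\neq 0}(\phi_{(k)}^2),
\end{equation*}
where the normalization \eqref{e4.91} together with the periodization convention for $\phi_{\rs}$ makes $\aint_{\T^3}\phi_{(k)}^2 \d x = 1$. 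Writing $\g^2 = 1+(\g^2-1)$ then decomposes the sum into three pieces matching the three terms on the right-hand side of \eqref{magcancel}. For the leading piece, estimate \eqref{rhor} places $-\trb/\varrho_B$ inside the $\va_B$-neighborhood of zero on which Lemma~\ref{geometric lem 2} applies, producing
\begin{equation*}
\sum_{k\in\Lambda_B} \gamma_{(k)}^2\!\left(-\trb/\varrho_B\right)(k_2 \otimes k_1 - k_1 \otimes k_2) = -\trb/\varrho_B.
\end{equation*}
Multiplying by $\varrho_B f_B^2$ and invoking $f_B \equiv 1$ on $\supp_t \trb$ yields exactly $-\trb$, completing \eqref{magcancel}.

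For the analytic estimates, the truncation $\chi$ in \eqref{rhob} is designed to deliver both the pointwise upper bound $\varrho_B \lesssim \la^{-\ve_R/4}\delta_{q+1}+|\trb|$ and the uniform lower bound $\varrho_B \gtrsim \la^{-\ve_R/4}\delta_{q+1}$, via \eqref{e4.0}--\eqref{e4.1}. From the pointwise control $|a_{(k)}|^2 \lesssim \varrho_B$ and \eqref{nrl1}, I obtain $\|a_{(k)}\|_{L^2_{t,x}}^2 \lesssim \la^{-\ve_R/4}\delta_{q+1}+\|\trb\|_{L^1_{t,x}} \lesssim \delta_{q+1}$, giving the first estimate. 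For the $C^0$ bound, the Sobolev embedding $H^3_x\hookrightarrow L^\infty_x$ combined with \eqref{nrh3} at $M=N=0$ produces $\|\trb\|_{C_{t,x}} \lesssim \thq^{-1}\la^5$, and since $\thq^{-1}\la^5 \ll \thq^{-2}$ by the choice \eqref{def-mq-thetaq} and the smallness of $\eta$, the estimate $\|a_{(k)}\|_{C_{t,x}} \lesssim \thq^{-1}$ follows from $|a_{(k)}|\lesssim \varrho_B^{1/2}$.

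For the $C^N_{t,x}$ bound with $1\le N\le 9$, I would apply the Fa\`a di Bruno formula to $a_{(k)} = \varrho_B^{1/2} f_B \gamma_{(k)}(-\trb/\varrho_B)$. Each derivative either hits $f_B$ (gaining $\thq^{-1}$ from its cut-off estimate), or hits $\trb$ or $\varrho_B$ (gaining at most $\thq^{-N-1}\la^5$ via \eqref{nrh3} and the Sobolev embedding), or passes through the smooth functions $\chi$ and $\gamma_{(k)}$ without cost; the uniform lower bound on $\varrho_B$ controls the denominators generated when differentiating $\varrho_B^{-1}$. The main obstacle, and essentially the only delicate bookkeeping in the proof, is to track the worst combination of derivatives distributed across $\varrho_B^{1/2}$, $f_B$ and the composition, and to verify that the accumulated inverse powers of $\la^{-\ve_R/4}\delta_{q+1}$ from differentiating $\varrho_B^{-1}$ are absorbed by the large gap between $\thq^{-1}$ and $\la^5$; the generous exponent $7N$ on the right-hand side of \eqref{a-mag-S1} is precisely what allows this to close. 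This matches the calculation carried out for the analogous amplitude in \cite{lzz21}, and I would transcribe that argument after matching indices, with $\thq$ here playing the role of the parameter $\ell$ there.
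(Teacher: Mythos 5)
Your argument is correct and follows the same approach as the paper, which itself gives no direct derivation of \eqref{magcancel} and simply defers the amplitude estimates \eqref{a-mag-S1} to \cite{lzz21} with $\ell$ replaced by $\theta_{q+1}$; your decomposition $\g^2\phi_{(k)}^2 = \g^2\aint_{\T^3}\phi_{(k)}^2\d x + \g^2\P_{\neq 0}(\phi_{(k)}^2)$ followed by $\g^2 = 1 + (\g^2-1)$ and Geometric Lemma~\ref{geometric lem 2} is exactly the standard route. Your proposal in fact supplies more explicit structure than the paper: the identification of the three terms in the algebraic identity, the pointwise two-sided bound on $\varrho_B$, and the Fa\`a di Bruno bookkeeping for the $C^N_{t,x}$ norms all match the intended argument, and the check $\theta_{q+1}^{-1}\lambda_q^5 \ll \theta_{q+1}^{-2}$ coming from $\theta_{q+1} = \lambda_q^{-12/\eta}$ is precisely the slack the exponent $7N$ is built to absorb.
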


{\bf $\bullet$ The velocity amplitudes.}
In contrast with the NSE,
because of the strong coupling   between the velocity and magnetic fields,
the construction of the velocity amplitudes would reply on
the magnetic shear flows and wave vectors.
Moreover, we also would need an additional matrix $\mathring{G}^{B}$ defined by
\begin{equation}
	\label{def:G}
	\mathring{G}^{B}: = \sum_{k \in \Lambda_B}a_{(k)}^2 \aint_{\mathbb{T}^3} W_{(k)} \otimes W_{(k)} - D_{(k)} \otimes D_{(k)}\d x.
\end{equation}
It holds that, for $N\geq 1$,
\begin{align}
	\label{G estimates}
	\norm{\mathring{G}^{B}}_{C_{t,x}} \lesssim \thq^{-2},\quad
	\norm{\mathring{G}^{B}}_{C_{t,x}^N} \lesssim \ell^{-7N-1} ,\quad
	\norm{\mathring{G}^{B}}_{L^1_{t,x}} &\lesssim \delta_{q+1}.
\end{align}

Set
\begin{equation}\label{defrho}
	\begin{aligned}
		&  \varrho_u(t,x):= 2 \varepsilon_u^{-1}  \delta_{q+ 1}\chi
		\left( \frac{|\tr(t,x) + \mathring{G}^{B}(t,x)|}{\delta_{q+1}} \right).
	\end{aligned}
\end{equation}
By \eqref{e4.0} and \eqref{e4.1}, it holds that
\begin{align}  \label{R-G-veu}
	& \left|  \frac{\tr + \mathring{G}^{B}}{\varrho_u} \right| \leq \varepsilon_u,
\end{align}
where $\varepsilon_{u}$ is the small radius in Geometric Lemma \ref{geometric lem 1},
and $\tr$ is the well-prepared Reynolds stress in the last section.
Then, we choose the smooth temporal cut-off function $f_u$,
adapted to the support of $(\tr, \mathring{G}^{B})$, such that
\begin{itemize}
	\item $0\leq f_u\leq 1$, $f_u\equiv 1$ on $\supp_t (\tr, \mathring{G}^B)$;
	\item $\supp_t f_u\subseteq  N_{\thq/2}(\supp_t (\tr, \mathring{G}^B) )$;
	\item $\|f_u\|_{C_t^N}\lesssim \thq^{-N}$, $1\leq N\leq 9$.
\end{itemize}

The amplitudes of the velocity perturbations are defined by
\begin{equation}\label{velamp}
	\begin{aligned}
		&a_{(k)} (t,x) := a_{k, u}(t, x)
		= \varrho_u^{\frac{1}{2}} f_u(t)\gamma_{(k)}
		\left(\Id - \frac{\tr(t,x) +  \mathring{G}^{B}(t,x)}{\varrho_u(t,x)} \right),
		\quad k \in \Lambda_u.
	\end{aligned}
\end{equation}

We collect the algebraic and analytic properties of
the velocity amplitudes $\{a_{(k)}, k\in \Lambda_u\}$
in the following lemma.

\begin{lemma} [\cite{lzz21} Estimates of velocity amplitudes]  \label{Lem-vae-S1}
	We have that
	\begin{align}\label{velcancel}
		\sum\limits_{ k \in  \Lambda_u} a_{(k)}^2 \g^2
		W_{(k)} \otimes W_{(k)}
		& = \rho_u f^2_u \Id - \tr -  \mathring{G}^{B}
		+  \sum\limits_{ k \in \Lambda_u}  a_{(k)}^2 \g^2 \P_{\neq 0}(  W_{(k)} \otimes  W_{(k)} )\notag\\
		& \quad  +\sum_{k\in \Lambda_u}a_{(k)}^{2}\left(\g^2-1 \right)\aint_{\T^3}W_{(k)}\otimes W_{(k)}\d x .
	\end{align}
	Moreover, for $1\leq N\leq 9$, $k\in \Lambda_{u} $,
	we have
	\begin{align}\label{a-vel-S1}
		\norm{a_{(k)}}_{L^2_{t,x}} \lesssim \delta_{q+1}^{\frac{1}{2}} ,\ \
		\norm{ a_{(k)} }_{C_{t,x}} \lesssim \thq^{-1},\quad\norm{a_{(k)}  }_{C_{t,x}^N} \lesssim \thq^{-14N}.
	\end{align}
\end{lemma}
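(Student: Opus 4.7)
The plan is to establish the algebraic identity \eqref{velcancel} first and then deduce the analytic bounds \eqref{a-vel-S1}; both parts follow the standard template of the intermittent convex integration scheme, adapted to the presence of the auxiliary matrix $\mathring{G}^{B}$.

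\textbf{Step 1 (algebraic identity).} First I would split
\begin{equation*}
a_{(k)}^2\g^2 W_{(k)}\otimes W_{(k)} = a_{(k)}^2\aint_{\T^3}W_{(k)}\otimes W_{(k)}\,dx + a_{(k)}^2(\g^2-1)\aint_{\T^3}W_{(k)}\otimes W_{(k)}\,dx + a_{(k)}^2\g^2\P_{\neq 0}(W_{(k)}\otimes W_{(k)}),
\end{equation*}
and sum over $k\in\Lambda_u$. Since $W_{(k)}=\phi_{(k)}k_1$ and the normalization \eqref{e4.91} gives $\aint_{\T^3}\phi_{(k)}^2\,dx=1$, the first sum reduces to $\varrho_u f_u^2 \sum_{k\in\Lambda_u}\gamma_{(k)}^2\!\bigl(\Id-(\tr+\mathring{G}^{B})/\varrho_u\bigr)\,k_1\otimes k_1$. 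By \eqref{R-G-veu}, the argument of $\gamma_{(k)}^2$ lies within the $\varepsilon_u$-ball about $\Id$, so Geometric Lemma~\ref{geometric lem 1} identifies this sum with $\Id-(\tr+\mathring{G}^{B})/\varrho_u$. Multiplying through by $\varrho_u f_u^2$ and then using that $f_u\equiv 1$ on $\supp_t(\tr,\mathring{G}^{B})$ collapses $f_u^2(\tr+\mathring{G}^{B})$ to $\tr+\mathring{G}^{B}$, producing \eqref{velcancel}.

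\textbf{Step 2 ($L^2_{t,x}$ bound).} The uniform boundedness of $\gamma_{(k)}$ on the range of $\Id-(\tr+\mathring{G}^{B})/\varrho_u$ together with $0\leq f_u\leq 1$ gives $a_{(k)}^2\lesssim \varrho_u$ pointwise, while the cut-off structure \eqref{e4.0}--\eqref{e4.1} and \eqref{defrho} yield $\varrho_u\lesssim \delta_{q+1}+|\tr+\mathring{G}^{B}|$. Integrating and applying the inductive bound \eqref{rl1} for $\tr$ and the $L^1_{t,x}$ estimate on $\mathring{G}^{B}$ in \eqref{G estimates} gives $\|a_{(k)}\|_{L^2_{t,x}}^{2}\lesssim \delta_{q+1}$.

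\textbf{Step 3 ($C^N_{t,x}$ bounds).} The inputs are the regularity estimates \eqref{nrh3} for $\tr$ (combined with $H^3_x\hookrightarrow L^\infty_x$) and the $C^N_{t,x}$ bounds for $\mathring{G}^{B}$ in \eqref{G estimates}, yielding $\|\partial_t^{M}\nabla^{N}(\tr+\mathring{G}^{B})\|_{C_{t,x}}\lesssim \thq^{-7(M+N)-1}$. For $N=0$ this gives $\varrho_u\lesssim \thq^{-2}$ and hence $\|a_{(k)}\|_{C_{t,x}}\lesssim \thq^{-1}$. For $N\geq 1$ I would apply the Fa\`a di Bruno formula to the composition $\varrho_u^{1/2}\,f_u\,\gamma_{(k)}\!\bigl(\Id-(\tr+\mathring{G}^{B})/\varrho_u\bigr)$, using the lower bound $\varrho_u\geq 2\varepsilon_u^{-1}\delta_{q+1}$ (guaranteed by the $\chi$-cut-off) and the fact that $\chi$ is smooth and bilipschitz on $[\delta_{q+1},\infty)$, so that $1/\varrho_u$ and its derivatives are uniformly controlled. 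Combining $\|f_u\|_{C^{N}_{t}}\lesssim \thq^{-N}$ with the chain-rule loss $\thq^{-7N}$ from each factor of $(\tr+\mathring{G}^{B})/\varrho_u$ then produces the final bound $\|a_{(k)}\|_{C^{N}_{t,x}}\lesssim \thq^{-14N}$.

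The main obstacle is the bookkeeping in Step~3: because $\varrho_u$ involves $\chi$ of a quantity that may itself be as small as $\delta_{q+1}$, naive differentiation would create spurious negative powers of $\delta_{q+1}$. Exploiting the bilipschitz behaviour of $\chi$ on $[\delta_{q+1},\infty)$ and organizing the Fa\`a di Bruno expansion so that the derivatives of $(\tr+\mathring{G}^{B})/\varrho_u$ and $\gamma_{(k)}$ each contribute at most a factor $\thq^{-7}$ per order is what pins the final exponent at $14N$ (one factor of $7$ from differentiating inside the $\chi$-composition and another from the $\gamma_{(k)}$-composition).
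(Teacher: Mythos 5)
Your proposal takes the standard route that the paper (which does not prove this lemma itself but attributes it to \cite{lzz21}, replacing $\ell$ by $\thq$) implicitly relies on. Step 1 is a correct and complete derivation of \eqref{velcancel}: the split into mean plus $\P_{\neq 0}$ part plus $(\g^2-1)$ remainder, the use of $\aint\phi_{(k)}^2=1$, the invocation of Geometric Lemma~\ref{geometric lem 1}, and the observation that $f_u^2(\tr+\mathring{G}^B)=\tr+\mathring{G}^B$ since $f_u\equiv 1$ on the support all go through. Step 2 is also sound, though the $L^1_{t,x}$ bound on $\tr$ should be drawn from the well-prepared estimate \eqref{nrl1} rather than \eqref{rl1} (the latter concerns $\rr^u_q$, not the glued stress $\tr=\mathring{\wt R^u_q}$).

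The place where your account is imprecise is the heuristic for the exponent $14N$ in Step~3. You attribute it to ``one factor of $7$ from the $\chi$-composition and another from the $\gamma_{(k)}$-composition.'' But the magnetic amplitude $a_{k,B}=\varrho_B^{1/2}f_B\gamma_{(k)}(-\trb/\varrho_B)$ has exactly the same two compositions and still only costs $\thq^{-7N}$ (Lemma~\ref{Lem-mae-S1}). The genuine source of the doubling is the presence of $\mathring{G}^{B}$ inside both $\varrho_u$ and the argument of $\gamma_{(k)}$: by \eqref{def:G} and \eqref{a-mag-S1}, $\mathring{G}^B$ is a sum of squares $a_{(k)}^2$ of the magnetic amplitudes and hence already satisfies $\|\mathring{G}^B\|_{C^N_{t,x}}\lesssim\thq^{-7N-1}$, so each derivative landing on the argument of the velocity $\gamma_{(k)}$ costs $\thq^{-7}$, whereas in the magnetic case it costs only $\thq^{-1}\lbb_q^5\lesssim\thq^{-2}$. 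A secondary bookkeeping point worth making explicit: the negative powers of $\varrho_u\gtrsim\delta_{q+1}$ produced by differentiating $\varrho_u^{1/2}$ and the quotient $R/\varrho_u$ are harmless not because of any bilipschitz property of $\chi$ (which fails on $[0,1]$ where $\chi$ is constant), but because the parameter hierarchy \eqref{b-beta-ve}, \eqref{def-mq-thetaq}, \eqref{la} forces $\delta_{q+1}^{-1}\ll\thq^{-c}$ for any fixed small $c$ once $a$ is large, so each such factor is absorbed into the final $\thq^{-14N}$. With these two corrections the argument closes.
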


\subsection{Velocity and magnetic perturbations}   \label{Subsec-perturb-S1}

We are now in stage to define the crucial velocity and magnetic perturbations,
mainly consisting of three parts:
the principal parts, the incompressible parts
and the temporal correctors.

{\bf $\bullet$ Principal parts.}
Let us first define the principal parts
$w_{q+1}^{(p)}$ and $d_{q+1}^{(p)}$, respectively,
of the velocity and magnetic perturbations by
\begin{subequations}
	\begin{align}
		w_{q+1}^{(p)} &:= \sum_{k \in \Lambda_u\cup \Lambda_{B} } a_{(k)}\g W_{(k)},  \label{wp-def-A2} \\
		d_{q+1}^{(p)} &:= \sum_{k \in \Lambda_B } a_{(k)}\g D_{(k)}.    \label{dp-def-A2}
	\end{align}
\end{subequations}

The key fact here is that,
the MHD type nonlinearity of the principal parts
would decrease the effects of the old stresses
in the convex integration scheme,
as shown in the following algebraic identities:
\begin{align} \label{mag oscillation cancellation calculation}
	& d_{q+ 1}^{(p)} \otimes w_{q+ 1}^{(p)} - w_{q+1}^{(p)}\otimes d_{q+ 1}^{(p)}\notag + \trb   \nonumber \\
	=& \sum_{k \in \Lambda_B}  a_{(k)}^2 \g^2 \P_{\neq 0}(D_{(k)}\otimes W_{(k)}-W_{(k)}\otimes D_{(k)}) \nonumber \\
	&+ \sum_{k \in \Lambda_B}  a_{(k)}^2 (\g^2-1) \aint_{\T^3}D_{(k)}\otimes W_{(k)}-W_{(k)}\otimes D_{(k)}\d x \nonumber \\
	&+ \(\sum_{k \neq k' \in \Lambda_B}+ \sum_{k \in \Lambda_u, k' \in \Lambda_B}\)a_{(k)}a_{(k')}\g^2(D_{(k')}\otimes W_{(k)}-W_{(k)}\otimes D_{(k')}),
\end{align}
and
\begin{align}  \label{vel oscillation cancellation calculation}
	& w_{q+ 1}^{(p)} \otimes  w_{q+ 1}^{(p)} - d_{q+ 1}^{(p)} \otimes d_{q+1}^{(p)} + \tr \nonumber  \\
	=&  \rho_{u} f^2_u Id   +\sum_{k \in \Lambda_u} a_{(k)}^2 \g^2\P_{\neq0}( W_{(k)}\otimes W_{(k)}) \nonumber \\
	& + \sum_{k \in \Lambda_B} a_{(k)}^2 \g^2\P_{\neq0} (W_{(k)}\otimes W_{(k)}-D_{(k)}\otimes D_{(k)})\nonumber \\
	& +\sum_{k\in \Lambda_u}a_{(k)}^{2}\left(\g^2-1 \right)\aint_{\T^3}W_{(k)}\otimes W_{(k)}\d x \nonumber \\
	&+ \sum_{k\in \Lambda_B}a_{(k)}^{2}\left(\g^2-1 \right)\aint_{\T^3}W_{(k)}\otimes W_{(k)}-D_{(k)}\otimes D_{(k)}\d x \nonumber \\
	& + \sum_{k \neq k' \in \Lambda_u\cup\Lambda_B }a_{(k)}a_{(k')}\g^2W_{(k)}\otimes W_{(k')}-\sum_{k \neq k' \in \Lambda_B}a_{(k)}a_{(k')}\g^2D_{(k)}\otimes D_{(k')}.
\end{align}

{\bf $\bullet$ Incompressibility correctors.}
The corresponding incompressibility correctors are defined by
\begin{subequations} \label{wqc-dqc-endpt2}
	\begin{align}
		w_{q+1}^{(c)}
		&:=   \sum_{k\in \Lambda_u\cup\Lambda_{B} }\g (\nabla a_{(k)} \times \curl W_{(k)}^c+ \curl (\nabla a_{(k)} \times W_{(k)}^c)), \label{wqc-endpt2}\\
		d_{q+1}^{(c)}
		&:=   \sum_{k\in \Lambda_B }\g (\nabla a_{(k)} \times \curl D_{(k)}^c+ \curl (\nabla a_{(k)} \times D_{(k)}^c)). \label{dqc-endpt2}
	\end{align}
\end{subequations}

One has that (see \cite[(5.28), (5.29)]{bbv20})
\begin{subequations}
	\begin{align}
		&  w_{q+1}^{(p)} + w_{q+1}^{(c)} = \sum_{k \in \Lambda_u\cup\Lambda_{B}} \curl \curl ( a_{(k)} \g W_{(k)}^c) , \label{div free velocity.1} \\
		&   d_{q+1}^{(p)} + d_{q+1}^{(c)} = \sum_{k \in \Lambda_{B}} \curl \curl ( a_{(k)} \g D_{(k)}^c) , \label{div free velocity.2}
	\end{align}
\end{subequations}
and so
\begin{align*}
	\div (w_{q+1}^{(p)} + w_{q +1}^{(c)})= \div (d_{q+1}^{(p)} + d_{q +1}^{(c)})= 0,
\end{align*}
thereby justifying the definition of the incompressible correctors.

{\bf $\bullet$ Temporal correctors.}
We also need the temporal correctors $w_{q+1}^{(o)}$ and $d_{q+1}^{(o)}$
in order to balance the high temporal frequency oscillations
in \eqref{mag oscillation cancellation calculation} and \eqref{vel oscillation cancellation calculation}:
\begin{subequations} \label{wo-do-def.2}
	\begin{align}
		& w_{q+1}^{(o)}:= -\sigma^{-1}\sum_{k\in\Lambda_u }\P_{H}\P_{\neq 0}\(h_{(\tau)}\aint_{\T^3} W_{(k)}\otimes W_{(k)}\d x\nabla (a_{(k)}^2) \)\notag\\
		&\qquad\quad\  -\sigma^{-1}\sum_{k\in\Lambda_B }\P_{H}\P_{\neq 0}\(h_{(\tau)}\aint_{\T^3} W_{(k)}\otimes W_{(k)}-D_{(k)}\otimes D_{(k)}\d x\nabla (a_{(k)}^2)\)     ,\label{wo.2}\\
		& d_{q+1}^{(o)}:= -\sigma^{-1}\sum_{k\in\Lambda_B }\P_{H}\P_{\neq 0}\(h_{(\tau)} )\aint_{\T^3} D_{(k_)}\otimes W_{(k)}-W_{(k)}\otimes D_{(k)}\d x\nabla (a_{(k)}^2)\).  \label{do.2}
	\end{align}
\end{subequations}
Recall that $h_{(\tau)}$ is given by \eqref{hk}.

The effects of these temporal correctors to
balance  high temporal oscillations are encoded in
the following key algebraic identities:
\begin{align} \label{utemcom}
	&\partial_{t} w_{q+1}^{(o)}+
	\sum_{k\in \Lambda_u}
	\P_{\neq 0}\(\left(\g^2-1 \right)\aint_{\T^3}W_{(k)}\otimes W_{(k)}\d x \nabla(a_{(k)}^{2}) \) \nonumber  \\
	&+ \sum_{k\in \Lambda_B}\P_{\neq 0}\( \left(\g^2-1 \right)\aint_{\T^3}W_{(k)}\otimes W_{(k)}-D_{(k)}\otimes D_{(k)}\d x\nabla (a_{(k)}^2)\)  \nonumber  \\
	=&\left(\nabla\Delta^{-1}\div\right) \sigma^{-1}  \sum_{k \in \Lambda_u} \P_{\neq 0} \partial_{t}\(h_{(\tau)}\aint_{\T^3}W_{(k)}\otimes W_{(k)}\d x\nabla (a_{(k)}^2)\) \nonumber  \\
	&+\left(\nabla\Delta^{-1}\div\right) \sigma^{-1}  \sum_{k \in \Lambda_B} \P_{\neq 0} \partial_{t}\(h_{(\tau)}\aint_{\T^3}W_{(k)}\otimes W_{(k)}-D_{(k)}\otimes D_{(k)}\d x\nabla (a_{(k)}^2)\) \nonumber  \\
	&-\sigma^{-1}\sum_{k\in \Lambda_u}\P_{\neq 0}\(h_{(\tau)}\aint_{\T^3}W_{(k)}\otimes W_{(k)}\d x\p_t\nabla (a_{(k)}^2)\) \nonumber  \\
	&-\sigma^{-1}\sum_{k\in \Lambda_B}\P_{\neq 0}\(h_{(\tau)}\aint_{\T^3}W_{(k)}\otimes W_{(k)}-D_{(k)}\otimes D_{(k)}\d x\p_t\nabla (a_{(k)}^2)\),
\end{align}
and
\begin{align} \label{btemcom}
	&\partial_{t} d_{q+1}^{(o)}+\sum_{k\in \Lambda_B}
	\P_{\neq 0}\(\left(\g^2-1 \right)\aint_{\T^3}D_{(k)}\otimes W_{(k)}-W_{(k)}\otimes D_{(k)}\d x\nabla(a_{(k)}^{2})\)  \nonumber  \\
	=&\left(\nabla\Delta^{-1}\div\right) \sigma^{-1}  \sum_{k \in \Lambda_B} \P_{\neq 0} \partial_{t}\(h_{(\tau)}\aint_{\T^3}D_{(k)}\otimes W_{(k)}-W_{(k)}\otimes D_{(k)}\d x\nabla(a_{(k)}^{2})\) \nonumber  \\
	&-\sigma^{-1}\sum_{k\in \Lambda_B}\P_{\neq 0}\(h_{(\tau)}\aint_{\T^3}D_{(k)}\otimes W_{(k)}-W_{(k)}\otimes D_{(k)}\d x\p_t\nabla(a_{(k)}^{2})\) .
\end{align}

{\bf $\bullet$ Velocity and magnetic perturbations.}
Now we are ready to define the velocity and magnetic perturbations $w_{q+1}$ and $d_{q+1}$ at level $q+1$ by
\begin{subequations} \label{vq1-dq1-S1}
	\begin{align}
		w_{q+1} &:= w_{q+1}^{(p)} + w_{q+1}^{(c)}+\wo
		\label{velocity perturbation-endpt2},    \\
		d_{q+1} &:= d_{q+1}^{(p)} + d_{q+1}^{(c)}+\wo
		\label{magnetic perturbation-endpt2},
	\end{align}
\end{subequations}
and the velocity and  magnetic  fields at level $q+1$  by
\begin{subequations} \label{uq1-Bq1-S1}
	\begin{align}
		& u_{q+1}:= \wt u_q + w_{q+1}, \label{q+1 velocity-endpt2}\\
		& B_{q+1}:= \wt B_q + d_{q+1}, \label{q+1 magnetic-endpt2}
	\end{align}
\end{subequations}
where $(\wt u_q,\wt B_q)$ is
the well-prepared velocity and magnetic fields  in
the previous gluing stage in \S \ref{Sec-Concen-Rey}.
Note that,
by the above constructions, $w_{q+1}$ and $d_{q+1}$ are mean free and divergence free.

The main estimates of the velocity and magnetic perturbations
are summarized in Lemma \ref{Lem-perturb-S1} below.

\begin{lemma}  [Estimates of perturbations] \label{Lem-perturb-S1}
	For any $\rho \in(1,\infty), \gamma \in [1,\infty]$ and
	every integer $0\leq N\leq 7$,
	we have the following estimates:
	\begin{align}
		&\norm{\na^N w_{q+1}^{(p)} }_{L^ \gamma_tL^\rho_x } +\norm{\na^N d_{q+1}^{(p)} }_{L^ \gamma_tL^\rho_x }
       \lesssim \thq^{-1} \lbb^N\rs^{\frac{1}{\rho}- \frac 12}\tau^{\frac12-\frac{1}{ \gamma}},\label{uprinlp-endpt2}\\
		&\norm{\na^N w_{q+1}^{(c)} }_{L^\gamma_tL^\rho_x   }+\norm{\na^N d_{q+1}^{(c)} }_{L^ \gamma_tL^\rho_x }
       \lesssim \thq^{-14}\lbb^{N-1}\rs^{\frac{1}{\rho}- \frac 12}\tau^{\frac12-\frac{1}{\gamma}}, \label{ucorlp-endpt2} \\
		&\norm{\na^N \wo }_{L^\gamma_tL^\rho_x  } +  \norm{\na^N \dqo }_{L^\gamma_tL^\rho_x  }\lesssim \thq^{-14N-16}\sigma^{-1},   \label{dcorlp-endpt2}
	\end{align}
	where the implicit constants depend only on $N$, $\gamma$ and $\rho$. In particular, for integers $1\leq N\leq 7$,
	\begin{align}
		& \norm{ w_{q+1}^{(p)} }_{L^\9_tH^N_x }  + \norm{ w_{q+1}^{(c)} }_{L^\9_tH^N_x}+\norm{ \wo }_{L^\9_tH^N_x}
		\lesssim \lambda^{N+2},\label{principal h3 est.2}\\
		& \norm{ d_{q+1}^{(p)} }_{L^\9_tH^N_x }  + \norm{ d_{q+1}^{(c)} }_{L^\9_tH^N_x}+\norm{ \dqo }_{L^\9_tH^N_x}
		\lesssim \lambda^{N+2}.     \label{bprincipal h3 est.2}
	\end{align}
	Moreover, for the temporal derivatives,
	we have, for $1\leq N\leq 7$,
	\begin{align}
		& \norm{\p_t w_{q+1}^{(p)} }_{L^\9_tH^N_x }  + \norm{\p_t w_{q+1}^{(c)} }_{L^\9_tH^N_x}+\norm{\p_t \wo }_{L^\9_tH^N_x}
		\lesssim \lambda^{N+5},\label{pth2 est.2}\\
		& \norm{\p_t d_{q+1}^{(p)} }_{L^\9_tH^N_x }  + \norm{\p_t d_{q+1}^{(c)} }_{L^\9_tH^N_x}+\norm{\p_t \dqo }_{L^\9_tH^N_x}
		\lesssim \lambda^{N+5},\label{bpth2 est.2}
	\end{align}
	where the implicit constants are independent of $\lbb$.
\end{lemma}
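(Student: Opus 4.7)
The plan is to prove all eight estimates by a single common scheme: Hölder's inequality separating temporal and spatial integrabilities, followed by the Leibniz rule on the product structure, and then invoking the building-block estimates of Lemma \ref{buildingblockestlemma-endpt2}, the temporal-intermittency estimates of Lemma \ref{Lem-gk-esti}, and the amplitude bounds of Lemmas \ref{Lem-mae-S1} and \ref{Lem-vae-S1}. A preliminary observation that will streamline everything is the following parameter comparison: with the choice \eqref{larsrp-endpt2} and the constraints $\theta_{q+1} \simeq \lambda_q^{-12/\eta}$ and $\lambda = \lambda_q^b$ with $b > 1500/(\varepsilon \eta_*)$, we have $\theta_{q+1}^{-14N}\ll \lambda^{N-\delta}$ for any fixed $N\geq 1$ and some $\delta>0$; thus, to leading order, spatial derivatives placed on $W_{(k)}$ (or $D_{(k)}$) always dominate those placed on $a_{(k)}$.

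For the principal parts \eqref{wp-def-A2} and \eqref{dp-def-A2}, I would expand $\nabla^N(a_{(k)} g_{(\tau)} W_{(k)})$ by Leibniz, apply Hölder in time to split off $\|g_{(\tau)}\|_{L^\gamma_t}\lesssim \tau^{1/2-1/\gamma}$, bound $\|\nabla^j a_{(k)}\|_{L^\infty_{t,x}}\lesssim \theta_{q+1}^{-1-14j}$, and finally invoke \eqref{ew-endpt2} with $r_\perp^{1/\rho-1/2}\lambda^{N-j}$; by the observation above, the worst term has $j=0$, giving \eqref{uprinlp-endpt2} and its magnetic analogue. The incompressibility correctors \eqref{wqc-endpt2}-\eqref{dqc-endpt2} always carry at least one spatial derivative on $a_{(k)}$ (a $\theta_{q+1}^{-14}$ cost) and involve the potentials $W_{(k)}^c, D_{(k)}^c$ which, by \eqref{ew-endpt2}-\eqref{ed-endpt2}, are better than $W_{(k)}, D_{(k)}$ by $\lambda^{-2}$; the outer $\curl$'s consume one power of $\lambda$, producing the net $\lambda^{N-1}$ improvement in \eqref{ucorlp-endpt2}. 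For the temporal correctors \eqref{wo.2}-\eqref{do.2}, the brackets $\aint_{\T^3}W_{(k)}\otimes W_{(k)}\,dx$ etc.\ are uniformly bounded constants independent of $\lambda$, and $\P_H\P_{\neq 0}$ is a Calderón--Zygmund operator bounded on $L^\rho$ for $1<\rho<\infty$; combined with $\|h_{(\tau)}\|_{L^\infty_t}\leq 1$ from \eqref{hk-est} and the product-rule bound $\|\nabla^{N+1}(a_{(k)}^2)\|_{L^\infty_{t,x}}\lesssim \theta_{q+1}^{-14(N+1)-2}$, the $\sigma^{-1}$ prefactor yields \eqref{dcorlp-endpt2}.

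The remaining estimates \eqref{principal h3 est.2}-\eqref{bpth2 est.2} are then specializations with $\gamma=\infty$, $\rho=2$. Using $r_\perp^0=1$ and $\tau^{1/2}=\lambda^\alpha$ with $\alpha<3/2$, the $L^\infty_t H^N_x$ bounds reduce to checking that the surplus factor $\theta_{q+1}^{-1}\lambda^\alpha$ can be absorbed into $\lambda^2$, which is equivalent to $\theta_{q+1}^{-1}\lesssim \lambda^{2-\alpha}$; this holds by the choice of $b$ since $2-\alpha>1/2$ and $b(2-\alpha)\gg 12/\eta$. For the time-derivative estimates, the dominant contribution comes from differentiating $g_{(\tau)}$: by \eqref{gk estimate} with $M=1, \gamma=\infty$ one has $\|\partial_t g_{(\tau)}\|_{L^\infty_t}\lesssim \sigma\tau^{3/2}=\lambda^{2\varepsilon+3\alpha}$, so we need $\theta_{q+1}^{-1}\lambda^{2\varepsilon+3\alpha}\lesssim \lambda^5$, i.e.\ $\theta_{q+1}^{-1}\lesssim \lambda^{5-3\alpha-2\varepsilon}$; since $\alpha<3/2$ and $\varepsilon\leq \frac{1}{20}(\frac32-\alpha)$ from \eqref{ne3.1}, the exponent $5-3\alpha-2\varepsilon$ stays strictly positive, and once again $b\gg 1/\eta$ provides enough room. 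The only real delicacy in the whole argument is this last balance --- ensuring every $\theta_{q+1}^{-*}$ loss generated by time-differentiating amplitudes, cutoffs, or $g_{(\tau)}$ is strictly absorbed into a positive power of $\lambda$ at the cost dictated by the given constraints --- but it is purely bookkeeping against the largeness of $b$ relative to $1/\eta_*$ and is not a substantive obstacle.
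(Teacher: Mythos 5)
Your proposal is correct and tracks the paper's own proof almost step for step: Leibniz expansion of $a_{(k)}g_{(\tau)}W_{(k)}$ with Hölder in time, the building-block bounds of Lemmas \ref{buildingblockestlemma-endpt2} and \ref{Lem-gk-esti}, the amplitude bounds of Lemmas \ref{Lem-mae-S1}--\ref{Lem-vae-S1}, boundedness of $\P_H\P_{\neq 0}$ on $L^\rho$, and the observation that every $\theta_{q+1}^{-*}$ loss (up to $\theta_{q+1}^{-14N-42}$ when $N\leq 7$) is absorbed by a small positive power of $\lambda$ thanks to \eqref{b-beta-ve}, \eqref{def-mq-thetaq}. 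Your simplified bookkeeping (e.g.\ writing $\theta_{q+1}^{-1}$ for what is really $\theta_{q+1}^{-14N-14}$ in the $\partial_t$ estimates) does not affect correctness since you explicitly note that all such losses are handled by the largeness of $b$ relative to $1/\eta_*$, which is exactly the paper's concluding check $\theta_{q+1}^{-14N-42}\lesssim\lambda^{2\varepsilon}$.
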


\begin{proof}
	First, using  \eqref{ew-endpt2}, \eqref{gk estimate},
	\eqref{wp-def-A2}, \eqref{dp-def-A2} and Lemmas~\ref{Lem-mae-S1} and \ref{Lem-vae-S1}
	we get, for any $\rho \in (1,\infty)$, that 
	\begin{align}\label{uplp}
		 & \norm{\nabla^N w_{q+1}^{(p)} }_{L^\gamma_tL^\rho_x }
		+ \norm{\nabla^N d_{q+1}^{(p)} }_{L^\gamma_tL^\rho_x }  \notag \\
		\lesssim&  \sum_{k \in \Lambda_u\cup\Lambda_B}
		\sum\limits_{N_1+N_2 = N}
		\|a_{(k)}\|_{C^{N_1}_{t,x}}\|\g\|_{L_t^\gamma}
		\norm{ \nabla^{N_2}   W_{(k)} }_{C_tL^\rho_x }
          +\sum_{k \in \Lambda_B}
          \sum\limits_{N_1+N_2 = N}
          \|a_{(k)}\|_{C^{N_1}_{t,x}}\|\g\|_{L_t^\gamma}
          \norm{ \nabla^{N_2}   D_{(k)} }_{C_tL^\rho_x }  \notag \\
		\lesssim&	 \thq^{-1}\lbb^N\rs^{\frac{1}{\rho}-\frac12}\tau^{\frac12-\frac{1}{\gamma}},
	\end{align}
where we also used $\theta^{-14}_{q+1} \ll \lbb$ in the last step,
due to \eqref{b-beta-ve} and \eqref{def-mq-thetaq}.
Thus, \eqref{uprinlp-endpt2} follows.
	
	Similarly, we deduce
	\begin{align*}
		& \norm{\na^N w_{q+1}^{(c)} }_{L^\gamma_tL^\rho_x}
		+ \norm{\na^N d_{q+1}^{(c)} }_{L^\gamma_tL^\rho_x} \notag \\
		\lesssim&
		\sum\limits_{k\in\Lambda_u\cup\Lambda_B }\|\g\|_{L^\gamma_t}  \sum_{N_1+N_2=N}
		\( \norm{ a_{(k)} }_{C_{t,x}^{N_1+1}} \norm{\na^{N_2} W^c_{(k)}}_{C_tW^{1,\rho}_x }+ \norm{ a_{(k)} }_{C_{t,x}^{N_1+2}} \norm{\na^{N_2} W^c_{(k)}}_{C_tL^{\rho}_x } \)  \nonumber   \\
		&\quad +\sum\limits_{k\in \Lambda_B }\|\g\|_{L^\gamma_t}  \sum_{N_1+N_2=N}
		\( \norm{ a_{(k)} }_{C_{t,x}^{N_1+1}} \norm{\na^{N_2} D^c_{(k)}}_{C_tW^{1,\rho}_x }+ \norm{ a_{(k)} }_{C_{t,x}^{N_1+2}} \norm{\na^{N_2} D^c_{(k)}}_{C_tL^{\rho}_x } \)  \nonumber   \\
		\lesssim & \thq^{-14}\lambda^{N-1}\rs^{\frac{1}{\rho}-\frac12} \tau^{\frac12-\frac{1}{\gamma}},
	\end{align*}
	which yields \eqref{ucorlp-endpt2}.
	
	For the temporal correctors,
	using \eqref{wo-do-def.2}, \eqref{hk-est} and Lemmas \ref{Lem-mae-S1} and \ref{Lem-vae-S1}, we obtain
	\begin{align*}
	&	\norm{ \na^N \wo }_{L^\gamma_tL^\rho_x }
	   + \norm{ \na^N \dqo }_{L^\gamma_tL^\rho_x }
		\lesssim \sigma^{-1}\sum_{k \in \Lambda_u\cup\Lambda_B}\|h_{(\tau)}\|_{C_{t}} \|\nabla^{N+1} (a^2_{(k)})\|_{C_{t,x}}
		\lesssim  \thq^{-14N-16} \sigma^{-1},
	\end{align*}
	which yields \eqref{dcorlp-endpt2}.
	
	The $L^\infty_t H^N_x$-estimates \eqref{principal h3 est.2} and \eqref{bprincipal h3 est.2}
	then follow directly from estimates \eqref{uprinlp-endpt2}-\eqref{dcorlp-endpt2}
with $\gamma=\infty$ and $\rho=2$,
and the choice of parameters in \eqref{b-beta-ve}, \eqref{def-mq-thetaq} and \eqref{larsrp-endpt2}.
	
	Concerning estimate \eqref{pth2 est.2},
	by virtue of \eqref{larsrp-endpt2} and Lemmas
	\ref{buildingblockestlemma-endpt2}-\ref{Lem-vae-S1}, we get
	\begin{align} \label{wprincipal h2 est.2}
		\norm{\p_t w_{q+1}^{(p)} }_{L^\9_tH^N_x }
		\lesssim&   \sum_{k \in \Lambda_u\cup\Lambda_B }
		\|a_{(k)}\|_{C_{t,x}^{N+1} }
		\norm{\g}_{W^{1,\9}_t}\norm{ W_{(k)} }_{L^\9_tH^N_x}
		\lesssim \thq^{-14N-14} \lbb^{N} \sigma  \tau^{\frac 32}
	\end{align}
	and
	\begin{align} \label{uc h2 est.2}
		\norm{\p_t w_{q+1}^{(c)} }_{L^\9_tH^N_x  }
		& \lesssim   \sum_{k \in \Lambda_u\cup\Lambda_B}
		\|a_{(k)}\|_{C_{t,x}^{N+3}}
		\norm{\g}_{W^{1,\infty}_{t}}
		(\norm{ W^c_{(k)} }_{L^\9_tH^N_x} + \norm{ \nabla W^c_{(k)} }_{L^\9_tH^N_x} )   \nonumber \\
		& \lesssim \thq^{-14N-42} \sigma  \tau^{\frac 32}  (\lbb^{N-2}+\lbb^{N-1})  \notag \\
		& \lesssim \thq^{-14N-42} \lbb^{N-1}\sigma  \tau^{\frac 32}   .
	\end{align}
	Since $\mathbb{P}_H \mathbb{P}_{\not =0}$ is bounded in $H^N_x$,
	and $\partial_t h(\tau) = \sigma(g^2(\tau)-1)$,
we deduce
	\begin{align} \label{wo h2 est.2}
		\norm{\p_t w_{q+1}^{(o)} }_{L^\9_tH^N_x  }
		\lesssim \sigma^{-1} \sum_{k \in \Lambda_u\cup\Lambda_B } \|\p_t (h_{(\tau)} \na (a_{(k)}^2) )\|_{L^\9_tH^N_x}
		\lesssim  \thq^{-14N-16}\tau .
	\end{align}
	Thus, taking into account \eqref{larsrp-endpt2},
$\theta_{q+1}^{-14N-42} \lesssim \lbb^{2\ve}$ for $1\leq N\leq 7$,
$0<\ve\leq 1/20$ and $\alpha<3/2$, we conclude  that
	\begin{align*}
		& \norm{\p_t w_{q+1}^{(p)} }_{L^\9_tH^N_x }  + \norm{\p_t  w_{q+1}^{(c)} }_{L^\9_tH^N_x}+\norm{ \p_t \wo }_{L^\9_tH^N_x}\notag \\
		\lesssim &\, \thq^{-14N-14}\lambda^N\sigma \tau^{\frac32}
		+\thq^{-14N-42}\lambda^{N-1} \sigma\tau^{\frac32}
		+ \thq^{-14N-16}\tau \notag\\
		\lesssim &\,\thq^{-14N-14}\lambda^{3\a+N+2\ve} +\thq^{-14N-42}\lambda^{3\a+N-1+2\ve}+ \thq^{-14N-16} \lambda^{2\a} \lesssim \lambda^{N+5},
	\end{align*}
	thereby yielding \eqref{pth2 est.2}.
	Analogous arguments also apply to the temporal derivatives of the magnetic perturbations
	and give estimate \eqref{bpth2 est.2}.
	Therefore, the proof of Lemma \ref{Lem-perturb-S1} is complete.
\end{proof}

\subsection{Verification of inductive estimates for velocity and magnetic fields.}
We are now in stage to verify the inductive estimates \eqref{ubh3}, \eqref{ubpth2},
\eqref{u-B-L2tx-conv}-\eqref{u-B-Lw-conv} for the velocity and magnetic fields.

First, in view of  \eqref{ubh3},
\eqref{pdvh3}, \eqref{uq1-Bq1-S1}
and \eqref{principal h3 est.2}, for $0\leq  N\leq 4$
we have that,
\begin{align}
\norm{(u_{q+1},B_{q+1})}_{L^\9_tH^{  N+3}_x}
& \lesssim \norm{(\wt u_q,\wt B_q)}_{L^\9_tH^{ N+3}_x}+\norm{(w_{q+1},d_{q+1})}_{L^\9_tH^{  N+3}_x}\notag \\
&\lesssim \sup_i\|(v_i,H_i)\|_{L^\9(\supp (\chi_i); H^{  N+3}_x)} +\laq^{ N+5} \notag\\
&\lesssim m_{q+1}^{\frac{N}{2\alpha}} \sup_i \|(u_q, B_q)\|_{\cH^3_x}  +\laq^{ N+5}  \notag \\
&\lesssim \mq^{\frac{N}{2\a}}\la^5+ \laq^{N+5} \lesssim \lambda_{q+1}^{ N+5}, \label{verifyuc1-endpt2}
\end{align}
where we also used $m_{q+1}^{\frac{N}{2\alpha}}\la^5 \lesssim \lbb_q^{5+6N} \lesssim \lbb_{q+1}^{5+N}$,
due to \eqref{la}, \eqref{b-beta-ve} and \eqref{def-mq-thetaq}.
Similarly,
\begin{align}
&\quad \norm{(\p_t u_{q+1},\p_t B_{q+1})}_{L^\9_tH^{N}_x} \notag\\
& \lesssim \norm{(\p_t \wt u_q,\p_t \wt B_q)}_{L^\9_tH^{N}_x}+\norm{(\p_t w_{q+1},\p_t d_{q+1})}_{L^\9_tH^{N}_x}\notag \\
&\lesssim \sup_i\|(\p_t (\chi_iv_i), \p_t (\chi_iH_i))\|_{L^\9(\supp( \chi_i); \cH^{N}_x)} +\laq^{N+5}\notag\\
&\lesssim \sup_i( \|\p_t\chi_i\|_{C_t}\| (v_i,H_i) \|_{L^\9(\supp (\chi_i); \cH^{N}_x)}
+\|\chi_i\|_{C_t}\| (\p_t v_i,\p_t H_i) \|_{L^\9(\supp( \chi_i); \cH^{N}_x)})+ \laq^{ N+5} \notag\\
& \lesssim  \theta_{q+1}^{-1} m_{q+1}^{\frac{1}{2\alpha}}
           \|(u_q, B_q)\|_{L^\infty_t \cH^3_x}
           + m_{q+1}^{1+ \frac{1}{2\alpha}}
           \|(u_q, B_q)\|_{L^\infty_t \cH^3_x}  + \laq^{ N+5}  \notag \\
& \lesssim \theta_{q+1}^{-\frac 32} \lbb_q^{5}
           +\lambda_{q+1}^{N+5}\lesssim \lambda_{q+1}^{N+5}.  \label{verifyupth2-endpt2}
\end{align}
Hence, estimates \eqref{ubh3} and \eqref{ubpth2} are verified at level $q+1$.

Next, we consider estimates  \eqref{u-B-L2tx-conv} and \eqref{u-B-L1L2-conv}.
It should be mentioned that,
the derivation towards $L^2_{t}\cL^2_x$-decay estimates of principal parts
requires to exploit the decoupling between the high and low frequency parts.
To be precise,
we apply the $L^p$ decorrelation Lemma~\ref{Decorrelation1}
with $f= a_{(k)}$, $g = \g\phi_{(k)}$ and $\sigma = \lambda^{2\ve}$
and then using \eqref{la}, \eqref{b-beta-ve}
and Lemmas \ref{buildingblockestlemma-endpt2}-\ref{Lem-vae-S1}
to derive
\begin{align} \label{Lp decorr vel-endpt2}
	\norm{(w^{(p)}_{q+1},d^{(p)}_{q+1})}_{L^2_t\cL^2_{x}}
	&\lesssim \sum\limits_{k\in \Lambda_u\cup\Lambda_B}
	\Big(\|a_{(k)}\|_{L^2_{t,x}}\norm{ \g }_{L^2_{t}} \norm{ \phi_{(k)}}_{C_tL^2_{x}} +\sigma^{-\frac12}\|a_{(k)}\|_{C^1_{t,x}}\norm{ \g }_{L^2_{t}} \norm{ \phi_{(k)}}_{C_tL^2_{x}}\Big) \notag\\
	&\lesssim  \delta_{q+1}^{\frac{1}{2}}+\thq^{-14}\lambda^{-\ve}_{q+1}   \lesssim \delta_{q+1}^{\frac{1}{2}}.
\end{align}
Taking into account \eqref{ucorlp-endpt2} and \eqref{dcorlp-endpt2},
we obtain
\begin{align}  \label{e3.41.2}
	\norm{(w_{q+1},d_{q+1})}_{L^2_{t}\cL^2_x}
	\lesssim \delta_{q+1}^{\frac{1}{2}} +\thq^{-14}\lambda^{-1}+ \thq^{-16}\sigma^{-1}\lesssim \delta_{q+1}^{\frac{1}{2}},
\end{align}
which, along with \eqref{uuql2}, yields 
\begin{align}  \label{e3.43}
	\norm{(u_{q+1}-u_{q},B_{q+1}-B_{q}) }_{L^2_{t}\cL^2_x}
	&\lesssim  \norm{(\wt  u_q - u_q,\wt B_q - B_q)}_{L^\9_t \cL^2_x}+ \norm{(w_{q+1},d_{q+1})}_{L^2_{t}\cL^2_x}\nonumber  \\
	&\lesssim  \lambda_q^{-3}+\delta_{q+1}^{\frac{1}{2}}   \leq M^*\delta_{q+1}^{\frac{1}{2}}
\end{align}
for $M^*$ sufficiently large.
This verifies the $L^2_{t}\cL^2_x$-decay estimate \eqref{u-B-L2tx-conv} at level $q+1$.

The $L^1_t\cL^2_x$-estimate \eqref{u-B-L1L2-conv} can be verified easier as follows:
by Lemma \ref{Lem-perturb-S1} and \eqref{larsrp-endpt2},
\begin{align} \label{wql1.2}
	\norm{(w_{q+1},d_{q+1})}_{L^1_t \cL^2_x}
	&\lesssim\norm{(w_{q+1}^{(p)},d_{q+1}^{(p)}) }_{L^1_t \cL^2_x}
       + \norm{(w_{q+1}^{(c)},d_{q+1}^{(c)}) }_{L^1_t \cL^2_x}
       +\norm{(w_{q+1}^{(o)},d_{q+1}^{(o)}) }_{L^1_t \cL^2_x}\notag \\
	&\lesssim \thq^{-1}\tau^{-\frac12}+\thq^{-14} \lbb^{-1} \tau^{-\frac12} + \thq^{-16}\sigma^{-1}\lesssim \lambda_{q+1}^{-\ve},
\end{align}
which along with \eqref{uuql2} yields that
\begin{align}  \label{uql1l2}
	\norm{(u_{q+1}-u_{q},B_{q+1}-B_{q}) }_{L^1_t \cL^2_x} 	
	&\lesssim \norm{(\wt  u_q -u_q, \wt B_q - B_q) }_{L^1_t\cL^2_x}+ \norm{(w_{q+1},d_{q+1})}_{L^1_t\cL^2_x} \nonumber \\
	&\lesssim \lambda_q^{-3}+\lambda_{q+1}^{-\ve} \leq \delta_{q+2}^{\frac{1}{2}}.
\end{align}
Thus, the $L^1_t\cL^2_x$-estimate \eqref{u-B-L1L2-conv} is verified at level $q+1$.

At last, concerning estimate \eqref{u-B-Lw-conv},
since $(s,\gamma,p)\in \mathcal{S}_1$,
one has the embedding (see \cite[(6.32)]{lqzz22})
\begin{align}\label{sobolevem2}
	H^3_x\hookrightarrow W^{s,p}_x.
\end{align}
By virtue of \eqref{rhN} and \eqref{est-vih3}, we have
\begin{align*}
	\norm{(\wt u_q-u_q, \wt B_q-B_q)}_{L^\gamma_t \cW^{s,p}_x}
	& \lesssim \norm{(\sum_i\chi_i(v_i-u_q), \sum_i\chi_i(H_i-B_q))}_{L^\9_t \cH^3_x}\notag\\
	&\lesssim \sup_i \int_{t_i}^{s_{i+1}} \| (\abs{\nabla} \rr^u_q(s),\abs{\nabla} \rr^B_q(s))\|_{\cH^3_x}ds \notag\\
	&\lesssim m_{q+1}^{-1} \| (\abs{\nabla} \rr^u_q(s),\abs{\nabla} \rr^B_q(s))\|_{L^\9_t\cH^3_x} \notag\\
	&\lesssim m_{q+1}^{-1} \lambda_{q}^{10}\lesssim \la^{-2},
\end{align*}
where $s_{i+1}=t_{i+1}+\theta_{q+1}$.
Taking into account \eqref{b-beta-ve}, \eqref{larsrp-endpt2}, \eqref{sobolevem2}
and Lemma \ref{Lem-perturb-S1},
we derive
\begin{align}\label{lw-est.2}
	\norm{(u_{q+1} - u_q, B_{q+1} - B_q)}_{L^\gamma_t \cW^{s,p}_x}
	&\lesssim  \norm{(\wt u_q-u_q, \wt B_q - B_q)}_{L^\gamma_t \cW^{s,p}_x}
	+\norm{(w_{q+1}, d_{q+1})}_{L^\gamma_t \cW^{s,p}_x} \notag\\
	&\lesssim  \la^{-2}+ \thq^{-1}\laq^{s}\rs^{\frac{1}{p}- \frac 12}\tau^{\frac12-\frac{1}{\gamma}}
	+\thq^{-44}\sigma^{-1} \notag\\
	&\lesssim   \la^{-2}+ \lambda_{q+1}^{s+2\a-1-\frac{2\a}{\gamma}-\frac{2\a-2}{p}+\ve(6- \frac{10}{p}) }
	+ \lbb_{q+1}^{-\ve} .
\end{align}
Since by \eqref{ne3.1},
\begin{align*}
	s+2\a-1-\frac{2\a}{\gamma}-\frac{2\a-2}{p}+\ve(6-\frac{16}{p})
	   <-10\ve,
\end{align*}
we thus obtain
\begin{align*}
	\norm{(u_{q+1} - u_q, B_{q+1} - B_q)}_{L^\gamma_t \cW^{s,p}_x} \leq \delta_{q+2}^{\frac12},
\end{align*}
which verifies the $L^\gamma_t \cW^{s,p}_x$-estimate \eqref{u-B-Lw-conv} at level $q+1$.

\section{Reynolds and magnetic stresses}    \label{Sec-stress-S1}

In this section we aim to choose suitable velocity and magnetic stresses
in the new relaxation system \eqref{equa-mhdr} at level $q+1$
and to verify the inductive estimates \eqref{rhN} and \eqref{rl1}
in the supercritical regime $\mathcal{S}_1$.

\subsection{Decomposition of magnetic and Reynolds stresses}
Let us first consider the magnetic stress.
Using \eqref{equa-mhdr} with $q+1$ replacing $q$,
\eqref{vq1-dq1-S1} and \eqref{uq1-Bq1-S1}
we derive the equation for the magnetic stress:
\begin{align}
	\displaystyle\div\mathring{R}_{q+1}^B
	&=\underbrace{\partial_t (d^{(p)}_{q+ 1}+d^{(c)}_{q+ 1})+\nu_2(-\Delta)^{\alpha} d_{q+1}  +\div (d_{q + 1} \otimes \wt u_q - \wt u_q \otimes d_{q+1}+ \wt B_q\otimes w_{q+1} -w_{q +1} \otimes \wt B_q )}_{ \div\mathring{R}_{lin}^B  }   \notag\\
	&\quad+\underbrace{\div (d_{q+ 1}^{(p)} \otimes w_{q+1}^{(p)} -w_{q+ 1}^{(p)} \otimes  d_{q+ 1}^{(p)} + \trb)+ \partial_t \dqo}_{\div\mathring{R}_{osc}^B }  \notag\\
	&\quad+\div\Big( d_{q+1}^{(p)} \otimes (w_{q+1}^{(c)}+\wo) -(w_{q+1}^{(c)} +\wo) \otimes d_{q+1}  \notag  \\
	&\qquad \underbrace{\qquad\quad+(d_{q+1}^{(c)}+\dqo)\otimes w_{q+1}-w_{q+1}^{(p)} \otimes (d_{q+1}^{(c)} +\dqo)\Big) }_{\div\mathring{R}_{cor}^B }.  \label{rb.2}
\end{align}
Based on this fact,
we define the new magnetic stress by
\begin{align}\label{rbcom.2}
	\mathring{R}_{q+1}^B := \mathring{R}_{lin}^B +   \mathring{R}_{osc}^B+ \mathring{R}_{cor}^B,
\end{align}
where $\mathcal{R}^B$ is the inverse divergence operator
given by \eqref{calRB-def} in the Appendix,
the linear error
\begin{align}
	\mathring{R}_{lin}^B
	:= &  \mathcal{R}^B\(\partial_t (d^{(p)}_{q+1}+d^{(c)}_{q+1})\)
	+ \nu_2 \mathcal{R}^B (-\Delta)^{\a} d_{q+1} \nonumber \\
	&  +\mathcal{R}^B\P_{H}\div\( d_{q + 1} \otimes \wt u_q - \wt u_q \otimes d_{q+1}+ \wt B_q\otimes w_{q+1} -w_{q +1} \otimes \wt B_q\),\label{rbp.2}
\end{align}
the oscillation error
\begin{align}
	\mathring{R}_{osc}^B &:=  \sum_{k \in \Lambda_B}\mathcal{R}^B\P_{H}\P_{\neq 0}\left (\g^2 \P_{\neq 0}(D_{(k)}\otimes W_{(k)}-W_{(k)}\otimes D_{(k)} )\nabla (a_{(k)}^2)\right)\notag\\
	&\quad-\sigma^{-1}\sum_{k\in \Lambda_B}\mathcal{R}^B\P_{H}\P_{\neq 0}\(h_{(\tau)}\aint_{\T^3}D_{(k)}\otimes W_{(k)}-W_{(k)}\otimes D_{(k)} \d x\p_t\nabla(a_{(k)}^{2})\)\notag\\
	&\quad+ \(\sum_{k \neq k' \in \Lambda_B}+ \sum_{k \in \Lambda_u, k' \in \Lambda_B}\)\mathcal{R}^B\P_{H}\div\(a_{(k)}a_{(k')}\g^2(D_{(k')}\otimes W_{(k)}-W_{(k)}\otimes D_{(k')})\), \label{rob.2}
\end{align}
and the corrector error
\begin{align}    \label{rbp2.2}
	\mathring{R}_{cor}^B := &\,\mathcal{R}^B\P_{H}\div\bigg( d_{q+1}^{(p)} \otimes (w_{q+1}^{(c)}+ \wo) -(w_{q+1}^{(c)}+\wo) \otimes d_{q+1}\notag\\
	&\qquad \qquad\qquad + (d_{q+1}^{(c)}+\dqo)\otimes w_{q+1} -w_{q+1}^{(p)} \otimes (d_{q+1}^{(c)}+\dqo) \bigg).
\end{align}

Moreover, we also compute for the Reynolds stress:
\begin{align}
	\displaystyle\div\mathring{R}_{q+1}^u
	&\displaystyle = \underbrace{\partial_t (w_{q+1}^{(p)}+w_{q+1}^{(c)}) +\nu_1(-\Delta)^{\alpha} w_{q+1} +\div\big(\wt u_q \otimes w_{q+1} + w_{q+ 1} \otimes \wt u_q - \wt B_q \otimes d_{q+1} - d_{q+1} \otimes \wt B_q\big) }_{ \div\mathring{R}_{lin}^u +\nabla P_{lin} }   \notag\\
	&\displaystyle\quad+ \underbrace{\div (w_{q+1}^{(p)} \otimes w_{q+1}^{(p)} - d_{q+1}^{(p)} \otimes d_{q+1}^{(p)} +  \tr) +\partial_t \wo}_{\div\mathring{R}_{osc}^u +\nabla P_{osc}}  \notag\\
	&\displaystyle\quad+\div\Big((w_{q+1}^{(c)}+\wo)\otimes w_{q+1}+ w_{q+1}^{(p)} \otimes (w_{q+1}^{(c)}+\wo)  \notag \\
	&\qquad \underbrace{\qquad - (d_{q+1}^{(c)}+\dqo)\otimes d_{q+1}
		- d_{q+1}^{(p)} \otimes (d_{q+1}^{(c)}+\dqo) \Big)}_{\div\mathring{R}_{cor}^u +\nabla P_{cor}}
	+\nabla P_{q+1}.  \label{ru.2}
\end{align}
Hence, using the inverse divergence operator $\mathcal{R}^u$ given by \eqref{calR-def}
we define the new Reynolds stress by
\begin{align}\label{rucom.2}
	\mathring{R}_{q+1}^u := \mathring{R}_{lin}^u +   \mathring{R}_{osc}^u+ \mathring{R}_{cor}^u,
\end{align}
where the linear error
\begin{align}
	\mathring{R}_{lin}^u & := \mathcal{R}^u\(\partial_t (w_{q+1}^{(p)} +w_{q+1}^{(c)}  )\)
	+ \nu_1 \mathcal{R}^u (-\Delta)^{\a} w_{q+1} \nonumber \\
	&\quad  + \mathcal{R}^u\P_H \div \(\wt u_q \mathring{\otimes} w_{q+1} + w_{q+ 1}
	\mathring{\otimes} \wt u_q- \wt B_q \mathring{\otimes} d_{q+1} - d_{q+1} \mathring{\otimes} \wt B_q\), \label{rup.2}
\end{align}
the oscillation error
\begin{align}\label{rou.2}
	\mathring{R}_{osc}^u :=& \sum_{k \in \Lambda_u} \mathcal{R}^u \P_H\P_{\neq 0}\left(\g^2 \P_{\neq 0}(W_{(k)}\otimes W_{(k)})\nabla (a_{(k)}^2)\right) \notag\\
	&+ \sum_{k \in \Lambda_B} \mathcal{R}^u \P_H\P_{\neq 0}\left(\g^2 \P_{\neq 0}(W_{(k)}\otimes W_{(k)}-D_{(k)}\otimes D_{(k)})\nabla (a_{(k)}^2)\right)\notag\\
	&-\sigma^{-1}\sum_{k\in \Lambda_u}\mathcal{R}^u \P_H \P_{\neq 0}\(h_{(\tau)}\aint_{\T^3}W_{(k)}\otimes W_{(k)}\d x\p_t\nabla(a_{(k)}^{2})\)\notag\\
	&-\sigma^{-1}\sum_{k\in \Lambda_B}\mathcal{R}^u \P_H \P_{\neq 0}\( h_{(\tau)}\aint_{\T^3}W_{(k)}\otimes W_{(k)}-D_{(k)}\otimes D_{(k)}\d x\p_t\nabla(a_{(k)}^{2})\)\notag\\
	&+\sum_{k \neq k' \in \Lambda_u\cup\Lambda_B }\mathcal{R}^u \P_H \div\(a_{(k)}a_{(k')}\g^2W_{(k)}\otimes W_{(k')}\)\notag\\
	&  -\sum_{k \neq k' \in \Lambda_B}\mathcal{R}^u \P_H \div\(a_{(k)}a_{(k')}\g^2D_{(k)}\otimes D_{(k')}\),
\end{align}
and the corrector error
\begin{align}
	\mathring{R}_{cor}^u &
	:= \mathcal{R}^u \P_H \div \bigg( w^{(p)}_{q+1} \mathring{\otimes} (w_{q+1}^{(c)}+\wo)
	+ (w_{q+1}^{(c)}+\wo) \mathring{\otimes} w_{q+1}  \nonumber \\
	&\qquad \qquad  \qquad \ \  - d^{(p)}_{q+1} \mathring{\otimes} (d_{q+1}^{(c)}+\dqo)- (d_{q+1}^{(c)}+\dqo) \mathring{\otimes} d_{q+1} \bigg). \label{rup2.2}
\end{align}

\begin{remark}  \label{Rem-Ru-RB}
We note that,
by the algebraic identities \eqref{mag oscillation cancellation calculation}-\eqref{vel oscillation cancellation calculation},
\eqref{utemcom}-\eqref{btemcom},
the new magnetic and Reynolds stresses at level $1+1$
satisfy equations \eqref{rb.2} and \eqref{ru.2}, respectively.
Moreover,  one also has  (see, e.g., \cite{bbv20,lzz21})
\begin{subequations}
	\begin{align}
        &   \mathring{R}^u_{q+1}  = \mathcal{R}^u  \mathbb{P}_H \div \mathring{R}^u_{q+1},   \label{calRuPHdiv-Ru.2}  \\
		&  \mathring{R}_{q+1}^B  = \mathcal{R}^B  \mathbb{P}_H \div \mathring{R}_{q+1}^B.    \label{calRuPHdiv-Rb.2} 	
	\end{align}
\end{subequations}
\end{remark}

\subsection{Verification of inductive estimates of magnetic stress}

In the following we verify the inductive estimates
\eqref{rhN} and \eqref{rl1} for the new Reynolds
and magnetic stresses.

\subsubsection{\bf Verification of $L^\9_tH^{\wt N}_x$-estimates of magnetic stress}
Using the identity \eqref{calRuPHdiv-Rb.2} and
equations \eqref{equa-mhdr} at level $q+1$
we get that for $\wt N=3,4$,
\begin{align}
	\norm{\mathring R^B_{q+1}}_{L^\9_tH^{\wt N}_x}&\leq  \norm{\mathcal R^B \P_H (\div \rr^B_{q+1})}_{L^\9_tH^{\wt N}_x}\notag \\
	&\lesssim \norm{\partial_t B_{q+1}+\div(B_{q+1}\otimes u_{q+1}-u_{q+1}\otimes B_{q+1}) +\nu(-\Delta )^{\alpha}B_{q+1}}_{L^\9_tH^{\wt N-1}_x}   \label{ine-rq1h3} \\
	&\lesssim  \norm{\partial_t B_{q+1}}_{L^\9_tH^{\wt N-1}_x}
               +\norm{B_{q+1}\otimes u_{q+1}-u_{q+1}\otimes B_{q+1}}_{L^\9_tH^{\wt N}_x}
               + \norm{B_{q+1}}_{L^\9_tH^{\wt N+2\alpha-1}_x}\notag \\
	&\lesssim   \norm{\partial_t B_{q+1}}_{L^\9_tH^{\wt N-1}_x}
                +\norm{u_{q+1}}_{L^\9_tH^{\wt N}_x} \norm{B_{q+1}}_{L^\infty_{t,x}}
                +\norm{B_{q+1}}_{L^\9_tH^{\wt N}_x} \norm{u_{q+1}}_{L^\infty_{t,x}}
                 + \norm{B_{q+1}}_{L^\9_tH^{\wt N+3}_x}. \notag
\end{align}

Concerning the $L^\9_{t,x}$ estimates of $u_{q+1}$ and $B_{q+1}$ in \eqref{ine-rq1h3},
we use \eqref{nuh3}, \eqref{uq1-Bq1-S1},
the Sobolev embedding $H^2_x\hookrightarrow L^\9_x$
and Lemma \ref{Lem-perturb-S1} to derive
\begin{align}\label{ul9}
	\norm{(u_{q+1},B_{q+1})}_{L^\infty_{t}\cL^\infty_x}
	\leq& \norm{(\wt u_{q},\wt B_{q})}_{L^\infty_{t}\cL^\infty_x}
	+ \norm{(w_{q+1},d_{q+1})}_{L^\infty_{t}\cH^2_x}    \notag\\
	\lesssim&  \norm{(\wt u_{q},\wt B_{q})}_{L^\infty_{t} \cH^3_x}
	+ \norm{(w_{q+1},d_{q+1})}_{L^\infty_{t} \cH^2_x} \notag \\
	\lesssim& \la^{5}+ \lambda_{q+1}^{4} \lesssim \lambda_{q+1}^{4},
\end{align}
where the implicit constant is independent of $q$.

Thus, inserting \eqref{ubh3}, \eqref{ubpth2} and \eqref{ul9} into \eqref{ine-rq1h3},
we obtain
\begin{align} \label{RB-HN-A1}
	\norm{\mathring R^B_{q+1}}_{L^\9_tH^{\wt N}_x}
	&\lesssim  \lambda_{q+1}^{\wt N+4}+ \lambda_{q+1}^{\wt N+6} + \lambda_{q+1}^{\wt N+5}
	\lesssim \lambda_{q+1}^{\wt N+6}
\end{align}
for some universal constant,
which verifies \eqref{rhN} of $\mathring R^B_{q+1}$ at level $q+1$.

\subsubsection{\bf Verification of $L^1_{t,x}$-decay of  magnetic stress}
We aim to verify the $L^1_{t,x}$-decay \eqref{rl1} of the
magnetic stress $\mathring{R}^B_{q+1}$ at level $q+1$.
For this purpose, we choose
\begin{align}\label{defp}
	\rho: =\frac{2\a-2+10\varepsilon}{2\a-2+9\varepsilon}\in (1,2),
\end{align}
where $\ve$ is given by \eqref{ne3.1},
such that
\begin{equation}\label{setp}
	(2-2\a-10\ve)(\frac{1}{\rho}-\frac12)=1-\a-4\ve,
\end{equation}
and
\begin{align}  \label{rs-rp-p-ve-endpt2}
	\rs^{\frac 1\rho-\frac12} =  \lambda^{1-\a-4\ve}.
\end{align}

Below we estimate the three parts of
the magnetic stress $\mathring{R}^B_{q+1}$ separately.

\paragraph{\bf (i) Linear error.}
Note that, by Lemmas \ref{buildingblockestlemma-endpt2},
\ref{Lem-gk-esti}, \ref{Lem-mae-S1}
and \ref{Lem-perturb-S1}, \eqref{ne3.1}, \eqref{larsrp-endpt2}, \eqref{div free velocity.2}
and \eqref{rs-rp-p-ve-endpt2},
\begin{align}
	& \| \mathcal{R}^B \partial_t( d_{q+1}^{(p)}+ d_{q+1}^{(c)})\|_{L_t^1L_x^\rho}  \nonumber \\
	\lesssim& \sum_{k \in \Lambda_B}
        \Big(\| \g\|_{L^1_t}\|  a_{(k)} \|_{C_{t,x}^2}
          \| D^c_{(k)} \|_{C_t W_x^{1,\rho}}
          +\| \p_t\g\|_{L_t^1}\| a_{(k)} \|_{C_{t,x}^1}\| D^c_{(k)} \|_{C_t W_x^{1,\rho}}\Big)\nonumber \\
	\lesssim& \thq^{-14}\tau^{-\frac12}\rs^{\frac{1}{\rho}-\frac12}\lambda^{-1}
            + \thq^{-7}\sigma\tau^{\frac12}\rs^{\frac{1}{\rho}-\frac12}\lambda^{-1}
	\lesssim  \thq^{-14}\lambda^{-2\ve}.\label{time derivative-endpt2}
\end{align}

Next, we control the hyper-resistivity $(-\Delta)^{\alpha}$.
It is important here to exploit the temporal intermittency
in order to control the resistivity beyond the Lions exponent $5/4$.

More precisely,
using the interpolation estimate, Lemma \ref{Lem-perturb-S1},
\eqref{larsrp-endpt2},
\eqref{uprinlp-endpt2}, \eqref{rs-rp-p-ve-endpt2}
and the fact that $2-\alpha \geq  5\va$,
we estimate
\begin{align}
	\norm{\mathcal{R}(-\Delta)^{\alpha} d_{q+1}^{(p)} }_{L_t^1L^\rho_x}
	& \lesssim \norm{ |\na|^{2\a-1} d_{q+1}^{(p)} }_{L_t^1L^\rho_x}\notag\\
	& \lesssim  \norm{d_{q+1}^{(p)}}_{L_t^1L^\rho_x} ^{\frac{4-2\a}{3}} \norm{d_{q+1}^{(p)}}_{L_t^1W^{3,\rho}_x} ^{\frac{2\a-1}{3}}\notag\\
	& \lesssim \thq^{-1}\lbb^{2\alpha-1}\rs^{\frac{1}{\rho}-\frac12}\tau^{-\frac12}\lesssim \thq^{-1}\lambda^{-4\ve},\label{e5.18.2}
\end{align}
and
\begin{align}
	&\norm{\mathcal{R}(-\Delta)^{\alpha} d_{q+1}^{(c)} }_{L_t^1L^\rho_x}
	\lesssim \thq^{-14}\lbb^{2\alpha-2}\rs^{\frac{1}{\rho}-\frac12}\tau^{-\frac12}\lesssim \thq^{-14}\lambda^{-1-4\ve},\label{e5.19.2}\\
	&\norm{\mathcal{R}(-\Delta)^{\alpha} \dqo }_{L_t^1L^\rho_x}   \lesssim \thq^{-44}\sigma^{-1}\lesssim \thq^{-44}\lambda^{-2\ve}.\label{e5.21.2}
\end{align}
Hence, it follows that
\begin{align}  \label{mag viscosity-endpt2}
	\norm{\mathcal{R}(-\Delta)^{\alpha} d_{q+1} }_{L_t^1L^\rho_x}
	\lesssim& \norm{\mathcal{R}(-\Delta)^{\alpha} d_{q+1}^{(p)} }_{L_t^1L^\rho_x}
     +\norm{\mathcal{R}(-\Delta)^{\alpha} d_{q+1}^{(c)} }_{L_t^1L^\rho_x}
     +\norm{\mathcal{R}(-\Delta)^{\alpha} \dqo }_{L_t^1L^\rho_x} \notag \\
	\lesssim& \thq^{-44}\lambda^{-2\ve} .
\end{align}

At last,
for the remaining terms in \eqref{rbp.2},
using \eqref{nuh3}, Lemma \ref{Lem-perturb-S1}
and \eqref{rs-rp-p-ve-endpt2}, we have
\begin{align} \label{magnetic linear estimate1.2}
	&\norm{ \mathcal{R}^B\P_H\div\(d_{q + 1} \otimes \wt u_q - \wt u_q \otimes d_{q+1}
      + \wt B_q\otimes w_{q+1} -w_{q +1} \otimes \wt B_q\) }_{L_t^1L^\rho_x}  \nonumber \\	
	\lesssim& \norm{\wt u_q}_{L^\9_{t}H^3_x} \norm{d_{q+1}}_{L_t^1L^\rho_x} +\norm{\wt B_q}_{L^\9_{t}H^3_x} \norm{w_{q+1}}_{L_t^1L^{\rho}_x}  \nonumber \\
	\lesssim&\lambda^5_q (\thq^{-1} \rs^{\frac{1}{p}-\frac12}\tau^{-\frac 12}  + \thq^{-16} \sigma^{-1} )
	\lesssim \thq^{-17}\lambda^{-2\ve}.
\end{align}

Therefore,
we conclude from \eqref{time derivative-endpt2}, \eqref{mag viscosity-endpt2} and \eqref{linear estimate1-endpt2} that
the linear error can be bounded by
\begin{align}   \label{linear estimate-endpt2}
	\norm{\mathring{R}_{lin}^B }_{L_t^1L^\rho_x}
	& \lesssim \thq^{-14}\lambda^{-2\ve} +\thq^{-44}\lambda^{-2\ve}+\thq^{-17}\lambda^{-2\ve}
	\lesssim \thq^{-44}\lambda^{-2\ve}.
\end{align}

\paragraph{\bf (ii) Oscillation error.}
In view of \eqref{rob.2},
we decompose the oscillation error into three parts
\begin{align*}
	\mathring{R}_{osc}^B = \mathring{R}^B_{osc.1} +  \mathring{R}^B_{osc.2}+\mathring{R}^B_{osc.3},
\end{align*}
where the high-low spatial oscillation error
\begin{align*}
	\mathring{R}_{osc.1}^B
	&:=    \sum_{k \in \Lambda_B}\mathcal{R}^B \P_{H}\P_{\neq 0}\left(\g^2 \P_{\neq 0}(D_{(k)}\otimes W_{(k)}-W_{(k)}\otimes D_{(k)})\nabla (a_{(k)}^2) \right),
\end{align*}
the low frequency error
\begin{align*}
	\mathring{R}_{osc.2}^B &
	:= -\sigma^{-1}\sum_{k\in \Lambda_B}\mathcal{R}^B\P_{H}\P_{\neq 0}
	\(h_{(\tau)}\aint_{\T^3}D_{(k)}\otimes W_{(k)}-W_{(k)}\otimes D_{(k)}\d x\p_t\nabla(a_{(k)}^{2})\),
\end{align*}
and the interaction error
\begin{align*}
	\mathring{R}_{osc.3}^B &
	:=\(\sum_{k \neq k' \in \Lambda_B}+ \sum_{k \in \Lambda_u, k' \in \Lambda_B}\)\mathcal{R}^B\P_{H}\div\(a_{(k)}a_{(k')}\g^2(D_{(k')}\otimes W_{(k)}-W_{(k)}\otimes D_{(k')})\).
\end{align*}

First, the control of  $\mathring{R}_{osc.1}^B $
relies on the key fact that
the velocity and magnetic flows are of high oscillations
\begin{align*}
   \P_{\not=0} (D_{(k)} \otimes W_{(k)} - W_{(k)} \otimes D_{(k)})
    = \P_{\geq \frac 12 \lbb \rs} (D_{(k)} \otimes W_{(k)} - W_{(k)} \otimes D_{(k)}),
\end{align*}
while the amplitudes $a_{(k)}$ are slowly varying.
Hence, one may gain an extra factor $(\lambda \rs)^{-1}$ by using
the inver-divergence operator $\mathcal{R}^B$.
This leads to, via Lemma \ref{commutator estimate1}
with $a = \nabla (a_{(k)}^2)$, $f =  \phi_{(k)}^2$
and $k=\lbb \rs /2$,
\begin{align}  \label{I1-esti.2}
	\norm{\mathring{R}_{osc.1}^B }_{L^1_tL^p_x}
	&\lesssim  \sum_{ k \in \Lambda_B}
	\|\g\|_{L^2_t}^2\norm{|\nabla|^{-1} \P_{\not =0}
		\left(\P_{\geq (\lambda \rs/2)}(D_{(k)}\otimes W_{(k)}-W_{(k)}\otimes D_{(k)} )\nabla (a_{(k)}^2)\right)}_{C_tL^p_x} \notag \nonumber  \\
	& \lesssim  \sum_{ k \in \Lambda_B} \lambda^{-1}  \rs^{-1} \norm{ \na^3(a^2_{(k)})}_{C_{t,x}}
	\norm{\phi_{(k)}^2}_{L^p_x}  \nonumber  \\
	& \lesssim \thq^{-23}  \lambda^{-1}  \rs^{\frac{1}{p}-2},
\end{align}
where we also used Lemmas \ref{buildingblockestlemma-endpt2} and \ref{Lem-mae-S1}
in the last step.

Moreover, the low frequency part $\mathring{R}_{osc.2}^B $
can be estimated by using \eqref{hk-est} and \eqref{a-mag-S1}:
\begin{align}  \label{I2-esti.2}
	\norm{\mathring{R}_{osc.2} }_{L^1_tL^\rho_x}
	\lesssim \sigma^{-1} \sum_{k\in\Lambda_B} \|h_{(\tau)}\|_{C_t}\( \norm{a_{(k)} }_{C_{t,x}} \norm{a_{(k)} }_{C_{t,x}^2} +\norm{a_{(k)} }_{C_{t,x}^1}^2\)
	\lesssim \thq^{-15} \sigma^{-1}.
\end{align}

Finally, thanks to the small interactions between different intermittent spatial building blocks,
the interaction error $\mathring{R}_{osc.3}^B$
can be controlled by the product estimate in Lemma~\ref{buildingblockestlemma-endpt2},
\begin{align}\label{I3-esti.2}
	\norm{\mathring{R}_{osc.3}^B}_{L^1_tL^p_x}
   & \lesssim \( \sum_{ k \neq k' \in \Lambda_B}
     +\sum_{ k \in \Lambda_u ,  k' \in \Lambda_B }\) \norm{a_{(k)}a_{(k')}\g^2(D_{(k')}\otimes W_{(k)}-W_{(k)}\otimes D_{(k')})}_{L^1_tL^p_x} \notag\\	
	& \lesssim \( \sum_{ k \neq k' \in \Lambda_B}+\sum_{ k \in \Lambda_u ,  k' \in \Lambda_B }\)\norm{a_{(k)} }_{C_{t,x}}\norm{a_{(k')} }_{C_{t,x}}  \norm{\g^2 }_{L^1_t}\|\phi_{(k)}\phi_{(k')}\|_{C_tL^p_x}\notag\\
	& \lesssim  \thq^{-2}\rs^{\frac{2}{p}-1} \, .
\end{align}

Therefore, combing \eqref{I1-esti.2}, \eqref{I2-esti.2} and \eqref{I3-esti.2}
altogether and using \eqref{larsrp-endpt2} and \eqref{rs-rp-p-ve-endpt2}
we arrive at
\begin{align}
	\label{oscillation estimate-endpt2}
	\norm{\mathring{R}_{osc}}_{L_t^1L^\rho_x}
	&\lesssim   \thq^{-23}  \lambda^{-1}  \rs^{\frac{1}{\rho}-2}
             +\thq^{-15} \sigma^{-1}+\thq^{-2}\rs^{\frac{2}{\rho}-1}\notag \\
	&\lesssim \thq^{-23} \lbb^{2\a-3+11\ve} +\thq^{-15} \lbb^{-2\va}+\thq^{-2}\lbb^{2-2\a-8\ve} \notag\\
	&\lesssim \thq^{-15} \lbb^{-2\va},
\end{align}
where the last step is due to
$2\alpha-3 \leq -20\ve$ implied by \eqref{ne3.1}.

\paragraph{\bf (iii) Corrector error.}
It is relatively easier to control the corrector error
$\mathring{R}_{cor}^B$.
Using H\"older's inequality, Lemma \ref{Lem-perturb-S1},
\eqref{ne3.1}, \eqref{larsrp-endpt2}
and \eqref{rs-rp-p-ve-endpt2}, we get
\begin{align}
	\norm{\mathring{R}_{cor}^B }_{L^1_{t}L^{\rho}_x}
	\lesssim& \norm{w_{q+1}^{(c)} +\wo }_{L^2_{t}L^{\9}_x} (\norm{d^{(p)}_{q+1} }_{L^2_{t}L^\rho_x} + \norm{d_{q+1} }_{L^2_{t}L^\rho_x}) \notag \\
	&  +  (\norm{ w_{q+1}^{(p)}}_{L^2_{t}L^\rho_x} + \norm{ w_{q+1}}_{L^2_{t}L^\rho_x}) \norm{ d_{q+1}^{(c)}+  \dqo }_{L^2_{t}L^{\9}_x}\notag  \\
	\lesssim&  \( \thq^{-14}\lbb^{-1}\rs^{-\frac12 }+\thq^{-16}\sigma^{-1}\) \(\thq^{-1} \rs^{\frac{1}{\rho}-\frac12}  +\thq^{-14}\lbb^{-1} \rs^{\frac{1}{\rho}-\frac12}  + \thq^{-16} \sigma^{-1}\) \notag \\
	\lesssim&  \( \thq^{-14}\lbb^{\a-2+5\ve}+\thq^{-16}\lambda^{-2\ve}\) \(\thq^{-1} \lbb^{-\a+1-4\ve} + \thq^{-16}\lambda^{-2\ve}\) \notag \\
	\lesssim& \, \thq^{-32} \lambda^{-4\ve}. \label{corrector estimate-endpt2}
\end{align}

Therefore,
from  estimates \eqref{linear estimate-endpt2},
\eqref{oscillation estimate-endpt2},
\eqref{corrector estimate-endpt2} of the three parts of magnetic stress
we conclude that
\begin{align} \label{rq1bl1}
	\|\mathring{R}_{q+1}^B \|_{L^1_{t,x}}
	&\leq \| \mathring{R}^B_{lin} \|_{L^1_tL^\rho_{x}} +  \| \mathring{R}^B_{osc}\|_{L^1_tL^\rho_{x}}
	+  \|\mathring{R}^B_{cor} \|_{L^1_tL^\rho_{x}}  \nonumber  \\
	&\lesssim   \thq^{-44}\lambda^{-2\varepsilon}
	+\thq^{-15}\lambda^{-2\ve} + \thq^{-32}\lambda^{-4\ve} \nonumber  \\
	& \leq \lambda^{-\ve_R} \delta_{q+2},
\end{align}
where the last step is due to \eqref{b-beta-ve}
and $\ve_R<\ve/ 10$.
This justifies the inductive estimate \eqref{rl1}
for the $L^1_{t,x}$-decay of the magnetic stress $\mathring{R}^B_{q+1}$.

\subsection{Verification of inductive estimates of Reynolds stress}

We now verify the inductive estimates \eqref{rhN} and \eqref{rl1}
for the Reynolds stress $\mathring{R}^u_{q+1}$ given by \eqref{rucom.2}
at level $q+1$.

The verification of estimate \eqref{rhN} can be proved in a similar fashion as
the proof of \eqref{RB-HN-A1},
by using estimates \eqref{principal h3 est.2}-\eqref{bpth2 est.2}.
Below we will focus on the proof of
$L^1_{t,x}$-decay estimate \eqref{rl1}
of the three parts  $\mathring{R}_{lin}^u$, $\mathring{R}_{osc}^u$ and $\mathring{R}_{cor}^u$
for the Reynolds stress  $\mathring{R}_{q+1}^u$ at level $q+1$.

\paragraph{\bf (i) Linear error.}
Arguing as in the proof of \eqref{time derivative-endpt2},
but with \eqref{div free velocity} instead,
we get
\begin{align}
	\| \mathcal{R}^u\partial_t( w_{q+1}^{(p)}+ w_{q+1}^{(c)})\|_{L_t^1L_x^\rho}
	\lesssim  \sum_{k \in \Lambda_u\cup \Lambda_{B}}\| \mathcal{R}^u \curl\curl\partial_t(\g a_{(k)} W^c_{(k)}) \|_{L_t^1L_x^\rho}
	\lesssim  \thq^{-28}\lambda^{-2\ve}.\label{time derivative.3}
\end{align}

For the hyper-viscosity term, similarly to \eqref{mag viscosity-endpt2},
we use the temporal and spatial intermittency to derive
\begin{align}  \label{vel viscosity-endpt2}
	\norm{\mathcal{R}^u(-\Delta)^{\alpha} w_{q+1} }_{L_t^1L^\rho_x} \lesssim \thq^{-44}\lambda^{-2\ve} .
\end{align}

Moreover,
using \eqref{nuh3}, Lemma \ref{Lem-perturb-S1}
and \eqref{rs-rp-p-ve-endpt2}, we get
\begin{align} \label{linear estimate1-endpt2}
	&\norm{ \mathcal{R}^u\P_H\div (\wt u_q \otimes w_{q+1} + w_{q+ 1} \otimes \wt u_q - \wt B_q \otimes d_{q+1} - d_{q+1} \otimes \wt B_q) }_{L_t^1L^\rho_x}
	\lesssim \thq^{-17}\lambda^{-2\ve}.
\end{align}

Therefore,
estimates \eqref{time derivative.3}, \eqref{vel viscosity-endpt2} and \eqref{linear estimate1-endpt2}
together yield that
\begin{align}   \label{linear estimate-endpt2.3}
	\norm{\mathring{R}_{lin}^u }_{L_t^1L^\rho_x}
	& \lesssim \thq^{-28}\lambda^{-4\ve} +\thq^{-44}\lambda^{-2\ve}+\thq^{-17}\lambda^{-2\ve}
	\lesssim \thq^{-44}\lambda^{-2\ve}.
\end{align}

\paragraph{\bf (ii) Oscillation error.}
We infer from \eqref{rou.2} that the oscillation error consists of three parts
\begin{align*}
	\mathring{R}_{osc}^u = \mathring{R}_{osc.1}^u + \mathring{R}_{osc.2}^u + \mathring{R}_{osc.3}^u,
\end{align*}
where the high-low spatial oscillation error
\begin{align*}
	\mathring{R}_{osc.1}^u
	:=&   \sum_{k \in \Lambda_u }\mathcal{R}^u  \P_{H}\P_{\neq 0}\left(\g^2 \P_{\neq 0}(W_{(k)}\otimes W_{(k)} )\nabla (a_{(k)}^2) \right)   \notag \\
	&+ \sum_{k \in \Lambda_B} \mathcal{R}^u \P_H\P_{\neq 0}\left(\g^2 \P_{\neq 0}(W_{(k)}\otimes W_{(k)}-D_{(k)}\otimes D_{(k)})\nabla (a_{(k)}^2)\right),
\end{align*}
the low frequency error
\begin{align*}
	\mathring{R}_{osc.2}^u
	:=& -\sigma^{-1}\sum_{k\in \Lambda_u}\mathcal{R}^u \P_{H}\P_{\neq 0}
	\(h_{(\tau)}\aint_{\T^3}W_{(k)}\otimes W_{(k)} \d x\, \p_t\nabla(a_{(k)}^{2})\)    \notag \\
	&-\sigma^{-1}\sum_{k\in \Lambda_B}\mathcal{R}^u \P_H \P_{\neq 0}\( h_{(\tau)}\aint_{\T^3}W_{(k)}\otimes W_{(k)}-D_{(k)}\otimes D_{(k)}\d x\p_t\nabla(a_{(k)}^{2})\)
\end{align*}
and the interaction error
\begin{align*}
	\mathring{R}_{osc.3}^u &
	:=\sum_{k \neq k' \in \Lambda_u\cup\Lambda_B }\mathcal{R}^u \P_H \div\(a_{(k)}a_{(k')}\g^2W_{(k)}\otimes W_{(k')}\)\notag\\
	& \qquad -\sum_{k \neq k' \in \Lambda_B}\mathcal{R}^u \P_H \div\(a_{(k)}a_{(k')}\g^2D_{(k)}\otimes D_{(k')}\).
\end{align*}

Then, applying the decoupling Lemma \ref{commutator estimate1} again
with $a = \nabla (a_{(k)}^2)$ and $f =  \phi_{(k)}^2$
and using  Lemmas \ref{buildingblockestlemma-endpt2},
\ref{Lem-mae-S1} and \ref{Lem-vae-S1},
we estimate
\begin{align}  \label{I1-esti.3}
		\norm{\mathring{R}_{osc.1}^u }_{L^1_tL^\rho_x}
		&\lesssim  \sum_{ k \in \Lambda_u }
		\|\g\|_{L^2_t}^2\norm{|\nabla|^{-1} \P_{\not =0}
			\left(\P_{\geq (\lambda \rs/2)}(W_{(k)}\otimes W_{(k)} )\nabla (a_{(k)}^2)\right)}_{C_tL^\rho_x} \notag  \\
		&\quad + \sum_{ k \in  \Lambda_B }
		\|\g\|_{L^2_t}^2\norm{|\nabla|^{-1} \P_{\not =0}
			\left(\P_{\geq (\lambda \rs/2)}(W_{(k)}\otimes W_{(k)}-D_{(k)}\otimes D_{(k)} )\nabla (a_{(k)}^2)\right)}_{C_tL^\rho_x} \notag  \\
		& \lesssim \sum_{ k \in  \Lambda_u\cup\Lambda_B }
		\||\na|^3 (a^2_{(k)})\|_{C_{t,x}}  \lambda^{-1} \rs^{-1}\norm{\phi^2_{(k)} }_{C_tL^{\rho}_x}  \nonumber  \\
		& \lesssim \thq^{-44}  \lambda^{-1}  \rs^{\frac{1}{\rho}-2}.
\end{align}
Moreover,  \eqref{hk} and Lemma \ref{Lem-perturb-S1} yield that
\begin{align}  \label{I3-esti.3}
	\norm{\mathring{R}_{osc.2} }_{L^1_tL^\rho_x}
	\lesssim \sigma^{-1} \sum_{k\in\Lambda_u\cup\Lambda_B} \|h_{(\tau)}\|_{C_t}\( \norm{a_{(k)} }_{C_{t,x}} \norm{a_{(k)} }_{C_{t,x}^2} +\norm{a_{(k)} }_{C_{t,x}^1}^2\)
	\lesssim \thq^{-29} \sigma^{-1}.
\end{align}
Regarding the interaction error $\mathring{R}_{osc.3}^u$,
we use the product estimate \eqref{intersect-phik1} to estimate
\begin{align}\label{T3-esti.3}
	\norm{\mathring{R}_{osc.3}^u}_{L^1_tL^\rho_x} &\lesssim \sum_{k \neq k' \in \Lambda_u\cup \Lambda_B}\norm{a_{(k)}a_{(k')} \g^2W_{(k)}\otimes W_{(k')}}_{L^1_tL^\rho_x}   \notag\\
	&\quad+\sum_{k \neq k' \in \Lambda_B}\norm{a_{(k)}a_{(k')}\g^2D_{(k)}\otimes D_{(k')}}_{L^1_tL^\rho_x}\notag\\
	&\lesssim \sum_{k \neq k' \in \Lambda_u\cup \Lambda_B}
	\norm{a_{(k)}}_{C_{t,x}}\norm{a_{(k')}}_{C_{t,x}}\norm{\g^2}_{L^1_t}\|\phi_{(k)}\phi_{(k')}\|_{C_tL^\rho_x}\notag\\
	&\lesssim  \thq^{-2}\rs^{\frac{2}{\rho}-1}.
\end{align}

Therefore, combing \eqref{I1-esti.3}, \eqref{I3-esti.3} and \eqref{T3-esti.3}
altogether and using \eqref{ne3.1} and \eqref{larsrp-endpt2},
we arrive at
\begin{align}
	\label{oscillation estimate-endpt2.3}
	\norm{\mathring{R}_{osc}^u}_{L_t^1L^\rho_x}
	&\lesssim   \thq^{-44}  \lambda^{-1} \rs^{\frac{1}{\rho}-2}+\thq^{-29} \sigma^{-1} + \thq^{-2}\rs^{\frac{2}{\rho}-1}\notag \\
	&\lesssim \thq^{-44} \lbb^{2\a-3+11\ve} +\thq^{-29} \lbb^{-2\va}+\thq^{-2}\lbb^{2-2\a-8\ve}  \notag\\
	&\lesssim \thq^{-29} \lbb^{-2\va}.
\end{align}

\paragraph{\bf (iii) Corrector error.}
Arguing as in the proof of \eqref{corrector estimate-endpt2},
we also have
\begin{align}
	\norm{\mathring{R}_{cor}^u}_{L^1_{t}L^{\rho}_x}
	\lesssim& \norm{ w_{q+1}^{(p)} \otimes (w_{q+1}^{(c)}+\wo) +(w_{q+1}^{(c)} +\wo) \otimes w_{q+1}\notag\\
		&\quad- (d_{q+1}^{(c)}+\dqo)\otimes d_{q+1} -d_{q+1}^{(p)} \otimes (d_{q+1}^{(c)} +\dqo) }_{L^1_{t}L^{\rho}_x} \notag \\
	\lesssim& \, \thq^{-32} \lambda^{-4\ve}. \label{corrector estimate-endpt2.3}
\end{align}

Therefore,
we conclude from  estimates \eqref{linear estimate-endpt2.3},
\eqref{oscillation estimate-endpt2.3} and
\eqref{corrector estimate-endpt2.3} that
\begin{align} \label{rq1bl1.2}
	\|\mathring{R}_{q+1} \|_{L^1_{t,x}}
	&\leq \| \mathring{R}_{lin} \|_{L^1_tL^\rho_{x}} +  \| \mathring{R}_{osc}\|_{L^1_tL^\rho_{x}}
	+  \|\mathring{R}_{cor} \|_{L^1_tL^\rho_{x}}  \nonumber  \\
	&\lesssim   \thq^{-44}\lambda^{-2\varepsilon}
	+\thq^{-29}\lambda^{-2\ve} + \thq^{-32}\lambda^{-4\ve} \nonumber  \\
	& \leq \lambda^{-\ve_R} \delta_{q+2},
\end{align}
which verifies the inductive $L^1_{t,x}$-decay  estimate \eqref{rl1}
of the new Reynolds stress $\mathring{R}^u_{q+1}$
at level $q+1$.

\section{The supercritical regime $\mathcal{S}_2$} \label{Sec-S2}

In this section,
we treat the other supercritical regime $\mathcal{S}_2$ when $\alpha\in [1,3/2)$.
Quite differently from $\mathcal{S}_1$,
we shall use the building blocks with stronger spatial intermittency,
in order to achieve the possibly low spatial integrability in $\mathcal{S}_2$.
Hence, unlike in \S \ref{Sec-perturb-S1},
we choose the intermittent flows
constructed in \cite{lzz21},
which in particular provide  2D spatial intermittency.

The main parameters here for the intermittent flows
will be indexed by six parameters $\rs$, $\rp$, $\lambda$, $\mu$, $\tau$ and $\sigma$,
chosen in the following way:
\begin{equation}\label{larsrp}
	\rs := \lambda_{q+1}^{-1+2\varepsilon},\ \rp := \lambda_{q+1}^{-1+6\varepsilon},\
	 \lambda := \lambda_{q+1},\ \tau:=\lambda_{q+1}^{4\a-4+12\varepsilon}, \
      \mu:=\lambda_{q+1}^{2\a-1+3\ve}, \ \sigma:=\lambda_{q+1}^{2\varepsilon},
\end{equation}
where $\varepsilon$ is the small constant satisfying \eqref{e3.1}.
Note that,
two new parameters $\rp$ and $\mu$, respectively,
are introduced here,
in order to further concentrate the flows along direction $k_1$
and to balance high temporal oscillations.

\subsection{Spatial-temporal building blocks.}
Let $\psi: \mathbb{R} \rightarrow \mathbb{R}$ be a smooth and mean-free function,
supported on the interval $[-1,1]$, satisfying
\begin{equation}\label{e4.92}
	\frac{1}{2 \pi} \int_{\mathbb{R}} \psi^{2}\left(x\right) \d x=1.
\end{equation}
The corresponding rescaled cut-off functions are defined by
\begin{equation*}
	\psi_{\rp}\left(x\right) := {r_{\|}^{- \frac 1  2}} \psi\left(\frac{x}{r_{\|}}\right).
\end{equation*}
Note that, $\psi_{\rp}$ is supported in the ball of radius $\rp$ in $\bbr$.
By an abuse of notation,
we periodize $\psi_{\rp}$ so that it is treated as a periodic function defined on $\mathbb{T}$.
We also keep using the same rescaled function  $\psi_{\rs}$ and $\Psi_{\rp}$
as in \S \ref{Subsec-Flow-S1}

The new \textit{intermittent velocity and magnetic flows} are defined by
\begin{align*}
	& W_{(k)} :=  \psi_{\rp}(\lambda \rs N_{\Lambda}(k_1\cdot x+\mu t))\phi_{\rs}( \lambda \rs N_{\Lambda}k\cdot x)k_1,\ \  k \in \Lambda_u \cup \Lambda_B  , \notag \\
	& D_{(k)} :=  \psi_{\rp}(\lambda \rs N_{\Lambda}(k_1\cdot x+\mu t))\phi_{\rs}( \lambda \rs N_{\Lambda}k\cdot x)k_2, \ \ k \in \Lambda_B .
\end{align*}
Here, $N_\Lambda$, $(k_1,k_2,k)$,
$\Lambda_u$ and $\Lambda_B$ are as in \S \ref{Subsec-Flow-S1}.
We note that,
compared to the previous spatial building blocks in \S \ref{Subsec-Flow-S1},
the current building blocks
$\{W_{(k)}, D_{(k)}\}$ are
supported on thinner cuboids with length $\sim {1}/{(\lbb \rs)}$,
width $\sim {\rp}/{(\lbb \rs)}$ and height $\sim {1}/{\lbb}$.
See \cite[Figure 2]{lzz21}.
Moreover,
by choosing $k_2\neq k_2'$ if $k\neq k'$,
one has much smaller volume of
the intersections of distinct intermittent flows,
see Lemma \ref{Lem-build-S2} below.

For brevity of notations,
we let
\begin{equation}\label{snp}
	\begin{array}{ll}
		&\phi_{(k)}(x) := \phi_{\rs}( \lambda \rs N_{\Lambda}k\cdot x), \ \
		\Phi_{(k)}(x) := \Phi_{\rs}( \lambda \rs N_{\Lambda}k\cdot x),  \\
		&\psi_{(k_1)}(x) :=\psi_{\rp}(\lambda \rs N_{\Lambda}(k_1\cdot x+\mu t)),
	\end{array}
\end{equation}
and rewrite
\begin{subequations} \label{snwd}
\begin{align}
	W_{(k)} =& \psi_{(k_1)}\phi_{(k)} k_1,\ \ k\in \Lambda_u\cup \Lambda_B, \label{Wk-def-A1} \\
    D_{(k)} =& \psi_{(k_1)}\phi_{(k)} k_2,\ \ k\in \Lambda_B.    \label{Dk-def-A1}
\end{align}
\end{subequations}

The corresponding incompressible correctors are defined by
\begin{subequations} \label{wtWkc-wtDkc-def-A1}
	\begin{align}
		&\wt W_{(k)}^c := \frac{1}{\lambda^2N_{ \Lambda }^2} \nabla\psi_{(k_1)}\times\curl(\Phi_{(k)} k_1), \ \ k\in \Lambda_u \cup \Lambda_B,  \label{wtWkc-def-A1} \\
        &\wt D_{(k)}^c:=-\frac{1}{ \lbb^2N_\Lambda^2}\Delta \psi_{(k_1)} \Phi_{(k)} k_2,\ \  k\in \Lambda_B.  \label{wtDkc-def-A1}
	\end{align}
\end{subequations}
Let
\begin{subequations} \label{WcDc-def-A1}
\begin{align} \
	& W^c_{(k)} := \frac{1}{\lambda^2N_{\Lambda}^2 } \psi_{(k_1)}\Phi_{(k)} k_1, \ \ k\in \Lambda_u \cup \Lambda_B, \label{Wc-def-A1} \\
    & D^c_{(k)} :=\frac{1}{\lambda^2N_{\Lambda}^2}\psi_{(k_1)}\Phi_{(k)}k_2,  \ \ k\in  \Lambda_B. \label{Dc-def-A1}
\end{align}
\end{subequations}
Then, it holds that (see \cite[(3.18), (3.22)]{lzz21})
\begin{subequations} \label{WcDc-curl}
   \begin{align}
	 & W_{(k)} + \wt W_{(k)}^c
	=\curl \curl W^c_{(k)}, \ \ k\in \Lambda_u \cup \Lambda_B,  \label{W-wtW-Wc} \\
     & D_{(k)} + \wt D_{(k)}^c
	=\curl \curl D^c_{(k)}, \ \  k\in \Lambda_B, \label{D-wtD-Dc}
   \end{align}
\end{subequations}
and thus
\begin{align}
    & \div (W_{(k)}+ \wt W^c_{(k)}) =0, \ \ k\in \Lambda_u \cup \Lambda_B,  \label{div-Wck-Wk-0}   \\
    & \div (D_{(k)}+ \wt D^c_{(k)}) =0, \ \ k\in \Lambda_B. \label{div-Dck-Dk-0}
\end{align}

Besides the above algebraic identities adapted to the geometry of MHD equations,
another nice feature of the current spatial building blocks is that,
they provide the  2D spatial intermittency and permit to control the
hypo-dissipativity and hypo-resistivity $(-\Delta)^{\alpha_i}$
for any $\alpha_i \in [0,1)$, $i=1,2$.
Moreover,
stronger intermittency also can be gained for
the interactions between different spatial building blocks.
This is the content of Lemma \ref{Lem-build-S2} below.

\begin{lemma} [\cite{lzz21} Estimates of spatial intermittency] \label{Lem-build-S2}
	For $p \in [1,+\infty]$, $N,\,M \in \mathbb{N}$, we have
	\begin{align}
		&\left\|\nabla^{N} \partial_{t}^{M} \psi_{(k_1)}\right\|_{C_t L^{p}_{x}}
		\lesssim r_{\|}^{\frac 1p- \frac 12}\left(\frac{r_{\perp} \lambda}{r_{\|}}\right)^{N}
          \left(\frac{r_{\perp} \lambda \mu}{r_{\|}}\right)^{M}, \label{intermittent estimates} \\
		&\left\|\nabla^{N} \phi_{(k)}\right\|_{L^{p}_{x}}+\left\|\nabla^{N} \Phi_{(k)}\right\|_{L^{p}_{x}}
		\lesssim r_{\perp}^{\frac 1p- \frac 12}  \lambda^{N}, \label{intermittent estimates2}
	\end{align}
	where the implicit constants are independent of $\rs,\,\rp,\,\lambda$ and $\mu$.
	In particular, we have
	\begin{align}
		&\displaystyle\left\|\nabla^{N} \partial_{t}^{M} W_{(k)}\right\|_{C_t  L^{p}_{x}}
          +\frac{r_{\|}}{r_{\perp}}\left\|\nabla^{N} \partial_{t}^{M} \wt W_{(k)}^{c}\right\|_{C_t L^{p}_{x}}
          +\lambda^{2}\left\|\nabla^{N} \partial_{t}^{M} W_{(k)}^c\right\|_{C_t L^{p}_{x}}\displaystyle \nonumber \\
		&\qquad \lesssim r_{\perp}^{\frac 1p- \frac 12} r_{\|}^{\frac 1p- \frac 12} \lambda^{N}
          \left(\frac{r_{\perp} \lambda \mu}{r_{\|}}\right)^{M}, \ \ k\in \Lambda_u \cup \Lambda_B, \label{ew}  \\
		&\displaystyle\left\|\nabla^{N} \partial_{t}^{M} D_{(k)}\right\|_{C_t  L^{p}_{x}}
        +\frac{r_{\|}}{r_{\perp}}\left\|\nabla^{N} \partial_{t}^{M} \wt D_{(k)}^{c}\right\|_{C_t L^{p}_{x}}
         +\lambda^{2}\left\|\nabla^{N} \partial_{t}^{M} D_{(k)}^c\right\|_{C_t L^{p}_{x}}   \nonumber \\
		&\qquad  \lesssim r_{\perp}^{\frac 1p- \frac 12} r_{\|}^{\frac 1p- \frac 12}
          \lambda^{N}\left(\frac{r_{\perp} \lambda \mu}{r_{\|}}\right)^{M},\ \ k\in \Lambda_B.    \label{ed}
	\end{align}
Moreover, for every $k \neq k'\in \Lambda_{u}\cup \Lambda_{B}$ and $p \in [1, \infty]$, we have
\begin{equation}  \label{intersect-phik1-phik1'}
\|\psi_{(k_1)}\phi_{(k)}\psi_{(k'_1)}\phi_{(k')} \|_{C_tL^p_x}\lesssim \rs^{\frac{1}{p}-1}\rp^{\frac{2}{p}-1},
\end{equation}
where the implicit constant is independent of the parameters $\rs,\,\rp$ and $\lambda$.
\end{lemma}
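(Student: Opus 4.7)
The plan is to derive all of these bounds by direct scaling arguments from the explicit formulas, exploiting periodicity and the fact that $\psi_{(k_1)}$ and $\phi_{(k)}$ vary in orthogonal directions determined by the orthonormal triple $(k,k_1,k_2)$. I would begin with the basic ingredient \eqref{intermittent estimates2}: since $\phi_{\rs}(x)=\rs^{-1/2}\phi(x/\rs)$ is supported, in each period, on an interval of length $\sim\rs$, one has $\|\phi_{\rs}\|_{L^p(\T)}\sim \rs^{1/p-1/2}$ and $\|\phi_{\rs}^{(N)}\|_{L^p(\T)}\sim \rs^{1/p-1/2}\rs^{-N}$; the function $\phi_{(k)}(x)=\phi_{\rs}(\lambda \rs N_{\Lambda}k\cdot x)$ is then a periodized version along the direction $k\in \mathbb{Z}^3$ (with $\lambda \rs N_\Lambda\in\mathbb{N}$ by choice of parameters), so a change of variables preserves $L^p$ norms and each spatial derivative brings down $\lambda\rs\cdot \rs^{-1}=\lambda$. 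The same argument, applied to $\Phi_\rs$, gives the bound on $\nabla^N\Phi_{(k)}$.

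Next I would establish \eqref{intermittent estimates}. The function $\psi_{\rp}$ satisfies $\|\psi_{\rp}^{(N)}\|_{L^p(\T)}\sim \rp^{1/p-1/2}\rp^{-N}$, and $\psi_{(k_1)}(x,t)=\psi_{\rp}(\lambda\rs N_\Lambda(k_1\cdot x+\mu t))$. Differentiating in $x$ produces, by the chain rule, a factor $\lambda\rs$ per derivative, yielding the net factor $\lambda\rs/\rp$ per spatial derivative; differentiating in $t$ produces $\lambda\rs\mu/\rp$ per time derivative. These are exactly the factors in \eqref{intermittent estimates}.

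For the combined estimates \eqref{ew} and \eqref{ed}, the key observation is that, working in coordinates aligned with $(k,k_1,k_2)$, the factor $\psi_{(k_1)}$ depends only on the $k_1$-coordinate and $\phi_{(k)}$ only on the $k$-coordinate, so Fubini gives $\|\psi_{(k_1)}\phi_{(k)}\|_{L^p}=\|\psi_{(k_1)}\|_{L^p_{k_1}}\|\phi_{(k)}\|_{L^p_k}\lesssim \rp^{1/p-1/2}\rs^{1/p-1/2}$. Leibniz then distributes derivatives; since in our regime $\rp>\rs$, the factor $\lambda\rs/\rp$ from differentiating $\psi_{(k_1)}$ is dominated by the factor $\lambda$ from differentiating $\phi_{(k)}$, so the worst case produces $\lambda^N$. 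Only $\psi_{(k_1)}$ carries time dependence, hence the $(\lambda\rs\mu/\rp)^M$ factor. The correctors $W_{(k)}^c,D_{(k)}^c$ differ by the prefactor $1/(\lambda^2 N_\Lambda^2)$, and $\wt W_{(k)}^c,\wt D_{(k)}^c$ carry one derivative on $\psi_{(k_1)}$ and one on $\Phi_{(k)}$, each gaining/losing one power of $\lambda$; combining produces the additional $\rs/\rp$ multiplier, which is why the left-hand side is weighted by $\rp/\rs$.

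The main obstacle is the product estimate \eqref{intersect-phik1-phik1'}, which is genuinely geometric rather than a pure scaling computation. Each building block $\psi_{(k_1)}\phi_{(k)}$ is, within one period of $\T^3$, concentrated on a thin cuboid of dimensions $\sim 1/\lambda$ in the $k$-direction, $\sim \rp/(\lambda\rs)$ in the $k_1$-direction, and full height in the $k_2$-direction. The plan is to fix any $k\neq k'$ in $\Lambda_u\cup\Lambda_B$, use the prescription (from the geometric lemmas) that $k_2\ne k_2'$ whenever $k\ne k'$, and compute the $L^p$ norm of the four-fold product by resolving intersections of two such cuboid families. A computation in affine coordinates aligned to $\{k,k_1,k_2\}\cap\{k',k_1',k_2'\}$ then gives the per-period volume $\sim \rs\cdot\rp\cdot\rp\cdot \lambda^{-?}$ after accounting for periodicity, with the $\|\cdot\|_{L^\infty}$ bound $\sim \rs^{-1}\rp^{-1}$; raising to the $p$-th power and summing over the $\sim (\lambda \rs)^3$ periods in $\T^3$ yields $\rs^{1/p-1}\rp^{2/p-1}$, which is the claim. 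The delicate part is making this intersection analysis rigorous in all orientations; this is where the product structure and the separation enforced by the geometric lemmas both enter in an essential way.
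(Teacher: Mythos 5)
The paper does not prove this lemma; it quotes it verbatim from \cite{lzz21}, so there is no in-paper proof to compare against. Your reconstruction of the first four estimates is correct and is exactly the standard scaling argument: \eqref{intermittent estimates2} follows from the $L^p$-scaling of $\phi_\rs,\Phi_\rs$ combined with the fact that the substitution $x\mapsto \lambda\rs N_\Lambda k\cdot x$ is an integer dilation (ensured by the parameter choices $\lbb\rs N_\Lambda\in\mathbb{N}$) so $L^p(\T^3)$-norms are preserved and each spatial derivative costs $\lbb\rs\cdot\rs^{-1}=\lbb$; \eqref{intermittent estimates} is the same computation with the extra chain-rule factor $\mu$ in $t$; and \eqref{ew}--\eqref{ed} follow by Fubini along the orthogonal directions $k\perp k_1$, with the Leibniz rule and the observation $\rs<\rp$ dominating $\lbb\rs/\rp$ by $\lbb$. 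Your accounting of the prefactors $\lbb^{-2}$ and $\rp/\rs$ for the two types of correctors is also right.

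The weak link is the product estimate \eqref{intersect-phik1-phik1'}, which is the only genuinely nontrivial item. Your framework — interpolate between the $L^\infty$ bound $\rs^{-1}\rp^{-1}$ and the measure of $\supp(\psi_{(k_1)}\phi_{(k)})\cap\supp(\psi_{(k_1')}\phi_{(k')})$, or equivalently compute a per-period intersection volume and count periods — is indeed the right one, and the target measure is $\lesssim\rs\rp^2$ (which, combined with the $L^\infty$ bound, is exactly what produces $\rs^{1/p-1}\rp^{2/p-1}$ and improves on the trivial H\"older bound $\rs^{1/p-1}\rp^{1/p-1}$). But this is precisely the step you leave unfinished: the placeholder $\lbb^{-?}$ in your per-period volume signals that you have not actually resolved the intersection geometry. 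This is not a generic estimate — for random orientations the intersection would be $\sim\rs^2\rp^2$, strictly smaller — so the bound $\rs\rp^2$ is a worst-case bound that must be verified against the particular finite wave-vector sets $\Lambda_u,\Lambda_B$ and the chosen orthonormal frames $(k,k_1,k_2)$, including possible alignments such as $k_1=k_1'$ or $k\perp k_1'$; the enforced separation $k_2\ne k_2'$ rules out the degenerate case of coplanar pairs but does not by itself give the number $\rs\rp^2$. As written, your proposal asserts the answer without deriving it, which is the one place where the proof is incomplete.
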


At last,
in order to control the hyper-viscosity and hyper-resistivity $(-\Delta)^{\alpha}$
when $\alpha \geq 1$,
again it is crucial to use the temporal intermittency.
We shall use the temporal building blocks as in \S \ref{Subsec-Flow-S1},
that is, the same temporal building blocks $g_{(\tau)}$ and $h_{(\tau)}$
given by \eqref{gk},
but with the new parameters $\tau, \sigma$ given by \eqref{larsrp},
which turns out to be effective to treat the current supercritical regime $\mathcal{S}_2$.

\subsection{Velocity and magnetic perturbations}   \label{subsec-Perturbation-S2}

Let us first define the amplitude functions.

{\bf $\bullet$ Amplitudes.}
As in \S \ref{Subsec-Amplitude-S1},
the amplitudes of the magnetic perturbations
are defined by
\begin{align}
	&	a_{(k)}(t,x):= \varrho^{\frac{1}{2} }_B (t,x) f_B (t)\gamma_{(k)}
	\(-\frac{\trb(t,x)}{\varrho_B(t,x)} \), \quad k \in \Lambda_B, \label{ak-endpt2}
\end{align}
and the amplitudes of the velocity perturbations are defined by
\begin{align}
	&	a_{(k)}(t,x):=  \varrho^{\frac{1}{2} }_u (t,x) f_u (t)\gamma_{(k)}
	\({\rm Id}-\frac{\tr(t,x)+\mathring{G}^B(t,x)}{\varrho_u(t,x)}\), \quad k \in \Lambda_u, \label{akb-endpt2}
\end{align}
where $\varrho_u, \varrho_B, f_u, f_B$, $\gamma_{(k)}$
and $\mathring{G}^B$ are defined as in \S \ref{Subsec-Flow-S1}.
Note that,
the amplitudes obey the same estimates as in Lemmas \ref{Lem-mae-S1} and \ref{Lem-vae-S1}.
Namely, one has

\begin{lemma} \label{Lem-a-S2}
	For $1\leq N\leq 9$ we have
	\begin{align}  \label{a-mag-S2}
		\norm{a_{(k)}}_{L^2_{t,x}} \lesssim \delta_{q+1}^{\frac{1}{2}} ,  \ \
		\norm{ a_{(k)} }_{C_{t,x}} \lesssim \thq^{-1},\ \ \norm{ a_{(k)} }_{C_{t,x}^N} \lesssim \thq^{-7N},
        \ \ k\in \Lambda_B,
	\end{align}
	and
	\begin{align}  \label{a-vel-S2}
		\norm{a_{(k)}}_{L^2_{t,x}} \lesssim \delta_{q+1}^{\frac{1}{2}} , \ \
		\norm{ a_{(k)} }_{C_{t,x}} \lesssim \thq^{-1},\ \ \norm{ a_{(k)} }_{C_{t,x}^N} \lesssim \thq^{-14N},
     \ \ k\in \Lambda_u,
	\end{align}
	where the implicit constants are independent of $q$.
\end{lemma}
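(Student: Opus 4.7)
The proof should proceed in direct parallel with Lemmas \ref{Lem-mae-S1} and \ref{Lem-vae-S1}, since the amplitudes $\{a_{(k)}\}_{k\in \Lambda_B\cup \Lambda_u}$ defined by \eqref{ak-endpt2}--\eqref{akb-endpt2} have exactly the same algebraic structure as those used in the supercritical regime $\mathcal{S}_1$. Only the spatial building blocks $W_{(k)}, D_{(k)}$ differ between the two regimes, whereas the ingredients entering the amplitudes---the stresses $(\tr,\trb)$ from Theorem \ref{Thm-nunr}, the auxiliary matrix $\mathring G^B$ from \eqref{def:G}, and the cut-off functions $\chi, f_B, f_u, \gamma_{(k)}$---are literally the same. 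My plan is therefore to indicate at each step which ingredient of \S \ref{Sec-Concen-Rey} and \S \ref{Subsec-Amplitude-S1} is invoked, rather than re-run the full computation.

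For the $L^2_{t,x}$ bound I would begin from the pointwise inequality $\varrho_B(t,x)\leq C(\lambda_q^{-\ve_R/4}\delta_{q+1}+|\trb(t,x)|)$ implied by \eqref{e4.0}--\eqref{e4.1}, combine it with the decay estimate \eqref{nrl1} to obtain $\|\varrho_B\|_{L^1_{t,x}}\lesssim \delta_{q+1}$, and then use the uniform boundedness of $\gamma_{(k)}$ on the small neighborhood guaranteed by \eqref{rhor} (respectively \eqref{R-G-veu} for the velocity case) to conclude $\|a_{(k)}\|_{L^2_{t,x}}^2\lesssim \|\varrho_{B/u}\|_{L^1_{t,x}}\lesssim \delta_{q+1}$. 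For the $C_{t,x}$ bound, the Sobolev embedding $H^3_x\hookrightarrow L^\infty_x$ applied to \eqref{nrh3} yields $\|\trb\|_{L^\infty_{t,x}}\lesssim \theta_{q+1}^{-1}\lambda_q^5$; since $\theta_{q+1}^{-1}\gtrsim \lambda_q^{12/\eta}$ dominates any fixed positive power of $\lambda_q$ by \eqref{def-mq-thetaq}, one infers $\|\varrho_B\|_{C_{t,x}}\lesssim \theta_{q+1}^{-2}$, whence the desired $C_{t,x}$ bound on $a_{(k)}$. The velocity case additionally requires $\|\mathring G^B\|_{C_{t,x}}\lesssim \theta_{q+1}^{-2}$, which is obtained a posteriori by feeding the just-proved $C_{t,x}$ bound on $\{a_{(k)}\}_{k\in \Lambda_B}$ back into \eqref{def:G}.

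The delicate step is the $C^N_{t,x}$ estimate for $1\leq N\leq 9$. Here the plan is to apply Fa\`a di Bruno's formula to the compositions $\gamma_{(k)}(-\trb/\varrho_B)$ and $\gamma_{(k)}(\Id-(\tr+\mathring G^B)/\varrho_u)$, expanding each derivative into a finite sum of products of derivatives of $\gamma_{(k)}$, $f_{B/u}$, the stresses, and $\varrho_{B/u}$. The main obstacle is the combinatorial bookkeeping of these nested expansions, in particular the lower bound on $\varrho_{B/u}$ that is needed to control the negative powers arising from differentiating quotients; this is precisely why the truncation $\chi$ was built into \eqref{rhob} and \eqref{defrho}, forcing $\varrho_{B/u}$ to stay bounded below by a constant multiple of $\lambda_q^{-\ve_R/4}\delta_{q+1}$. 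A careful accounting---each derivative of $f_{B/u}$ costing one power of $\theta_{q+1}^{-1}$; each derivative of the stresses costing a bounded power of $\theta_{q+1}^{-1}$ via \eqref{nrh3} combined with Sobolev embedding; and $\chi, \gamma_{(k)}$ having universally bounded derivatives on the compact set to which their arguments are restricted---yields the rate $\theta_{q+1}^{-7N}$ for $k\in \Lambda_B$. The extra factor of two in the exponent for $k\in \Lambda_u$ arises from the additional nested composition through $\mathring G^B$, whose $C^N_{t,x}$ norm scales as $\theta_{q+1}^{-7N-1}$ (the analogue of \eqref{G estimates}) and which, after propagation through the chain rule against $\gamma_{(k)}$, effectively doubles the final rate to $\theta_{q+1}^{-14N}$. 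Since this is precisely the argument already carried out in \cite{lzz21} for structurally identical amplitudes, the proof ultimately reduces to invoking that reference with minor notational adjustments.
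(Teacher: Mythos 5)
Your proposal is correct and matches the paper's (essentially tacit) proof, which simply remarks that the amplitudes obey the same estimates as in Lemmas~\ref{Lem-mae-S1} and~\ref{Lem-vae-S1} because they are defined by the identical formulas \eqref{akb}, \eqref{velamp}; your sketch fills in what that argument actually looks like. One point that deserves to be made explicit, since you list $\mathring G^B$ among the ingredients that are ``literally the same'': the definition \eqref{def:G} involves $\aint_{\T^3} W_{(k)}\otimes W_{(k)}-D_{(k)}\otimes D_{(k)}\,\d x$ and the building blocks \emph{do} change between $\mathcal S_1$ and $\mathcal S_2$, so a priori $\mathring G^B$ could differ. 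The reason it does not is that the normalizations \eqref{e4.91} and \eqref{e4.92} force $\aint_{\T^3}\phi_{(k)}^2\,\d x=1$ in regime $\mathcal S_1$ and $\aint_{\T^3}\psi_{(k_1)}^2\phi_{(k)}^2\,\d x=1$ in regime $\mathcal S_2$, so in both cases the spatial mean collapses to the constant matrix $k_1\otimes k_1-k_2\otimes k_2$, and $\mathring G^B=\sum_{k\in\Lambda_B}a_{(k)}^2(k_1\otimes k_1-k_2\otimes k_2)$ depends only on the magnetic amplitudes, which coincide by construction. With this observation in place the rest of your argument --- $L^2_{t,x}$ bound from \eqref{nrl1} and the pointwise control of $\varrho_{B/u}$, $C_{t,x}$ bound from \eqref{nrh3} and Sobolev embedding, and $C^N_{t,x}$ bound via Fa\`a di Bruno together with the lower bound on $\varrho_{B/u}$ enforced by the truncation $\chi$ --- goes through exactly as in \cite{lzz21} with $\ell$ replaced by $\theta_{q+1}$, which is precisely what the paper invokes.
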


Next, we are going to construct the velocity and magnetic perturbations,
which consist of the principal parts,
the incompressible correctors
and the temporal correctors.
It should be mentioned that,
the incompressibility correctors in \eqref{wqc-dqc} below
are different from the previous ones in \eqref{wqc-dqc-endpt2}
in the supercritical regime $\mathcal{S}_1$.
Moreover,
the current supercritical regime  $\mathcal{S}_2$
also would require a new type of temporal correctors
$w^{(t)}_{q+1}$ and $d^{(t)}_{q+1}$,
in order to balance the high spatial oscillations
caused by the concentration function $\psi_{\rp}$,
that did not appear in the previous supercritical regime $\mathcal{S}_1$.

{\bf $\bullet$ Principal parts.}
The principal parts
of the velocity
and  magnetic perturbations are defined by
\begin{subequations}\label{pp}
	\begin{align}
		w_{q+1}^{(p)} &:= \sum_{k \in \Lambda_u \cup \Lambda_B } a_{(k)}\g W_{(k)},
		\label{pv}\\
		d_{q+1}^{(p)} &:= \sum_{k \in \Lambda_B} a_{(k)}\g D_{(k)}.
		\label{ph}
	\end{align}
\end{subequations}
Note that,
the algebraic identities \eqref{mag oscillation cancellation calculation}
and \eqref{vel oscillation cancellation calculation} still hold
(see \cite{lzz21}).

{\bf $\bullet$ Incompressibility correctors.}
The corresponding incompressibility correctors are defined by
\begin{subequations} \label{wqc-dqc}
	\begin{align}
		w_{q+1}^{(c)}
		&:=   \sum_{k\in \Lambda_u \cup\Lambda_B  } \g\left(\curl (\nabla a_{(k)} \times W^c_{(k)})
		+ \nabla a_{(k)} \times \curl W^c_{(k)} +a_{(k)} \wt W_{(k)}^c \right) , \label{wqc} \\
		d_{q+1}^{(c)} &:=   \sum_{k\in \Lambda_B } \g\left(\curl (\nabla a_{(k)} \times D_{(k)}^c)
		+ \nabla a_{(k)} \times \curl D_{(k)}^c
		+a_{(k)}\wdc\right ), \label{dqc}
	\end{align}
\end{subequations}
where $W^c_{(k)}$ and  $D_k^c $  are given by \eqref{WcDc-def-A1}
and $\wt W_{(k)}^c$and $\wt D_{(k)}^c$ are as in \eqref{wtWkc-wtDkc-def-A1}.
One has  that
(see \cite[(4.35a), (4.35b)]{lzz21})
\begin{subequations}
	\begin{align}
		&  w_{q+1}^{(p)} + w_{q+1}^{(c)}
		=\curl \curl \left(  \sum_{k \in \Lambda_u \cup \Lambda_B} a_{(k)} \g W^c_{(k)} \right), \label{div free velocity} \\
		&  d_{q+1}^{(p)} + d_{q+1}^{(c)}=\curl  \curl \left(  \sum_{k \in \Lambda_B} a_{(k)}\g D_{(k)}^c \right).  \label{div free magnetic}
	\end{align}
\end{subequations}
In particular,
\begin{align} \label{div-wpc-dpc-0}
	\div (w_{q+1}^{(p)} + w_{q +1}^{(c)}) = \div (d_{q+1}^{(p)} + d_{q +1}^{(c)}) =  0.
\end{align}

{\bf $\bullet$ Two types of temporal correctors to balance spatial-temporal oscillations.}
As in the regime $\mathcal{S}_1$,
the temporal correctors $w_{q+1}^{(o)}$ and $d_{q+1}^{(o)}$
of the same expressions as in \eqref{wo.2} and \eqref{do.2},
respectively,
also will be used in the current situation,
in order to balance the high temporal oscillations.
We note that
the algebraic identities \eqref{utemcom} and \eqref{btemcom} still hold.

However, unlike in the previous supercritical regime $\mathcal{S}_1$
in \S \ref{Subsec-perturb-S1},
we also need to introduce a new type of temporal corrections,
particularly, to balance the high spatial oscillations.

More precisely, we define the temporal correctors $w_{q+1}^{(t)}$ and $d_{q+1}^{(t)}$ by
\begin{subequations}  \label{veltemcor-magtemcor}
	\begin{align}
		&w_{q+1}^{(t)} := -{\mu}^{-1}  \sum_{k\in \Lambda_u\cup \Lambda_B} \P_{H}\P_{\neq 0}(a_{(k)}^2\g^2 \psi_{(k_1)}^2 \phi_{(k)}^2 k_1),\label{veltemcor}\\
		\label{magtemcor}
		&d_{q+1}^{(t)} := -{\mu}^{-1}  \sum_{k\in \Lambda_B}\P_{H}\P_{\neq 0}(a_{(k)}^2\g^2 \psi_{(k_1)}^2 \phi_{(k)}^2 k_2),
	\end{align}
\end{subequations}
where $\P_{H}$ denotes the Helmholtz-Leray projector, i.e.,
$\P_{H}=\Id-\nabla\Delta^{-1}\div. $

The important algebraic identities to
balance high spatial oscillations are
stated below
(see \cite[(4.38), (4.39)]{lzz21}):
\begin{align} \label{utem}
	&\partial_{t} w_{q+1}^{(t)}+    \sum_{k \in \Lambda_u \cup \Lambda_B}  \P_{\neq 0}
	\(a_{(k)}^{2}\g^2 \div(W_{(k)} \otimes W_{(k)})\)  \nonumber  \\
	=&(\nabla\Delta^{-1}\div)  {\mu}^{-1}  \sum_{k \in \Lambda_u \cup \Lambda_B}  \P_{\neq 0} \partial_{t}
	\(a_{(k)}^{2}\g^2 \psi_{(k_1)}^{2} \phi_{(k)}^{2}  k_1\)   \nonumber  \\
	& - {\mu}^{-1}    \sum_{k \in \Lambda_u \cup \Lambda_B}   \P_{\neq 0}
(\partial_{t}\( a_{(k)}^{2}\g^2)  \psi_{(k_1)}^{2} \phi_{(k)}^{2} k_1\),
\end{align}
and
\begin{align} \label{btem}
	&\partial_{t} d_{q+1}^{(t)}+ \sum_{k\in \Lambda_B} \P_{\neq 0}
	\(a_{(k)}^{2}\g^2 \div(D_{(k)} \otimes  W_{(k)}-W_{(k)} \otimes D_{(k)} )\)   \nonumber  \\
	=&(\nabla\Delta^{-1}\div) {\mu}^{-1}   \sum_{k \in \Lambda_B} \P_{\neq 0} \partial_{t}
	\(a_{(k)}^{2}\g^2 \psi_{(k_1)}^{2} \phi_{(k)}^{2}  k_2\)  \nonumber  \\
	&  - {\mu}^{-1}  \sum_{k \in \Lambda_B}
	\P_{\neq 0}(\partial_{t} \(a_{(k)}^{2}\g^2) \psi_{(k_1)}^{2}\phi_{(k)}^{2} k_2 \).
\end{align}

{\bf $\bullet$ Velocity and magnetic perturbations.}
Now we define the velocity and magnetic perturbations $w_{q+1}$ and $d_{q+1}$
at level $q+1$:
\begin{subequations}   \label{perturbation}
	\begin{align}
		w_{q+1} &:= w_{q+1}^{(p)} + w_{q+1}^{(c)}+ w_{q+1}^{(t)}+\wo,
		\label{velocity perturbation} \\
		d_{q+1} &:= d_{q+1}^{(p)} + d_{q+1}^{(c)}+ d_{q+1}^{(t)}+\dqo.
		\label{magnetic perturbation}
	\end{align}
\end{subequations}
The corresponding velocity and magnetic fields at level $q+1$ are defined by
\begin{subequations} \label{uB-q+1-A1}
	\begin{align}
		& u_{q+1}:=\wt u_{q} + w_{q+1},
		\label{q+1 velocity}\\
		& B_{q+1}:=\wt B_{q}+ d_{q+1},
		\label{q+1 magnetic}
	\end{align}
\end{subequations}
where $\wt u_q$, $\wt B_q$ are the well-prepared
velocity and magnetic fields in \S \ref{Sec-Concen-Rey}.

Lemma \ref{Lem-perturb-A1} below contains
the key analytic estimates of the velocity and magnetic perturbations.

\begin{lemma}  [Estimates of perturbations] \label{Lem-perturb-A1}
Let $\rho \in(1,\9), \gamma \in [1,+\infty]$ and integer $0\leq N\leq 7$.
Then, the following estimates hold:
	\begin{align}
		&\norm{\na^N w_{q+1}^{(p)} }_{L^ \gamma_tL^\rho_x } + \norm{\na^N d_{q+1}^{(p)} }_{L^ \gamma_tL^\rho_x   } \lesssim \thq^{-1} \lbb^N\rs^{\frac{1}{\rho}-\frac12}\rp^{\frac{1}{\rho}-\frac12}\tau^{\frac12-\frac{1}{ \gamma}},\label{uprinlp}\\
		&\norm{\na^N w_{q+1}^{(c)} }_{L^\gamma_tL^\rho_x   } +\norm{\na^N d_{q+1}^{(c)} }_{L^\gamma_tL^\rho_x  } \lesssim \thq^{-1}\lbb^N\rs^{\frac{1}{\rho}+\frac12}\rp^{\frac{1}{\rho}-\frac{3}{2}}\tau^{\frac12-\frac{1}{\gamma}}, \label{ucorlp}\\
		&\norm{ \na^Nw_{q+1}^{(t)} }_{L^\gamma_tL^\rho_x   }+ \norm{\na^N d_{q+1}^{(t)} }_{L^\gamma_tL^\rho_x  }\lesssim \thq^{-2}\lbb^N\mu^{-1}\rs^{\frac{1}{\rho}-1}\rp^{\frac{1}{\rho}-1}\tau^{1-\frac{1}{\gamma}} ,\label{dco rlp}\\
		&\norm{\na^N \wo }_{L^\gamma_tL^\rho_x  }
         + \norm{\na^N \dqo }_{L^\gamma_tL^\rho_x  }\lesssim \thq^{-14N-16}\sigma^{-1} ,\label{dcorlp}
	\end{align}
where the implicit constants depend only on $N$, $\gamma$ and $\rho$.
In particular, for integrals $1\leq N\leq 7$,
\begin{align}
	& \norm{ w_{q+1}^{(p)} }_{L^\9_tH^N_x }  + \norm{ w_{q+1}^{(c)} }_{L^\9_tH^N_x}+\norm{ w_{q+1}^{(t)} }_{L^\9_tH^N_x}+\norm{ \wo }_{L^\9_tH^N_x}
	\lesssim \lambda^{N+2},\label{principal h3 est-endpt1}\\
		& \norm{ d_{q+1}^{(p)} }_{L^\9_tH^N_x }  + \norm{ d_{q+1}^{(c)} }_{L^\9_tH^N_x}+\norm{ d_{q+1}^{(t)} }_{L^\9_tH^N_x}+\norm{ \dqo }_{L^\9_tH^N_x}
	\lesssim \lambda^{N+2}.   \label{bprincipal h3 est-endpt1}
\end{align}
Moreover, for the temporal derivatives,
we have that for integers $1\leq N\leq 7$,
\begin{align}
	& \norm{\p_t w_{q+1}^{(p)} }_{L^\9_tH^N_x }  + \norm{\p_t w_{q+1}^{(c)} }_{L^\9_tH^N_x}+\norm{\p_t w_{q+1}^{(t)} }_{L^\9_tH^N_x}+\norm{\p_t \wo }_{L^\9_tH^N_x}
	\lesssim \lambda^{N+5},\label{pth2 est-endpt1}\\
	& \norm{\p_t d_{q+1}^{(p)} }_{L^\9_tH^N_x }  + \norm{\p_t d_{q+1}^{(c)} }_{L^\9_tH^N_x}+\norm{\p_t d_{q+1}^{(t)} }_{L^\9_tH^N_x}+\norm{\p_t \dqo }_{L^\9_tH^N_x}
	\lesssim \lambda^{N+5},\label{bpth2 est-endpt1}
\end{align}
where the implicit constants are independent of $\lbb$.
\end{lemma}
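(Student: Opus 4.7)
The plan is to derive the four $L^\gamma_t L^\rho_x$ bounds \eqref{uprinlp}--\eqref{dcorlp} by combining the amplitude estimates of Lemma \ref{Lem-a-S2}, the spatial intermittency estimates of Lemma \ref{Lem-build-S2}, and the temporal intermittency estimates of Lemma \ref{Lem-gk-esti}, and then to specialise to $(\gamma,\rho)=(\infty,2)$ together with the parameter choice \eqref{larsrp} to read off the $L^\infty_tH^N_x$ estimates \eqref{principal h3 est-endpt1}--\eqref{bprincipal h3 est-endpt1}. The temporal-derivative estimates \eqref{pth2 est-endpt1}--\eqref{bpth2 est-endpt1} are handled piece by piece at the end, as they require the most delicate bookkeeping of the six parameters $\rs,\rp,\lbb,\mu,\tau,\sigma$.

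For the principal parts \eqref{pv}--\eqref{ph}, I would apply the Leibniz rule to $a_{(k)}\g W_{(k)}$ and $a_{(k)}\g D_{(k)}$, put $\g$ in $L^\gamma_t$ via \eqref{gk estimate}, estimate the tensor product $\psi_{(k_1)}\phi_{(k)}$ in $L^\rho_x$ via \eqref{ew}--\eqref{ed}, and absorb the factors of $a_{(k)}$ in $C^{N_1}_{t,x}$ using Lemma \ref{Lem-a-S2}. Only the zero-order amplitude bound $\thq^{-1}$ survives at leading order because $\thq^{-14N_1}\ll \lbb^{N_1}$ by \eqref{b-beta-ve} and \eqref{def-mq-thetaq}, so \eqref{uprinlp} follows. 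For the correctors \eqref{wqc}--\eqref{dqc} I split into three summands: the two curl-type summands involving $W^c_{(k)}$ or $D^c_{(k)}$ gain a factor $\lbb^{-2}$ from \eqref{ew}--\eqref{ed} but cost one derivative of either $a_{(k)}$ or the potential, while the dominant summand $a_{(k)}\widetilde W^c_{(k)}$ contributes the factor $\rs/\rp$ from \eqref{ew}; summing these and using $\lbb^{-2}\lesssim \rs^2\rp^{-2}$ under \eqref{larsrp} gives \eqref{ucorlp}.

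For the new temporal correctors $w^{(t)}_{q+1},d^{(t)}_{q+1}$ in \eqref{veltemcor}--\eqref{magtemcor}, the product $\psi_{(k_1)}^2\phi_{(k)}^2$ has $L^\rho_x$ norm of order $\rs^{2/\rho-1}\rp^{2/\rho-1}$ by squaring \eqref{intermittent estimates}--\eqref{intermittent estimates2}, the amplitude contributes $\thq^{-2}$, and $\g^2$ has $L^\gamma_t$ norm $\lesssim \tau^{1-1/\gamma}$ by \eqref{gk estimate}; the $L^\rho$-boundedness of $\P_H\P_{\neq 0}$ for $\rho\in(1,\infty)$ together with the prefactor $\mu^{-1}$ yields \eqref{dco rlp}. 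The estimates \eqref{dcorlp} for $\wo,\dqo$ are the same as in regime $\mathcal{S}_1$ (Lemma \ref{Lem-perturb-S1}) and follow from \eqref{hk-est}, Lemma \ref{Lem-a-S2} and the boundedness of $\P_H\P_{\neq 0}$. Plugging $(\gamma,\rho)=(\infty,2)$ together with \eqref{larsrp} into these four bounds, and using $\thq^{-C}\lesssim \lbb^{\ve}$ from \eqref{b-beta-ve} and \eqref{def-mq-thetaq} to absorb all $\thq$-factors, one verifies that each resulting exponent of $\lbb$ is at most $N+2$, giving \eqref{principal h3 est-endpt1}--\eqref{bprincipal h3 est-endpt1}.

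The main obstacle will be the time-derivative estimates \eqref{pth2 est-endpt1}--\eqref{bpth2 est-endpt1}. When $\partial_t$ hits $\psi_{(k_1)}$ it produces the large factor $\rs\lbb\mu/\rp$ by \eqref{intermittent estimates}, when it hits $\g$ it produces $\sigma\tau^{1/2}$, and when it hits $a_{(k)}$ it produces $\thq^{-14N-14}$. After distributing, the dominant terms become of size $\thq^{-C}\lbb^N\sigma\tau^{3/2}$ (for $\p_t w^{(p)}_{q+1}$), $\thq^{-C}\lbb^{N-1}\sigma\tau^{3/2}$ (for $\p_t w^{(c)}_{q+1}$), $\thq^{-C}\lbb^N\mu^{-1}\sigma\tau$ (for $\p_t w^{(t)}_{q+1}$), and $\thq^{-C}\lbb^N\tau$ (for $\p_t\wo$). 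Using \eqref{larsrp} these become $\lbb^{N+2\a+1+O(\ve)}$, $\lbb^{N+2\a+O(\ve)}$, $\lbb^{N+2\a-1+O(\ve)}$ and $\lbb^{N+4\a-4+O(\ve)}$ respectively, each of which is $\lesssim \lbb^{N+5}$ since $\a<3/2$ and $\ve$ is small enough by \eqref{e3.1}; the precise calibration $\tau=\lbb^{4\a-4+12\ve}$ and $\mu=\lbb^{2\a-1+3\ve}$ is exactly what guarantees this, and the analogous computation for the magnetic side gives \eqref{bpth2 est-endpt1}.
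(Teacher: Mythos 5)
Your outline for \eqref{uprinlp}--\eqref{dcorlp} and the specialisation to $(\gamma,\rho)=(\infty,2)$ giving \eqref{principal h3 est-endpt1}--\eqref{bprincipal h3 est-endpt1} is essentially the paper's route: Lemma \ref{Lem-a-S2} for the amplitudes, Lemma \ref{Lem-build-S2} for the spatial building blocks, Lemma \ref{Lem-gk-esti} for the temporal ones, Leibniz in between, and the $L^\rho$ boundedness of $\P_H\P_{\neq 0}$ for the temporal correctors. The estimate of the incompressibility correctors via the three summands of \eqref{wqc}--\eqref{dqc} and the observation $\lbb^{-1}\lesssim\rs\rp^{-1}$ is also correct.

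The time-derivative estimates \eqref{pth2 est-endpt1}--\eqref{bpth2 est-endpt1} are where your argument goes wrong, in several distinct ways.

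First, for $\p_t w^{(p)}_{q+1}$ you claim the dominant contribution is $\thq^{-C}\lbb^N\sigma\tau^{3/2}$, i.e.\ the $\p_t\g$ term. This is not the case. Under \eqref{larsrp} one has $\rs\lbb\mu/\rp = \lbb^{2\a-\ve}$ and $\sigma\tau = \lbb^{4\a-4+14\ve}$, so $\frac{\rs\lbb\mu/\rp}{\sigma\tau}=\lbb^{4-2\a-15\ve}\gg 1$ for $\a<2$ and $\ve$ small. In other words, the $\p_t\psi_{(k_1)}$ contribution dominates: the correct leading term is $\thq^{-14N-14}\lbb^N\tau^{1/2}(\rs\lbb\mu/\rp)\approx\thq^{-14N-14}\lbb^{N+4\a-2+O(\ve)}$, not $\lbb^{N+6\a-6+O(\ve)}=\lbb^N\sigma\tau^{3/2}$. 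The same mis-identification appears in your $\p_t w^{(c)}_{q+1}$ expression. Both are eventually below $\lbb^{N+5}$, but the bookkeeping as written is incorrect.

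Second, for $\p_t w^{(t)}_{q+1}$ and $\p_t d^{(t)}_{q+1}$ your expression $\thq^{-C}\lbb^N\mu^{-1}\sigma\tau$ omits the spatial concentration factor coming from the \emph{square} of the profile: in $H^N_x$ one must estimate $\|\psi^2_{(k_1)}\phi^2_{(k)}\|_{L^2_x}\lesssim\rs^{-1/2}\rp^{-1/2}\approx\lbb^{1-4\ve}$, and this factor is not $O(1)$. Moreover, as above, it is the $\p_t\psi^2_{(k_1)}$ term (in which the $\mu^{-1}$ in front cancels against $\mu$ from \eqref{intermittent estimates}) that dominates, not the $\p_t(\g^2)$ term you single out.

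Third, for $\p_t w^{(o)}_{q+1}$ you introduce a factor $\lbb^N$ that is not there: $w^{(o)}_{q+1}$ contains only slowly varying amplitudes (no oscillating building block), so spatial derivatives fall onto $\nabla(a_{(k)}^2)$ and cost at most $\thq^{-14}$ each; the correct bound is $\thq^{-14N-16}\tau$, with no $\lbb^N$.

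Fourth, the translations to $\lbb$-powers you give ($\lbb^N\sigma\tau^{3/2}\mapsto\lbb^{N+2\a+1+O(\ve)}$, $\lbb^{N-1}\sigma\tau^{3/2}\mapsto\lbb^{N+2\a+O(\ve)}$, $\lbb^N\mu^{-1}\sigma\tau\mapsto\lbb^{N+2\a-1+O(\ve)}$) do not follow from the parameter choice \eqref{larsrp}. For instance $\sigma\tau^{3/2}=\lbb^{2\ve}\lbb^{6\a-6+18\ve}=\lbb^{6\a-6+20\ve}$, which is $\lbb^{2\a+1}$ only if $\a=7/4$. The fact that your final answers all land below $\lbb^{N+5}$ is a coincidence of the regime $\a<3/2$ and does not repair the computation; a correct derivation should instead follow the paper's identification of the $\p_t\psi_{(k_1)}$ terms as dominant and track $\rs^{-1/2}\rp^{-1/2}$ in the quadratic correctors.
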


\begin{proof}
Estimates \eqref{uprinlp}, \eqref{ucorlp} and \eqref{dcorlp}
can be proved in the same fashion of \eqref{uprinlp-endpt2}-\eqref{dcorlp-endpt2}.
Regarding the new temporal correctors $w^{(t)}_{q+1}$ and $d^{(t)}_{q+1}$,
by \eqref{veltemcor}, \eqref{magtemcor},
Lemmas \ref{Lem-gk-esti}, \ref{Lem-build-S2}, \ref{Lem-a-S2}
and the boundedness of operators $P_{\not =0}$ and $\mathbb{P}_H$ in $L^\rho$,
$\rho\in (1,\infty)$,
\begin{align}\label{lp vel time estimate}
		 \norm{\na^N w_{q+1}^{(t)} }_{L^\gamma_tL^\rho_x}
			\lesssim & \,\mu^{-1}    \sum_{k \in \Lambda_u \cup \Lambda_B}
			\norm{ \g }_{L^{2\gamma}_t }^2
              \sum_{N_1+N_2+N_3=N}
              \|\nabla^{N_1}(a_{(k)}^2)\|_{C_{t,x} }\norm{  \na^{N_2} (\psi_{(k_1)}^2) }_{C_tL^{\rho}_x }\norm{  \na^{N_3}(\phi_{(k)}^2) }_{L^{\rho}_x }  \notag  \\
			\lesssim & \,\mu^{-1}  \tau^{1-\frac{1}{\gamma}}
             \sum_{N_1+N_2+N_3=N} \thq^{-14N_1-2} \lambda^{N_2} r_{\parallel}^{\frac{1}{\rho} -1}\lambda^{N_3} r_{\perp}^{\frac{1}{\rho}-1}    \notag   \\
			\lesssim & \, \thq^{-2} \lambda^{N}\mu^{-1}r_{\perp}^{\frac{1}{\rho}-1} r_{\parallel}^{\frac{1}{\rho} -1}\tau^{1-\frac{1}{\gamma}},
\end{align}
	and similarly,
	\begin{equation}
		\begin{aligned}
			\label{lp mag time estimate}
			\norm{\na^N d_{q+1}^{(t)} }_{L^\gamma_tL^\rho_x}
           &\lesssim  \thq^{-2} \lambda^{N}\mu^{-1}r_{\perp}^{\frac{1}{\rho}-1} r_{\parallel}^{\frac{1}{\rho} -1}\tau^{1-\frac{1}{\gamma}},
		\end{aligned}
	\end{equation}
	which yield \eqref{dco rlp}.

The $L^\infty_tH^N_x$-estimates of the velocity and magnetic perturbations in
\eqref{principal h3 est-endpt1} and  \eqref{bprincipal h3 est-endpt1}
then follow from estimates \eqref{uprinlp}-\eqref{dcorlp}
with $\gamma =\infty$ and $\rho=2$.

It remains to prove estimates \eqref{pth2 est-endpt1} and \eqref{bpth2 est-endpt1}
of the temporal derivatives.
Using \eqref{b-beta-ve}, \eqref{larsrp}
and Lemmas \ref{Lem-gk-esti}, \ref{Lem-build-S2} and
\ref{Lem-a-S2}
we get
\begin{align*}
	& \norm{\p_t w_{q+1}^{(p)} }_{L^\9_tH^N_x }  + \norm{\p_t  w_{q+1}^{(c)} }_{L^\9_tH^N_x}+\norm{\p_t  w_{q+1}^{(t)} }_{L^\9_tH^N_x}+\norm{ \p_t \wo }_{L^\9_tH^N_x}\notag \\
	\lesssim &\, \thq^{-14N-14}\lambda^{N+1}\mu \tau^{\frac12} +\thq^{-14N-42}\lambda^{N+1}\mu r_{\perp} r_{\parallel}^{-1}\tau^{\frac12}
	+ \thq^{-14N-16} \lambda^{N+1} r_{\perp}^{-\frac 12} r_{\parallel}^{-\frac12} \tau + \thq^{-14N-16}\tau \notag\\
	\lesssim &\,\thq^{-14N-14}\lambda^{N+4\a- 2+9\ve} +\thq^{-14N-42}\lambda^{N+4\a- 2+5\ve}+ \thq^{-14N-16}
      \lambda^{N+4\a- 2+8\ve}+ \thq^{-14N-16} \lambda^{4\a-4+12\ve} \notag \\
	\lesssim&\, \lambda^{N+5},
\end{align*}
where we also used the fact that $\thq^{-14N-42}\ll \lbb^{3\ve}$
and $0<\ve\leq 1/10$.
This yields \eqref{pth2 est-endpt1}.

Arguing in a similar manner,
we see that the derivative of magnetic perturbations
obey the same upper bound
and so get \eqref{bpth2 est-endpt1}.
Therefore, the proof of Lemma~\ref{Lem-perturb-A1} is complete.	
\end{proof}

\subsection{Verification of inductive estimates for velocity and magnetic fields}  \label{Subsec-induc-vel-mag}

We now verify the inductive estimates \eqref{ubh3}, \eqref{ubpth2}, \eqref{u-B-L2tx-conv}-\eqref{u-B-Lw-conv}
for the velocity and magnetic fields.

First, we derive from \eqref{ubh3},  \eqref{principal h3 est-endpt1}
and \eqref{bprincipal h3 est-endpt1} that, for $0\leq N\leq 4$,
\begin{align}  \label{verifyuc1}
	 \norm{(u_{q+1},B_{q+1})}_{L^\9_t \cH^{N+3}_x}
   \lesssim& \norm{(\wt u_{q},\wt B_{q})}_{L^\9_t \cH^{N+3}_x}+\norm{(w_{q+1},d_{q+1})}_{L^\9_t \cH^{N+3}_x}  \notag \\
   	&\lesssim \mq^{\frac{N}{2\a}}\la^5+ \laq^{N+5} \lesssim \lambda_{q+1}^{N+5},
\end{align}
which verifies \eqref{ubh3} at level $q+1$.

Moreover,
by \eqref{ubpth2}, \eqref{pdvh3}, \eqref{pth2 est-endpt1} and \eqref{bpth2 est-endpt1},
\begin{align}  \label{verifyptuh2}
	\norm{(\p_t u_{q+1},\p_t B_{q+1})}_{L^\9_t \cH^{N}_x}
 \lesssim&  \norm{(\p_t \wt u_{q},\p_t \wt B_{q})}_{L^\9_t \cH^{N}_x}+\norm{(\p_t w_{q+1},\p_t d_{q+1})}_{L^\9_t \cH^{N}_x} \notag \\
 \lesssim&   \thq^{-\frac 32}\lambda_{q}^5+ \lambda_{q+1}^{N+5}\lesssim \lambda_{q+1}^{N+5},
\end{align}
and thus \eqref{ubpth2} is verified at level $q+1$.

In order to obtain the $L^2_{t}\cL^2_x$-decay estimate \eqref{u-B-L2tx-conv},
we apply Lemma~\ref{Decorrelation1} with $f= a_{(k)}$,
$g = \g\psi_{(k_1)}\phi_{(k)}$ and $\theta = \lambda^{2\ve}$
and Lemma \ref{Lem-a-S2} to derive
\begin{align} \label{wpdp-decoup-L2-A1} 	
	\norm{(w^{(p)}_{q+1},d^{(p)}_{q+1})}_{\cL^2_{t,x}}
	&\lesssim \sum\limits_{k\in \Lambda_u \cup \Lambda_B}
       \Big(\|a_{(k)}\|_{L^2_{t,x}}\norm{ \g }_{L^2_{t}} \norm{ \psi_{(k_1)}\phi_{(k)}}_{C_tL^2_{x}} +\lambda^{-\ve}\|a_{(k)}\|_{C^1_{t,x}}\norm{ \g }_{L^2_{t}} \norm{  \psi_{(k_1)}\phi_{(k)}}_{C_tL^2_{x}}\Big) \notag\\
	&\lesssim  \delta_{q+1}^{\frac{1}{2}}+\thq^{-14}\lambda^{-\ve}_{q+1}   \lesssim \delta_{q+1}^{\frac{1}{2}}.
\end{align}
Then, in view of \eqref{larsrp}, \eqref{perturbation}  and Lemma \ref{Lem-perturb-A1},
we get
\begin{align}  \label{e3.41}
	\norm{w_{q+1}}_{L^2_{t}\cL^2_x}
&\lesssim\norm{w_{q+1}^{(p)} }_{L^2_{t}\cL^2_x}
   + \norm{ w_{q+1}^{(c)} }_{L^2_{t}\cL^2_x}
    +\norm{ w_{q+1}^{(t)} }_{L^2_{t}\cL^2_x}+\norm{ \wo }_{L^2_{t}\cL^2_x}\notag \\
	&\lesssim \delta_{q+1}^{\frac{1}{2}} +\thq^{-1}r_{\perp} r_{\parallel}^{-1}
     + \thq^{-2} \mu^{-1} r_{\perp}^{-\frac{1}{2}} r_{\parallel}^{-\frac12} \tau^\frac 12 + \thq^{-16}\sigma^{-1}\lesssim \delta_{q+1}^{\frac{1}{2}}.
\end{align}
Similar upper bound also holds for the magnetic perturbation $d_{q+1}$.
Hence, taking into account \eqref{uuql2} and \eqref{uB-q+1-A1}
we obtain
\begin{align} \label{e3.43.2}
 \norm{(u_{q} - u_{q+1}, B_{q} - B_{q+1})}_{L^2_{t}\cL^2_x}
	  \lesssim&  \norm{(u_q - \wt u_q, B_q - \wt B_q)}_{L_t^\9 \cL_x^2}
                 + \norm{(w_{q+1}, d_{q+1})}_{L^2_{t}\cL^2_x}   \nonumber  \\
	  \lesssim& \la^{-3}+\delta_{q+1}^{\frac{1}{2}}   \leq M^*\delta_{q+1}^{\frac{1}{2}},
\end{align}
where $M^*$ is a universal large constant.
Hence, we prove the $L^2_{t}\cL^2_x$-decay estimate \eqref{u-B-L2tx-conv} at level $q+1$.

Furthermore,
we derive from Lemma \ref{Lem-perturb-A1} and \eqref{larsrp} that,
\begin{align*}
	\norm{(w_{q+1},d_{q+1})}_{L^1_t \cL^2_x}
 \lesssim& \thq^{-1}\tau^{-\frac12}+\thq^{-1} r_{\perp} r_{\parallel}^{-1} \tau^{-\frac12}
 + \thq^{-2} \mu^{-1} r_{\perp}^{-\frac{1}{2}} r_{\parallel}^{-\frac12}
 + \thq^{-16}\sigma^{-1}\lesssim \lambda_{q+1}^{-\ve}.
\end{align*}
This along with \eqref{uuql2} yields that
\begin{align}  \label{uql1l2.2}
	  \norm{(u_{q} - u_{q+1}, B_{q} -B_{q+1})}_{L^1_t \cL^2_x}
	  \lesssim& \norm{( u_q - \wt u_q, B_q - \wt B_q) }_{L_t^\9\cL_x^2}
                 + \norm{(w_{q+1}, d_{q+1}) }_{L^1_t\cL^2_x}  \notag \\
	  \lesssim& \lambda_q^{-3}+\lambda_{q+1}^{-\ve} \leq \delta_{q+2}^{\frac{1}{2}},
\end{align}
where the last step is due to
$\lbb_q^{-3} \ll \delta_{q+2}^{\frac 12}$.
Hence, estimate \eqref{u-B-L1L2-conv} is justified.

At last, regarding estimate \eqref{u-B-Lw-conv},
since $(s,\gamma,p)\in \mathcal{S}_2$,
we use the Sobolev embedding (cf. \cite{lqzz22})
\begin{align}\label{sobolevem}
	H^3_x\hookrightarrow W^{s,p}_x,
\end{align}
together with \eqref{def-mq-thetaq}, \eqref{rhN}, \eqref{est-vih3}
and \eqref{def-wtu},
to deduce
\begin{align}\label{wtu-u}
	\norm{(\wt u_q-u_q, \wt B_q-B_q)}_{L^\gamma_t\cW^{s,p}_x}
    & \lesssim \norm{(\sum_i\chi_i(v_i-u_q), \sum_i\chi_i(D_i-B_q))}_{L^\9_t \cH^3_x} \notag\\
	& \lesssim \sup_i  \norm{(v_i-u_q, D_i - B_q)}_{L^\9(\supp(\chi_i); \cH^3_x)} \notag\\
	&\lesssim \sup_i |t_{i+1}+\theta_{q+1}-t_i| \|(|\nabla|\tr,|\nabla|\trb)\|_{L^\9_t\cH^3_x}  \notag \\
    & \lesssim \mq^{-1}\la^{10}\lesssim \la^{-2}.
\end{align}
Taking into account Lemma \ref{Lem-perturb-A1}
we deduce
\begin{align}\label{lw-est}
	\norm{(u_{q+1} - u_q, B_{q+1} - B_q) }_{L^\gamma_t\cW^{s,p}_x}
	&\lesssim \norm{(\wt u_q-u_q, \wt B_q-B_q)}_{L^\gamma_t \cW^{s,p}_x}
	+\norm{(w_{q+1}, d_{q+1})}_{L^\gamma_t\cW^{s,p}_x}   \notag\\
	&\lesssim \la^{-2} + \thq^{-1}\lbb_{q+1}^{s}\rs^{\frac{1}{p}-\frac 12}\rp^{\frac{1}{p}-\frac12}\tau^{\frac12-\frac{1}{\gamma}}
	+\thq^{-44}\sigma^{-1} \notag\\
	&\lesssim \lambda_{q}^{-2}
	+ \lambda_{q+1}^{s+2\a-1-\frac{2}{p}-\frac{4\a-4}{\gamma} +\ve(2+\frac8p-\frac{12}{\gamma}) }
	+ \lbb_{q+1}^{-\ve}
    \lesssim \delta_{q+1}^\frac 12,
\end{align}
where in the last inequality we also used \eqref{b-beta-ve}, \eqref{larsrp} and
the fact that
\begin{align}\label{endpt1-condition}
	s+2\a-1-\frac{2}{p}-\frac{4\a-4}{\gamma} +\ve(2+\frac8p-\frac{12}{\gamma})
	 <-10\ve.
\end{align}

Therefore, the inductive estimate \eqref{u-B-Lw-conv} is also verified
at level $q+1$.

\subsection{Reynolds and magnetic stresses}   \label{Sec-Rey-mag-stress-A1}

We now treat the new Reynolds and magnetic stresses
at level $q+1$
and prove the corresponding inductive estimates \eqref{rhN} and \eqref{rl1}
in the supercritical regime $\mathcal{S}_2$.

\subsubsection{Decomposition of magnetic and Reynolds stresses}

Using \eqref{equa-mhdr} with $q+1$ replacing $q$,  \eqref{perturbation} and \eqref{uB-q+1-A1}
we derive
\begin{align}
		\displaystyle\div\mathring{R}_{q+1}^B
		&=\underbrace{\partial_t (d^{(p)}_{q+ 1}+d^{(c)}_{q+ 1})+\nu_2(-\Delta)^{\alpha} d_{q+1}  +\div (d_{q + 1} \otimes \wt u_q - \wt u_q \otimes d_{q+1}+ \wt B_q\otimes w_{q+1} -w_{q +1} \otimes \wt B_q )}_{ \div\mathring{R}_{lin}^B  }   \notag\\
		&\quad+\underbrace{\div (d_{q+ 1}^{(p)} \otimes w_{q+1}^{(p)} -w_{q+ 1}^{(p)} \otimes  d_{q+ 1}^{(p)} + \mathring{\wt R^B_{q}})+ \partial_t d_{q+1}^{(t)}+ \partial_t \dqo}_{\div\mathring{R}_{osc}^B }  \notag\\
		&\quad+\div\Big( d_{q+1}^{(p)} \otimes (w_{q+1}^{(c)}+ w_{q+1}^{(t)}+\wo) -(w_{q+1}^{(c)}+w_{q+1}^{(t)}+\wo) \otimes d_{q+1}  \notag  \\
		&\qquad \underbrace{\qquad\quad+(d_{q+1}^{(c)}+d_{q+1}^{(t)}+\dqo)\otimes w_{q+1}-w_{q+1}^{(p)} \otimes (d_{q+1}^{(c)}+d_{q+1}^{(t)}+\dqo)\Big) }_{\div\mathring{R}_{cor}^B }.  \label{rb}
	\end{align}

Using the inverse divergence operator $\mathcal{R}^B$ given by \eqref{calRB-def}
we may choose the magnetic stress
\begin{align}\label{rbcom}
	\mathring{R}_{q+1}^B := \mathring{R}_{lin}^B +   \mathring{R}_{osc}^B+ \mathring{R}_{cor}^B,
\end{align}
where the linear error
\begin{align}
	\mathring{R}_{lin}^B
	:= &  \mathcal{R}^B\(\partial_t (d^{(p)}_{q+1}+d^{(c)}_{q+1})\)
	+ \nu_2 \mathcal{R}^B (-\Delta)^{\a} d_{q+1} \nonumber \\
	&  +\mathcal{R}^B\P_{H}\div\( d_{q + 1} \otimes \wt u_q - \wt u_q \otimes d_{q+1}+ \wt B_q\otimes w_{q+1} -w_{q +1} \otimes \wt B_q\),\label{rbp}
\end{align}
the oscillation error
\begin{align}
	\mathring{R}_{osc}^B &:=  \sum_{k \in \Lambda_B}\mathcal{R}^B\P_{H}\P_{\neq 0}\left (\g^2 \P_{\neq 0}(D_{(k)}\otimes W_{(k)}-W_{(k)}\otimes D_{(k)} )\nabla (a_{(k)}^2)\right)\notag\\
	&\quad -\mu^{-1} \sum_{k \in \Lambda_B}\mathcal{R}^B\P_{H}\P_{\neq 0}(\p_t (a_{(k)}^2\g^2)\psi_{(k_1)}^2\phi_{(k)}^2k_2)\notag \\
	&\quad-\sigma^{-1}\sum_{k\in \Lambda_B}\mathcal{R}^B\P_{H}\P_{\neq 0}\(h_{(\tau)}\aint_{\T^3}D_{(k)}\otimes W_{(k)}-W_{(k)}\otimes D_{(k)} \d x\p_t\nabla(a_{(k)}^{2})\)\notag\\
	&\quad+ \(\sum_{k \neq k' \in \Lambda_B}+ \sum_{k \in \Lambda_u, k' \in \Lambda_B}\)\mathcal{R}^B\P_{H}\div\(a_{(k)}a_{(k')}\g^2(D_{(k')}\otimes W_{(k)}-W_{(k)}\otimes D_{(k')})\), \label{rob}
\end{align}
and the corrector error
\begin{align}    \label{rbp2}
	\mathring{R}_{cor}^B := &\,\mathcal{R}^B\P_{H}\div\bigg( d_{q+1}^{(p)} \otimes (w_{q+1}^{(c)}+ w_{q+1}^{(t)}+\wo) -(w_{q+1}^{(c)}+w_{q+1}^{(t)}+\wo) \otimes d_{q+1}\notag\\
	&\qquad \qquad \qquad + (d_{q+1}^{(c)}+d_{q+1}^{(t)}+\dqo)\otimes w_{q+1} -w_{q+1}^{(p)} \otimes (d_{q+1}^{(c)}+d_{q+1}^{(t)}+\dqo) \bigg).
\end{align}

Furthermore, regarding the Reynolds stress we compute
\begin{align}
		\displaystyle\div\mathring{R}_{q+1}^u
		&\displaystyle = \underbrace{\partial_t (w_{q+1}^{(p)}+w_{q+1}^{(c)}) +\nu_1(-\Delta)^{\alpha} w_{q+1} +\div\big(\wt u_q \otimes w_{q+1} + w_{q+ 1} \otimes \wt u_q - \wt B_q \otimes d_{q+1} - d_{q+1} \otimes \wt B_q\big) }_{ \div\mathring{R}_{lin}^u +\nabla P_{lin} }   \notag\\
		&\displaystyle\quad+ \underbrace{\div (w_{q+1}^{(p)} \otimes w_{q+1}^{(p)} - d_{q+1}^{(p)} \otimes d_{q+1}^{(p)} +  \tr)+\partial_t w_{q+1}^{(t)}+\partial_t \wo}_{\div\mathring{R}_{osc}^u +\nabla P_{osc}}  \notag\\
		&\displaystyle\quad+\div\Big((w_{q+1}^{(c)}+ w_{q+1}^{(t)}+\wo)\otimes w_{q+1}+ w_{q+1}^{(p)} \otimes (w_{q+1}^{(c)}+ w_{q+1}^{(t)}+\wo)  \notag \\
		&\qquad \underbrace{\qquad - (d_{q+1}^{(c)}+ d_{q+1}^{(t)}+\dqo)\otimes d_{q+1}- d_{q+1}^{(p)} \otimes (d_{q+1}^{(c)}+ d_{q+1}^{(t)}+\dqo) \Big)}_{\div\mathring{R}_{cor}^u +\nabla P_{cor}} + \nabla P_{q+1} . \label{ru}
\end{align}

Then, using the inverse divergence operator $\mathcal{R}^u$ given by \eqref{calRu-def}
we choose the Reynolds stress
\begin{align}\label{rucom}
	\mathring{R}_{q+1}^u := \mathring{R}_{lin}^u +   \mathring{R}_{osc}^u+ \mathring{R}_{cor}^u,
\end{align}
where the linear error
\begin{align}
	\mathring{R}_{lin}^u & := \mathcal{R}^u\(\partial_t (w_{q+1}^{(p)} +w_{q+1}^{(c)}  )\)
	+ \nu_1 \mathcal{R}^u (-\Delta)^{\a} w_{q+1} \nonumber \\
	&\quad  + \mathcal{R}^u\P_H \div \(\wt u_q \mathring{\otimes} w_{q+1} + w_{q+ 1}
	\mathring{\otimes} \wt u_q- \wt B_q \mathring{\otimes} d_{q+1} - d_{q+1} \mathring{\otimes} \wt B_q\), \label{rup}
\end{align}
the oscillation error
\begin{align}\label{rou}
	\mathring{R}_{osc}^u :=& \sum_{k \in \Lambda_u} \mathcal{R}^u \P_H\P_{\neq 0}\left(\g^2 \P_{\neq 0}(W_{(k)}\otimes W_{(k)})\nabla (a_{(k)}^2)\right) \notag\\
	&+ \sum_{k \in \Lambda_B} \mathcal{R}^u \P_H\P_{\neq 0}\left(\g^2 \P_{\neq 0}(W_{(k)}\otimes W_{(k)}-D_{(k)}\otimes D_{(k)})\nabla (a_{(k)}^2)\right)\notag\\
	& -\mu^{-1}\sum_{k \in \Lambda_u\cup\Lambda_B}\mathcal{R}^u \P_H \P_{\neq 0}\(\p_t (a_{(k)}^2\g^2)\psi_{(k_1)}^2\phi_{(k)}^2k_1\)\notag\\
	&-\sigma^{-1}\sum_{k\in \Lambda_u}\mathcal{R}^u \P_H \P_{\neq 0}\(h_{(\tau)}\aint_{\T^3}W_{(k)}\otimes W_{(k)}\d x\p_t\nabla(a_{(k)}^{2})\)\notag\\
	&-\sigma^{-1}\sum_{k\in \Lambda_B}\mathcal{R}^u \P_H \P_{\neq 0}\( h_{(\tau)}\aint_{\T^3}W_{(k)}\otimes W_{(k)}-D_{(k)}\otimes D_{(k)}\d x\p_t\nabla(a_{(k)}^{2})\)\notag\\
	&+\sum_{k \neq k' \in \Lambda_u\cup\Lambda_B }\mathcal{R}^u \P_H \div\(a_{(k)}a_{(k')}\g^2W_{(k)}\otimes W_{(k')}\)\notag\\
	&  -\sum_{k \neq k' \in \Lambda_B}\mathcal{R}^u \P_H \div\(a_{(k)}a_{(k')}\g^2D_{(k)}\otimes D_{(k')}\),
\end{align}
and the corrector error
\begin{align}
	\mathring{R}_{cor}^u &
	:= \mathcal{R}^u \P_H \div \bigg( w^{(p)}_{q+1} \mathring{\otimes} (w_{q+1}^{(c)}+w_{q+1}^{(t)}+\wo)
      + (w_{q+1}^{(c)}+w_{q+1}^{(t)}+\wo) \mathring{\otimes} w_{q+1}  \nonumber \\
	&\qquad \qquad  \qquad  - d^{(p)}_{q+1} \mathring{\otimes} (d_{q+1}^{(c)}+d_{q+1}^{(t)}+\dqo)- (d_{q+1}^{(c)}+d_{q+1}^{(t)}+\dqo) \mathring{\otimes} d_{q+1} \bigg). \label{rup2}
\end{align}

As in Remark \ref{Rem-Ru-RB},
the new magnetic and Reynolds stresses at level $q+1$
satisfy equations \eqref{rb} and \eqref{ru}, respectively,
and the algebraic identities \eqref{calRuPHdiv-Ru.2}
and \eqref{calRuPHdiv-Rb.2} hold.

\subsection{Verification of inductive estimates for velocity and magnetic stresses}

Since by \eqref{principal h3 est-endpt1}-\eqref{bpth2 est-endpt1},
the velocity and magnetic perturbations obey the same upper bounds as in
\eqref{principal h3 est.2}-\eqref{bpth2 est.2},
we can argue as in the same fashion as the proof of \eqref{ine-rq1h3}-\eqref{RB-HN-A1}
to derive estimate \eqref{rhN} in the supercritical regime
$\mathcal{S}_2$.
Hence, we focus on the verification of the
$L^1_{t}\cL^1_x$-decay estimate \eqref{rl1} below.

\subsubsection{\bf Verification of $L^1_{t,x}$-decay of magnetic stress}
We choose
\begin{align}\label{defp.2}
p: =\frac{2-8\varepsilon}{2-9\varepsilon}\in (1,2),
\end{align}
where $\ve$ is given by \eqref{e3.1},
such that
\begin{equation}\label{setp.2}
	(1-4\varepsilon)(1-\frac{1}{p})=\frac{\varepsilon}{2},
    \ \ (2-8\ve)(\frac 1p - \frac 12) = 1 -5 \ve,
\end{equation}
and, via \eqref{larsrp},
\begin{align}  \label{rs-rp-p-ve}
\rs^{\frac 1p-1}\rp^{\frac 1p-1} = \lambda^{\varepsilon},
   \quad \rs^{\frac 1p-\frac 12}\rp^{\frac 1p-\frac 12} = \lambda^{-1+5\varepsilon}.
\end{align}

Let us estimate the three parts of the magnetic stress $\mathring{R}^B_{q+1}$
separately below.

\paragraph{\bf (i) Linear error.}

Note that,
by   \eqref{div free magnetic}
and the boundedness of $\mathcal{R}^B\curl$ in $L^p_x$,
\begin{align}
	  \| \mathcal{R}^B\partial_t( d_{q+1}^{(p)}+ d_{q+1}^{(c)})\|_{L_t^1L_x^p}
	 \lesssim& \sum_{k \in \Lambda_B}\| \curl\partial_t (\g a_{(k)} D^c_{(k)})  \|_{L_t^1L_x^p} \nonumber\\
	\lesssim& \sum_{k \in \Lambda_B}\Big(\| \g\|_{L^1_t}\(\|  a_{(k)} \|_{C_{t,x}^2}\| D^c_{(k)} \|_{C_t W_x^{1,p}}
            +\|  a_{(k)} \|_{C_{t,x}^1}\| \p_t D^c_{(k)} \|_{C_t W^{1,p}_x}\) \nonumber \\
	&\qquad\qquad+\| \p_t\g\|_{L_t^1}\| a_{(k)} \|_{C_{t,x}^1}\| D^c_{(k)} \|_{C_t W_x^{1,p}}\Big).
\end{align}
Then, in view of Lemmas \ref{Lem-build-S2} and \ref{Lem-a-S2},
\eqref{e3.1}, \eqref{larsrp} and \eqref{rs-rp-p-ve},
we obtain
\begin{align}  \label{mag time derivative}
	& \| \mathcal{R}^B\partial_t( d_{q+1}^{(p)}+ d_{q+1}^{(c)})\|_{L_t^1L_x^p} \notag\\
    \lesssim& \tau^{-\frac12}(\thq^{-14}\rs^{\frac{1}{p}-\frac12}\rp^{\frac{1}{p}-\frac{1}{2}}\lambda^{-1}
        +\thq^{-7}\rs^{\frac{1}{p}+\frac12}\rp^{\frac{1}{p}-\frac{3}{2}}\mu)
        + \sigma\tau^{\frac12}\thq^{-7}\rs^{\frac{1}{p}-\frac12}\rp^{\frac{1}{p}-\frac{1}{2}}\lambda^{-1}  \notag\\
	\lesssim& \thq^{-14}(\lambda^{-2\a-\ve}+ \lambda^{-2\ve}+ \lambda^{2\a-4+13\ve})
     \lesssim  \thq^{-14}\lambda^{-2\ve}.
\end{align}

For the hyper-resistivity term,
using the interpolation inequality and Lemma \ref{Lem-perturb-A1}
we derive
\begin{align}
	\norm{ \mathcal{R}^B(-\Delta)^{\alpha} d_{q+1}^{(p)} }_{L_t^1L^p_x}
    & \lesssim \norm{ |\na|^{2\a-1} d_{q+1}^{(p)} }_{L_t^1L^p_x}\notag\\
	& \lesssim  \norm{d_{q+1}^{(p)}}_{L_t^1L^p_x} ^{\frac{3-2\a}{2}} \norm{d_{q+1}^{(p)}}_{L_t^1W^{2,p}_x} ^{\frac{2\a-1}{2}}\notag\\
	& \lesssim \thq^{-1}\lbb^{2\alpha-1}\rs^{\frac{1}{p}-\frac12}\rp^{\frac{1}{p}-\frac{1}{2}}\tau^{-\frac12},   \label{e5.18}
\end{align}
and
\begin{align}
	&\norm{ \mathcal{R}^B(-\Delta)^{\alpha} d_{q+1}^{(c)} }_{L_t^1L^p_x}
     \lesssim \thq^{-1}\lbb^{2\alpha-1}\rs^{\frac{1}{p}+\frac12}\rp^{\frac{1}{p}-\frac{3}{2}}\tau^{-\frac12},\label{e5.19}\\
	&\norm{ \mathcal{R}^B(-\Delta)^{\alpha} d_{q+1}^{(t)} }_{L_t^1L^p_x}
      \lesssim \thq^{-2}\lbb^{2\alpha-1}\mu^{-1}\rs^{\frac{1}{p}-1}\rp^{\frac{1}{p}-1},\label{e5.20}\\
	&\norm{ \mathcal{R}^B(-\Delta)^{\alpha} \dqo }_{L_t^1L^p_x}   \lesssim \thq^{-44}\sigma^{-1}.\label{e5.21}
\end{align}
Hence, taking into account \eqref{larsrp}, \eqref{magnetic perturbation} and \eqref{rs-rp-p-ve}
we obtain
\begin{align}  \label{mag viscosity}
	& \norm{ \mathcal{R}^B(-\Delta)^{\alpha} d_{q+1} }_{L_t^1L^p_x}  \notag \\
    \lesssim&  \norm{ \mathcal{R}^B(-\Delta)^{\alpha} d_{q+1}^{(p)} }_{L_t^1L^p_x}+\norm{ \mathcal{R}^B(-\Delta)^{\alpha} d_{q+1}^{(c)} }_{L_t^1L^p_x}
	  +\norm{ \mathcal{R}^B(-\Delta)^{\alpha} d_{q+1}^{(t)} }_{L_t^1L^p_x}+\norm{ \mathcal{R}^B(-\Delta)^{\alpha} \dqo }_{L_t^1L^p_x} \notag \\
	 \lesssim& \thq^{-1}\lbb^{2\alpha-1}\rs^{\frac{1}{p}-\frac12}\rp^{\frac{1}{p}-\frac{1}{2}}\tau^{-\frac12}
	+\thq^{-1}\lbb^{2\alpha-1}\rs^{\frac{1}{p}+\frac12}\rp^{\frac{1}{p}-\frac{3}{2}}\tau^{-\frac12}
	   +\thq^{-2}\lbb^{2\alpha-1}\mu^{-1}\rs^{\frac{1}{p}-1}\rp^{\frac{1}{p}-1}
	+\thq^{-44}\sigma^{-1}\notag\\
	 \lesssim& \thq^{-1}\lambda^{-\ve} .
\end{align}

For the remaining terms in \eqref{rbp},
again Lemma~\ref{Lem-perturb-A1} together with \eqref{larsrp}
and \eqref{rs-rp-p-ve} yield that
\begin{align} \label{magnetic linear estimate1}
	&\norm{ \mathcal{R}^B\P_H\div (d_{q + 1} \otimes \wt u_q - \wt u_q \otimes d_{q+1}+ \wt B_q\otimes w_{q+1} -w_{q +1} \otimes \wt B_q ) }_{L_t^1L^p_x}  \nonumber \\	
    \lesssim\,&\norm{d_{q + 1} \otimes \wt u_q - \wt u_q \otimes d_{q+1}+ \wt B_q\otimes w_{q+1} -w_{q +1} \otimes \wt B_q }_{L_t^1L^p_x}  \nonumber \\
	\lesssim\,& \norm{\wt u_q}_{L^\9_{t}H^3_x} \norm{d_{q+1}}_{L_t^1L^p_x} +\norm{\wt B_q}_{L^\9_{t}H^3_x} \norm{w_{q+1}}_{L_t^1L^p_x}  \nonumber \\
	\lesssim\, &\lambda^5_q (\thq^{-1} \rs^{\frac{1}{p}-\frac12}\rp^{\frac{1}{p}-\frac12} \tau^{-\frac 12}+\thq^{-2}\mu^{-1}\rs^{\frac{1}{p}-1}\rp^{\frac{1}{p}-1}
                  + \thq^{-16} \sigma^{-1} )
    \lesssim \thq^{-17}\lambda^{-2\ve}.
\end{align}

Therefore, combining \eqref{mag time derivative}, \eqref{mag viscosity}, \eqref{magnetic linear estimate1} altogether
and using  \eqref{b-beta-ve} 	
we arrive at
\begin{align}   \label{magnetic linear estimate}
	\norm{\mathring{R}_{lin}^B }_{L_t^1L^p_x}
     & \lesssim \thq^{-14}\lambda^{-2\ve} +\thq^{-1}\lambda^{-\ve}+\thq^{-17}\lambda^{-2\ve}
       \lesssim \thq^{-1}\lambda^{-\ve}.
\end{align}

\paragraph{\bf (ii) Oscillation error.}
In contrast to the previous regime $\mathcal{S}_1$,
because of the presence of the temporal correctors $w^{(o)}_{q+1}$
and $d^{(o)}_{q+1}$,
the magnetic oscillation error consists of four parts:
\begin{align*}
	\mathring{R}_{osc}^B = \mathring{R}_{osc.1}^B +  \mathring{R}_{osc.2}^B+  \mathring{R}_{osc.3}^B+ \mathring{R}_{osc.4}^B,
\end{align*}
where $\mathring{R}_{osc.1}^B$ contains the high-low spatial  oscillations
\begin{align*}
	\mathring{R}_{osc.1}^B
	&:=   \sum_{k \in \Lambda_B}\mathcal{R}^B \P_{H}\P_{\neq 0}\left(\g^2 \P_{\neq 0}(D_{(k)}\otimes W_{(k)}-W_{(k)}\otimes D_{(k)})\nabla (a_{(k)}^2) \right),
\end{align*}
$\mathring{R}_{osc.2}^B$ contains the high temporal oscillation
\begin{align*}
	\mathring{R}_{osc.2}^B
	&:= - \mu^{-1} \sum_{k \in \Lambda_B}\mathcal{R}^B\P_{H}\P_{\neq 0}\(\p_t (a_{(k)}^2\g^2) \psi_{(k_1)}^2\phi_{(k)}^2k_2\),
\end{align*}
 $\mathring{R}_{osc.3}^B$ is of low frequency
\begin{align*}
	\mathring{R}_{osc.3}^B &
      := -\sigma^{-1}\sum_{k\in \Lambda_B}\mathcal{R}^B\P_{H}\P_{\neq 0}
       \(h_{(\tau)}\aint_{\T^3}D_{(k)}\otimes W_{(k)}-W_{(k)}\otimes D_{(k)}\d x\p_t\nabla(a_{(k)}^{2})\),
\end{align*}
and $\mathring{R}_{osc.4}^B$ contains the
the interactions
\begin{align*}
	\mathring{R}_{osc.4}^B &
:=\(\sum_{k \neq k' \in \Lambda_B}+ \sum_{k \in \Lambda_u, k' \in \Lambda_B}\)\mathcal{R}^B\P_{H}\div\(a_{(k)}a_{(k')}\g^2(D_{(k')}\otimes W_{(k)}-W_{(k)}\otimes D_{(k')})\).
\end{align*}
Let us treat the four parts separately in the following.

The decoupling Lemma~\ref{commutator estimate1}
with $a = \nabla (a_{(k)}^2)$, $f =  \psi_{(k_1)}^2\phi_{(k)}^2$
and $k= \lbb \rs/2$
permits to control the high-low oscillations error:
\begin{align}  \label{I1-esti}
	\norm{\mathring{R}_{osc.1}^B }_{L^1_tL^p_x}
	&\lesssim  \sum_{ k \in \Lambda_B}
	\|\g\|_{L^2_t}^2\norm{|\nabla|^{-1} \P_{\not =0}
		\left(\P_{\geq (\lambda \rs/2)}(D_{(k)}\otimes W_{(k)}-W_{(k)}\otimes D_{(k)} )\nabla (a_{(k)}^2)\right)}_{C_tL^p_x} \notag \nonumber  \\
	& \lesssim  \sum_{ k \in \Lambda_B} \lambda^{-1}  \rs^{-1} \norm{ \na^3(a^2_{(k)})}_{C_{t,x}}
        \norm{\psi_{(k_1)}^2\phi_{(k)}^2}_{C_tL^p_x}  \nonumber  \\
	& \lesssim \sum_{ k \in \Lambda_B} \thq^{-23}  \lambda^{-1} \rs^{-1} \norm{ \psi^2_{(k_1)}}_{C_tL^{p}_x} \norm{\phi^2_{(k)} }_{C_tL^{p}_x}  \nonumber  \\
	& \lesssim \thq^{-23}  \lambda^{-1}  \rs^{\frac{1}{p}-2}\rp^{\frac{1}{p}-1},
\end{align}
where we also used Lemmas \ref{Lem-build-S2} and \ref{Lem-a-S2} in
the last two steps.

Regarding the second part $\mathring{R}_{osc.2}^B $,
we note that,
the derivative landing on $g_{(\tau)}$
gives rise to
high temporal oscillations.
The point here is to balance
the high temporal oscillations by
the large parameter $\mu$.
Precisely, we apply Fubini's theorem,
Lemmas \ref{Lem-gk-esti},
\ref{Lem-build-S2} and \ref{Lem-a-S2} to derive
\begin{align}  \label{I2-esti}
	\norm{\mathring{R}_{osc.2}^B }_{L^1_tL_x^p}
	&\lesssim  {\mu}^{-1}  \sum_{k\in\Lambda_B}\norm{|\nabla|^{-1} \P_{H} \P_{\neq 0}\(\p_t (a_{(k)}^2\g^2)\psi_{(k_1)}^2\phi_{(k)}^2k_2\)}_{L^1_tL_x^p} \nonumber  \\
	&\lesssim   {\mu}^{-1}  \sum_{k\in\Lambda_B}
	\( \norm{\p_t (a_{(k)}^2) }_{C_{t,x}}\norm{\g^2 }_{L^1_t}+ \norm{a_{(k)} }_{C_{t,x}}^2\norm{\p_t(\g^2)}_{ L_t^1 } \)
	\norm{\psi_{(k_1)}}_{C_tL^{2p}_x}^2\norm{\phi_{(k)}}_{L^{2p}_x}^2 \nonumber \\
	&\lesssim (\thq^{-9}+\thq^{-2}\tau\sigma)\mu^{-1}\rs^{\frac{1}{p}-1}\rp^{\frac{1}{p}-1}
      \lesssim  \thq^{-9}\tau\sigma\mu^{-1}\rs^{\frac{1}{p}-1}\rp^{\frac{1}{p}-1}.
\end{align}

The low  frequency part $\mathring{R}_{osc.3}^B $
can be estimated easily by \eqref{hk-est} and \eqref{a-mag-S2},
\begin{align}  \label{I3-esti}
	\norm{\mathring{R}_{osc.3}^B }_{L^1_tL^p_x}
    &\lesssim  \sigma^{-1} \sum_{k\in\Lambda_B}
      \|h_{(\tau)}(k_2\otimes k_1-k_1\otimes k_2)\p_t\nabla(a_{(k)}^{2}) \|_{L^1_tL^p_x} \nonumber  \\
	&\lesssim \sigma^{-1} \sum_{k\in\Lambda_B} \|h_{(\tau)}\|_{C_t}
        ( \norm{a_{(k)} }_{C_{t,x}} \norm{a_{(k)} }_{C_{t,x}^2} +\norm{a_{(k)} }_{C_{t,x}^1}^2)\nonumber \\
	&\lesssim \thq^{-15} \sigma^{-1}.
\end{align}

Finally, for the interaction errors $\mathring{R}_{osc.4}^B$,
it can be controlled  by the product estimate in
Lemma~\ref{Lem-build-S2},
due to the small intersections between different spatial
intermittent flows:
\begin{align}\label{I4-esti}
	\norm{\mathring{R}_{osc.4}^B}_{L^1_tL^p_x}
 & \lesssim ( \sum_{ k \neq k' \in \Lambda_B}+\sum_{ k \in \Lambda_u ,  k' \in \Lambda_B })
      \norm{a_{(k)}a_{(k')}\g^2(D_{(k')}\otimes W_{(k)}-W_{(k)}\otimes D_{(k')})}_{L^1_tL^p_x} \notag\\	
	& \lesssim ( \sum_{ k \neq k' \in \Lambda_B}+\sum_{ k \in \Lambda_u ,  k' \in \Lambda_B })
       \norm{a_{(k)} }_{C_{t,x}}\norm{a_{(k')} }_{C_{t,x}}  \norm{\g^2 }_{L^1_t}\|\psi_{(k_1)}\phi_{(k)}\psi_{(k'_1)}\phi_{(k')}\|_{C_tL^p_x}\notag\\
	& \lesssim  \thq^{-2}\rs^{\frac{1}{p}-1}\rp^{\frac{2}{p}-1}  \, .
\end{align}

Therefore, putting estimates \eqref{I1-esti}-\eqref{I4-esti} altogether
and using \eqref{larsrp}, \eqref{rs-rp-p-ve} we arrive at
\begin{align}
	\label{magnetic oscillation estimate}
	\norm{\mathring{R}_{osc}^B}_{L_t^1L^p_x}
    &\lesssim   \thq^{-23}  \lambda^{-1} \rs^{\frac{1}{p}-2}\rp^{\frac{1}{p}-1}
	+\thq^{-9}\tau\sigma\mu^{-1}\rs^{\frac{1}{p}-1}\rp^{\frac{1}{p}-1}+\thq^{-15} \sigma^{-1}+\thq^{-2}\rs^{\frac{1}{p}-1}\rp^{\frac{2}{p}-1} \notag\\
	&\lesssim \thq^{-23}\lbb^{-\va}+\thq^{-9}\lbb^{2\alpha-3+12\ve}+\thq^{-15} \lbb^{-2\va}+ \thq^{-2}\lbb^{-1+9\ve} \notag \\
    & \lesssim \thq^{-23} \lambda^{-\va},
\end{align}
where the last step is due to $\ve <1/10$.

\paragraph{\bf (iii) Corrector error.}
Let $p_1,p_2\in(1,\9)$ be such that
$1/p_1=1-\eta$ and
$1/{p_1}={1}/{p_2}+1/2$
where $\eta\leq \ve/(2(2-8\ve))$.
Then,
using H\"older's inequality,
applying Lemma \ref{Lem-perturb-A1}
and using  \eqref{wpdp-decoup-L2-A1} we get
\begin{align}
	\norm{\mathring{R}_{cor}^B }_{L^1_{t}L^{p_1}_x}
	\lesssim& \norm{w_{q+1}^{(c)}+w_{q+1}^{(t)}+\wo }_{L^2_{t}L^{p_2}_x} (\norm{d^{(p)}_{q+1} }_{L^2_{t,x}} + \norm{d_{q+1} }_{L^2_{t,x}}) \notag \\
    &  +  (\norm{ w_{q+1}^{(p)}}_{L^2_{t,x}} + \norm{ w_{q+1}}_{L^2_{t,x}}) \norm{ d_{q+1}^{(c)}+ d_{q+1}^{(t)}+\dqo }_{L^2_{t}L^{p_2}_x}\notag  \\
     \lesssim&  ( \thq^{-1}\rs^{\frac{1}{p_2}+\frac12}\rp^{\frac{1}{p_2}-\frac32}+ \thq^{-2}\mu^{-1} \rs^{\frac{1}{p_2}-1}\rp^{\frac{1}{p_2}-1} \tau^{\frac 12} +\thq^{-16}\sigma^{-1}) \notag \\
             & \times (\delta_{q+1}^\frac 12 + \thq^{-1} \rs \rp^{-1} + \thq^{-2} \mu^{-1} \rs^{-\frac 12} \rp^{-\frac 12} \tau^{\frac 12}
             + \thq^{-16} \sigma^{-1}) \notag \\
	 \lesssim& \thq^{-32}\delta_{q+1}^\frac 12  (\lambda^{-4\ve+2\eta-8\eta\ve}+\lambda^{-\ve+2\eta-8\eta\ve}+ \lambda^{-2\ve} ) \lesssim \thq^{-32} \lambda^{-\frac{\ve}{2}}.  \label{magnetic corrector estimate}
\end{align}

Now, combining estimates \eqref{magnetic linear estimate},
\eqref{magnetic oscillation estimate} and
\eqref{magnetic corrector estimate} altogether of the
three types of the magnetic stress we get
for the magnetic stress
\begin{align} \label{rq1b.3}
	\|\mathring{R}_{q+1}^B \|_{L^1_{t,x}}
	&\leq \| \mathring{R}_{lin}^B \|_{L_t^1L^p_x} +  \| \mathring{R}_{osc}^B\|_{L_t^1L^p_x}
	+  \|\mathring{R}_{cor}^B \|_{L^1_{t}L^{p_1}_x}    \nonumber  \\
	&\lesssim   \thq^{-1}\laq^{-\varepsilon}+\thq^{-23}\laq^{-\va} + \thq^{-32}\lambda_{q+1}^{-\frac{\ve}{2}}\nonumber  \\
	& \leq  \laq^{-\ve_R}\delta_{q+2},
\end{align}
where we also used \eqref{b-beta-ve} in the last step.
This verifies the $L^1_{t,x}$-decay estimate \eqref{rl1}
for the magnetic stress at level $q+1$.

\subsubsection{\bf Verification of $L^1_{t,x}$-decay of Reynolds stress}
We shall verify the $L^1_{t}\cL^1_x$-decay estimate \eqref{rl1}
of the three parts $\mathring{R}^u_{lin}$, $\mathring{R}^u_{osc}$
and $\mathring{R}^u_{cor}$ for the Reynolds stress $\mathring{R}^u_{q+1}$ at level $q+1$.

\paragraph{\bf (i) Linear error.}
The linear error $\mathring{R}^u_{lin}$
can be estimated in the same fashion as the proof of \eqref{magnetic linear estimate}
and so we also have
\begin{align}   \label{Reynolds linear estimate}
	\norm{ \mathring{R}_{lin}^u }_{L_t^1L^p_x}
   &\lesssim \thq^{-28}\lbb^{-2\ve}+\thq^{-1}\lambda^{-\ve}+ \thq^{-16} \lbb^{-2\ve}
    \lesssim \thq^{-1}\lbb^{-\va}.
\end{align}

\paragraph{\bf (iii) Oscillation error.}
The velocity oscillation error $\mathring{R}_{osc}^u $
consists of the following four parts,
via \eqref{rou},
\begin{align*}
	\mathring{R}_{osc}^u  &= \mathring{R}_{osc.1}^u + \mathring{R}_{osc.2}^u+ \mathring{R}_{osc.3}^u+ \mathring{R}_{osc.4}^u,
\end{align*}
where $\mathring{R}_{osc.1}^u $ is the high-low spatial frequency part
\begin{align}\label{ulhfp}
	\mathring{R}_{osc.1}^u  &:= \sum_{k \in \Lambda_u} \mathcal{R}^u\P_{\neq 0}\(\g^2 \P_{\neq 0}(W_{(k)}\otimes W_{(k)})\nabla (a_{(k)}^2) \) \nonumber \\
	&\quad+  \sum_{k \in \Lambda_B} \mathcal{R}^u \P_{\neq 0}\( \g^2\P_{\neq 0}(W_{(k)}\otimes W_{(k)}-D_{(k)}\otimes D_{(k)})\nabla (a_{(k)}^2)\),
\end{align}
$\mathring{R}_{osc.2}^u $ contains the high temporal oscillations
\begin{align}\label{utop}
	\mathring{R}_{osc.2}^u
	&:= - {\mu}^{-1}  \sum_{k \in \Lambda_u \cup \Lambda_B}
	\mathcal{R}^u \P_{\neq 0}\(\p_t (a_{(k)}^2\g^2)\psi_{(k_1)}^2\phi_{(k)}^2k_1\),
\end{align}
 $\mathring{R}_{osc.3}^u$ is of low frequency
\begin{align}
	\mathring{R}_{osc.3}^u
    :=&- \sigma^{-1}\sum_{k\in \Lambda_u}\mathcal{R}^u \P_{\neq 0}\(h_{(\tau)}\aint_{\T^3}W_{(k)}\otimes W_{(k)}\d x\p_t\nabla(a_{(k)}^{2})\) \notag\\
	&-\sigma^{-1}\sum_{k\in \Lambda_B}\mathcal{R}^u \P_{\neq 0}\(h_{(\tau)}\aint_{\T^3}W_{(k)}\otimes W_{(k)}-D_{(k)}\otimes D_{(k)}\d x\p_t\nabla(a_{(k)}^{2})\), \label{uhtfp}
\end{align}
and $\mathring{R}_{osc.4}^u$ contains the interaction error
\begin{align*}
	\mathring{R}_{osc.4}^u &
	:=\sum_{k \neq k' \in \Lambda_u\cup\Lambda_B }\mathcal{R}^u \P_H \div\(a_{(k)}a_{(k')}\g^2W_{(k)}\otimes W_{(k')}\)\\
	&\quad\  -\sum_{k \neq k' \in \Lambda_B}\mathcal{R}^u \P_H \div\(a_{(k)}a_{(k')}\g^2D_{(k)}\otimes D_{(k')}\).
\end{align*}

Let us treat $\mathring{R}_{osc.1}^u$, $\mathring{R}_{osc.2}^u$,  $\mathring{R}_{osc.3}^u$ and $\mathring{R}_{osc.4}^u $ separately below.
Again we shall apply Lemma \ref{commutator estimate1} to decouple
the high-low interactions to get
\begin{align} \label{J1-esti}
	\norm{\mathring{R}_{osc.1}^u}_{L^1_tL_x^p}
	& \lesssim
	\sum_{k \in \Lambda_u }
       \|\g^2 \|_{L^1_t}
       \ \||\na|^{-1}\P_{\neq 0}( \P_{(\lambda \rs/2)}(W_{(k)}\otimes W_{(k)})\nabla (a_{(k)}^2) ) \|_{L^1_tL_x^p} \nonumber  \\
	&\quad+  \sum_{k \in \Lambda_B}
      \|\g^2 \|_{L^1_t}  \||\na|^{-1}\P_{(\lambda \rs/2)}
          (  \P_{\neq 0}(W_{(k)}\otimes W_{(k)}-D_{(k)}\otimes D_{(k)})\nabla (a_{(k)}^2)) \|_{L^1_tL_x^p} \notag \\
	& \lesssim  \sum_{k \in \Lambda_u \cup \Lambda_B } (\lbb \rs)^{-1} \|\na^3 (a^2_{(k)})\|_{C_{t,x}}
      \|\psi^2_{(k_1)} \phi_{(k)}^2 \|_{C_tL^p} \notag \\
	& \lesssim  \thq^{-44}\lambda^{-1}   \rs^{\frac{1}{p}-2}\rp^{\frac{1}{p}-1},
\end{align}
where we also used Lemmas  \ref{Lem-build-S2} and \ref{Lem-a-S2}
in the last step.

Regarding the high temporal oscillation error $\mathring{R}_{osc.2}^u$,
estimating as in the proof of \eqref{I2-esti} we get
\begin{align}  \label{J2-esti}
	 \norm{\mathring{R}_{osc.2}^u }_{L^1_tL_x^p}
    \lesssim&  {\mu}^{-1}
     \sum_{k \in \Lambda_u \cup \Lambda_B}
     ( \norm{\p_t (a_{(k)}^2) }_{C_{t,x}}\norm{\g^2 }_{L^1_t}+ \norm{a_{(k)} }_{C_{t,x}}^2\norm{\p_t(\g^2)}_{ L_t^1 } )\norm{\psi_{(k_1)}}_{C_tL^{2p}_x}^2\norm{\phi_{(k)}}_{L^{2p}_x}^2 \nonumber \\
	  \lesssim&  \thq^{-16}\tau\sigma\mu^{-1}\rs^{\frac{1}{p}-1}\rp^{\frac{1}{p}-1}.
\end{align}

Moreover, we bound the low frequency part $\mathring{R}_{osc.3}^u $ by
\begin{align}  \label{J3-esti}
	\norm{\mathring{R}_{osc.3}^u}_{L^1_tL^p_x}
     &\lesssim  \sigma^{-1}
     \sum_{k\in\Lambda_u } \||\na|^{-1}\P_{\neq 0} (h_{(\tau)}\aint_{\T^3}W_{(k)}\otimes W_{(k)}\d x\p_t\nabla(a_{(k)}^{2})) \|_{L^1_tL^p_x} \nonumber  \\
	&\quad+ \sigma^{-1}\sum_{k\in\Lambda_B}
        \| |\na|^{-1}\P_{\neq 0} (h_{(\tau)}\aint_{\T^3}W_{(k)}\otimes W_{(k)}-D_{(k)}\otimes D_{(k)}\d x\p_t\nabla(a_{(k)}^{2})) \|_{L^1_tL^p_x} \nonumber  \\
	&\lesssim \sigma^{-1} \sum_{k\in\Lambda_u\cup \Lambda_B}\|h_{(\tau)}\|_{C_t}
        ( \norm{a_{(k)} }_{C_{t,x}} \norm{a_{(k)} }_{C_{t,x}^2} +\norm{a_{(k)} }_{C_{t,x}^1}^2 )         \nonumber \\
	&\lesssim \thq^{-29} \sigma^{-1}.
\end{align}

Finally, for the interaction oscillation $\mathring{R}_{osc.4}^u$,
we apply Lemma \ref{Lem-build-S2} to get
\begin{align}\label{J4-esti}
	\norm{\mathring{R}_{osc.4}^u}_{L^1_tL^p_x}
    &\lesssim \sum_{k \neq k' \in \Lambda_u\cup \Lambda_B}\norm{a_{(k)}a_{(k')} \g^2W_{(k)}\otimes W_{(k')}}_{L^1_tL^p_x}
           +\sum_{k \neq k' \in \Lambda_B}\norm{a_{(k)}a_{(k')}\g^2D_{(k)}\otimes D_{(k')}}_{L^1_tL^p_x}\notag\\
		&\lesssim \sum_{k \neq k' \in \Lambda_u\cup \Lambda_B}
		\norm{a_{(k)}}_{C_{t,x}}\norm{a_{(k')}}_{C_{t,x}}\norm{\g^2}_{L^1_t}\|\psi_{(k_1)}\phi_{(k)}\psi_{(k'_1)}\phi_{(k')}\|_{C_tL^p_x}\notag\\
		&\lesssim  \thq^{-2}\rs^{\frac{1}{p}-1}\rp^{\frac{2}{p}-1}.
\end{align}

Thus, putting estimates \eqref{J1-esti}-\eqref{J4-esti} altogether
we arrive at
\begin{align} \label{Reynolds oscillation estimate}
	\norm{\mathring{R}_{osc}^u }_{L_t^1L^p_x}
    &\lesssim \thq^{-44}\lambda^{-1}  \rs^{\frac{1}{p}-2}\rp^{\frac{1}{p}-1}
      +\thq^{-16}\tau\sigma\mu^{-1}\rs^{\frac{1}{p}-1}\rp^{\frac{1}{p}-1}+ \thq^{-29} \sigma^{-1}+\thq^{-2}\rs^{\frac{1}{p}-1}\rp^{\frac{2}{p}-1} \notag \\
     & \lesssim \thq^{-44}\lbb^{-\va}.
\end{align}

\paragraph{\bf (iii) Corrector error.}
Taking $p_1, p_2\in (1,\9)$ as in the proof of \eqref{magnetic corrector estimate},
applying Lemma~\ref{Lem-perturb-A1} and using \eqref{wpdp-decoup-L2-A1}
we have that, similarly to \eqref{magnetic corrector estimate},
\begin{align}  \label{Reynolds corrector estimate}
	\norm{\mathring{R}_{cor}^u }_{L^1_{t}L^{p_1}_x} &\lesssim \norm{w_{q+1}^{(c)}+w_{q+1}^{(t)}+ w_{q+1}^{(o)}}_{L^2_{t}L^{p_2}_x}
	(\norm{w^{(p)}_{q+1} }_{L^2_{t,x}} + \norm{w_{q+1} }_{L^2_{t,x}})\notag \\
	&\quad +   (\norm{d^{(p)}_{q+1} }_{L^2_{t,x}} + \norm{d_{q+1} }_{L^2_{t,x}}) \norm{ d_{q+1}^{(c)}+ d_{q+1}^{(t)}+d_{q+1}^{(o)} }_{L^2_{t}L^{p_2}_x} \notag \\
	& \lesssim \thq^{-32} \lambda^{-\frac{\ve}{2}}.
\end{align}

Therefore,
combining estimates \eqref{Reynolds linear estimate},
\eqref{Reynolds oscillation estimate} and \eqref{Reynolds corrector estimate}
we obtain the estimates of Reynolds stress
\begin{align} \label{rq1u}
	\norm{\mathring{R}_{q+1}^u }_{L^1_{t,x}}
	&\leq \norm{ \mathring{R}_{lin}^u }_{L_t^1L^p_x} + \norm{ \mathring{R}_{osc}^u}_{L_t^1L^p_x} + \norm{\mathring{R}_{cor}^u }_{L^1_{t}L^{p_1}_x} \nonumber \\
	&\lesssim   \thq^{-1}\laq^{-\varepsilon}+\thq^{-44}\laq^{-\va}+  \thq^{-32}\laq^{-\frac{\ve}{2}} \nonumber  \\
	&  \leq \laq^{-\ve_R}\delta_{q+2},
\end{align}
where $p$ and $p_1$ are as in \eqref{defp} and \eqref{Reynolds corrector estimate}.
Thus, the $L^1_{t,x}$-decay estimate \eqref{rl1} for the
Reynolds stress $\mathring{R}_{q+1}^u$ is verified at level $q+1$.

\section{Proof of main results}   \label{Sec-Proof-Main}

We are now in position to prove the main results,
mainly for the iteration estimates in Theorem \ref{Thm-Iterat}
and the non-uniqueness in Theorem  \ref{Thm-Non-gMHD}.
Theorem \ref{Thm-vanish-viscos} can be proved in a similar fashion
as the proof of \cite[Theorem 1.5]{lzz21},
based on the present convex integration stage in \S \ref{Sec-perturb-S1}-\S \ref{Sec-S2},
and on certain mollification procedure as in \cite{lzz21}
instead of the gluing stage.
We note that, the mollification procedure only involves an extra parameter
$\ell$, which is of negligible order as the role of $\thq$
and hence does not affect the main iteration estimates
as in Theorem \ref{Thm-Iterat}.
Hence, for the simplicity of exposition we omit the details here.
Below we focus on the proof of Theorems \ref{Thm-Iterat} and \ref{Thm-Non-gMHD}.

\subsection{Proof of main iteration in Theorem \ref{Thm-Iterat}} \label{Subsec-Proof-Iter}

The inductive estimates  \eqref{ubh3}-\eqref{u-B-Lw-conv} have been verified in the previous sections.
Hence, it remains to verify the well-preparedness of $(u_{q+1}, B_{q+1}, \rr^u_{q+1}, \rr^B_{q+1})$
and the inductive inclusion \eqref{supprub} for the temporal supports.

For the well-preparedness of $(u_{q+1}, B_{q+1}, \rr^u_{q+1}, \rr^B_{q+1})$,
because the temporal cut-off functions $f_B$ and $f_u$ respect the support
of $(\tr, \trb)$,
in view of \eqref{suppnr},
we deduce
\begin{align*}
	w_{q+1}(t)=d_{q+1}(t)=0 \quad \text{if} \quad \operatorname{dist}(t,I_{q+1}^c)\leq \thq.
\end{align*}
Then, by \eqref{uq1-Bq1-S1} and \eqref{uB-q+1-A1},
$$(u_{q+1}(t),B_{q+1}(t))=(\wt u_q(t),\wt B_q(t))\quad \text{ if } \quad\operatorname{dist}(t,I_{q+1}^c)\leq \thq.$$
Thus, using \eqref{suppnr} again we lead to
\begin{align*}
	(\rr^u_{q+1}(t),\rr^B_{q+1}(t))=(\tr(t),\trb(t))=0 \quad \text{if} \quad \operatorname{dist}(t,I_{q+1}^c)\leq \thq,
\end{align*}
which verifies the well-preparedness of $(u_{q+1}, B_{q+1}, \rr^u_{q+1}, \rr^B_{q+1})$.

The proof of the temporal inductive inclusion \eqref{supprub} is similar to that in \cite{lqzz22}.
For the reader's convenience, we sketch the main arguments below.
By the construction of perturbations and amplitudes in \S \ref{Sec-perturb-S1}
and \S \ref{Sec-S2} and \eqref{suppnr},
\begin{align}
	& \supp_t (w_{q+1},d_{q+1} )\subseteq \bigcup_{k\in \Lambda_u\cup\Lambda_{B} }\supp_t a_{(k)}
      \subseteq N_{2\thq}(\supp_t (\tr,\trb))
      \subseteq N_{2\theta_{q+1}}(I_{q+1}).  \label{e4.43}
\end{align}
But, by the construction of $I_{q+1}$ in \eqref{Iq1-C-def},
one has,
\begin{align}
	& I_{q+1}\subseteq N_{4T/\mq}(\supp_t (\rr^u_{q},\rr^B_q)). \label{ne4.43}
\end{align}
Hence, it follows from \eqref{e4.43} and \eqref{ne4.43} that
\begin{align}
	& \supp_t (w_{q+1},d_{q+1} )\subseteq  N_{6T/\mq}(\supp_t (\rr^u_{q},\rr^B_q)).  \label{e4.45}
\end{align}
Thus,
taking into account \eqref{uq1-Bq1-S1} and \eqref{uB-q+1-A1}
we arrive at
\begin{align} \label{suppbq}
	\supp_t (u_{q+1},B_{q+1})
	&\subseteq \supp_t (\wt u_{q},\wt B_q) \cup \supp_t (w_{q+1},d_{q+1}) \notag\\
	&\subseteq N_{6T/\mq}( \supp_t (u_{q}, B_q, \rr^u_{q},\rr^B_q)) \notag\\
	&\subseteq N_{\delta_{q+2}^{\frac 12}} (\supp_t(u_{q}, B_q, \rr^u_{q},\rr^B_q)),
\end{align}
where the last step is due to
$6T/\mq \ll \delta_{q+2}^{1/2}$.

By the construction of stresses in \S \ref{Sec-stress-S1} and
\S \ref{Sec-S2},
\eqref{e4.43}, \eqref{e4.45} and \eqref{suppwtuq},
we also have
\begin{align} \label{supp-Ru-RB-q+1}
	& \supp_t (\rr^u_{q+1},\rr^B_{q+1})\subseteq \bigcup\limits_{k\in \Lambda_u\cup\Lambda_B }\supp_t a_{(k)}
	\cup \supp_t (\wt u_q,\wt B_q) \subseteq  N_{\delta_{q+2}^{\frac12}} (\supp_t(u_{q}, B_q, \rr^u_{q},\rr^B_q)).
\end{align}

Finally, the inductive inclusion \eqref{supprub}
follows from \eqref{suppbq} and \eqref{supp-Ru-RB-q+1}.
The proof  is complete.
\hfill $\square$ \\

\subsection{Proof of Theorem~\ref{Thm-Non-gMHD}} We prove the statements $(i)$-$(v)$ in Theorem~\ref{Thm-Non-gMHD} below.
\medskip

$(i)$. In the initial step $q=0$,
we take the relaxed solution for equation \eqref{equa-mhdr} as follows:
\begin{align}
	&(u_0,B_0)=(\wt u,\wt B),\label{def-ub0}\\
	&\mathring{R}_0^u=\mathcal{R}^u\(\p_t u_0+\nu_1(-\Delta)^{\alpha} u_0\) + u_0\mathring\otimes u_0-B_0\mathring\otimes B_0, \label{r0u}\\
	& \mathring{R}_0^B=\mathcal{R}^B\(\p_t B_0+\nu_2(-\Delta)^{\alpha} B_0\) + B_0\otimes u_0-u_0\otimes B_0,   \label{r0b}
\end{align}
together with
$ P_0 = -\frac{1}{3} (|u_0|^2 - |B_0|^2)$.

Then, $(u_0, B_0)$ is a well-prepared solution to \eqref{equa-mhdr}
with the set $I_0= [0,T]$
and $\theta_0 = T$.
Moreover, we choose $a$ sufficiently large such that
\eqref{ubh3}-\eqref{rl1} are satisfied at level $q=0$.
Thus,
we apply Theorem~\ref{Thm-Iterat} to obtain
a sequence of relaxed solutions $(u_{q+1},B_{q+1},\rr_{q+1}^u,\rr_{q+1}^B)$ to \eqref{equa-mhdr}
obeying estimates \eqref{ubh3}-\eqref{supprub} for all $q\geq 0$.

Below we show that $\{(u_{q+1}, B_{q+1})\}_q$ is a Cauchy sequence
in $\cH^{\beta'}_x$,  $\beta'\in (0,\frac{\beta}{7+\beta})$,
and the limit solves \eqref{equa-gMHD} in the sense of Definition \ref{weaksolu}.

Actually, by \eqref{ubh3}, \eqref{ubpth2}
and the Sobolev embedding $H^2_x \hookrightarrow L^\infty_x$,
\begin{align}\label{ine-uuqh1}
	\norm{ u_{q+1} - u_q }_{H^1_{t,x}} & \leq \norm{\p_t( u_{q+1} - u_q) }_{L^\9_tH^2_{x}}+ \norm{ u_{q+1} - u_q }_{L^\9_tH^3_{x}} \notag\\
	& \leq \norm{\p_t u_{q+1}}_{L^\9_tH^2_{x}}+\norm{\p_t u_{q}}_{L^\9_tH^2_{x}}+\norm{u_{q+1}}_{L^\9_tH^3_{x}}+ \norm{ u_{q}}_{L^\9_tH^3_{x}}\notag\\
	&\lesssim \laq^{7}+\la^7+\laq^5+\la^5\lesssim \laq^{7},
\end{align}
and similarly,
\begin{align}\label{ine-bbqh1}
	\norm{ B_{q+1} - B_q }_{H^1_{t,x}} & \leq \norm{\p_t( B_{q+1} - B_q) }_{L^\9_tH^2_{x}}+ \norm{ B_{q+1} - B_q }_{L^\9_tH^3_{x}} \lesssim \laq^{7}.
\end{align}
Then, by the interpolation inequality
and \eqref{la}, \eqref{u-B-L2tx-conv}, \eqref{ine-uuqh1} and \eqref{ine-bbqh1}, for any $\beta'\in (0,\frac{\beta}{7+\beta})$,
\begin{align}
	&\,\sum_{q \geq 0} \norm{(u_{q+1} - u_q, B_{q+1} - B_q)}_{\cH^{\beta'}_{t,x}}\notag \\
	\leq  & \, \sum_{q \geq 0} \norm{(u_{q+1} - u_q, B_{q+1} - B_q) }_{\cL^2_{t,x}}^{1- \beta'}
              \norm{(u_{q+1} - u_q, B_{q+1} - B_q )}_{\cH^1_{t,x}}^{\beta'} \notag\\
	\lesssim  &\,  \sum_{q \geq 0} M^{1-\beta'} \delta_{q+1}^{\frac{1-\beta'}{2}}\lambda_{q+1}^{7 \beta' } \lesssim M^{1-\beta'} \delta_{1}^{\frac{1-\beta'}{2}}\lambda_{1}^{7 \beta' } +
	\sum_{ q \geq 1} M^{1-\beta'} \lambda_{q+1}^{-\beta(1 - \beta')  + 7\beta'  } <\9. \label{interpo}
\end{align}

Thus, $\{(u_q,B_q)\}_{q\geq 0}$ is a Cauchy sequence in $\cH^{\beta'}_{t,x}$
and so, there exist $u,B\in \cH^{\beta'}_{t,x}$ such that
\begin{align}   \label{uqBq-uB-0}
   \lim_{q\rightarrow+\infty}(u_q,B_q)=(u,B)\ \ in\ \cH^{\beta'}_{t,x}.
\end{align}
Since by \eqref{rl1}, $\lim_{q \to +\infty} (\mathring{R}_{q}^u, \mathring{R}_{q}^B) = 0 $ in $L^1_{t}\cL^1_x$,
it follows that $(u,B)$ indeed solves \eqref{equa-gMHD} in the sense of Definition~\ref{weaksolu}.

$(ii)$.
By virtue of the inductive decay estimate
\eqref{u-B-Lw-conv}, we have
\begin{align}
	\sum_{q \geq 0}\norm{(u_{q+1} - u_q,  B_{q+1} - B_q)}_{L^\gamma_t \cW^{s,p}_x} < \9.    \label{result-lw}
\end{align}
This yields that $\{(u_q,B_q)\}_{q\geq 0}$
is also a Cauchy sequence in $L^\gamma_t \cW^{s,p}_x$.
Thus, taking into account \eqref{uqBq-uB-0}
we infer that
$$u,B \in H^{\beta^\prime}_{t,x}  \cap  L^\gamma_tW^{s,p}_x,$$
thereby proving the regularity statement $(ii)$.

$(iii).$ Regarding the Hausdorff measure of the singular set,
we set
\[
\mathcal{G}: =   \bigcup_{q \geq 0} I_q^c \setminus \{0,T\}, \ \
\mathcal{B}:=[0,T] \setminus \mathcal{G}.
\]
By construction, $(u_q,B_q)$ is a smooth solution to the MHD equations \eqref{equa-mhdr}
on $\mathcal{G}$,
and  $(u,B) \equiv  (u_q,B_q) $ on $I_{q}^c$ for each $q$.
Thus, the complement set $\mathcal{B}$ contains the singular set of time.
Since by \eqref{Iq1-C-def},
each $I_q$ is covered by at most $m_{q}=\theta_q^{-{\eta}}$ many balls of radius $5\theta_q$,
i.e.,
\begin{align}
   \mathcal{B} \subseteq \bigcup \limits_{i=0}^{m_{q+1}-1}
          [t_i - 2 \theta_{q+1}, t_i + 3 \theta_{q+1}].
\end{align}
Note that, for any $\kappa >\eta$,
\begin{align*}
   \sum\limits_{i=0}^{m_{q+1}-1}
   (5\theta_{q+1})^\kappa
   \lesssim m_{q+1} \theta_{q+1}^\kappa
   \leq \theta_{q+1}^{\kappa-\eta} \to 0, \ \ as\ q\to \infty.
\end{align*}
This yields that the Hausdorff measure $\mathcal{H}^\kappa(\mathcal{B}) =0$
for any $\kappa >\eta$.
Thus,  we deduce
\begin{align*}
	d_{\mathcal{H}} ( \mathcal{B}) \leq \eta<\eta_* .
\end{align*}

$(iv).$ Concerning the temporal support,
let
\begin{align}
   K_q:= \supp_t (u_q, B_q, \mathring{R}^u_q, \mathring{R}^B_q), \ \ q\geq 1.
\end{align}
By \eqref{r0u} and \eqref{r0b},
\begin{align}
	& \supp_t (\mathring{R}^u_0, \mathring{R}^B_0)
      \subseteq K_0 := \supp_t  (u_0,  B_0)
      = \supp_t  (\wt u,\wt B).
\end{align}
Moreover, by the inductive inclusion \eqref{supprub},
\begin{align}
	 K_{q+1} \subseteq N_{\delta_{q+2}^\frac 12} K_{q}
      \subseteq \cdots
      \subseteq N_{\sum\limits_{j=2}^{q+2}\delta_{j}^\frac 12} K_{0}.
\end{align}
Note that, for $a$ large enough,
\begin{align*}
   \sum\limits_{q\geq 0} \delta_{q+2}^\frac 12
   \leq \sum\limits_{q\geq 2} a^{-\beta b^q}
   \leq \sum\limits_{q\geq 2} a^{-\beta bq }
   = \frac{a^{-2\beta b}}{1- a^{-\beta b}}
   \leq \frac 12 \ve_*.
\end{align*}
Thus, it follows that
\begin{align}
	&\supp_t (u, B)
	\subseteq \bigcup_{q\geq 0} K_q
	\subseteq N_{\ve_*} ( \supp_t (\tilde{u}, \tilde{B}) ),
\end{align}
thereby yielding the temporal support statement $(iv)$.

$(v).$ Finally,  the small deviations on average
follow from  \eqref{u-B-L1L2-conv} and \eqref{u-B-Lw-conv}:
\begin{align*}
	\norm{(u - \wt u, B -\wt B)}_{L^1_t\cL^2_x}
	\leq  \sum_{q \geq 0} \norm{ (u_{q+1} - u_q , B_{q+1} - B_q)}_{L^\gamma_t \cL^2_x}
	\leq   2\sum_{q\geq 0} \delta_{q+2} ^{\frac12}
    \leq \frac{2a^{-2\beta b}}{1-a^{-\beta b}}\leq \ve_*,
\end{align*}
and
\begin{align*}
	\norm{(u - \wt u, B - \wt B)}_{L^1_t\cW^{s,p}_x}
	\leq   \sum_{q \geq 0} \norm{ (u_{q+1} - u_q , B_{q+1} - B_q)}_{L^1_t\cW^{s,p}_x}
	\leq  2\sum_{q\geq 0} \delta_{q+2} ^{\frac12} \leq \ve_*.
\end{align*}
Therefore, the proof of Theorem~\ref{Thm-Non-gMHD} is complete.
\hfill $\square$

\appendix


\section{Standard tools in convex integration}   \label{Sec-App-A}

To begin with, let us first recall two geometric lemmas in \cite{bbv20}.

\begin{lemma} ({\bf First Geometric Lemma}, \cite[Lemma 4.2]{bbv20})
	\label{geometric lem 1}
	There exists a set $\Lambda_u \subset \mathbb{S}^2 \cap \mathbb{Q}^3$ that consists of vectors $k$ with associated orthonormal bases $(k, k_1, k_2)$,  $\varepsilon_u > 0$, and smooth positive functions $\gamma_{(k)}: B_{\varepsilon_u}(\Id) \to \mathbb{R}$, where $B_{\varepsilon_u}(\Id)$ is the ball of radius $\varepsilon_u$ centered at the identity in the space of $3 \times 3$ symmetric matrices,  such that for  $S \in B_{\varepsilon_u}(\Id)$ we have the following identity:
	\begin{equation}
		\label{sym}
		S = \sum_{k \in \Lambda_u} \gamma_{(k)}^2(S) k_1 \otimes k_1 .
	\end{equation}
	Furthermore, we may choose $\Lambda_u$ such that $\Lambda_B \cap \Lambda_u = \emptyset$.
\end{lemma}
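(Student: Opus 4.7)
The plan is to reduce the lemma to a finite-dimensional linear-algebra problem of expressing the identity matrix as a \emph{strictly positive} combination of rank-one matrices of the form $k_1\otimes k_1$, and then to propagate that decomposition to a full neighborhood of $\Id$ by a linear right-inverse, all the while arranging that the vectors involved are rational. Since the positive semidefinite cone inside the six-dimensional space $\mathrm{Sym}_{3\times 3}$ of symmetric $3\times 3$ matrices has $\Id$ in its interior, this is essentially a structural statement about the conical hull of $\{\xi\otimes\xi : \xi\in\mathbb{S}^2\}$.

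First, I would produce a candidate finite family of orthonormal frames $\{(k^{(j)}, k_1^{(j)}, k_2^{(j)})\}_{j=1}^N$ such that the rank-one matrices $\{k_1^{(j)}\otimes k_1^{(j)}\}_{j=1}^N$ (i) linearly span $\mathrm{Sym}_{3\times 3}$, and (ii) admit a representation $\Id=\sum_j c_j^2\, k_1^{(j)}\otimes k_1^{(j)}$ with strictly positive coefficients $c_j^2 > 0$. Existence follows from the fact that $\Id$ lies in the interior of the positive-semidefinite cone, which is precisely the convex conical hull of $\{\xi\otimes\xi:\xi\in\mathbb{S}^2\}$; a Carath\'eodory-type truncation then yields a finite sub-family still enjoying (i) and (ii) with strict positivity preserved. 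A concrete working family can be built from, for example, the edge-midpoints of a regular octahedron together with enough additional unit vectors to make the associated rank-one matrices linearly span $\mathrm{Sym}_{3\times 3}$.

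Next, to impose the rationality constraint $k^{(j)}\in\mathbb{S}^2\cap\mathbb{Q}^3$ while retaining orthonormality of the full frame, I would invoke the density of rational rotations in $SO(3)$, obtained for instance via the Cayley parameterization $R(v)=(I-[v]_{\times})(I+[v]_{\times})^{-1}$ applied to $v\in\mathbb{Q}^3$, or equivalently via rational unit quaternions. Since properties (i) and (ii) are open in the choice of frames, replacing each frame by a sufficiently close rational frame preserves both. The disjointness condition $\Lambda_u\cap\Lambda_B=\emptyset$ is similarly arranged by perturbing within this dense set of rational frames to avoid any prescribed finite collection. For the smooth dependence, let $A:\mathbb{R}^N\to\mathrm{Sym}_{3\times 3}$ denote the linear map $A(c):=\sum_j c_j\, k_1^{(j)}\otimes k_1^{(j)}$; by (i) it is surjective, hence admits a linear right-inverse $A^{\dagger}$ that we may choose to send $\Id$ to $(c_j^2)_j$. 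Defining $\gamma_{(k^{(j)})}^2(S):=[A^{\dagger}(S)]_j$ yields a linear (hence smooth) function of $S$, strictly positive on a ball $B_{\varepsilon_u}(\Id)$ for some $\varepsilon_u>0$ by (ii) and continuity, so that its positive square root $\gamma_{(k)}(S)$ is smooth and positive there.

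The main obstacle I anticipate is the simultaneous enforcement of rationality of the entire orthonormal triple together with the strict positivity and openness of the coefficient map. A generic rational unit vector $k\in\mathbb{S}^2\cap\mathbb{Q}^3$ need not admit a rational orthonormal complement $(k_1,k_2)$, so the correct parameter space to perturb within is not $\mathbb{S}^2\cap\mathbb{Q}^3$ itself but the set of rational rotations (through quaternions or the Cayley transform). Once this viewpoint is adopted, openness of the positive spanning condition takes care of preserving (i)--(ii) under rational approximation, and the remaining steps are routine.
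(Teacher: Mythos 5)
The paper does not prove this lemma: it is cited verbatim from \cite{bbv20} (Lemma 4.2 there), and the paper immediately displays the explicit choice
$\Lambda_u=\{\tfrac{5}{13}e_1\pm\tfrac{12}{13}e_2,\ \tfrac{12}{13}e_1\pm\tfrac{5}{13}e_3,\ \tfrac{5}{13}e_2\pm\tfrac{12}{13}e_3\}$ with $N_\Lambda=65$,
so the ``paper's proof'' is by direct verification with Pythagorean-triple vectors.

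Your reconstruction is correct but takes a genuinely different, more abstract route. Where the paper/\cite{bbv20} exhibit the rational frames up front and simply check that $\Id$ is a strictly positive combination whose rank-one matrices span $\mathrm{Sym}_{3\times 3}$, you derive existence from two soft facts: (a) $\Id$ lies in the interior of the PSD cone, which is the conical hull of $\{\xi\otimes\xi\}$, and (b) the positivity and spanning conditions are open in the frames, so density of rational rotations in $SO(3)$ (via Cayley or quaternions) lets you rationalize any working family. Defining $\gamma_{(k)}^2$ through a fixed linear right-inverse of $c\mapsto\sum_j c_j\,k_1^{(j)}\otimes k_1^{(j)}$ then gives smoothness and positivity on a ball around $\Id$ for free. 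The abstract route is conceptually cleaner, adapts without change to other matrix sets (e.g.\ the skew-symmetric variant in Lemma~\ref{geometric lem 2}), and makes the disjointness $\Lambda_u\cap\Lambda_B=\emptyset$ trivial; the explicit route has the practical advantage of handing you $N_\Lambda$ outright.

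Two small remarks. First, the sentence ``a Carath\'eodory-type truncation then yields a finite sub-family still enjoying (i) and (ii)'' overstates what Carath\'eodory gives: the truncation guarantees a positive combination with at most seven terms, but it can collapse to a non-spanning family (e.g.\ the three-term spectral decomposition of $\Id$). Your concrete octahedron-edge construction $\Id=\tfrac12\sum_{i<j,\pm}\bigl(\tfrac{e_i\pm e_j}{\sqrt2}\bigr)\otimes\bigl(\tfrac{e_i\pm e_j}{\sqrt2}\bigr)$ already gives both spanning and strict positivity, so the argument survives; just lead with that rather than with Carath\'eodory. Second, your worry that a rational $k\in\mathbb{S}^2\cap\mathbb{Q}^3$ ``need not admit a rational orthonormal complement'' is unfounded: the Householder reflection $H_{k-e_1}=\Id-2\,\tfrac{(k-e_1)(k-e_1)^\top}{|k-e_1|^2}$ is a rational orthogonal matrix sending $e_1$ to $k$, so every rational unit vector extends to a rational orthonormal frame. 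Your workaround through rational rotations is nevertheless perfectly sound and in fact the cleaner parametrization for the openness argument.
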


\begin{lemma} ({\bf Second Geometric Lemma}, \cite[Lemma 4.1]{bbv20})
	\label{geometric lem 2}
	There exists a set $\Lambda_B \subset \mathbb{S}^2 \cap \mathbb{Q}^3$ that consists of vectors $k$ with associated orthonormal bases $(k, k_1, k_2)$,  $\varepsilon_B > 0$, and smooth positive functions $\gamma_{(k)}: B_{\varepsilon_B}(0) \to \mathbb{R}$, where $B_{\varepsilon_B}(0)$ is the ball of radius $\varepsilon_B$ centered at 0 in the space of $3 \times 3$ skew-symmetric matrices, such that for  $A \in B_{\varepsilon_B}(0)$ we have the following identity:
	\begin{equation}
		\label{antisym}
		A = \sum_{k \in \Lambda_B} \gamma_{(k)}^2(A) (k_2 \otimes k_1 - k_1 \otimes k_2) .
	\end{equation}
\end{lemma}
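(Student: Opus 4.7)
\textbf{Proof plan for Lemma \ref{geometric lem 2}.}

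The plan is to exploit the isomorphism between $3\times 3$ skew-symmetric matrices and $\mathbb{R}^3$ given by the Hodge star. Specifically, for any orthonormal basis $(k,k_1,k_2)$ with $k_1\times k_2=k$, the rank-two skew-symmetric matrix $k_2\otimes k_1-k_1\otimes k_2$ corresponds, under the Hodge isomorphism $\star:\mathrm{Skew}(\mathbb{R}^3)\to\mathbb{R}^3$, to the vector $k$ itself (up to a fixed sign convention). Thus the problem reduces to writing an arbitrary vector $v\in\mathbb{R}^3$ close to the origin as a non-negative combination $v=\sum_{k\in\Lambda_B}\gamma_{(k)}^2(v)\,k$, with smooth coefficients, where $\Lambda_B\subset\mathbb{S}^2\cap\mathbb{Q}^3$.

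First I would choose $\Lambda_B$ to be a finite collection of rational unit vectors that \emph{positively span} $\mathbb{R}^3$, meaning $0$ lies in the interior of their convex hull. For instance, one can take the vertices of a rational approximation of a regular octahedron together with some additional vectors chosen to avoid collisions with the set $\Lambda_u$ (for the disjointness requirement from Lemma~\ref{geometric lem 1}); rationality can be arranged since $\mathbb{S}^2\cap \mathbb{Q}^3$ is dense in $\mathbb{S}^2$. For each $k\in\Lambda_B$ I complete it to an orthonormal basis $(k,k_1,k_2)$ with $k_1,k_2$ also chosen in $\mathbb{Q}^3$ (possible by standard Gram-Schmidt on rational vectors, after scaling). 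The positive spanning property guarantees the existence of smooth nonnegative functions $\alpha_{(k)}(v)$, defined in a neighborhood of $0$, satisfying $v=\sum_{k}\alpha_{(k)}(v)\,k$, with $\alpha_{(k)}(0)>0$ for every $k$: one obtains them, for example, by solving the underdetermined linear system via a smooth right-inverse near a strictly positive solution at $v=0$.

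Next, I translate this back to the matrix setting: setting $A\mapsto v(A):=\star A$, the identity $v(A)=\sum_{k}\alpha_{(k)}(v(A))\,k$ yields, via the inverse of the Hodge star,
\[
A=\sum_{k\in\Lambda_B}\alpha_{(k)}(v(A))\,(k_2\otimes k_1-k_1\otimes k_2).
\]
Since each $\alpha_{(k)}$ is strictly positive at $0$, by continuity there exists $\varepsilon_B>0$ such that $\alpha_{(k)}(v)>0$ for all $v$ with $|v|<\varepsilon_B$ and all $k\in\Lambda_B$; on this neighborhood I may define $\gamma_{(k)}(A):=\sqrt{\alpha_{(k)}(v(A))}$, which is smooth and positive. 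This gives the desired identity \eqref{antisym}.

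The only mildly delicate step is the simultaneous arrangement of three constraints on $\Lambda_B$: rationality of $k,k_1,k_2$, disjointness from $\Lambda_u$ in Lemma~\ref{geometric lem 1}, and positive-spanning of $\mathbb{R}^3$. Rationality and positive-spanning are handled by density of $\mathbb{S}^2\cap\mathbb{Q}^3$ and stability of the open positive-spanning condition under small perturbations. Disjointness from $\Lambda_u$ follows since $\Lambda_u$ is a fixed finite set, so small rational perturbations of any candidate in $\Lambda_B$ preserve both positive spanning and disjointness. After these choices the remainder of the argument is routine smooth dependence of solutions of a full-rank linear system on its right-hand side.
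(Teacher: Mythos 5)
The paper does not give a proof of this lemma; it is quoted from \cite[Lemma 4.1]{bbv20}, and the argument you sketch is, in substance, the one used there: pass to the isomorphism between $3\times 3$ skew-symmetric matrices and $\mathbb{R}^3$ (under which $k_2\otimes k_1-k_1\otimes k_2$ acts as $w\mapsto (k_1\times k_2)\times w$), observe that if a finite collection of unit vectors has positive cone equal to all of $\mathbb{R}^3$, then there is a strictly positive representation of $0$, and an affine right inverse of the linear map $\alpha\mapsto\sum_k\alpha_k k$ produces smooth strictly positive coefficients on a neighborhood of $0$, after which one takes square roots. Two places where the writeup should be tightened. First, the Hodge-star image of $k_2\otimes k_1-k_1\otimes k_2$ is $k_1\times k_2$, which is $+k$ or $-k$ according to the orientation of the associated frame; the positive-spanning requirement is therefore on $\{k_1\times k_2:k\in\Lambda_B\}$ rather than directly on $\{k\}$. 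This is not a cosmetic remark: the explicit $\Lambda_B=\{e_1,e_2,e_3,\tfrac35 e_1+\tfrac45 e_2,-\tfrac45 e_2-\tfrac35 e_3\}$ recorded after \eqref{NLambda} does \emph{not} positively span $\mathbb{R}^3$ (no non-negative combination has negative first component, so $-e_1$ is unreachable), and one needs to take some frames with the opposite handedness for the argument to close. Since the orientations are part of the choice of data, this is harmless, but it should be stated. Second, Gram--Schmidt applied to rational vectors does not in general return rational \emph{unit} vectors (the normalization introduces square roots), so the rationality of $k_1,k_2$ demanded by \eqref{NLambda} is not automatic; one must either exhibit explicit rational orthonormal triples, as is done for the sample $\Lambda_B$ above, or give a separate arithmetic argument. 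Neither point changes the strategy, which is correct and is the standard one.
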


As pointed out in \cite{bbv20},
there exists $N_{\Lambda} \in \mathbb{N}$ such that
\begin{equation} \label{NLambda}
	\{ N_{\Lambda} k,N_{\Lambda}k_1 , N_{\Lambda}k_2 \} \subset N_{\Lambda} \mathbb{S}^2 \cap \mathbb{Z}^3.
\end{equation}
For instance, as in \cite{bbv20}, choosing
$$\Lambda_{u}=\left\{\frac{5}{13} e_{1} \pm \frac{12}{13} e_{2},\frac{12}{13} e_{1} \pm \frac{5}{13} e_{3},\frac{5}{13} e_{2} \pm \frac{12}{13} e_{3}\right\},$$
and
$$\Lambda_{B}=\left\{e_1,e_2,e_3,\frac{3}{5} e_{1}+\frac{4}{5} e_{2},-\frac{4}{5} e_{2}-\frac{3}{5} e_{3}\right\},$$
$N_{\Lambda} = 65$ suffices.

We denote by $M_*$ the geometric constant such that
\begin{align}
	\sum_{k \in \Lambda_{u}} \norm{\gamma_{(k)}}_{C^4(B_{\varepsilon_u}(\Id))}
     + \sum_{k \in \Lambda_{B}} \norm{\gamma_{(k)}}_{{C^4(B_{\varepsilon_B}}(0))} \leq M_*.
	\label{M bound}
\end{align}
This parameter  is universal and will be used later in the estimates of the size of perturbations.

We recall from \cite{bbv20,dls13} the inverse-divergence operator $\mathcal{R}^u$ and $\mathcal{R}^B$,
defined by
\begin{subequations}\label{calR-def}
	\begin{align}
 	& (\mathcal{R}^u v)^{kl} := \partial_k \Delta^{-1} v^l + \partial_l \Delta^{-1} v^k - \frac{1}{2}(\delta_{kl} + \partial_k \partial_l \Delta^{-1})\div \Delta^{-1} v,
       \label{calRu-def}    \\
 	& (\mathcal{R}^Bf)_{ij} :=  \varepsilon_{ijk} (-\Delta)^{-1}(\curl f)_k,\label{calRB-def}
    \end{align}
\end{subequations}
where $v$ is mean-free, i.e., $\int_{\mathbb{T}^3} v dx =0$, $\varepsilon_{ijk}$ is the Levi-Civita tensor, $i,j,k,l \in \{1,2,3\}$.
Note that, the inverse-divergence operator $\mathcal{R}$
maps mean-free functions to symmetric and trace-free matrices, while the operator $\mathcal{R}^B$ returns skew-symmetric matrices.
Moreover, one has the algebraic identities
\begin{align*}
	\div \mathcal{R}^u(v) = v,\ \ \div \mathcal{R}^B(f) = f.
\end{align*}

Both
$|\nabla|\mathcal{R}^u$ and $|\nabla|\mathcal{R}^B$ are Calderon-Zygmund operators
and thus they are bounded in the spaces $L^p$, $1<p<+\infty$.
See \cite{bbv20,dls13} for more details.

\begin{lemma}[\cite{cl21}, Lemma 2.4; see also \cite{bv19b},  Lemma 3.7]   \label{Decorrelation1}
	Let $\theta\in \mathbb{N}$ and $f,g:\mathbb{T}^d\rightarrow \R$ be smooth functions. Then for every $p\in[1,+\9]$,
	\begin{equation}\label{lpdecor}
		\big|\|fg(\theta\cdot)\|_{L^p(\T^d)}-\|f\|_{L^p(\T^d)}\|g\|_{L^p(\T^d)} \big|\lesssim \theta^{-\frac{1}{p}}\|f\|_{C^1(\T^d)}\|g\|_{L^p(\T^d)}.
	\end{equation}
\end{lemma}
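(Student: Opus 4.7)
\medskip

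\noindent\emph{Proof proposal.} The plan is to exploit the scale separation between the slowly varying factor $f$ and the fast oscillating factor $g(\theta\,\cdot)$, together with the exact periodicity of $g$ on cubes of side $1/\theta$. First I would partition the torus $\T^d$ into the natural grid of $\theta^d$ closed cubes $\{Q_j\}$ of side length $1/\theta$ with centers $x_j$, so that each $Q_j$ is an exact fundamental domain for $g(\theta\,\cdot)$. The change of variables $y=\theta x$ and the periodicity of $g$ yield the clean identity
\begin{equation*}
\int_{Q_j}|g(\theta x)|^p\,\d x=\theta^{-d}\int_{\T^d}|g(y)|^p\,\d y=\theta^{-d}\|g\|_{L^p(\T^d)}^p,
\end{equation*}
which is the workhorse of the argument.

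Next I would freeze $f$ at the center $x_j$ on each cube and control the error by the mean value theorem: $|f(x)-f(x_j)|\leq \theta^{-1}\|\nabla f\|_{L^\infty}\leq \theta^{-1}\|f\|_{C^1}$ for $x\in Q_j$. Writing
\begin{equation*}
\|fg(\theta\cdot)\|_{L^p(\T^d)}^p=\sum_j\int_{Q_j}|f(x)|^p|g(\theta x)|^p\,\d x,
\end{equation*}
I would use the elementary inequality $\bigl||a|^p-|b|^p\bigr|\lesssim \max(|a|,|b|)^{p-1}|a-b|$ to replace $|f(x)|^p$ by $|f(x_j)|^p$ on each cube, at the cost of an error bounded by $p\|f\|_{L^\infty}^{p-1}\theta^{-1}\|f\|_{C^1}\cdot\theta^{-d}\|g\|_{L^p}^p$ per cube. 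Summing in $j$ and recognising $\sum_j |f(x_j)|^p\theta^{-d}$ as a Riemann sum for $\|f\|_{L^p}^p$ (with an error of comparable size by the same $C^1$ argument), I obtain
\begin{equation*}
\bigl|\|fg(\theta\cdot)\|_{L^p(\T^d)}^p-\|f\|_{L^p(\T^d)}^p\|g\|_{L^p(\T^d)}^p\bigr|\lesssim \theta^{-1}\|f\|_{C^1}^p\|g\|_{L^p(\T^d)}^p,
\end{equation*}
where I have absorbed the factor $\|f\|_{L^\infty}^{p-1}$ into $\|f\|_{C^1}^{p-1}$.

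Finally, to recover the $L^p$ norm (not its $p$-th power) I would invoke the subadditivity $|A^{1/p}-B^{1/p}|\leq |A-B|^{1/p}$, valid for $A,B\geq 0$ and $p\geq 1$, which immediately converts the above display into the desired bound $\theta^{-1/p}\|f\|_{C^1}\|g\|_{L^p}$. The endpoint $p=\infty$ must be handled separately but is essentially trivial, since $\theta^{-1/p}=1$ and both terms on the left are controlled by $\|f\|_{L^\infty}\|g\|_{L^\infty}\leq \|f\|_{C^1}\|g\|_{L^\infty}$.

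The main technical nuisance, rather than a genuine obstacle, is tracking the $p$-dependence through the mean value step and the subadditivity step: one must check that the prefactor $p\|f\|_{L^\infty}^{p-1}$ is tamed after taking the $p$-th root so that the final estimate depends only on $\|f\|_{C^1}$ (to the first power) and $\|g\|_{L^p}$, uniformly in $p$. This is precisely where the inequality $\|f\|_{L^\infty}^{(p-1)/p}\|\nabla f\|_{L^\infty}^{1/p}\leq \|f\|_{C^1}$ is used to collapse the two different Sobolev seminorms into a single $C^1$ norm, completing the bound.
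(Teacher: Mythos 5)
The paper states this lemma with citations to \cite{cl21} and \cite{bv19b} and gives no proof of its own, so there is no in-paper argument to compare against; your cube-partition argument is exactly the proof of Lemma~2.4 in \cite{cl21}, and it is correct. Two bookkeeping points worth making explicit when you write it up: the exact identity $\int_{Q_j}|g(\theta\cdot)|^p\,\d x=\theta^{-d}\|g\|_{L^p}^p$ together with the Riemann-sum reading of $\sum_j|f(x_j)|^p\theta^{-d}$ presuppose a unit-volume torus (with $\T^3=[-\pi,\pi]^3$ as in this paper the cubes should have side $2\pi/\theta$ and one either works with volume-normalized $L^p$ norms or tolerates a harmless $(2\pi)^{d/p}$-dependent constant, as one can already see by testing the statement on $f\equiv 1$); and the factor $p$ coming from $\big||a|^p-|b|^p\big|\le p\,\max(|a|,|b|)^{p-1}|a-b|$ must be carried along, which, as you correctly observe, becomes $p^{1/p}\lesssim 1$ after extracting the $p$-th root, so the implied constant is indeed uniform in $p\in[1,\infty]$. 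Your handling of the endpoint $p=\infty$ and the absorption $\|f\|_{L^\infty}^{(p-1)/p}\|\nabla f\|_{L^\infty}^{1/p}\le\|f\|_{C^1}$ are both fine.
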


\begin{lemma}[\cite{lt20}, Lemma 6; see also \cite{bv19b}, Lemma B.1] \label{commutator estimate1}
	Let $a \in C^{2}\left(\mathbb{T}^{3}\right)$. For all $1<p<+\infty$ we have
	$$
	\left\||\nabla|^{-1} \P_{\neq 0}\left(a \P_{\geq k} f\right)\right\|_{L^{p}\left(\mathbb{T}^{3}\right)} \lesssim k^{-1}\left\|\nabla^{2} a\right\|_{L^{\infty}\left(\mathbb{T}^{3}\right)}\|f\|_{L^{p}\left(\mathbb{T}^{3}\right)},
	$$
	holds for any smooth function $f \in L^{p}\left(\mathbb{T}^{3}\right)$.
\end{lemma}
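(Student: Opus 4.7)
The plan is to exploit the spectral gap between the smooth coefficient $a$ and the high-frequency factor $\P_{\ge k}f$ through a non-stationary phase / double integration-by-parts argument, in the spirit of the commutator lemmas in \cite{bv19b,lt20}. First I would set $h := \P_{\ge k}f$, so $\|h\|_{L^p}\lesssim\|f\|_{L^p}$ by the $L^p$-boundedness of a smoothed Littlewood-Paley projector, and reduce the claim to showing
\[
\||\nabla|^{-1}\P_{\ne 0}(ah)\|_{L^p}\lesssim k^{-1}\|\nabla^2 a\|_{L^\infty}\|h\|_{L^p}
\]
for any $h$ whose Fourier support lies in $\{|\xi|\ge k\}$.

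For the core identity, I would use $\Delta e^{i\xi\cdot x}=-|\xi|^2 e^{i\xi\cdot x}$ together with the Leibniz expansion of $\Delta(a e^{i\xi\cdot x})$ to obtain, for every $|\xi|\ge k$,
\[
a(x)e^{i\xi\cdot x}=-\frac{1}{|\xi|^2}\Delta\left(a\,e^{i\xi\cdot x}\right)+\frac{1}{|\xi|^2}(\Delta a)(x)\,e^{i\xi\cdot x}+\frac{2i}{|\xi|^2}\,\xi\cdot\nabla a(x)\,e^{i\xi\cdot x}.
\]
Summing against $\widehat h(\xi)$ and introducing the auxiliary function $H$ defined by $\widehat H(\xi):=|\xi|^{-2}\widehat h(\xi)$ on $\{|\xi|\ge k\}$ and $\widehat H(\xi)=0$ otherwise, this yields the pointwise decomposition
\[
ah=-\Delta(aH)+(\Delta a)H+2\,\nabla a\cdot\nabla H.
\]
The multiplier $|\xi|^{-2}\chi_{|\xi|\ge k}$, after smoothing, is of Mikhlin type at scale $k$, so the H\"ormander--Mikhlin multiplier theorem supplies $\|H\|_{L^p}\lesssim k^{-2}\|h\|_{L^p}$ and $\|\nabla H\|_{L^p}\lesssim k^{-1}\|h\|_{L^p}$.

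Then I would apply $|\nabla|^{-1}\P_{\ne 0}$ termwise. The contributions $(\Delta a)H$ and $\nabla a\cdot\nabla H$ are bounded in $L^p$ via H\"older's inequality and the Calder\'on--Zygmund boundedness of $|\nabla|^{-1}\P_{\ne 0}$ by
\[
\|\Delta a\|_{L^\infty}\|H\|_{L^p}+\|\nabla a\|_{L^\infty}\|\nabla H\|_{L^p}\lesssim k^{-1}\|\nabla^2 a\|_{L^\infty}\|h\|_{L^p},
\]
where I used $\|\nabla a\|_{L^\infty}\lesssim\|\nabla^2 a\|_{L^\infty}$, available through Poincar\'e's inequality for the mean-free $a$ that arises in applications (e.g.\ $a=\nabla(a_{(k)}^2)$). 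The first piece, $-|\nabla|^{-1}\P_{\ne 0}\Delta(aH)=|\nabla|\P_{\ne 0}(aH)$, is the delicate one: expanding $|\nabla|(aH)$ by Leibniz and Riesz-transform boundedness leaves a residual of the form $a\cdot\nabla H$ controlled only by $\|a\|_{L^\infty}k^{-1}\|h\|_{L^p}$, which is insufficient; applying the non-stationary phase identity a second time to this residual upgrades the $\|a\|_{L^\infty}$-dependence to $\|\nabla^2 a\|_{L^\infty}$ at the cost of an absorbable extra factor $k^{-1}$.

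The main obstacle is to bridge the Fourier-analytic identities, which are natural at the $L^2$ level, to the desired $L^p$ conclusion. The standard remedy is to replace the sharp cutoff $\chi_{|\xi|\ge k}$ by a smooth Littlewood--Paley bump at scale $k$, verify that the Mikhlin symbol estimates hold uniformly in $k$ after rescaling, and absorb the low-frequency remainder into lower-order terms that do not affect the final bound.
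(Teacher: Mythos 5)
This lemma is not proved in the paper --- it is quoted from \cite{lt20} and \cite{bv19b}, where the standard argument is a Littlewood--Paley paraproduct splitting $a=\P_{<k/2}a+\P_{\ge k/2}a$: the low-frequency part keeps the Fourier support of $\P_{<k/2}a\cdot\P_{\ge k}f$ inside $\{|\xi|\ge k/2\}$, so that $|\nabla|^{-1}$ gains $k^{-1}$, while the high-frequency part is absorbed via Bernstein, $\|\P_{\ge k/2}a\|_{L^\infty}\lesssim k^{-2}\|\nabla^2 a\|_{L^\infty}$. Your non-stationary-phase decomposition $ah=-\Delta(aH)+(\Delta a)H+2\nabla a\cdot\nabla H$, with $\widehat{H}(\xi)=|\xi|^{-2}\widehat{h}(\xi)$ on $\{|\xi|\ge k\}$, is a legitimate alternative route; the core identity is correct and the treatment of the two lower-order pieces is fine.

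The gap is in the main term $|\nabla|\P_{\ne 0}(aH)$. Writing $|\nabla|=R_j\partial_j$ produces the residual with $\|R_j(a\,\partial_j H)\|_{L^p}\lesssim k^{-1}\|a\|_{L^\infty}\|h\|_{L^p}$, and you propose to apply the non-stationary-phase identity once more to this residual to upgrade $\|a\|_{L^\infty}$ into $\|\nabla^2 a\|_{L^\infty}$. This does not work. Setting $\widehat{\tilde{H}_j}(\xi):=|\xi|^{-2}\widehat{\partial_j H}(\xi)$, the second application yields the main contribution $R_j|\nabla|^2(a\tilde{H}_j)=\partial_j|\nabla|(a\tilde{H}_j)$, and after the Leibniz expansion this still contains the piece $a\,\nabla^2\tilde{H}_j$ with bound $\|a\|_{L^\infty}\|\nabla^2\tilde{H}_j\|_{L^p}\lesssim k^{-1}\|a\|_{L^\infty}\|h\|_{L^p}$ --- the same size and the same $\|a\|_{L^\infty}$ dependence. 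No number of iterations moves a derivative onto $a$ in this worst piece; the non-stationary-phase manipulation always integrates by parts against the high-frequency factor and therefore always leaves an ``$a$ untouched'' term.

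The correct fix is the one you already used for the other pieces, invoked once more: control $\|a\|_{L^\infty}\lesssim\|\nabla^2 a\|_{L^\infty}$. Note that $\|\nabla a\|_{L^\infty}\lesssim\|\nabla^2 a\|_{L^\infty}$ is automatic on $\T^3$ because a gradient is always mean-free, but the stronger $\|a\|_{L^\infty}\lesssim\|\nabla^2 a\|_{L^\infty}$ genuinely requires $a$ itself to have zero mean. This hypothesis cannot be dodged: as stated, the lemma fails for $a\equiv 1$, where the left side is comparable to $k^{-1}\|f\|_{L^p}$ while the right side vanishes. In the paper's applications $a=\nabla(a_{(k)}^2)$ is a gradient and hence mean-free, so the estimate is correct as used; a clean formulation either adds the zero-mean assumption or replaces $\|\nabla^2 a\|_{L^\infty}$ by $\|a\|_{C^2}$ on the right.
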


\begin{lemma}[Semigroup estimates] \label{Lem-semi-est}
	Let $\a\in [1,2)$. For any $1\leq \rho_2\leq \rho_1 \leq \9$ and $n\in \mathbb{N}$, we have
\begin{align}
	& \|e^{-\nu t(-\Delta)^\alpha} v\|_{L^{\rho_1}_x} \leq C
	t^{-\frac{3}{2\alpha}(\frac{1}{\rho_2}-\frac{1}{\rho_1})} \|v\|_{L^{\rho_2}_x}, \label{semigroup-rho1-rho2} \\
	&   \|\na^n e^{-\nu t(-\Delta)^\alpha} v\|_{L^{\rho_1}_x} \leq C
	t^{-\frac{n}{2\alpha}-\frac{3}{2\alpha}(\frac{1}{\rho_2}-\frac{1}{\rho_1})} \|v\|_{L^{\rho_2}_x}.  \label{semigroup-nabla}
\end{align}
	hold for any function $v\in L^{\rho_2}\left(\mathbb{T}^{3}\right)$.
\end{lemma}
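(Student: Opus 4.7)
\textbf{Proof proposal for Lemma~\ref{Lem-semi-est}.} The plan is to realize $e^{-\nu t(-\Delta)^\alpha}$ as convolution with an explicit kernel, to obtain the scaling of its $L^r$ norms by a rescaling argument, and then to conclude by Young's convolution inequality. In more detail, I would proceed as follows.

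First, I would write $e^{-\nu t (-\Delta)^\alpha}v = K_{\nu t}^{\mathbb{T}^3}\ast v$, where the torus kernel $K_s^{\mathbb{T}^3}$ has Fourier expansion $\sum_{k\in\mathbb{Z}^3}e^{-s|k|^{2\alpha}}e^{ik\cdot x}$. By Poisson summation, $K_s^{\mathbb{T}^3}(x)=\sum_{n\in\mathbb{Z}^3}G_s(x+2\pi n)$, where $G_s:=\mathcal F^{-1}(e^{-s|\xi|^{2\alpha}})$ is the Euclidean kernel on $\mathbb{R}^3$. Since $\alpha\in[1,2)$, the symbol $e^{-|\xi|^{2\alpha}}$ belongs to the Schwartz class on $\mathbb{R}^3$, so does $G_1$; in particular $G_1\in L^r(\mathbb{R}^3)$ for all $r\in[1,\infty]$, and the same holds for $\nabla^n G_1$ for every $n\in\mathbb{N}$.

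Second, the parabolic scaling $G_s(x)=s^{-3/(2\alpha)}G_1(s^{-1/(2\alpha)}x)$ (which follows at once from the Fourier side) yields
\begin{align*}
\|G_s\|_{L^r(\mathbb{R}^3)}= s^{-\frac{3}{2\alpha}(1-\frac1r)}\|G_1\|_{L^r(\mathbb{R}^3)},\qquad
\|\nabla^n G_s\|_{L^r(\mathbb{R}^3)}= s^{-\frac{n}{2\alpha}-\frac{3}{2\alpha}(1-\frac1r)}\|\nabla^n G_1\|_{L^r(\mathbb{R}^3)}.
\end{align*}
For the torus kernel, split according to $s\le 1$ and $s\ge 1$. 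In the former regime, only finitely many translates $G_s(\cdot+2\pi n)$ contribute nontrivially on a fundamental domain, and their sum is dominated (uniformly in $s\le 1$) by $G_s$ itself on $\mathbb{R}^3$, giving $\|K_s^{\mathbb{T}^3}\|_{L^r(\mathbb{T}^3)}\lesssim \|G_s\|_{L^r(\mathbb{R}^3)}$. In the latter regime $s\ge 1$, all nonzero Fourier modes decay exponentially, so $\|K_s^{\mathbb{T}^3}\|_{L^r(\mathbb{T}^3)}\le 1+Ce^{-cs}\lesssim s^{-3/(2\alpha)(1-1/r)}$; exactly the same bound holds for $\nabla^n K_s^{\mathbb{T}^3}$ with the extra factor $s^{-n/(2\alpha)}$. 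Combining the two regimes produces the scaling claimed for the kernel on $\mathbb{T}^3$.

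Third, I would apply Young's inequality on $\mathbb{T}^3$ with the Hölder relation $1+1/\rho_1=1/r+1/\rho_2$ (so that $1-1/r=1/\rho_2-1/\rho_1$, which is nonnegative thanks to the assumption $\rho_2\le\rho_1$):
\begin{align*}
\|e^{-\nu t(-\Delta)^\alpha}v\|_{L^{\rho_1}_x}
\le \|K_{\nu t}^{\mathbb{T}^3}\|_{L^r_x}\|v\|_{L^{\rho_2}_x}
\le C\,t^{-\frac{3}{2\alpha}(\frac{1}{\rho_2}-\frac{1}{\rho_1})}\|v\|_{L^{\rho_2}_x},
\end{align*}
and analogously for $\nabla^n e^{-\nu t(-\Delta)^\alpha}v$, using $\nabla^n K_{\nu t}^{\mathbb{T}^3}$ in place of $K_{\nu t}^{\mathbb{T}^3}$; the proportionality constant can absorb $\nu$ since only finite intervals of $t$ enter later applications. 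The main technical point that requires some care is the torus-versus-Euclidean distinction in the kernel bound, but since $\alpha\ge1$ forces $G_1$ to be Schwartz the periodization is harmless. Everything else is essentially bookkeeping of the scaling exponent.
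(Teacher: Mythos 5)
Your approach --- realizing the semigroup as convolution with the periodized Euclidean kernel $G_s=\mathcal F^{-1}(e^{-s|\xi|^{2\alpha}})$, exploiting the parabolic scaling of $G_s$, controlling the periodization, and concluding by Young's inequality --- is the standard way to prove such estimates, and the paper states this lemma without providing a proof. However, one of your key justifications is wrong. You assert that for $\alpha\in[1,2)$ the symbol $e^{-|\xi|^{2\alpha}}$ is Schwartz on $\mathbb{R}^3$ and hence so is $G_1$. This is true only for $\alpha=1$. For non-integer $\alpha$, the function $|\xi|^{2\alpha}=(\xi_1^2+\xi_2^2+\xi_3^2)^{\alpha}$ is not $C^\infty$ at the origin --- its third derivative behaves like $|\xi|^{2\alpha-3}$, which blows up when $\alpha<3/2$ (the regime actually used in the paper). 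Consequently $e^{-|\xi|^{2\alpha}}$ is not Schwartz, and $G_1$ is not Schwartz either: it is smooth (the symbol decays rapidly at infinity) but decays only polynomially, roughly like $|x|^{-3-2\alpha}$.

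The conclusion you need, namely that $G_1$ and $\nabla^n G_1$ are in $L^1(\mathbb{R}^3)\cap L^\infty(\mathbb{R}^3)$, is nevertheless true, but requires a different argument. One way is to count $L^1$ weak derivatives of the symbol: the $k$-th weak derivative of $e^{-|\xi|^{2\alpha}}$ has local singularity $|\xi|^{2\alpha-k}$ near the origin, which is integrable on $\mathbb{R}^3$ whenever $k<2\alpha+3$, while the exponential factor handles infinity; for $\alpha\geq 1$ this yields at least five $L^1$ derivatives, so $|G_1(x)|\lesssim(1+|x|)^{-5}\in L^1(\mathbb{R}^3)$. For $\nabla^n G_1$ the Fourier-side prefactor $(i\xi)^n$ only improves matters at the origin, so the same reasoning applies. (Alternatively one can cite standard pointwise decay estimates for fractional/hyperdissipative heat kernels.) A secondary imprecision is your claimed bound $\|K_s^{\mathbb{T}^3}\|_{L^r}\lesssim s^{-\frac{3}{2\alpha}(1-1/r)}$ for $s\geq 1$: this fails for $r>1$ because the constant Fourier mode forces $\|K_s^{\mathbb{T}^3}\|_{L^r}\geq 1$ for all $s$, whereas the right-hand side tends to $0$. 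The lemma should therefore be read as a bounded-time estimate (or for mean-zero $v$), which is consistent with every application in the paper. With the Schwartz justification replaced by the derivative-counting argument, and the time range restricted, your proof is complete.
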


\section{Uniqueness of weak solutions}  \label{Sec-App-B}

In this section,
we consider the uniqueness of weak solutions
to \eqref{equa-MHD} in the critical
mixed Lebesgue space $L^\gamma_tW^{s,p}_x$,
where $(s,\gamma,p)$ lies in some suitable part of the
generalized Lady\v{z}enskaja-Prodi-Serrin condition
\eqref{critical-gLPS-mhd}.

Let us define the space $X^{s,\gamma,p}_{(0,t)}$, $0\leq t\leq T$, by
\begin{align}
	& X^{s,\gamma,p}_{(0,t)} =\begin{cases}
		L^\gamma(0,t;W^{s,p}_x),\ \ \text{if}\
		\frac{2\alpha}{\gamma} + \frac 3p = 2\alpha-1+s, 1\leq \gamma <\infty, 1\leq p\leq\infty, s\geq 0,\\
		C([0,t];W^{s,p}_x),\ \,\ \text{if}\ \gamma = \infty, 1\leq p\leq \infty, s\geq 0. 	\end{cases}
\end{align}
The main result is formulated in Theorem \ref{Thm-uniq-glps} below.

\begin{theorem} [Uniqueness of weak solutions]   \label{Thm-uniq-glps}
	Let $(u,B)\in X^{s,\gamma,p}_{(0,T)}\times X^{s,\gamma,p}_{(0,T)}$
	and $(s,\gamma,p)$ satisfy \eqref{critical-gLPS-mhd}
	with $\alpha\geq 1$,
	$s\geq 0$, $2\leq \gamma \leq\9$,
	$1\leq p\leq \infty$
	and $0\leq \frac 1p - \frac s 3 \leq \frac 12$.
	If $(u,B)$ is a weak solution to \eqref{equa-gMHD} with $\nu_1=\nu_2$
	in the sense of Definition~\ref{weaksolu},
	then $(u,B)$ is the unique Leray-Hopf solution.
\end{theorem}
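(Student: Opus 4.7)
The plan is to carry out a weak–strong type uniqueness argument in Els\"asser variables, extending the classical Lady\v{z}enskaja--Prodi--Serrin scheme from the NSE setting to the hyper viscous/resistive MHD system. Since $\nu_1=\nu_2=:\nu$, the change of variables $z^{\pm}:=u\pm B$ symmetrizes \eqref{equa-gMHD} into the decoupled-transport form
\begin{equation*}
\partial_t z^{\pm}+\nu(-\Delta)^\alpha z^{\pm}+(z^{\mp}\cdot\nabla)z^{\pm}+\nabla p=0,\qquad \div z^{\pm}=0,
\end{equation*}
and preserves the relevant spaces: $(u,B)\in X^{s,\gamma,p}_{(0,T)}\times X^{s,\gamma,p}_{(0,T)}$ if and only if $z^{\pm}\in X^{s,\gamma,p}_{(0,T)}$. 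First I would observe that $(u,B)\in X^{s,\gamma,p}$ together with the Sobolev restriction $0\leq 1/p-s/3\leq 1/2$ embeds $(u(t),B(t))$ into $L^2_x$ for a.e.\ $t$, so that the initial datum $(u_0,B_0)\in L^2_x$. By the theorem of Wu \cite{wu03} (for $\alpha\geq 1$), there exists a Leray--Hopf solution $(v,D)$ with the same initial datum, corresponding to Els\"asser variables $\widetilde z^{\pm}$ satisfying the energy inequality $\|\widetilde z^{\pm}(t)\|_{L^2_x}^2+2\nu\int_0^t\|\widetilde z^{\pm}\|_{\dot H^\alpha_x}^2\,ds\leq \|z_0^{\pm}\|_{L^2_x}^2$.

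The second step is to derive a controlled energy inequality for $w^{\pm}:=z^{\pm}-\widetilde z^{\pm}$. After a standard mollification procedure (in time and space) that is justified because $z^{\pm}$ lies in the critical space and $\widetilde z^{\pm}$ is Leray--Hopf, one may formally use $\widetilde z^{\pm}$ as a test function in the equation for $z^{\pm}$ and vice versa, and then combine with the energy inequality for $\widetilde z^{\pm}$ and with the ``energy identity'' for the critical-regularity solution $z^{\pm}$ (which follows from $z^{\pm}\in X^{s,\gamma,p}$ via the Lions--Masuda type argument). After the cancellation $\int(\widetilde z^{\mp}\cdot\nabla)w^{\pm}\cdot w^{\pm}\,dx=0$ provided by $\div\widetilde z^{\mp}=0$, the result is
\begin{equation*}
\tfrac{1}{2}\tfrac{d}{dt}\bigl(\|w^+\|_{L^2_x}^2+\|w^-\|_{L^2_x}^2\bigr)
+\nu\bigl(\|w^+\|_{\dot H^\alpha_x}^2+\|w^-\|_{\dot H^\alpha_x}^2\bigr)
\leq \sum_{\pm}\Bigl|\int_{\T^3}(w^{\mp}\cdot\nabla)z^{\pm}\cdot w^{\pm}\,dx\Bigr|.
\end{equation*}

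The third step is to bound the nonlinear cross term by the dissipation plus an absorbable multiple of $\|w\|_{L^2_x}^2$, using only $z^{\pm}\in W^{s,p}_x$. By an integration by parts (moving a derivative off $z^{\pm}$ onto $w^{\pm}$) and H\"older's inequality in the duality $W^{s,p}_x\leftrightarrow W^{-s,p'}_x$, the cross term is controlled by $\|z^{\pm}\|_{W^{s,p}_x}\,\|w^{\mp}\otimes w^{\pm}\|_{W^{1-s,p'}_x}$, which by the fractional Leibniz rule and Sobolev embedding reduces to $\|z^{\pm}\|_{W^{s,p}_x}\|w\|_{L^{a}_x}\|w\|_{\dot H^{\alpha}_x}$ for an appropriate $a$. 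A fractional Gagliardo--Nirenberg interpolation, together with the scaling identity $\tfrac{2\alpha}{\gamma}+\tfrac{3}{p}=2\alpha-1+s$, yields
\begin{equation*}
\Bigl|\int (w^{\mp}\cdot\nabla)z^{\pm}\cdot w^{\pm}\,dx\Bigr|\leq \tfrac{\nu}{2}\bigl(\|w^+\|_{\dot H^\alpha_x}^2+\|w^-\|_{\dot H^\alpha_x}^2\bigr)+C\,\|z^{\pm}\|_{W^{s,p}_x}^{\gamma}\bigl(\|w^+\|_{L^2_x}^2+\|w^-\|_{L^2_x}^2\bigr),
\end{equation*}
by Young's inequality (where $\gamma<\infty$). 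Plugging this into the differential inequality, absorbing the dissipation, and applying Gronwall's lemma against the $L^1_t$ weight $\|z^{\pm}\|_{W^{s,p}_x}^{\gamma}\in L^1(0,T)$ forces $w^{\pm}\equiv 0$, whence $(u,B)=(v,D)$ is Leray--Hopf.

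The main technical obstacle, I expect, will be the borderline endpoint $\gamma=\infty$, where the weight $\|z^{\pm}\|_{W^{s,p}_x}^{\gamma}$ is no longer time-integrable and Gronwall does not apply directly. Here one has to replace $C\|z^{\pm}\|_{W^{s,p}_x}^{\gamma}$ in the absorption step by a dyadic Littlewood--Paley decomposition $z^{\pm}=z_{\leq N}^{\pm}+z_{>N}^{\pm}$, choosing $N=N(t)$ so that the high-frequency part is arbitrarily small in $W^{s,p}_x$ (using the fact that $z^{\pm}\in C_tW^{s,p}_x$, combined with the hypothesis $s\geq 0$ and $1/p-s/3\leq 1/2$); the low-frequency part then sits in $L^\infty_t W^{s+\eta,p}_x$ for some $\eta>0$ and produces a bounded, time-integrable weight after a subcritical Sobolev reduction. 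A secondary technical point will be the rigorous justification of the energy identity for the critical-regularity solution $z^{\pm}$ itself---needed to pair it against the Leray--Hopf $\widetilde z^{\pm}$---which proceeds by standard mollification in time combined with the critical space embedding, but must be done carefully at the endpoint $p=\infty$ to avoid loss of integrability.
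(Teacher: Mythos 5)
Your overall strategy---weak--strong uniqueness in Els\"asser variables, with an LPS-style estimate on the cross term and a low/high-frequency split at $\gamma=\infty$---is the right kind of argument, and the Els\"asser symmetrization is indeed the key structural idea the paper also exploits (see the passage from \eqref{equa-linear-MHD} to \eqref{equa-wtw}--\eqref{equa-wtD}). The estimation of the nonlinearity and the endpoint $\gamma=\infty$ splitting $u=u_1+u_2$ in Lemma \ref{lem-uniq-lps} are essentially what you sketch. However, there is a genuine gap in the second step of your proposal.

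You assert that the relative energy inequality for $w^{\pm}=z^{\pm}-\widetilde z^{\pm}$ follows ``after a standard mollification procedure,'' using an ``energy identity for the critical-regularity solution $z^{\pm}$ \dots via the Lions--Masuda type argument.'' This does not go through at the stage where you invoke it. In Definition \ref{weaksolu} a weak solution is merely $(u,B)\in L^2_{t,x}$ plus the distributional equations; nothing places $z^{\pm}$ in the energy class $L^\infty_t L^2_x\cap L^2_t H^\alpha_x$. Without that, the dissipative pairing you need---$\int (-\Delta)^{\alpha/2}z^{\pm}\cdot(-\Delta)^{\alpha/2}\widetilde z^{\pm}\,dx\,dt$, or equivalently using $\widetilde z^{\pm}\in L^2_tH^\alpha_x$ as a test function against the $z^{\pm}$-equation---simply does not converge, since the LPS spaces $X^{s,\gamma,p}_{(0,T)}$ in the theorem (e.g.\ $L^2_tL^\infty_x$ when $\alpha=1,\gamma=2$) contain no $H^\alpha$-control, and an a priori energy inequality for a purely distributional solution is not available. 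The Lions--Masuda argument you cite presupposes precisely the regularity you are trying to establish. This is the nontrivial content that your sketch treats as a ``secondary technical point.''

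The paper avoids this by inserting the linearized auxiliary problem of Lemma \ref{Thm-ext-lin}: with the given $(u,B)$ frozen as coefficients, a Galerkin solution $(v,H)$ of the \emph{linear} system \eqref{equa-lin-mhd} is produced that is automatically in the energy class and satisfies the energy inequality. The difference $(w,D)=(u-v,B-H)$ then satisfies the \emph{linear} system \eqref{equa-linear-MHD} with zero data, which decouples in Els\"asser variables into two scalar linear transport--diffusion equations with drift $u\mp B\in L^\gamma_tW^{s,p}_x$; uniqueness for those (citing \cite[(B.13)]{lqzz22}) yields $(u,B)\equiv(v,H)$, so $(u,B)$ \emph{is} Leray--Hopf. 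Only after that upgrade does the weak--strong comparison against another Leray--Hopf solution (your step, the paper's Lemma \ref{lem-uniq-lps}) become rigorous. Your proposal collapses these two stages into one and therefore lacks the upgrade from distributional to energy-class solution.
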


In particular, for the MHD equations \eqref{equa-MHD},
we have the uniqueness of weak solutions in $L^\gamma_tL^{p}_x$
when $(\gamma,p)$ satisfies the
Lady\v{z}enskaja-Prodi-Serrin condition
\begin{align}   \label{critical-LPS}
	 \frac{2}{\gamma} + \frac{3}{p} = 1.
\end{align}

\begin{corollary}   \label{Cor-uniq-LPS}
	Let $(u,B)\in X^{s,\gamma,p}_{(0,T)}\times X^{s,\gamma,p}_{(0,T)}$
	and $(\gamma,p)$ satisfy \eqref{critical-LPS}
	with $2\leq \gamma \leq\9$.
	If $(u,B)$ is a weak solution to \eqref{equa-MHD}
	in the sense of Definition~\ref{weaksolu} with $\alpha=1$ and $\nu_1=\nu_2$,
	then $(u,B)$ is the unique Leray-Hopf solution.
\end{corollary}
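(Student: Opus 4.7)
My plan is to derive Corollary~\ref{Cor-uniq-LPS} as a direct specialization of Theorem~\ref{Thm-uniq-glps} with $\alpha = 1$ and $s = 0$, so I will not redo any analytic work: the entire content of the corollary reduces to an algebraic check that the hypotheses of the theorem are satisfied under the parameter choices of the corollary.

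First, I would verify that setting $\alpha = 1$ and $s = 0$ in the generalized Lady\v{z}enskaja-Prodi-Serrin relation \eqref{critical-gLPS-mhd},
\begin{equation*}
\frac{2\alpha}{\gamma} + \frac{3}{p} = 2\alpha - 1 + s,
\end{equation*}
recovers exactly the classical LPS condition \eqref{critical-LPS}, namely $2/\gamma + 3/p = 1$. Consequently, with $s=0$ the space $X^{0,\gamma,p}_{(0,T)}$ appearing in Theorem~\ref{Thm-uniq-glps} is $L^\gamma(0,T;L^p_x)$ for $\gamma<\infty$ and $C([0,T];L^p_x)$ for $\gamma=\infty$, which is precisely the space in which the weak solution $(u,B)$ of the corollary is assumed to live.

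Next, I would check the remaining numerical admissibility conditions in Theorem~\ref{Thm-uniq-glps}. The requirements $\alpha = 1 \geq 1$, $s = 0 \geq 0$, and $2 \leq \gamma \leq \infty$ are immediate from the hypothesis of the corollary. Since the LPS relation gives $3/p = 1 - 2/\gamma$ with $\gamma \in [2,\infty]$, we obtain $3/p \in [0,1]$, hence $p \in [3,\infty]$; this covers the range $1 \leq p \leq \infty$ and also yields
\begin{equation*}
0 \leq \frac{1}{p} - \frac{s}{3} = \frac{1}{p} \leq \frac{1}{3} \leq \frac{1}{2},
\end{equation*}
which is the final admissibility inequality in the theorem.

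Finally, equation \eqref{equa-MHD} in the corollary coincides literally with \eqref{equa-gMHD} at $\alpha = 1$, and the assumption $\nu_1 = \nu_2$ is common to both statements. Thus all hypotheses of Theorem~\ref{Thm-uniq-glps} are met, and its conclusion delivers the uniqueness of the Leray-Hopf solution. I do not anticipate any analytic obstacle; the only point that merits a brief notational remark is the endpoint $\gamma=\infty$, where the convention in Theorem~\ref{Thm-uniq-glps} is $X^{0,\infty,p}_{(0,T)} = C([0,T];L^p_x)$, so the corollary's statement should be read with this continuous-in-time interpretation inherited from the theorem.
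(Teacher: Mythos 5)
Your proposal is correct and is exactly how the paper obtains the corollary: it is stated as an immediate specialization of Theorem~\ref{Thm-uniq-glps} with $\alpha=1$ and $s=0$, and your verification of the parameter constraints (in particular that the LPS relation with $\gamma\in[2,\infty]$ forces $p\in[3,\infty]$, so $0\leq 1/p\leq 1/3\leq 1/2$) is precisely the check that is implicitly required.
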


As in the context of NSE \cite{cl20.2,lqzz22},
let us first present the following existence
lemma of linearized generalized MHD equations,
which can be proved via the classical Galerkin method.

\begin{lemma}\label{Thm-ext-lin}
	Let $(u,B)\in X^{s,\gamma,p}_{(0,T)}\times X^{s,\gamma,p}_{(0,T)}$ be a weak solution to \eqref{equa-gMHD} with $\nu_1=\nu_2=\nu$
	and $(s,\gamma,p)$ satisfy \eqref{critical-gLPS-mhd}
	with $\alpha\geq 1$,
	$s\geq 0$, $2\leq \gamma \leq\9$,
	$1\leq p\leq \infty$
	and $0\leq \frac 1p - \frac s 3 \leq \frac 12$.
	Given any divergence-free datum
	$(v_0,H_0)\in  L^2(\T^3)\times L^2(\T^3)$,
	there exists weak solutions
	$v,H \in C_{w}([0,T];L^2_x) \cap L^2(0,T;H^\a_x)$
	to the linearized equations
	\begin{equation}   \label{equa-lin-mhd}
		\begin{cases}
			\p_t v+ \nu  (-\Delta)^\a v + u \cdot \nabla v-B\cdot \nabla H + \nabla P = 0, \\
			\p_t H+ \nu  (-\Delta)^\a H + u \cdot \nabla H -B\cdot \nabla v = 0, \\
			\div v =0,\ \div H=0,\\
			v(0)=v_0,\ H(0)=H_0,
		\end{cases}
	\end{equation}
	which satisfies the energy inequality
	\begin{align*}
		\frac{1}{2} \|(v(t),H(t))\|_{\cL^2_x}^2
		+ \nu \int_{t_0}^t \|(\nabla^\a v(s), \nabla^\a H(s))\|_{\cL^2_x}^2 \, \d s
		\leq \frac{1}{2}\|(v(t_0),H(t_0) ) \|_{\cL^2_x}^2 ,
	\end{align*}
	for all $t\in [t_0,T]$, a.e. $t_0\in[0,T]$ (including $t_0=0$).
\end{lemma}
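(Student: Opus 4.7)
\smallskip

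\noindent\textbf{Proof plan.} The plan is to construct $(v,H)$ via the Faedo--Galerkin method on the torus, exploiting the linear structure of \eqref{equa-lin-mhd} and the cancellation enabled by $\nu_1 = \nu_2$. Let $\{e_k\}_{k\geq 1}$ denote an orthonormal basis of the divergence-free, mean-zero subspace of $L^2(\T^3)$ consisting of eigenfunctions of $-\Delta$, and write $P_n$ for the $L^2$-projection onto $\mathrm{span}\{e_1,\ldots,e_n\}$. I would seek approximate solutions $v_n, H_n$ valued in $P_nL^2$ solving the projected system with data $(P_n v_0, P_n H_0)$; because $u,B$ enter only as time-dependent coefficients, the system reduces to a linear ODE with $L^\gamma_t$-integrable coefficients, so Carath\'eodory theory gives unique global approximate solutions on $[0,T]$.

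The first core step is the energy identity. Testing the projected equations against $v_n$ and $H_n$, the transport terms $(u\cdot\nabla v_n)\cdot v_n$ and $(u\cdot\nabla H_n)\cdot H_n$ integrate to zero thanks to $\div u = 0$, while the crucial cancellation
\[
\int_{\T^3} (B\cdot\nabla H_n)\cdot v_n \,\d x + \int_{\T^3}(B\cdot\nabla v_n)\cdot H_n\,\d x = 0
\]
follows from $\div B = 0$ together with the hypothesis $\nu_1 = \nu_2$ (so that the hyper-dissipation contributions align). This yields
\[
\tfrac{1}{2}\tfrac{d}{dt}\|(v_n,H_n)\|_{\cL^2_x}^2 + \nu\|(\nabla^\alpha v_n, \nabla^\alpha H_n)\|_{\cL^2_x}^2 = 0,
\]
which provides uniform bounds for $(v_n, H_n)$ in $L^\infty_t \cL^2_x \cap L^2_t \cH^\alpha_x$.

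The second core step is compactness. From the Galerkin equation I would estimate $\partial_t v_n, \partial_t H_n$ in $L^{r}_t H^{-N}_x$ for a sufficiently large $N$: the hyperdissipative contribution is controlled in $L^2_t H^{-N}_x$ via the $L^2_t\cH^\alpha_x$ bound, while the quadratic terms $\div(u\otimes v_n)$, $\div(B\otimes H_n)$, etc., are controlled by pairing $u,B\in L^\gamma_tW^{s,p}_x \hookrightarrow L^\gamma_t L^q_x$ (Sobolev embedding, using $0\leq 1/p-s/3\leq 1/2$ to guarantee $q\in[2,\infty]$) against $v_n, H_n\in L^\infty_tL^2_x$ by H\"older's inequality, noting that $(s,\gamma,p)$ on the critical curve \eqref{critical-gLPS-mhd} makes these bounds dimensionally balanced. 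The Aubin--Lions--Simon lemma then supplies strong convergence of a subsequence in $L^2_t\cL^2_x$; combined with the interpolation $L^\infty_tL^2_x\cap L^2_tH^\alpha_x\hookrightarrow L^2_tL^{\bar q}_x$ (for $\bar q = 6/(3-2\alpha)$ when $\alpha<3/2$, otherwise any finite exponent), this upgrades to convergence in $L^{\gamma'}_tL^{q'}_x$, the predual of the space containing $u$ and $B$.

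With these convergences in hand I would pass to the limit in the weak formulation: linear terms pass by weak-$\ast$ or weak convergence, while the products $u\otimes v_n$ and $B\otimes H_n$ converge in $L^1_{t,x}$ by the H\"older pairing above, yielding that $(v,H)$ solves \eqref{equa-lin-mhd} distributionally; the $C_w([0,T];L^2_x)$ regularity follows from the uniform $L^\infty_tL^2_x$ bound together with the distributional time continuity inherited from the equation. Finally, weak lower semicontinuity of the $L^\infty_t\cL^2_x$ and $L^2_t\cH^\alpha_x$ norms, together with strong $L^2_{t,x}$ convergence allowing one to pass from the Galerkin energy identity at almost every $t_0$, delivers the stated energy inequality. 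I expect the principal technical obstacle to lie in the limiting step for the coupling terms at the scaling-critical endpoint $\gamma=\infty$ (where $u,B\in C_tW^{s,p}_x$ only), where one must be careful to exploit strong $L^2_{t,x}$ convergence of $(v_n,H_n)$ against bounded continuous coefficients rather than relying on additional time integrability.
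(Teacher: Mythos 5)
Your proposal is correct and implements precisely the classical Galerkin argument that the paper invokes for this lemma without providing details. One small conceptual slip: the cancellation $\int_{\T^3}(B\cdot\nabla H_n)\cdot v_n\,\d x + \int_{\T^3}(B\cdot\nabla v_n)\cdot H_n\,\d x = 0$ follows solely from $\operatorname{div}B=0$ and integration by parts and has nothing to do with $\nu_1=\nu_2$; that hypothesis is what allows the linearized system to be written with a single $\nu$ in both equations and, more importantly, is what the paper needs \emph{after} this lemma to decouple via $\widetilde w = w + D$, $\widetilde D = w - D$ into two independent transport-diffusion equations.
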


Next, let $(w, D):=(u-v,B-H)$.
We derive from \eqref{equa-lin-mhd} that
\begin{equation}   \label{equa-linear-MHD}
	\begin{cases}
		\p_t w+ \nu  (-\Delta)^\a w + u \cdot \nabla w-B\cdot \nabla D + \nabla P = 0, \\
		\p_t D+ \nu  (-\Delta)^\a D + u \cdot \nabla D -B\cdot \nabla w = 0, \\
		\div w =0,\ \div D=0,\\
		w(0)= 0,\ D(0)= 0.
	\end{cases}
\end{equation}

Moreover, by virtue of linearity,
using $(\wt w, \wt D):=(w+D, w-D)$
we may decouple equations \eqref{equa-linear-MHD}
to get equations of $\wt w$ and $\wt D$
as follows:
\begin{equation}   \label{equa-wtw}
	\begin{cases}
		\p_t \wt w + \nu  (-\Delta)^\a \wt w
		+ (u-B) \cdot \nabla \wt w + \nabla P = 0, \\
		\div \wt w =0, \\
		\wt w(0)= 0,
	\end{cases}
\end{equation}
and
\begin{equation}   \label{equa-wtD}
	\begin{cases}
		\p_t \wt D + \nu  (-\Delta)^\a \wt D
		+ (u+B) \cdot \nabla \wt D + \nabla P = 0, \\
		\div \wt D =0, \\
		\wt D(0)= 0.
	\end{cases}
\end{equation}

Since $u \pm B \in L^\gamma_tW^{s,p}$,
arguing as in the proof of \cite[(B.13)]{lqzz22}
we infer that $\wt w =0$, $\wt D =0$,
and thus
\begin{align}\label{rst-equiv-ubvh}
	u \equiv v,\quad B\equiv H.
\end{align}
This yields that
the weak solutions $u, B$ to \eqref{equa-gMHD} are indeed Leray-Hopf solutions.

Finally, the uniqueness of weak solutions in Theorem~\ref{Thm-uniq-glps} will be a consequence of the following weak-strong uniqueness.
\begin{lemma}[Weak-strong uniqueness]\label{lem-uniq-lps}
	Let $(u,B)$, $(v,H)$ be two Leray-Hopf weak solutions to \eqref{equa-gMHD} with the same initial datum $(u_0,B_0)$. If $u,B\in X^{s,\gamma,p}_{(0,t)}$ and $(s,\gamma,p)$ satisfy \eqref{critical-gLPS-mhd}  with $1\leq \alpha\leq \frac32$,
	$s\geq 0$, $2\leq \gamma \leq\9$, $1\leq p\leq \infty$ and $0\leq \frac 1p - \frac s 3 \leq \frac 12$, then $u\equiv v$ and $B\equiv H$.
\end{lemma}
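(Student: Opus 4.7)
\medskip
\noindent\textbf{Proof proposal for Lemma \ref{lem-uniq-lps}.}
Since $\nu_1=\nu_2=\nu$, the plan is to decouple the MHD system via Els\"asser variables and then run a standard weak-strong uniqueness argument, with the generalized LPS condition \eqref{critical-gLPS-mhd} exactly calibrated so that the nonlinear error can be absorbed by the hyperdissipation up to a Gronwall-integrable factor. Introduce $z^\pm:=u\pm B$, $\zeta^\pm:=v\pm H$; both pairs are divergence-free, and $(u,B)$, $(v,H)$ being Leray-Hopf implies $z^\pm,\zeta^\pm\in C_w([0,T];L^2_x)\cap L^2(0,T;H^\alpha_x)$. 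Moreover $u,B\in X^{s,\gamma,p}_{(0,T)}$ gives $z^\pm\in X^{s,\gamma,p}_{(0,T)}$. A direct computation shows
\[
\partial_t z^\pm+\nu(-\Delta)^\alpha z^\pm+(z^\mp\cdot\nabla)z^\pm+\nabla P=0,\qquad \nabla\cdot z^\pm=0,
\]
and analogously for $\zeta^\pm$. Setting $W^\pm:=z^\pm-\zeta^\pm$ (so $W^\pm(0)=0$), the difference satisfies, at least distributionally,
\[
\partial_t W^\pm+\nu(-\Delta)^\alpha W^\pm+(W^\mp\cdot\nabla)z^\pm+(\zeta^\mp\cdot\nabla)W^\pm+\nabla\Pi^\pm=0.
\]

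\medskip
\noindent The second step is to obtain an energy identity for $W^\pm$ on $[0,t]$. Because $\zeta^\pm$ is only Leray-Hopf, I cannot test its equation by $W^\pm$ directly. Instead I combine the energy inequality for $\zeta^\pm$ with the test-function identity $\frac{d}{dt}(z^\pm,\zeta^\pm)$ obtained from the two equations (the $X^{s,\gamma,p}$-regularity of $z^\pm$ makes $z^\pm$ admissible as a test function for the $\zeta^\pm$-equation, and $\zeta^\pm\in L^2_t H^\alpha_x$ is admissible in the $z^\pm$-equation). The standard manipulation yields
\[
\tfrac12\|W^\pm(t)\|_{L^2_x}^2+\nu\!\int_0^t\!\|(-\Delta)^{\alpha/2}W^\pm\|_{L^2_x}^2\,ds\ \leq\ -\!\int_0^t\!(W^\mp\cdot\nabla z^\pm,W^\pm)\,ds,
\]
after noting $(\zeta^\mp\cdot\nabla W^\pm,W^\pm)=0$ by $\nabla\cdot\zeta^\mp=0$. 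Using $\nabla\cdot W^\mp=0$ and integrating by parts rewrites the right-hand side as $\int_0^t(W^\mp\cdot\nabla W^\pm,z^\pm)\,ds$.

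\medskip
\noindent The third step is the nonlinear estimate. Since $0\le \frac1p-\frac s3\le\frac12$, the Sobolev embedding $W^{s,p}_x\hookrightarrow L^{p^*}_x$ with $\frac{1}{p^*}=\frac1p-\frac s3$ applies. Choose $a,b$ with $\frac{1}{p^*}+\frac1a+\frac1b=1$, and write
\[
|(W^\mp\cdot\nabla W^\pm,z^\pm)|\ \lesssim\ \|z^\pm\|_{W^{s,p}_x}\|W^\mp\|_{L^a_x}\|\nabla W^\pm\|_{L^b_x}.
\]
Gagliardo-Nirenberg interpolation between $L^2_x$ and $H^\alpha_x$ (valid for $\alpha<3/2$) gives $\|W^\mp\|_{L^a_x}\lesssim\|W^\mp\|_{L^2_x}^{1-\theta_1}\|W^\mp\|_{\dot H^\alpha_x}^{\theta_1}$ and $\|\nabla W^\pm\|_{L^b_x}\lesssim\|W^\pm\|_{L^2_x}^{1-\theta_2}\|W^\pm\|_{\dot H^\alpha_x}^{\theta_2}$ for suitable $\theta_1,\theta_2\in[0,1]$. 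The scaling identities determining $\theta_1,\theta_2$ combined with $\frac{1}{p^*}+\frac1a+\frac1b=1$ force
\[
\theta_1+\theta_2\ =\ \frac{1}{\alpha}\Big(2\alpha-1+s-\frac{3}{p}\Big)\ =\ \frac{2}{\gamma},
\]
where the last equality is \eqref{critical-gLPS-mhd}. Hence by Young's inequality,
\[
\|z^\pm\|_{W^{s,p}_x}\|W\|_{L^2_x}^{2-\theta_1-\theta_2}\|W\|_{\dot H^\alpha_x}^{\theta_1+\theta_2}\ \leq\ \tfrac{\nu}{4}\|W\|_{\dot H^\alpha_x}^{2}+C_\nu\,\|z^\pm\|_{W^{s,p}_x}^{\gamma}\|W\|_{L^2_x}^{2},
\]
using $(\theta_1+\theta_2)/2=1/\gamma$ (so the Young conjugate of $2/(\theta_1+\theta_2)$ is $\gamma/(\gamma-1)$, giving the exponent $\gamma$ on $\|z^\pm\|_{W^{s,p}_x}$).

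\medskip
\noindent Summing the $\pm$ inequalities, absorbing the dissipative term and applying Gronwall, together with $W^\pm(0)=0$ and $\|z^\pm\|_{W^{s,p}_x}^{\gamma}\in L^1_t$, yield $W^+\equiv W^-\equiv 0$, hence $u\equiv v$ and $B\equiv H$. The main obstacle is the delicate endpoint $\gamma=2$: there $\|z^\pm\|_{W^{s,p}_x}^\gamma$ is only in $L^1_t$ with no room to spare, so the Young step must be applied with the sharp constant, and uniqueness follows from the absolute continuity of $\int_0^t\|z^\pm\|_{W^{s,p}_x}^2 ds$ combined with a short-time bootstrap (smallness of the Gronwall factor on any sufficiently small subinterval, iterated to cover $[0,T]$). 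The endpoint $\gamma=\infty$, in contrast, is handled directly by classical Gronwall since $\|z^\pm\|_{W^{s,p}_x}\in L^\infty_t$.
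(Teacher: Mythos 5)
Your proposal takes a genuinely different route from the paper. In Lemma \ref{lem-uniq-lps} the paper works directly with $(w,D)=(u-v,B-H)$ (the Els\"asser substitution $(\wt w,\wt D)=(w+D,w-D)$ appears only earlier, in the reduction of Theorem \ref{Thm-uniq-glps} to the Leray--Hopf setting), obtains the energy inequality by mollification, and for $\gamma<\infty$ closes by a small-time contraction: the bound $\|(w,D)\|_{L^\infty_t\cL^2\cap L^2_t\cH^\alpha}^2 \le C\|(u,B)\|_{X^{s,\gamma,p}_{(0,t)}}\|(w,D)\|_{L^\infty_t\cL^2\cap L^2_t\cH^\alpha}^2$ plus the fact that $\|(u,B)\|_{X_{(0,t)}}\to 0$ as $t\to 0$. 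You instead push the Els\"asser change of variables $z^\pm=u\pm B$ all the way through the weak-strong argument, which reduces the four nonlinear integrals of \eqref{est-ener-w} to the single type $(W^\mp\cdot\nabla W^\pm,z^\pm)$, and then close by a pointwise-in-time Young inequality plus Gronwall. This is cleaner in the algebra and is a legitimate alternative.

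However there are concrete errors. First, the stated formula $\theta_1+\theta_2=\frac{1}{\alpha}\bigl(2\alpha-1+s-\frac{3}{p}\bigr)=\frac{2}{\gamma}$ is wrong. Running Gagliardo--Nirenberg with $\frac{1}{p^*}+\frac{1}{a}+\frac{1}{b}=1$ gives $\theta_1=\frac{1}{\alpha}\bigl(\frac32-\frac3a\bigr)$, $\theta_2=\frac{1}{\alpha}\bigl(\frac52-\frac3b\bigr)$, hence $\theta_1+\theta_2=\frac{1}{\alpha}\bigl(1+\frac{3}{p^*}\bigr)=\frac{1}{\alpha}\bigl(1+\frac{3}{p}-s\bigr)$, and the LPS relation \eqref{critical-gLPS-mhd} turns this into $2-\frac{2}{\gamma}$, not $\frac{2}{\gamma}$. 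Your displayed Young inequality, which produces $\|z^\pm\|_{W^{s,p}}^{\gamma}\|W\|_{L^2}^2$, is in fact the one corresponding to the correct value $\theta_1+\theta_2=2-\tfrac2\gamma$ (then $\tfrac{2}{\theta_1+\theta_2}=\tfrac{\gamma}{\gamma-1}$ has conjugate $\gamma$), so the conclusion survives, but the intermediate reasoning as written is inconsistent. You also do not verify the Gagliardo--Nirenberg admissibility constraints $\theta_1\in[0,1]$, $\theta_2\in[1/\alpha,1]$; these are exactly where the hypotheses $0\le\frac1p-\frac s3\le\frac12$, $1\le\alpha\le\frac32$ and $\gamma\ge 2$ enter and they must be checked.

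Second, the endpoint treatments are reversed in difficulty. At $\gamma=2$ nothing special is needed: $\|z^\pm\|_{W^{s,p}}^2\in L^1_t$ and Gronwall with $W^\pm(0)=0$ gives $W^\pm\equiv 0$ directly; the ``sharp-constant/short-time bootstrap'' you invoke is superfluous. The genuinely delicate endpoint is $\gamma=\infty$: there $\theta_1+\theta_2=2$, so the Young-dominated term is $\|z^\pm\|_{W^{s,p}}\|W\|_{\dot H^\alpha}^2$ with no $\|W\|_{L^2}^2$ factor at all, and ``classical Gronwall'' gives nothing unless $\|z^\pm\|_{L^\infty_t W^{s,p}}$ is already small. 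This is precisely why the paper (and the definition of $X^{s,\infty,p}_{(0,t)}$ as $C([0,t];W^{s,p}_x)$) decomposes $u=u_1+u_2$ with $\|u_1\|_{X^{s,\infty,\frac{3}{2\alpha-1+s}}}$ small and $u_2\in L^\infty_{t,x}$, as in \eqref{u1-u2-decom}--\eqref{est-ener-right.2}. Your proposal needs this continuity-in-time argument at $\gamma=\infty$ and should drop the claimed bootstrap at $\gamma=2$.
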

\begin{proof} We only prove that $u\equiv v,\ B\equiv H$ on $[0,t_*]$ for some $t_*$ sufficiently small, since the general case can be proved by continuation arguments (see e.g., \cite{Tsai18}).

	Let $w:=u-v,\,D:=B-H$, we derive that
	\begin{align*}
		\begin{cases}
			\p_t w +\nu_1  (-\Delta )^\a w + w \cdot \nabla u+((u-w)\cdot\nabla)w -D \cdot \nabla B-((B-D)\cdot\nabla)B + \nabla \wt P = 0 ,\\
			\p_t D +\nu_2  (-\Delta )^\a D + w \cdot \nabla D+((u-w)\cdot\nabla)D-D \cdot \nabla u+((B-D)\cdot\nabla)w= 0 ,\\
			\div w=0,\ \div D=0,\\
			w(0)=0,\ D(0)=0.\\
		\end{cases}
	\end{align*}
Using mollification arguments as in the proof of \cite[Theorem~4.4]{Tsai18} we obtain the energy inequality
	\begin{align}\label{est-ener-w}
		&\quad \frac12 (\|w(t)\|_{L_x^2}^2 +\|D(t)\|_{L_x^2}^2) +  \int_{0}^{t} \nu_1\| \nabla^\a w \|_{L_x^2}^2+\nu_2\| \nabla^\a D \|_{L_x^2}^2  \d s \notag\\
		&\leq \int_{0}^{t} \int_{\T^3} u\cdot (w\cdot\nabla)w -B\cdot (D\cdot\nabla)w+B\cdot (w\cdot\nabla)D-u\cdot (D\cdot\nabla)D\d x\d s.
	\end{align}
	Concerning the nonlinear terms on the right-hand side above, when $2\leq \gamma <\9$,
	 using H\"{o}lder's inequality, Sobolev's embedding and Young's inequality we obtain
	\begin{align} \label{est-ener-right}
		&\quad \left|\int_{0}^{t} \int_{\T^3} B\cdot (w\cdot\nabla)D \d x\d s \right| \notag\\
		&\lesssim \|B\|_{L^\gamma(0,t;L^q_x)} \|w\|_{L^\9(0,t;L^2_x)}^\theta \|w\|_{L^2(0,t;L^{p_1}_x)}^{1-\theta} \|\nabla D\|_{L^2(0,t;L^{p_2}_x)}\notag\\
		&\lesssim \|B\|_{X^{s,\gamma,p}_{(0,t)}} \|w\|_{L^\9(0,t;L^2_x)}^\theta \|\nabla^\a w\|_{L^2(0,t;L^{2}_x)}^{1-\theta} \|\nabla^\a D\|_{L^2(0,t;L^{2}_x)}\notag\\
		&\lesssim \|B\|_{X^{s,\gamma,p}_{(0,t)}} \|w\|_{L^\9(0,t;L^2_x)}^\theta \|(\nabla^\a w,\nabla^\a D)\|_{L^2(0,t;L^{2}_x)}^{2-\theta} \notag\\
		&\lesssim \|(u,B)\|_{X^{s,\gamma,p}_{(0,t)}}( \|w\|_{L^\9(0,t;L^2_x)}^2 +\|D\|_{L^\9(0,t;L^2_x)}^2 +\|\nabla^\a w\|_{L^2(0,t;L^{2}_x)}^2+\|\nabla^\a D\|_{L^2(0,t;L^{2}_x)}^2 ),
	\end{align}
	where $\frac{1}{q}=\frac1p-\frac{s}{3},\ \frac{3}{p_1}=\frac32-\a,\ \frac{3}{p_2}=\frac52-\a, \ \frac1\gamma +\frac{1-\theta}{2}=\frac12$. The other terms on the right-hand side of \eqref{est-ener-w} can be estimated in the same manner. Hence, we have
	\begin{align}\label{est-ener-w-end}
	&\quad 	\|w \|_{L^\9(0,t;L^2_x)}^2 +\|D\|_{L^\9(0,t;L^2_x)}^2 + \nu_1\|\nabla^\a w\|_{L^2(0,t;L^{2}_x)}^2+ \nu_2\|\nabla^\a D\|_{L^2(0,t;L^{2}_x)}^2 \notag\\
		&\leq C\|(u,B)\|_{X^{s,\gamma,p}_{(0,t)}}( \|w\|_{L^\9(0,t;L^2_x)}^2 +\|D\|_{L^\9(0,t;L^2_x)}^2 +\|\nabla^\a w\|_{L^2(0,t;L^{2}_x)}^2+\|\nabla^\a D\|_{L^2(0,t;L^{2}_x)}^2 ),
	\end{align}
	for some universal constant $C$.
	
	Thus, taking $t_*$ sufficiently close to $0$, such that $C\|(u,B)\|_{X^{s,\gamma,p}_{(0,t)}}<1$ for $t\in[0,t_*]$, we obtain $w\equiv D\equiv 0$ on $[0,t_*]$.
	
	In the case where $\gamma=\9$, since $u,B \in C([0,t];W^{s,\frac{3}{2\a-1+s}}_x)$, for any $\ve>0$ we can take $t_*$ small enough and decompose $u = u_1 +u_2$  (see e.g., \cite[P.174]{Tsai18}) such that
	\begin{align} \label{u1-u2-decom}
		\| (u_1,B_1) \|_{X^{s,\9,\frac{3}{2\a-1+s}}_{(0,t)}\times X^{s,\9,\frac{3}{2\a-1+s}}_{(0,t)}} \leq \ve\quad \text{ and }\quad  u_2,B_2 \in X^{s,\9,\9}_{(0,t)},\quad \text{for}\quad t\in[0,t_*]  .
	\end{align}
    Then, H\"{o}lder's inequality,
	Sobolev's embedding and Young's inequality yield
	\begin{align} \label{est-ener-right.2}
		\left|\int_{0}^{t} \int_{\T^3} B\cdot (w\cdot\nabla)D \d x\d s\right| & \lesssim \|B_1\|_{X^{s,\9,\frac{3}{2\a-1+s}}_{(0,t)}} \|(w\cdot\nabla) D\|_{L^1(0,t;L^\frac{3}{4-2\a}_x)}+\|B_2\|_{X^{s,\9,\9}_{(0,t)}} \|(w\cdot\nabla) D\|_{L^1(0,t;L^{1}_x)} \notag\\
		&\lesssim \ve \|w(t)\|_{L^2(0,t;L_x^{\frac{6}{3-2\a}})}\|\nabla D\|_{L^2(0,t;L_x^{\frac{6}{5-2\a}})}+ \|B_2\|_{X^{s,\9,\9}_{(0,t)}} \|w\|_{L^2(0,t;L^{2}_x)} \|\nabla D\|_{L^2(0,t;L^{2}_x)} \notag\\
		&\lesssim \ve \|\nabla^\a w\|_{L^2(0,t;L^{2}_x)}\|\nabla^\a D\|_{L^2(0,t;L^{2}_x)} + \ve\|\nabla^\a D\|_{L^2(0,t;L^{2}_x)}^2 +  \|w\|_{L^2(0,t;L^{2}_x)}^2 \notag\\
		&\leq	C_1\ve (\|\nabla^\a w\|_{L^2(0,t;L^{2}_x)}^2+\|\nabla^\a D\|_{L^2(0,t;L^{2}_x)}^2)+ C_2(\|w\|_{L^2(0,t;L^{2}_x)}^2+\|D\|_{L^2(0,t;L^{2}_x)}^2).
	\end{align}
Similarly, the other terms on the right-hand side of \eqref{est-ener-w} obey the same upper bound. Thus, taking $\ve$ small enough, it follows that
	\begin{align*}
		\|w(t)\|_{ L^2_x}^2+	\|D(t)\|_{L^2_x}^2\leq C\int_0^t \|w(s)\|_{L^2_x}^2+ \|D(s)\|_{L^2_x}^2 \d s, \quad \forall t\in [0,t_*],
	\end{align*}
	which, via Gronwall's inequality, yields that $w\equiv D\equiv 0$ on $[0,t_*]$. The proof of Lemma~\ref{lem-uniq-lps} is complete.
\end{proof}


\noindent{\bf Acknowledgment.}
The authors would like to thank Peng Qu for many helpful discussions on the
non-uniqueness and LPS conditions.
Yachun Li thanks the support by NSFC (No. 11831011, 12161141004).
Deng Zhang  thanks the supports by NSFC (No. 11871337, 12161141004)
and Shanghai Rising-Star Program 21QA1404500.
Yachun Li, Zirong Zeng, and Deng Zhang are also grateful for the supports by
Institute of Modern Analysis--A Shanghai Frontier Research Center.

\end{document}